\newcommand\cyr{%
\renewcommand\rmdefault{wncyr}%
\renewcommand\sfdefault{wncyss}%
\renewcommand\encodingdefault{OT2}%
\normalfont
\selectfont}
\DeclareTextFontCommand{\textcyr}{\cyr}
\newcommand{\Mod}[1]{\ (\text{mod}\ #1)}
\numberwithin{equation}{section}
\newtheorem{thm}{Theorem}[section]
\newtheorem{cor}[thm]{Corollary}
\newtheorem{lem}[thm]{Lemma}
\newtheorem{prop}[thm]{Proposition}
\newtheorem{assu}[thm]{Assumption}
\newtheorem{conj}[thm]{Conjecture}
\newtheorem{algo}[thm]{``Algorithm"}
\theoremstyle{definition}
\newtheorem{defn}[thm]{Definition}
\newtheorem{rem}[thm]{Remark}
\newtheorem{ques}[thm]{Question}
\newcommand{\KS}{\mathbf{KS}}
\newcommand{\KSbar}{\overline{\mathbf{KS}}}
\newcommand{\ES}{\mathbf{ES}}
\newcommand{\ks}{\boldsymbol{\kappa}}
\begin{document}
\title[Indivisibility of Kato's Euler systems]{On the indivisibility of derived Kato's Euler systems and the main conjecture for modular forms}
\author{Chan-Ho Kim}
\address{(C.-H. Kim) Korea Institute for Advanced Study, 85 Hoegiro, Dongdaemun-gu, Seoul 02455, Republic of Korea}
\email{chanho.math@gmail.com}
\author{Myoungil Kim}
\address{(M. Kim) Department of Mathematical Sciences, Ulsan National Institute of Science and Technology, Ulsan, Korea}
\email{mikimmath@gmail.com}
\author{Hae-Sang Sun}
\address{(H.-S. Sun) Department of Mathematical Sciences, Ulsan National Institute of Science and Technology, Ulsan, Korea}
\email{haesang.sun@gmail.com}
\date{\today}
\subjclass[2010]{11R23 (Primary); 11F67 (Secondary)}
\keywords{Iwasawa theory, Iwasawa main conjectures, Kato's Euler systems, Euler systems, Kolyvagin systems, modular symbols, Hida families}
\begin{abstract}
We provide a simple and efficient numerical criterion to verify the Iwasawa main conjecture and the indivisibility of derived Kato's Euler systems for modular forms of weight two at \emph{any} good prime under mild assumptions.
In the ordinary case, the criterion works for all members of a Hida family once and for all.
The key ingredient is the explicit computation of the integral image of the derived Kato's Euler systems under the dual exponential map.
We provide explicit new examples at the end.
This work does not appeal to the Eisenstein congruence method at all.
\end{abstract}
\maketitle

\section{Introduction}
\subsection{Overview}
The theme of this article is to apply the refined nature of Kolyvagin systems to the context of Kato's Euler systems and the Iwasawa main conjecture for modular forms.

In his celebrated work \cite{kato-euler-systems}, Kato proved one divisibility of the Iwasawa main conjecture for modular forms over the cyclotomic $\mathbb{Z}_p$-extension of $\mathbb{Q}$, which gives an upper bound of Selmer groups.
The key ingredient of his proof is of course the Euler system constructed by himself.

The theory of Euler systems itself is refined later in terms of Kolyvagin systems due to Mazur and Rubin \cite{mazur-rubin-book}.
Kolyvagin systems arise from the derivative process of Euler systems, but they are more organized and admit a more rigid structure than the Kolyvagin derivative classes, which we call derived Euler systems.
In the theory of Kolyvagin systems, the concepts of primitivity and $\Lambda$-primitivity are introduced. These notions provide the criterion to obtain the \emph{exact} bound of Selmer groups and the \emph{equality} of the Iwasawa main conjecture \`{a} la Kato, respectively.
Furthermore, K.~B\"{u}y\"{u}kboduk observed that the primitivity of Kato's Kolyvagin systems implies the $\Lambda$-primitivity of Kato's $\Lambda$-adic Kolyvagin systems.
Also, it turns out that the primitivity of Kolyvagin systems is equivalent to the indivisibility of derived Euler systems.
However, it seems highly non-trivial to show that such a mod $p$ non-vanishing of cohomology classes in a direct way.

In the anticyclotomic context, the indivisibility of derived Heegner points is conjectured by Kolyvagin \cite{kolyvagin-selmer},  and proved by Wei Zhang \cite{wei-zhang-mazur-tate} under certain assumptions \emph{using} the relevant main conjecture as the combination of \cite{kato-euler-systems} and \cite{skinner-urban} and the Bertolini--Darmon--Jochnowitz congruence argument \cite{bertolini-darmon-imc-2005}.

In his unpublished thesis \cite{grigorov-thesis}, G.~Grigorov tried to understand the mod $p$ non-vanishing problem 
in terms of the mod $p$ non-vanishing of certain modular symbols via the mod $p$ dual exponential map for elliptic curves over $\mathbb{Q}$ with good ordinary reduction.
His work depends heavily on the dual exponential computation of K.~Rubin in \cite[$\S$3.5]{rubin-book} for \emph{unramified} extensions of $\mathbb{Q}_p$ and the modular symbol computation of S.~R.~Williams' unpublished thesis \cite{analytic_kolyvagin}.
Although the computation in Williams' thesis is purely analytic, the content is nothing but the mod $p$ formal Taylor expansion of Kolyvagin derivatives of Mazur--Tate elements.
We remark that our work generalizes Grigorov's thesis to modular forms of weight two with \emph{arbitrary} Fourier coefficients and deals with ordinary and non-ordinary forms on equal footing.
Furthermore, our methodology generalizes to the case of elliptic curves with \emph{additive} reduction and it is carried out in the work of the first-named author and K.~Nakamura \cite{kim-nakamura}.

Our goal is to provide a \emph{numerical} criterion for the mod $p$ non-vanishing of the Kolyvagin derivatives of Mazur--Tate elements, which implies the indivisibility of the derived Kato's Euler systems via the (integral) dual exponential map.
As an analytic and cyclotomic analogue of Kolyvagin's conjecture, the conjecture of Kurihara (Conjecture \ref{conj:kurihara}) expects that the numerical criterion always works at least for elliptic curves with good ordinary reduction. Indeed, Kurihara proved his conjecture (Theorem \ref{thm:kurihara}) \emph{using} the main conjecture and the non-degeneracy of the $p$-adic height pairing for elliptic curves with good ordinary reduction.

Although our numerical criterion works under a certain minimal level condition, the results on the main conjecture generalize to modular forms of arbitrary level via congruences following the idea of \cite{gv}, \cite{epw}, \cite{greenberg-iovita-pollack}, and \cite{kim-lee-ponsinet} under the $\mu = 0$ assumption. Therefore, the criterion checks the equality of the Iwasawa main conjecture for \emph{families} of modular forms once and for all.

In their work \cite{skinner-urban}, Skinner and Urban proved the opposite divisibility of the Iwasawa main conjecture for modular forms at good ordinary primes under certain assumptions. Thus, they obtained the equality of the Iwasawa main conjecture for a large class of modular forms.
In their argument, they introduced a technical assumption on the ramification of the residual representation. Namely, the residual representation should have a semistable but unstable prime in the tame level.
Later, Xin Wan removed the technical asumption in \cite{wan_hilbert} via the base change trick under another assumption on the existence of a certain real quadratic field concerning the period issue. See \cite[Theorem 3, Theorem 4, and Remark 5]{wan_hilbert} for detail.  Also, note that the character of modular forms is assumed to be trivial in their work.
For the recent development of the non-ordinary case, see  \cite{wan-main-conj-ss-ec}, \cite{sprung-main-conj-ss},\cite{wan-main-conj-nonord}, and \cite{castella-ciperiani-skinner-sprung}.
See $\S$\ref{subsec:current_status_IMC} for precise statements for the current development of the Iwasawa main conjecture for modular forms.

We confirm various new examples of the Iwasawa main conjecture for modular forms in $\S$\ref{sec:examples}.
\subsection{The statements}
\subsubsection{Main Theorem}
Let $p > 2$ be a prime.
Let $f = \sum_n a_n(f) q^n \in S_2(\Gamma_1(N), \psi)$ be a newform with character $\psi$ and assume $(N,p) = 1$.
Let $\mathbb{Q}_{f,\lambda}$ be the Hecke field of $f$ over $\mathbb{Q}_p$, $\mathbb{Z}_{f,\lambda}$ be its ring of integers, $\lambda$ be a uniformizer, and $\mathbb{F}_\lambda$ be the residue field.
If $a_p(f)$ is a $\lambda$-adic unit, let $\alpha$ be the unit root of the Hecke polynomial $X^2 - a_p(f)X - \psi(p)p$ of $f$ at $p$.
Let $\overline{\rho} = \overline{\rho}_f: G_\mathbb{Q} = \mathrm{Gal}(\overline{\mathbb{Q}}/\mathbb{Q}) \to \mathrm{GL}_2(\mathbb{F}_\lambda)$ be the residual Galois representation of $f$ with the tame conductor $N(\overline{\rho})$ following the cohomological convention as described in $\S$\ref{subsec:modular_galois_repns}.

Let $\mathbb{Q}_\infty$ be the cyclotomic $\mathbb{Z}_p$-extension of $\mathbb{Q}$.
Let $n$ be a square-free product of Kolyvagin primes (Definition \ref{defn:kolyvagin_primes}) and
$$\left[ \frac{a}{n} \right]^+_f := \frac{ 1 }{2 {\Omega^+_f} } \cdot  \left(\int^{a/n}_{i\infty} f(z)dz + \int^{-a/n}_{i\infty} f(z)dz \right) \in \mathbb{Z}_{f,\lambda}$$
be the $(+)$-part of the modular symbol where $\frac{a}{n} \in \mathbb{Q}$ and $\Omega^+_f$ is the $(+)$-part of an integral canonical period of $f$ defined in $\S$\ref{subsec:mod_p_multi_one}.
Let $\overline{\left[ \frac{a}{n} \right]^+_f} \in \mathbb{F}_\lambda$ be the reduction of $\left[ \frac{a}{n} \right]^+_f$ modulo $\lambda$.
Since $n$ is a product of Kolyvagin primes, any prime divisor $\ell$ of $n$ satisfies $\ell \equiv 1 \Mod{p}$.
For each $\ell$, we fix a primitive root $\eta_\ell$ mod $\ell$ and define 
$\mathrm{log}_{\mathbb{F}_\ell} (a) \in \mathbb{Z}/(\ell - 1)\mathbb{Z}$ by $\eta^{ \mathrm{log}_{\mathbb{F}_\ell} (a) }_{\ell} \equiv a \Mod{\ell}$. Let $\overline{ \mathrm{log}_{\mathbb{F}_\ell} (a) } \in \mathbb{Z}/p\mathbb{Z}$ be the image of $\mathrm{log}_{\mathbb{F}_\ell} (a)$ mod $p$.

\begin{thm}[Main Theorem] \label{thm:main_theorem}
Assume the following conditions:
\begin{itemize}
\item[(NA)] $a_p(f) \not\equiv 1 \Mod{\lambda}$ and $a_p(f) \not\equiv \psi(p) \Mod{\lambda}$;
\item[(Im)] the image of $\overline{\rho}$ contains a conjugate of $\mathrm{SL}_2(\mathbb{F}_p)$;
\item[(Tam)] $N =N(\overline{\rho})$;
\item[($N$-imp)] $p \nmid { \displaystyle \left( \prod_{q \mid N_{\mathrm{sp}}} ( q-1 ) \right) \cdot \left( \prod_{q \mid N_{\mathrm{ns}}} ( q +1 )  \right) }$
where $ { \displaystyle N_{\mathrm{sp}} := \prod_{q \Vert N(\overline{\rho}), a_q(f) = 1} q }$ and $ { \displaystyle N_{\mathrm{ns}} := \prod_{q \Vert N(\overline{\rho}), a_q(f) = -1} q }$.
\end{itemize}
If
 $$\widetilde{\delta}_n := \sum_{a \in (\mathbb{Z}/n\mathbb{Z})^\times} \left( \overline{ \left[ \frac{a}{n} \right]^+_f } \cdot  \prod_{\ell \vert n} \overline{ \mathrm{log}_{\mathbb{F}_\ell} (a) }  \right) \neq 0 \in \mathbb{F}_\lambda$$
for some $n$,
then
\begin{enumerate}
\item the derived Kato's Euler system does not vanish modulo $\lambda$, and
\item the Iwasawa main conjecture \`{a} la Kato (Conjecture \ref{conj:kato-main-conjecture}) holds for $(f, \mathbb{Q}_\infty/\mathbb{Q})$.
\end{enumerate}
\end{thm}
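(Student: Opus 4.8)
The plan is to deduce both conclusions from one fact: the numerical quantity $\widetilde{\delta}_n$ is the leading coefficient of a Kolyvagin-derived Mazur--Tate element, and its non-vanishing propagates, through an explicit \emph{integral} dual exponential computation, to the non-vanishing of the derived Kato class --- which is exactly the primitivity of Kato's Kolyvagin system. For the bookkeeping, write $G_n = \mathrm{Gal}(\mathbb{Q}(\mu_n)/\mathbb{Q}) \cong (\mathbb{Z}/n\mathbb{Z})^\times$ and form the $(+)$-part Mazur--Tate element $\theta^+_n = \sum_{a \in (\mathbb{Z}/n\mathbb{Z})^\times} \left[\frac{a}{n}\right]^+_f\,\sigma_a \in \mathbb{Z}_{f,\lambda}[G_n]$, where $\sigma_a \in G_n$ sends $\zeta \mapsto \zeta^a$; for $\ell \mid n$ let $D_\ell = \sum_{i=1}^{\ell-2} i\,\sigma_{\eta_\ell}^i$ be the Kolyvagin derivative and $D_n = \prod_{\ell \mid n} D_\ell$. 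Reducing $D_n\theta^+_n$ modulo $\lambda$ and projecting to the top graded piece of the augmentation filtration on $\mathbb{F}_\lambda[G_n]$, i.e.\ to $\mathbb{F}_\lambda \otimes \bigotimes_{\ell \mid n}\bigl(I_\ell/I_\ell^2\bigr)$ under the identifications $I_\ell/I_\ell^2 \cong (\mathbb{Z}/p)\cdot(\sigma_{\eta_\ell}-1)$ (legitimate since $\ell \equiv 1 \pmod{p}$) and $\sigma_a - 1 \mapsto \overline{\mathrm{log}_{\mathbb{F}_\ell}(a)}\cdot(\sigma_{\eta_\ell}-1)$, recovers precisely $\widetilde{\delta}_n$. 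This identity is a direct computation; it is the arithmetic content of Williams' analytic computation, reread as the mod $p$ Taylor expansion of $D_n\theta^+_n$.

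The core of the argument is an integral dual exponential computation. Let $T$ be a Galois-stable $\mathbb{Z}_{f,\lambda}$-lattice in the $p$-adic representation attached to $f$, let $\{z_n\}_n$ be Kato's Euler system with $z_n \in H^1(\mathbb{Q}(\mu_n), T)$, and let $\kappa_n = D_n z_n \in H^1(\mathbb{Q}, T/\lambda^{M_n}T)$ be the descended Kolyvagin-derived class for a suitable $M_n \ge 1$. Kato's explicit reciprocity law identifies $\exp^*$ of $\mathrm{loc}_p(z_n)$, computed in $\DdR$ of the restriction to a place above $p$, with the period integrals $\int_{a/n}^{i\infty} f(z)\,dz$ assembled into $\theta^+_n$; since $D_n$ is Galois-equivariant, $\exp^*\!\bigl(\mathrm{loc}_p(\kappa_n)\bigr) = D_n\theta^+_n$ up to the fixed normalization. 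The genuinely new point is that this comparison is \emph{integral}: the dual exponential must carry the lattice $H^1(\mathbb{Q}_p, T)$ and its reductions isomorphically onto the lattice generated by the $\Omega^+_f$-normalized modular symbols, with no extra power of $\lambda$. Hypothesis (NA) --- $a_p(f) \not\equiv 1, \psi(p) \pmod{\lambda}$, the non-anomalous condition for both relevant Frobenius eigenvalues --- forces the obstructing $H^0$-terms at $p$ to vanish, so that $\exp^*$ remains an isomorphism modulo $\lambda^{M_n}$; this is the generalization, to ramified and non-ordinary settings, of Rubin's unramified dual exponential computation used in Grigorov's thesis. Hypotheses (Im) and (Tam) enter here only through a mod $p$ multiplicity-one statement, which guarantees that the canonical period $\Omega^+_f$ agrees, up to a $\lambda$-adic unit, with the period implicit in Kato's reciprocity law, so that the lattice identification is exact. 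Granting this, the identity of the first paragraph converts $\widetilde{\delta}_n \ne 0$ in $\mathbb{F}_\lambda$ into $D_n\theta^+_n \not\equiv 0 \pmod{\lambda}$, hence $\mathrm{loc}_p(\kappa_n) \not\equiv 0 \pmod{\lambda}$, hence $\kappa_n \not\equiv 0 \pmod{\lambda}$: this is conclusion (1).

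It remains to run the primitivity machinery. By the equivalence recalled in the introduction, the non-vanishing modulo $\lambda$ of the derived class $\kappa_n$ for some $n$ is the same as the primitivity, in the sense of Mazur--Rubin, of Kato's Kolyvagin system $\boldsymbol{\kappa}^{\mathrm{Kato}}$, once one knows --- using (Im) and (Tam) --- that the ambient Selmer structure has core rank $1$ and that the Mazur--Rubin passage from $\{z_n\}$ to $\boldsymbol{\kappa}^{\mathrm{Kato}}$ matches $\kappa_n$ with the Kolyvagin-system value up to a unit. Mazur--Rubin's structure theorem for primitive Kolyvagin systems then yields the exact bound on the Selmer group of $f$ over $\mathbb{Q}$. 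For the Iwasawa main conjecture, B\"{u}y\"{u}kboduk's observation promotes primitivity of $\boldsymbol{\kappa}^{\mathrm{Kato}}$ to $\Lambda$-primitivity of the $\Lambda$-adic Kato Kolyvagin system over $\mathbb{Q}_\infty$, and the $\Lambda$-adic structure theorem identifies the characteristic ideal of the dual Selmer group over $\mathbb{Q}_\infty$ with that of the cokernel of $\mathrm{loc}_p$ on the leading $\Lambda$-adic class, which the $\Lambda$-adic reciprocity law identifies with the ideal of the $p$-adic $L$-function of $f$; together with the divisibility already proved by Kato, this is the equality asserted in Conjecture \ref{conj:kato-main-conjecture}, conclusion (2).

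The hard part is squarely the integral dual exponential computation: establishing Kato's reciprocity law at the finite layers $\mathbb{Q}(\mu_n)$ in a form simultaneously explicit enough to commute with the Kolyvagin derivative $D_n$ and integral enough to pin the comparison lattice down exactly --- in particular matching the modular-symbol period $\Omega^+_f$ with the period of the reciprocity law. This is where (NA), (Im), and (Tam) are used in an essential way, and it is the substance of the ``explicit computation of the integral image of the derived Kato's Euler systems under the dual exponential map'' promised in the abstract; the remaining steps are a matter of assembling known structural results.
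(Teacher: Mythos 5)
Your high-level strategy --- dual-exponential comparison of the derived class with modular symbols, the Kurihara--Williams identity recovering $\widetilde{\delta}_n$ from $D_n\theta^+_n$, primitivity of Kato's Kolyvagin system, B\"{u}y\"{u}kboduk's promotion to $\Lambda$-primitivity, and the Mazur--Rubin structure theorem to close --- is exactly the paper's plan (\S\ref{subsec:the_main_idea}). But the step you flag as ``the hard part'' is asserted rather than executed, and more importantly the mechanism by which (NA), (Im), (Tam) actually enter is misdescribed, so the crucial implication $\widetilde{\delta}_n\neq 0\Rightarrow\kappa_n\not\equiv 0\pmod{\lambda}$ is not yet secured.

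Concretely, two points. First, the paper does \emph{not} need $\exp^*$ to map $\mathrm{H}^1_s(\mathbb{Q}_p(\mu_n),T_{\overline{f}}(1))$ onto the modular-symbol lattice ``with no extra power of $\lambda$,'' and (NA) is not what would make it do so. Proposition \ref{prop:the_image} computes $\mathscr{L}=\tfrac{1}{p}\cdot\tfrac{(1-\alpha_p^{n_p})(1-\beta_p^{n_p})}{\lambda^{e_n}}\cdot\mathbb{Z}_{f,\lambda}\otimes\mathbb{Z}_p[\mu_n]$; the proof only needs the factor $(1-\alpha_p^{n_p})(1-\beta_p^{n_p})/\lambda^{e_n}$ to be $\lambda$-\emph{integral}, which follows (Remark \ref{rem:size_of_image}) because $\lambda^{e_n}$ annihilates $J_1(N)_{\overline{f},\lambda}(\mathbb{Q}_p(\mu_n))_{\mathrm{tors}}$ --- a formal-group/N\'{e}ron-model computation (\S\ref{subsec:log_formal_groups}--\S\ref{subsec:computing_image}), independent of (NA). Second, you suppress the Euler factors appearing in the lifted interpolation formula (\ref{eqn:lifted_combined_formula}), and these are precisely where the hypotheses act: passing from $\langle\omega^*_{\overline{f}},\exp^*(\mathrm{loc}_p D_nc^+_{\mathbb{Q}(\mu_n)})\rangle_{\mathrm{dR}}$ to $\widetilde{\delta}_n$ picks up the factor $p\cdot\overline{C}_p = cd(c-1)(d-1)\cdot\prod_{q\mid N_{\mathrm{sp}}}(1-q^{-1})\cdot\prod_{q\mid N_{\mathrm{ns}}}(1+q^{-1})\cdot\bigl(p-a_p(f)+\psi(p)\bigr)$. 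The second half of (NA), $a_p(f)\not\equiv\psi(p)\pmod{\lambda}$, makes the last factor a unit; the first half of (Tam) makes the product over $q\mid N$ a unit; and the constraint $p\nmid(c-1)(d-1)$ from Definition \ref{defn:kato_euler_systems} makes $cd(c-1)(d-1)$ a unit. Meanwhile (Tam) does not enter through multiplicity one --- its second half implements the Tamagawa hypothesis (H.T) (Lemma \ref{lem:conditions}), and multiplicity one is a consequence of (Im) alone --- and the first half of (NA), $a_p(f)\not\equiv 1\pmod{\lambda}$, is about (H.sEZ) and the specialization surjectivity in Theorem \ref{thm:specialization_surjective}, not about $\exp^*$. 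Without identifying these unit Euler factors, nothing forces $\widetilde{\delta}_n$ itself (rather than $p\cdot\overline{C}_p\cdot\widetilde{\delta}_n$) to vanish when $\kappa_n\equiv 0\pmod{\lambda}$, which is the contrapositive the paper actually proves in \S\ref{subsec:the_proof}.
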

The first statement should be viewed as the cyclotomic analogue of the Kolyvagin conjecture on the indivisibility of derived Heegner points (\hspace{1sp}\cite[Conjecture A]{kolyvagin-selmer}).
The proof of Theorem \ref{thm:main_theorem} is sketched in $\S$\ref{subsec:the_main_idea} and the formal proof is given in $\S$\ref{subsec:the_proof}. 

We call $\widetilde{\delta}_n$ the \textbf{Kurihara number at $n$} since Kurihara extensively studied the properties of the number $\widetilde{\delta}_n$ in the context of Kolyvagin systems of Gauss sum type in \cite{kurihara-iwasawa-2012}.
Theorem \ref{thm:main_theorem} strengthens \cite[Theorem 4.(2)]{kurihara-iwasawa-2012}.
The number $\widetilde{\delta}_n$ itself depends on the choices of $\eta_\ell$ for $\ell \vert n$, but the mod $\lambda$ non-vanishing property is independent of the choices.
In \cite[$\S$3.8]{grigorov-thesis}, Grigorov provided the table of the non-vanishing of $\widetilde{\delta}_n$ for \emph{almost all} (optimal) elliptic curves over $\mathbb{Q}$ of conductor $< 30,000$ with $p \geq 5$ such that the $p$-part of the analytic order of the Shafarevich--Tate groups is non-trivial. We complete the table in Corollary \ref{cor:computation} and add several numerical examples in $\S$\ref{sec:examples}.
In \cite[Theorem 4.9]{ota-thesis}, Kazuto Ota gave a lower bound of the number of divisors of $n$ to have $\widetilde{\delta}_n \neq 0$. It should be larger than or equal to the $\mathbb{F}_\lambda$-rank of the $p$-strict Selmer group of $\overline{\rho}$ over $\mathbb{Q}$. Note that Ota studied the Mazur--Tate conjecture using the \emph{divisibility} of higher derived Kato's Euler systems.

Condition (Tam) is a necessary but a very mild condition.
Indeed, if we have $\widetilde{\delta}_n \neq 0$ for some $n$, then Condition (Tam) is automatic (Remark \ref{rem:tamagawa}).
Note that we can always find a newform satisfying Condition (Tam) in the set of congruent forms via level lowering.
Then we can spread the equality of the Iwasawa main conjecture from one form (checked by Theorem \ref{thm:main_theorem}) to all the congruent forms via congruences. Although we consider the ordinary and the non-ordinary cases separately in this article, the application of the congruences can also be studied simultaneously as in \cite{kim-lee-ponsinet}.

\subsubsection{Extension of Theorem \ref{thm:main_theorem} via congruences I}
In this subsection, we assume that $a_p(f)$ is a $\lambda$-adic unit satisfying Condition (NA) in Theorem \ref{thm:main_theorem}.

Let $f_\alpha \in S_2(\Gamma_1(N) \cap \Gamma_0(p), \psi)$ be the $p$-stabilization of $f$ (defined in $\S$\ref{subsec:p-adic-L-functions}) with the unit $U_p$-eigenvalue $\alpha = \alpha_p(f)$.
\begin{cor}[The ordinary forms of arbitrary weight]
As well as the assumptions in Theorem \ref{thm:main_theorem}, we further assume that
\begin{itemize}
\item $p > 3$, and
\item the $\mu$-invariant of the $p$-adic $L$-function $L_p(\mathbb{Q}_\infty,f_\alpha)$ (defined in $\S$\ref{subsec:p-adic-L-functions}) vanishes.
\end{itemize}
If $$\widetilde{\delta}_n  \neq 0 \in \mathbb{F}_\lambda$$
for some $n$,
then the Iwasawa main conjecture \`{a} la Mazur--Greenberg (Conjecture \ref{conj:iwasawa_main_conjecture}) holds for all members (without Condition (Tam)) of the Hida family of $\overline{\rho}$.
\end{cor}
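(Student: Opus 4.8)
The plan is to deduce the statement from Theorem \ref{thm:main_theorem} by propagating the Iwasawa main conjecture along the Hida family under the vanishing of the $\mu$-invariant, following Emerton--Pollack--Weston \cite{epw}, whose congruence arguments refine those of Greenberg--Vatsal \cite{gv} and Greenberg--Iovita--Pollack \cite{greenberg-iovita-pollack}.

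First, since $a_p(f)$ is a $\lambda$-adic unit, $f$ is ordinary at $p$, so Theorem \ref{thm:main_theorem} applies and the hypothesis $\widetilde{\delta}_n \neq 0$ yields the Iwasawa main conjecture \`{a} la Kato (Conjecture \ref{conj:kato-main-conjecture}) for $(f,\mathbb{Q}_\infty/\mathbb{Q})$. In the ordinary case this is equivalent to the Mazur--Greenberg formulation (Conjecture \ref{conj:iwasawa_main_conjecture}) for the $p$-stabilization $f_\alpha$: by Kato's explicit reciprocity law his zeta element interpolates to the $p$-adic $L$-function $L_p(\mathbb{Q}_\infty,f_\alpha)$, which converts the statement that the zeta element generates the relevant $\Lambda$-module into the statement that the characteristic ideal of the dual of the (Greenberg) strict Selmer group over $\mathbb{Q}_\infty$ is generated by $L_p(\mathbb{Q}_\infty,f_\alpha)$. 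Thus Conjecture \ref{conj:iwasawa_main_conjecture} holds for $f_\alpha$, and together with the hypothesis that $L_p(\mathbb{Q}_\infty,f_\alpha)$ has vanishing $\mu$-invariant, both the analytic and the algebraic $\mu$-invariants attached to $f_\alpha$ vanish.

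Second, $\overline{\rho}$ is absolutely irreducible by Condition (Im), and, since $(N,p)=1$ and $f$ is good ordinary at $p$, the two diagonal characters of $\overline{\rho}|_{G_{\mathbb{Q}_p}}$ are distinct (one is ramified through $\overline{\omega}$, the other unramified), so $\overline{\rho}$ is $p$-distinguished; hence the setup of \cite{epw} is in force. By loc.\ cit., the vanishing of the $\mu$-invariant for the single member $f_\alpha$ forces it for every classical specialization $g$ of the Hida family of $\overline{\rho}$ --- of arbitrary weight $\geq 2$ and arbitrary tame level, in particular with no analogue of Condition (Tam) --- while the analytic and algebraic $\lambda$-invariants vary in parallel, controlled by the same explicit sum of local correction terms at the tamely ramified primes. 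Consequently the equality between the characteristic ideal and the $p$-adic $L$-function established for $f_\alpha$ propagates to every such $g$, which is precisely Conjecture \ref{conj:iwasawa_main_conjecture} for all members of the Hida family.

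The main difficulty is the bookkeeping for the application of \cite{epw}: one must verify that its hypotheses genuinely hold in the present generality --- nontrivial nebentypus $\psi$, arbitrary Hecke field $\mathbb{Q}_{f,\lambda}$, and the precise integral periods $\Omega^{\pm}_f$ underlying both the modular symbols and $L_p(\mathbb{Q}_\infty,f_\alpha)$ --- and, in the first step, that the identification of Kato's formulation with the Mazur--Greenberg formulation holds integrally and not merely up to $\lambda$-adic units, so that the $\mu$- and $\lambda$-invariants on the two sides agree on the nose. The restriction $p>3$ is required for the EPW congruence machinery and the accompanying multiplicity-one and freeness inputs for the Hida Hecke modules.
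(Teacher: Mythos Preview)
Your proposal is correct and follows essentially the same route as the paper: invoke Theorem~\ref{thm:main_theorem} to obtain Kato's main conjecture for $f$, pass to the Mazur--Greenberg formulation via \cite[\S17.13]{kato-euler-systems}, and then propagate along the Hida family using \cite[Corollary~1]{epw} under the $\mu=0$ hypothesis. The paper compresses this into a single sentence, while you spell out the verification of the EPW hypotheses (irreducibility from (Im), $p$-distinguishedness, the role of $p>3$) and the integral comparison of periods; this extra detail is helpful but not a different argument.
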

\begin{proof}
It directly follows from \cite[Corollary 1]{epw} and
 the equivalence of the main conjectures of Kato and Mazur--Greenberg (\hspace{1sp}\cite[$\S$17.13]{kato-euler-systems}).
\end{proof}
\begin{rem} \label{rem:cor_ordinary}
Since \cite{epw} depends on \cite{diamond-taylor-non-optimal}, the $p=3$ case is excluded in the statement.
It should be emphasized that the \emph{Hida family} here means not only one tame level branch (``$\mathbb{I}$-adic") but also all ordinary forms congruent to $\overline{\rho}$. Thus, Condition (Tam) is removed in the statement.
If we apply \cite[Corollary 2.7]{ochiai-two-variable} instead of \cite[Corollary 1]{epw}, then we obtain the two-variable main conjecture \cite[Conjecture 2.4]{ochiai-two-variable} \emph{over the minimal tame level branch} without the $\mu=0$ assumption. Via \cite[Theorem 4.1.1 and Corollary 4.1.3]{fouquet-etnc}, the $\mu=0$ assumption could be removed and the $p=3$ case could be allowed, but we keep them in the statement because it has not been published yet.
\end{rem}

\subsubsection{Extension of Theorem \ref{thm:main_theorem} via congruences II}
In this subsection, we assume that $a_p(f)=0$ and $\psi$ is the trivial character, i.e. $\psi = \mathbf{1}$. Although this part depends on \cite{greenberg-iovita-pollack}, which has not been published yet, the result is now more or less well-known to the experts. Notably, the algebraic side of \cite{greenberg-iovita-pollack} is already covered in \cite{bdkim-supersingular-invariants} and \cite{hatley-lei}.

Let $S_2(\overline{\rho})[T_p]$ be the set of newforms of weight two such that
\begin{itemize}
\item their residual representations are isomorphic to $\overline{\rho}$,
\item their $p$-th Fourier coefficients are zero, and
\item their characters are trivial.
\end{itemize}

\begin{cor}[The non-ordinary forms of weight two with $a_p(f)=0$] \label{cor:pm-application}
As well as the assumptions in Theorem \ref{thm:main_theorem}, we further assume that
\begin{itemize}
\item $a_p(f) = 0$ and $\psi$ is trivial;
\item the $\mu$-invariants of the $\pm$-$p$-adic $L$-functions of $f$ vanish.
\end{itemize}
If $$\widetilde{\delta}_n  \neq 0 \in \mathbb{F}_\lambda$$
for some $n$,
then Kobayashi's $\pm$-main conjectures (Conjecture \ref{conj:pm_main_conjecture}) hold for all forms in $S_2(\overline{\rho})[T_p]$ (without Condition (Tam)).
\end{cor}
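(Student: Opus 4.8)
The plan is to follow the three-step pattern of the ordinary corollary, with \cite{greenberg-iovita-pollack} playing the role of \cite{epw} and Kobayashi's $\pm$-theory playing the role of the Kato-Mazur-Greenberg equivalence. First, since $f$ satisfies (NA), (Im), (Tam) and $\widetilde{\delta}_n \neq 0$ for some $n$, Theorem \ref{thm:main_theorem} yields the Iwasawa main conjecture \`{a} la Kato (Conjecture \ref{conj:kato-main-conjecture}) for $(f, \mathbb{Q}_\infty/\mathbb{Q})$, i.e.\ the equality of characteristic ideals, not merely one divisibility.

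Second, I would translate Kato's cyclotomic main conjecture for $f$ into Kobayashi's $\pm$-main conjectures for $f$. Because $a_p(f) = 0$ and $\psi = \mathbf{1}$, the semilocal condition at $p$ cutting out the cyclotomic Selmer group decomposes into Kobayashi's $\pm$-norm subgroups; the $\pm$-Coleman maps send the localization at $p$ of Kato's zeta element to the two $\pm$-$p$-adic $L$-functions up to units, and the Poitou-Tate sequence identifies the characteristic ideal of the Pontryagin dual of the $\pm$-Selmer group over $\mathbb{Q}_\infty$ with the corresponding quotient of the Iwasawa cohomology by Kato's class. For weight-two newforms with $a_p=0$ and trivial character the algebraic half of this is supplied by \cite{bdkim-supersingular-invariants} and \cite{hatley-lei} and the $p$-adic $L$-function half by \cite{greenberg-iovita-pollack}; granting the vanishing of the $\mu$-invariants of the $\pm$-$p$-adic $L$-functions, the equality in Kato's main conjecture is then equivalent to the equality in both of Kobayashi's $\pm$-main conjectures (Conjecture \ref{conj:pm_main_conjecture}) for $f$.

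Third, I would propagate this equality along congruences. Our $f$ already satisfies Condition (Tam); in general Proposition \ref{prop:existence_congruence} produces a congruent newform satisfying (Tam) to which Theorem \ref{thm:main_theorem} and the previous step apply, so one may assume the seed form satisfies (Tam). The congruence-propagation theorem of \cite{greenberg-iovita-pollack}, valid under the assumed vanishing of the $\mu$-invariants, then transfers the truth of Kobayashi's $\pm$-main conjectures from this one form to every newform in $S_2(\overline{\rho})[T_p]$, in particular dropping Condition (Tam) from the conclusion. I expect the delicate point to be Step two: running Kobayashi's $\pm$-decomposition and the divisibility-to-equality bookkeeping uniformly for weight-two newforms rather than elliptic curves, and confirming that the equality of Kato's main conjecture together with $\mu^\pm=0$ genuinely splits into the two sign components; and, as noted in the text, Step three currently depends on the unpublished \cite{greenberg-iovita-pollack} (with \cite{bdkim-supersingular-invariants} and \cite{hatley-lei} covering its algebraic input).
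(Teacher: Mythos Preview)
Your three-step outline matches the paper's proof, which is stated in a single sentence: apply Theorem \ref{thm:main_theorem}, then the equivalence of Kato's and Kobayashi's $\pm$-main conjectures via \cite[Theorem 7.4]{kobayashi-thesis}, then the congruence-propagation of \cite{greenberg-iovita-pollack}.

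Two points of calibration. First, in your Step two the equivalence between Kato's main conjecture and Kobayashi's $\pm$-main conjectures does \emph{not} require $\mu^\pm = 0$; the paper simply cites \cite[Theorem 7.4]{kobayashi-thesis} for this, and the $\mu^\pm = 0$ hypothesis enters only in Step three to run the congruence argument of \cite{greenberg-iovita-pollack}. Second, you have placed \cite{bdkim-supersingular-invariants} and \cite{hatley-lei} in Step two, but in the paper these references are cited as covering the \emph{algebraic side} of the propagation machinery in \cite{greenberg-iovita-pollack} (i.e.\ your Step three), not the Kato--Kobayashi equivalence. Your caution about extending Kobayashi's argument from elliptic curves to weight-two newforms with $a_p=0$ and trivial nebentypus is fair, but the paper takes \cite[Theorem 7.4]{kobayashi-thesis} as supplying the equivalence in this generality without further comment.
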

\begin{proof}
It is a direct application of \cite{greenberg-iovita-pollack} and the equivalence of the main conjectures of Kobayashi and Kato (\hspace{1sp}\cite[Theorem 7.4]{kobayashi-thesis}).
\end{proof}
\begin{rem}
The conditions $a_p(f) = 0$ and $\psi = \mathbf{1}$ are required to use the formulation of $\pm$-Iwasawa theory although they are not required in Theorem \ref{thm:main_theorem}.
As in Remark \ref{rem:cor_ordinary}, if we apply \cite[Theorem 4.1.1 and Corollary 4.1.3]{fouquet-etnc}, then the validity of the main conjecture would extend to all modular points in a certain universal Hecke algebra of $\overline{\rho}$ (with a certain $R=\mathbb{T}$ theorem).
\end{rem}

\subsubsection{Further consequences and the indivisibility of derived Kato's Euler systems} \label{subsubsec:further_consequences}
One of the advantages of Theorem \ref{thm:main_theorem} is that we can numerically compute the Kurihara numbers.
Indeed, as indicated in {\cite[Page 320 and 321]{kurihara-iwasawa-2012}}, the numerical computation of $\widetilde{\delta}_n$ is easy and we even can easily find $n$ such that $\widetilde{\delta}_n \neq 0$ in $\mathbb{F}_\lambda$, at least for elliptic curves over $\mathbb{Q}$ with good ordinary reduction. 
This yields the following practical and effective ``algorithm" for the verification of the main conjecture, and the relevant SAGE code  due to Alexandru Ghitza is available at
\begin{center}
\url{https://github.com/aghitza/kurihara_numbers}.
\end{center}
See $\S$\ref{sec:examples} to observe how it yields new examples of the main conjecture.

\begin{algo} $ $ \label{algo:algorithm}
\begin{enumerate}
\item Check whether a given modular form $f$ satisfies the assumptions of Theorem \ref{thm:main_theorem}.
\item Choose $s$ Kolyvagin primes $\ell_1, \cdots, \ell_s$ and let $\mathcal{N}$ be the set of square-free products of the chosen primes.
\item Compute $\widetilde{\delta}_n$ for all $n \in \mathcal{N}$ until we get $\widetilde{\delta}_n \in \mathbb{F}^\times_\lambda$. If we get all zeros, then go back to (2) and choose different Kolyvagin primes.
\item[(4-1)] In the ordinary case, compute $\vartheta(\mathbb{Q}_{r}, f_\alpha) \Mod{\lambda}$ for $r \geq 1$ until we get $\vartheta(\mathbb{Q}_{r}, f_\alpha) \pmod{\lambda}$ is non-zero in $\mathbb{F}_\lambda[\mathrm{Gal}( \mathbb{Q}_{r} / \mathbb{Q} )]$ where $\vartheta(\mathbb{Q}_{r}, f_\alpha)$ is defined in $\S$\ref{subsec:p-adic-L-functions}.
\item[(4-2)] In the non-ordinary case, compute $\theta(\mathbb{Q}_{r}, f) \Mod{\lambda}$ for $r \geq 1$ until we get $\theta(\mathbb{Q}_{r}, f) \pmod{\lambda}$ are non-zero in $\mathbb{F}_\lambda[\mathrm{Gal}( \mathbb{Q}_{r} / \mathbb{Q} )]$ for some odd and even $r$ where $\theta(\mathbb{Q}_{r}, f)$ is defined in $\S$\ref{subsec:p-adic-L-functions}.
\end{enumerate}
\end{algo}
\begin{rem}
We consider the above statement as an algorithm due to the following reasons. 
\begin{enumerate}
\item If $f$ does not satisfy Condition (Tam), then we replace $f$ by a congruent form satisfying Condition (Tam) via level lowering.
\item Here, $s$ should be larger than or equal to the $\mathbb{F}_\lambda$-rank of the $p$-strict Selmer group of $\overline{\rho}$ over $\mathbb{Q}$ (\hspace{1sp}\cite[Theorem 4.9]{ota-thesis}).
\item This one will terminate if we believe Kurihara's conjecture (Conjecture \ref{conj:kurihara}) at least in the ordinary case.
\item[(4-1)] This one will terminate if we believe Greenberg's conjecture \cite[Conjecture 1.11]{greenberg-lnm} on vanishing of $\mu$-invariants.
\item[(4-2)] See \cite[Proposition 6.18]{pollack-thesis} for the relation between $\theta(\mathbb{Q}_{r}, f)$ and $\pm$-$p$-adic $L$-functions. This one will terminate if we believe Pollack's conjecture \cite[Conjecture 6.3]{pollack-thesis} on vanishing of $\mu^{\pm}$-invariants.
\end{enumerate}
\end{rem}
It is natural to ask whether it is always possible to find a square-free product of Kolyvagin primes $n$ such that $\widetilde{\delta}_n \neq 0$. The following conjecture predicts that the answer is yes, at least for ordinary forms.
\begin{conj}[Kurihara; {\cite[Conjecture 1]{kurihara-iwasawa-2012}}] \label{conj:kurihara}
Under the assumptions of Theorem \ref{thm:main_theorem} with a $p$-ordinary form $f$, 
there always exists an integer $n$ such that $\widetilde{\delta}_n \neq 0$ in $\mathbb{F}_\lambda$.
\end{conj}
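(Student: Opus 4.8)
The plan is to \emph{deduce} the conjecture from two inputs that are, at least conditionally, available in the $p$-ordinary case (as in the statement): the Iwasawa main conjecture for $f$ --- known unconditionally for an ordinary elliptic curve by \cite{kato-euler-systems} and \cite{skinner-urban}, and of course recovered by Theorem \ref{thm:main_theorem} itself, but only \emph{after} one already knows some $\widetilde{\delta}_n\neq 0$, so it cannot be invoked here --- and the non-degeneracy of the cyclotomic $p$-adic height pairing attached to $f$. The guiding principle is that the Kurihara numbers $\widetilde{\delta}_n$ are, up to $\lambda$-adic units, the reductions modulo $\lambda$ of the ``leading coefficients'' of the Mazur--Tate elements $\vartheta(\mathbb{Q}_r,f_\alpha)$ in the augmentation filtration of the group rings, hence finite-level avatars of a $p$-adic regulator; their collective non-vanishing should therefore be forced by the algebraic meaning of the leading term of $L_p(\mathbb{Q}_\infty,f_\alpha)$.

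First I would let $\rho$ be the $\mathbb{F}_\lambda$-rank of the $p$-strict Selmer group of $\overline{\rho}$ over $\mathbb{Q}$, and recall from \cite[Theorem 4.9]{ota-thesis} that $\widetilde{\delta}_n\neq 0$ forces $n$ to have at least $\rho$ prime factors; so it suffices to exhibit a single $n$, a product of exactly $\rho$ Kolyvagin primes, with $\widetilde{\delta}_n\neq 0$. For such a minimal $n$, Kurihara's structure theory of Kolyvagin systems of Gauss sum type \cite{kurihara-iwasawa-2012} expresses $\widetilde{\delta}_n$, up to a $\lambda$-adic unit, as the determinant of the $\rho\times\rho$ matrix of finite--singular comparison maps attached to the primes dividing $n$, acting on the $\mathbb{F}_\lambda$-dual of the $p$-strict Selmer group; so $\widetilde{\delta}_n\neq 0$ for a well-chosen $n$ is equivalent to a non-degeneracy statement for the pairing these comparison maps assemble into. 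The hypotheses (NA), (Im), (Tam) of Theorem \ref{thm:main_theorem} enter here to guarantee that $\overline{\rho}$ is absolutely irreducible and that the Tamagawa-type factors and the $p$-adic multiplier at $p$ (a $\lambda$-adic unit by (NA)) appearing in the comparison do not obstruct non-vanishing.

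The substantive step is then to identify that pairing. Using the norm-compatibility of Kato's Euler system together with the Perrin-Riou and Rubin formulas relating derived Euler-system classes to $p$-adic height pairings, I would match the finite--singular comparison pairing above with the reduction modulo $\lambda$ of the cyclotomic $p$-adic height pairing of $f$ on the relevant Selmer space --- the point being that both the cyclotomic derivative of the Mazur--Tate elements and the derivatives taken in the auxiliary $\ell$-directions are governed by the same regulator. The main conjecture is used precisely to control the Selmer and Iwasawa modules along the cyclotomic tower, so that this finite-level comparison is faithful and so that no spurious power of $\lambda$ is introduced; concretely one needs $\mu\big(L_p(\mathbb{Q}_\infty,f_\alpha)\big)=0$, which under Condition (Im) holds on the analytic side by the comparison of analytic and algebraic $\mu$-invariants as in \cite{epw} (or may simply be assumed, as in the Corollaries above). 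Granting the non-degeneracy of the cyclotomic $p$-adic height pairing, the determinant computed in the previous step is then nonzero, and one concludes $\widetilde{\delta}_n\not\equiv 0\pmod{\lambda}$ for a suitable minimal $n$.

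The hard part is the non-degeneracy of the $p$-adic height pairing. It is a consequence of the $p$-adic Birch--Swinnerton-Dyer conjecture and is known unconditionally only in small rank --- for instance in analytic rank $\le 1$ via $p$-adic Gross--Zagier --- so in the generality stated the argument is conditional on exactly this input, which is why the assertion remains a conjecture and is established (as Theorem \ref{thm:kurihara}) only modulo it; in the non-ordinary case both the relevant main conjecture and the height theory are more delicate, which is why Kurihara restricts to ordinary forms. A secondary, more technical obstacle is making the dictionary between $\widetilde{\delta}_n$ modulo $\lambda$ and the Taylor coefficients of $L_p(\mathbb{Q}_\infty,f_\alpha)$ completely precise: one must align Kurihara's explicit formula for the image of the Mazur--Tate elements in the group rings with the $p$-adic height computation, track the $\mu=0$ hypothesis carefully, and exploit the passage between the primitivity of the mod-$\lambda$ Kolyvagin system and the $\Lambda$-primitivity of the associated $\Lambda$-adic system.
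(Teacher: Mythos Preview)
The statement you are addressing is presented in the paper as an open \emph{conjecture}, not as a theorem; the paper offers no proof of it and does not claim one. What the paper does record is Theorem~\ref{thm:kurihara}, Kurihara's result that the conjecture follows \emph{conditionally} from the Iwasawa main conjecture together with the non-degeneracy of the $p$-adic height pairing. Your proposal is, in substance, an outline of exactly that conditional argument: you isolate the same two inputs, and you correctly flag that the non-degeneracy of the $p$-adic height is the genuine obstruction and is not known in general. In that sense your write-up aligns with the paper's discussion, but it is not a proof of the conjecture any more than Theorem~\ref{thm:kurihara} is; you acknowledge this yourself.

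One point to sharpen: you suggest the main conjecture might be supplied by Theorem~\ref{thm:main_theorem}, then (rightly) note the circularity. In Kurihara's actual argument the main conjecture is an external hypothesis (or is imported from \cite{skinner-urban} when applicable), and the $\mu=0$ input is likewise assumed rather than deduced. Beyond that, your heuristic identification of $\widetilde{\delta}_n$ with a mod-$\lambda$ regulator via the finite--singular comparison is the correct picture, and it is precisely what Kurihara makes rigorous in \cite{kurihara-iwasawa-2012}; the paper under review does not reproduce that computation. So there is nothing to compare your sketch against here---the paper simply cites the result.
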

For the application of Kurihara's conjecture to the structure of Selmer groups, see  \cite[Theorem 3]{kurihara-iwasawa-2012}.
This conjecture should be viewed as ``the cyclotomic Kolyvagin conjecture" because it implies Theorem \ref{thm:main_theorem}.(1). 
The Kolyvagin conjecture is proved by Wei Zhang \cite[Theorem 1.1]{wei-zhang-mazur-tate} by using the main conjecture under certain assumptions.
Kurihara himself proved the following theorem toward Conjecture \ref{conj:kurihara}.
\begin{thm}[Kurihara; {\cite[Theorem 2]{kurihara-iwasawa-2012}}] \label{thm:kurihara}
Assume that $a_p(f)$ is a $\lambda$-adic unit satisfying Condition (NA) and all the other conditions in Theorem \ref{thm:main_theorem}.
If we further assume the main conjecture and the non-degeneracy of the $p$-adic height pairing, then Conjecture \ref{conj:kurihara} holds.
\end{thm}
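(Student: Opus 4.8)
The plan is to run the argument in reverse to the implication of Theorem \ref{thm:main_theorem}: rather than deducing the main conjecture from the non-vanishing of a Kurihara number, I would start from the main conjecture and the non-degeneracy of the $p$-adic height pairing and \emph{produce} an $n$ with $\widetilde{\delta}_n \neq 0$. The bridge is again the integral dual exponential computation carried out in this paper: for each square-free product of Kolyvagin primes $n$ it identifies, up to an explicit $\lambda$-adic unit, the image under $\exp^*$ of the $n$-th Kolyvagin derivative of Kato's zeta element with the Kurihara number $\widetilde{\delta}_n \in \mathbb{F}_\lambda$, the unit being a product of the canonical periods $\Omega_f^{\pm}$, Tamagawa-type factors controlled by Condition (Tam), and a local factor at $p$ which is a unit by Condition (NA). Granting this identification, the theorem reduces to showing that the derived Kato Euler system is \emph{not} identically zero modulo $\lambda$, i.e.\ that Kato's Kolyvagin system is primitive in the sense of Mazur--Rubin; one must also check that $\exp^*$ does not annihilate the relevant derived class, which follows from the fact that its localization at $p$ is governed by the unit root $\alpha$ exactly as for Kato's element itself.

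To obtain primitivity I would invoke the chain of equivalences recalled in the introduction. By Mazur--Rubin's theory of $\Lambda$-adic Kolyvagin systems, the \emph{equality} in the Iwasawa main conjecture \`{a} la Kato is equivalent to the $\Lambda$-primitivity of the $\Lambda$-adic Kolyvagin system attached to Kato's Euler system; thus the hypothesis that the main conjecture holds gives $\Lambda$-primitivity for free. The remaining step is a specialization: I would show that $\Lambda$-primitivity descends to primitivity of the Kolyvagin system at the bottom layer, i.e.\ at the trivial character of $\mathrm{Gal}(\mathbb{Q}_\infty/\mathbb{Q})$. This is the point where the non-degeneracy of the $p$-adic height pairing enters. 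When the cyclotomic $p$-adic $L$-value at the centre vanishes --- equivalently when the $\mathbb{F}_\lambda$-rank of the relevant Selmer group over $\mathbb{Q}$ is positive --- the specialized $\Lambda$-adic class vanishes and primitivity at finite level must be read off from a ``derived'' leading term; a Perrin-Riou / Mazur--Tate--Teitelbaum type formula expresses this leading term as a $p$-adic regulator built from the height pairing, and the non-degeneracy hypothesis guarantees that it is a \emph{unit} multiple of a non-zero quantity, so that the mod $\lambda$ non-vanishing is not lost. Combining the steps: main conjecture $\Rightarrow$ $\Lambda$-primitivity $\Rightarrow$ primitivity of Kato's Kolyvagin system $\Rightarrow$ indivisibility of the derived Kato Euler system $\Rightarrow$ $\widetilde{\delta}_n \neq 0$ for some $n$.

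The main obstacle is precisely this descent from $\Lambda$-primitivity to primitivity at the trivial character in the presence of a possible trivial zero: one needs a genuinely \emph{integral} control and leading-term statement for Kolyvagin systems, asserting that the ``stub'' at the bottom layer equals, up to a $\lambda$-adic unit, the $p$-adic height regulator times the algebraic part of the leading coefficient, rather than merely that it is non-zero. Establishing this requires care with the integral lattices, with the comparison of the canonical periods $\Omega_f^{\pm}$ against the de Rham and Betti normalizations used to define $\exp^*$, and with the Euler factor at $p$; it is here, together with Condition (NA) --- which forces the relevant local cohomology and Euler factor at $p$ to be units --- and Condition (Tam) --- which kills the tame Tamagawa obstructions --- that the running hypotheses are consumed. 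A secondary and more routine task is to check that the explicit unit appearing in the dual exponential identification is indeed a $\lambda$-adic unit under these hypotheses, so that the passage from ``the derived class is non-zero mod $\lambda$'' to ``$\widetilde{\delta}_n \neq 0$ in $\mathbb{F}_\lambda$'' is an equivalence rather than a one-way implication.
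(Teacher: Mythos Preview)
The paper does not give its own proof of this theorem: it is quoted verbatim as Kurihara's result from \cite{kurihara-iwasawa-2012}, and the text immediately passes to the remark that Theorem~\ref{thm:main_theorem} is a partial converse. Kurihara's actual argument does not go through Kato's Euler system; he works directly with his Kolyvagin systems of Gauss sum type, constructed from Mazur--Tate elements, and deduces the existence of $n$ with $\widetilde{\delta}_n\neq 0$ from the structure of the Selmer group (pinned down by the main conjecture) together with the $p$-adic height regulator. Indeed, the paper flags the explicit comparison between Kato's Euler system and Kurihara's Gauss sum system as an open question at the end of \S\ref{subsec:the_main_idea}.

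Your route has a genuine gap at the final step. The chain proved in this paper is
\[
\widetilde{\delta}_n\neq 0 \ \Longleftrightarrow\ D_n\langle\omega^*_{\overline f},\exp^*(\mathrm{loc}_p\,c^+_{\mathbb{Q}(\mu_n)})\rangle_{\mathrm{dR}}\notin\lambda\mathscr{L}\ \Longrightarrow\ D_n c^+_{\mathbb{Q}(\mu_n)}\not\equiv 0\pmod{\lambda}\ \Longleftrightarrow\ \kappa_n\not\equiv 0\pmod{\lambda},
\]
and the middle arrow is strictly one-directional: the composite $\langle\omega^*_{\overline f},\exp^*(\mathrm{loc}_p\,-\,)\rangle_{\mathrm{dR}}$ may well annihilate a global class that is non-zero mod $\lambda$. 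Your assertion that non-annihilation ``follows from the fact that its localization at $p$ is governed by the unit root $\alpha$'' is not an argument; nothing in the paper provides such an injectivity statement for derived classes at general $n$, so primitivity of Kato's Kolyvagin system does not, by the methods here, yield $\widetilde{\delta}_n\neq 0$.

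A second point: you misplace the role of the $p$-adic height pairing. Under the running hypotheses $\KSbar(T_{\overline f}(1)\otimes\Lambda)$ is free of rank one over $\Lambda$ (Theorem~\ref{thm:specialization_surjective}), so $\Lambda$-primitivity already forces $\ks^\infty$ to be a $\Lambda$-generator, and its specialization to $\KSbar(T_{\overline f}(1))$ is then automatically a $\mathbb{Z}_{f,\lambda}$-generator, hence primitive --- no height pairing is required for that descent. In Kurihara's proof the height pairing enters instead in linking the numbers $\widetilde{\delta}_n$ to the fine structure of the Selmer group, which is exactly the bridge your Kato-side approach does not supply.
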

In some sense, our main theorem (Theorem \ref{thm:main_theorem}) can be thought of a partial converse to Theorem \ref{thm:kurihara}. Ashay Burungale, Francesc Castella and the first-named author investigate an anticyclotomic analogue of this aspect in \cite{burungale-castella-kim}.
In the process of the proof of Theorem \ref{thm:main_theorem}.(2), it is observed that the numerical criterion implies the indivisibility of derived Kato's Euler systems (without making any ordinary assumption), i.e. Theorem \ref{thm:main_theorem}.(1).

\subsection{The main idea, the reduction of proof, and the organization} \label{subsec:the_main_idea}
We give a rough sketch of the proof of the main theorem (Theorem \ref{thm:main_theorem}).
The logical flow towards the main conjectures in this article is as follows:
\[
\xymatrix{
\widetilde{\delta}_n \neq 0 \ar@{=>}[r] & { \substack{\textrm{$\ks$ is primitive}\\ \left( \kappa_n \neq 0 \Mod{\lambda} \right) } } \ar@{=>}[r]^-{\textrm{Proposition \ref{prop:Lambda-primitivity}}}_{\textrm{(B\"{u}y\"{u}kboduk)}} & \textrm{$\ks^{\infty}$ is $\Lambda$-primitive} \ar@{=>}[d]_-{\textrm{Theorem \ref{thm:primitivity_kato_main_conjecture}}}^-{\textrm{(Mazur--Rubin)}}\\
& { \substack{ \textrm{Kobayashi's $\pm$-main conjectures}\\\textrm{(Conjecture \ref{conj:pm_main_conjecture})} }  } \ar@{=>}[d]^-{\textrm{\cite{greenberg-iovita-pollack} and }\mu^{\pm}=0} & { \substack{\textrm{Kato's main conjecture}\\\textrm{(Conjecture \ref{conj:kato-main-conjecture})}} } \ar@{<=>}[d]_-{\textrm{ordinary}}^-{\textrm{\cite[$\S$17.13]{kato-euler-systems}}} \ar@{<=>}[l]_-{  {\substack{ \textrm{non-ordinary} \\ (a_p=0) } } }^-{\textrm{\cite[Theorem 7.4]{kobayashi-thesis}}} \\
& { \substack{ \textrm{The $\pm$-main conjectures}\\ \textrm{for all congruent forms} \\ \textrm{of weight two with $a_p=0$} } } & { \substack{ \textrm{The main conjecture \`{a} la Mazur--Greenberg} \\ \textrm{(Conjecture \ref{conj:iwasawa_main_conjecture})} } } \ar@{=>}[d]^-{\textrm{\cite{epw} and }\mu=0}\\
& & { \substack{ \textrm{The main conjecture \`{a} la Mazur--Greenberg}\\ \textrm{for all members of Hida families} } }
}
\]
where $\ks$ is Kato's Kolyvagin system (Theorem \ref{thm:euler_to_kolyvagin}) and $\ks^\infty$ is Kato's $\Lambda$-adic Kolyvagin system (Theorem \ref{thm:euler_to_Lambda-adic}).

In $\S$\ref{sec:setup}, we fix the stage we work on. Also, we explain how the assumptions in Theorem \ref{thm:main_theorem} are used. 

In $\S$\ref{sec:main_conj}, we review various Iwasawa main conjectures for modular forms and their equivalence:
\begin{enumerate}
\item Kato's main conjecture (Conjecture \ref{conj:kato-main-conjecture}),
\item Iwasawa main conjecture \`{a} la Mazur--Greenberg (Conjecture \ref{conj:iwasawa_main_conjecture}), and
\item Kobayashi's $\pm$-main conjecture (Conjecture \ref{conj:pm_main_conjecture}).
\end{enumerate}
Also, we quickly review the current state of the art toward proving the Iwasawa main conjecture for modular forms.

In $\S$\ref{sec:kolyvagin}, 
we recall the necessary material of Kolyvagin systems and explain how Kato's main conjecture can be deduced from the primitivity of Kolyvagin systems. More precisely, Proposition \ref{prop:Lambda-primitivity} shows that
\begin{quote}
$\ks$ is primitive ($\kappa_n \neq 0 \Mod{\lambda}$ for some $n$) $\Rightarrow$ $\ks^{\infty}$ is $\Lambda$-primitive.
\end{quote}

More formally, we give the following reduction of proof of Theorem \ref{thm:main_theorem}.
\begin{proof}[Reduction of Proof of Theorem \ref{thm:main_theorem}]
By Proposition \ref{prop:Lambda-primitivity} and Theorem \ref{thm:primitivity_kato_main_conjecture} (\cite[Theorem 5.3.10.(iii)]{mazur-rubin-book}), it suffices to check
$$\kappa_n \neq 0 \Mod{\lambda} $$
for some square-free product of Kolyvagin primes $n$.
\end{proof}

Thus, most content of this article is devoted to prove
$$\widetilde{\delta}_n \neq 0 \Rightarrow \kappa_n \neq 0 \Mod{\lambda}.$$

In $\S$\ref{sec:the_image_of_dual_exp}, we compute the image of $\mathrm{H}^1(\mathbb{Q}_p(\mu_n), T_{\overline{f}}(1)) \subseteq \mathrm{H}^1(\mathbb{Q}_p(\mu_n), V_{\overline{f}}(1))$ under the composition of the de Rham pairing ($\S$\ref{subsec:kato_explicit_formula}) with the dual basis $\omega^*_{\overline{f}}$, which can be detected by the Eichler--Shimura isomorphism ($\S$\ref{subsec:eichler-shimura}), and the dual exponential map.
Here, ${\displaystyle \mathbb{Q}_p(\mu_n) := \prod_{v \vert p} \mathbb{Q}(\mu_n)_v }$ and we also write ${ \displaystyle \mathbb{Z}_p[\mu_n] := \prod_{v \vert p} \mathbb{Z}[\mu_n]_v }$.
We denote the image by
\begin{align*}
\mathscr{L} & := \langle \omega^*_{\overline{f}} , \mathrm{exp}^{*} \left( \mathrm{H}^1_s(\mathbb{Q}_p(\mu_n), T_{\overline{f}}(1)) \right) \rangle_{\mathrm{dR}} \\
& \subseteq \mathbf{D}_{\mathrm{dR},\mathbb{Q}_p(\mu_n)}(\mathbb{Q}_{f, \lambda}(1)) = \mathbb{Q}_{f, \lambda} \otimes_{\mathbb{Q}_p} \mathbb{Q}_p(\mu_n),
\end{align*}
which is a $\mathbb{Z}_{f, \lambda} \otimes \mathbb{Z}_p[\mu_n]$-lattice (Proposition \ref{prop:the_image}).
The method for the computation is to compute its dual (= the image of the local points of modular abelian varieties under the logarithm map) and the Tate local duality ($\S$\ref{subsec:log_formal_groups} and $\S$\ref{subsec:computing_image}).

In $\S$\ref{sec:explicit_construction}, we explicitly construct  the ``mod $p$" Kolyvagin system from Kato's Euler system and deduce the following relation 
$$\kappa_n \neq 0 \Mod{\lambda} \Leftrightarrow D_n c^+_{\mathbb{Q}(\mu_n)} \neq 0 \Mod{\lambda}$$
where $D_n c^+_{\mathbb{Q}(\mu_n)}$ is the $(+)$-part of the derived Kato's Euler system at $\mathbb{Q}(\mu_n)$ (Proposition \ref{prop:weak_vs_derived}).

In $\S$\ref{sec:zeta_modular}, we compute the image of the localization of derived Kato's Euler systems under the dual exponential map and express it as the Kolyvagin derivative of Mazur--Tate elements as in (\ref{eqn:D_n-equivariant}) below. 
Note that Mazur--Tate elements naturally appear as the image of localized Kato's Euler systems under the dual exponential map.

Let $\mathrm{loc}_p : \mathrm{H}^1(\mathbb{Q}(\mu_n), T_{\overline{f}}(1)) \to 
\mathrm{H}^1(\mathbb{Q}_p(\mu_n), T_{\overline{f}}(1))$ be the localization map to the semi-local cohomology.
Since $\mathrm{loc}_p$ is $\mathrm{Gal}(\mathbb{Q}(\mu_n)/\mathbb{Q})$-equivariant, we have
$$D_n \mathrm{loc}_p  c^+_{\mathbb{Q}(\mu_n)} = \mathrm{loc}_p D_n c^+_{\mathbb{Q}(\mu_n)} \neq 0 \Mod{\lambda} \Rightarrow D_n c^+_{\mathbb{Q}(\mu_n)} \neq 0 \Mod{\lambda} .$$
Since the dual exponential map is also $\mathrm{Gal}(\mathbb{Q}(\mu_n)/\mathbb{Q})$-equivariant in this setting, we have
$$\mathrm{exp}^* \left( D_n \mathrm{loc}_p  c^{+}_{\mathbb{Q}(\mu_n)} \right) = D_n \mathrm{exp}^* \left(  \mathrm{loc}_p  c^{+}_{\mathbb{Q}(\mu_n)} \right) $$
in $\mathscr{L}$.
Also, the de Rham pairing defined in Theorem \ref{thm:kato_formula} is also $\mathrm{Gal}(\mathbb{Q}(\mu_n)/\mathbb{Q})$-equivariant, we have
\begin{equation} \label{eqn:D_n-equivariant}
\langle \omega^*_{\overline{f}} ,  D_n  \mathrm{exp}^* \left(\mathrm{loc}_p  c^+_{\mathbb{Q}(\mu_n)} \right) \rangle_{\mathrm{dR}}
=  D_n  \langle \omega^*_{\overline{f}} , \mathrm{exp}^* \left(\mathrm{loc}_p  c^+_{\mathbb{Q}(\mu_n)} \right) \rangle_{\mathrm{dR}} 
\end{equation}
in $\mathscr{L}$.

In $\S$\ref{subsec:williams}, we prove that the Kolyvagin derivative of the Mazur--Tate element at $\mathbb{Q}(\mu_n)$ and $\widetilde{\delta}_n$, the Kurihara number at $n$, are congruent modulo $\lambda$ (Theorem \ref{thm:computation_KS}). In other words,
$$ D_n \langle \omega^*_{\overline{f}} , \mathrm{exp}^* \left(\mathrm{loc}_p  c^+_{\mathbb{Q}(\mu_n)} \right) \rangle_{\mathrm{dR}} \not\in \lambda\mathscr{L} \Leftrightarrow \widetilde{\delta}_n \neq 0 \Mod{\lambda} .$$

To sum up, we have the following implication in $\S$\ref{subsec:the_proof}
\begin{align*}
\widetilde{\delta}_n \neq 0 \Mod{\lambda}
& \Leftrightarrow  D_n \langle \omega^*_{\overline{f}} ,  \mathrm{exp}^* \left( \mathrm{loc}_p c^+_{\mathbb{Q}(\mu_n)} \right) \rangle_{\mathrm{dR}} \not\in \lambda\mathscr{L}  \\
 & \Rightarrow
D_n c^+_{\mathbb{Q}(\mu_n)}  \neq 0 \Mod{\lambda}  \\
&\Leftrightarrow
\kappa_n \neq 0 \Mod{\lambda} .
\end{align*}
Therefore, Theorem \ref{thm:main_theorem} immediately follows.

In $\S$\ref{sec:examples}, we examine ``Algorithm" \ref{algo:algorithm} to confirm various new examples of the main conjecture for elliptic curves with good reduction and modular forms at good primes. These examples are not covered by any other former work.

As a result, we understand the Kurihara number $\widetilde{\delta}_n$ at $n$  as the mod $\lambda$ localized image of Kolyvagin derivative of the $(+)$-part of Kato's Euler system at $\mathbb{Q}(\mu_n)$ under the dual exponential map.
Since Kurihara obtained $\widetilde{\delta}_n$ from his Euler systems of Gauss sum type, it seems natural to ask the following question.
\begin{ques}
What is the explicit relation between Kato's Euler systems \cite{kato-euler-systems} and the Euler systems of Gauss sum type \`{a} la Kurihara \cite{kurihara-munster}, \cite{kurihara-iwasawa-2012}?
\end{ques}
\section{Setup and remarks on the conditions in Theorem \ref{thm:main_theorem}} \label{sec:setup}
\subsection{Fixed embeddings}
Let $p$ be a prime $> 2$. Fix embeddings $\iota_\infty : \overline{\mathbb{Q}} \hookrightarrow \mathbb{C}$, $\iota_p: \overline{\mathbb{Q}} \hookrightarrow \overline{\mathbb{Q}}_p$, and an abstract field isomorphism $\iota: \mathbb{C} \simeq \overline{\mathbb{Q}}_p$ such that $\iota \circ \iota_\infty =\iota_p$. For a field $F$, let $G_F$ be the absolute Galois group of $F$.
\subsection{Modular forms}
For any ring $R$, let $S_2(\Gamma_1(N), R)$ be the space of cuspforms whose Fourier coefficients lie in $R$.

Let $f = \sum a_n(f) q^n \in S_2(\Gamma_1(N), \overline{\mathbb{Q}})$ be a newform with character $\psi$.
Let $\mathbb{Q}_f$ be the Hecke field of $f$ over $\mathbb{Q}$, which is totally real or CM depending on $\psi$, $\mathbb{Z}_f$ be the ring of integers of $\mathbb{Q}_{f}$.
Let $S(f)$ be a quotient $\mathbb{Q}$-vector space of $S_2(\Gamma_1(N), \overline{\mathbb{Q}})$ corresponding to $f$ following \cite[$\S$6.3]{kato-euler-systems}. Then $S(f)$ is one-dimensional over $\mathbb{Q}_f$. 

Let $\overline{f} = \sum \overline{a_n(f)} q^n \in S_2(\Gamma_1(N), \overline{\mathbb{Q}})$ be the dual modular form of $f$ as in \cite[$\S$6.5]{kato-euler-systems} where $\overline{a_n(f)}$ is the complex conjugate of $a_n(f)$. Then the character of $\overline{f}$ is $\overline{\psi} = \psi^{-1}$.

Let $\lambda$ be the place of $\mathbb{Q}_f$ dividing $p$ and compatible with $\iota_p$.
Let $\mathbb{Q}_{f,\lambda}$ be the completion of $\mathbb{Q}_f$ at $\lambda$.
Let $\mathbb{Z}_{f,\lambda}$ be the ring of integers of $\mathbb{Q}_{f,\lambda}$.
Then we have
$$\mathbb{Q}_f \otimes_\mathbb{Q} \mathbb{Q}_p = \prod_{\lambda' \vert p} \mathbb{Q}_{f, \lambda'}$$
where $\lambda'$ runs over the primes of $\mathbb{Q}_f$ dividing $p$.
Let $\mathbb{F}_\lambda := \mathbb{Z}_{f,\lambda} / \lambda\mathbb{Z}_{f,\lambda}$ be the residue field of $\mathbb{Q}_{f,\lambda}$.

\subsection{Hecke algebras} \label{subsec:hecke_algebra}
Let $\mathbb{T}$ be the full Hecke algebra over $\mathbb{Z}_p$ acting faithfully on $S_2(\Gamma_1(N), \overline{\mathbb{Z}}_p)$ where $\overline{\mathbb{Z}}_p$ is the integral closure of $\mathbb{Z}_p$ in $\overline{\mathbb{Q}}_p$.
Let $\wp_f \subseteq \mathbb{T}$ be the ideal generated by $T_\ell - a_\ell(f)$ for all primes $\ell$ and $\langle a \rangle$ for $a \in (\mathbb{Z}/N\mathbb{Z})^\times$.
Let $\mathfrak{m}  \subseteq \mathbb{T}$ be the maximal ideal generated by $\wp_f$ and $\lambda$, which corresponds to the residual representation of $f$.
Then $\mathbb{T} / \wp_f$ is an order of $\mathbb{Q}_{f,\lambda}$ and  
 $\mathbb{T} / \mathfrak{m} = \mathbb{F}_\lambda$. We denote the localization of $\mathbb{T}$ at $\mathfrak{m}$ by $\mathbb{T}_{\mathfrak{m}}$.
 
\subsection{Modular Galois representations} \label{subsec:modular_galois_repns}
Let $\rho_f : G_\mathbb{Q} \to \mathrm{GL}_2(\mathbb{Q}_{f, \lambda}) \simeq \mathrm{GL}(V_f)$ be the $\lambda$-adic Galois representation associated to $f$ arising from the \'{e}tale cohomology of a modular curve. Then $\rho_f$ satisfies the following properties \cite[$\S$14.10]{kato-euler-systems}:
\begin{enumerate}
\item $\mathrm{det} ( \rho_f ) = \chi^{-1}_{\mathrm{cyc}} \cdot \psi^{-1}$
where $\chi_{\mathrm{cyc}}$ is the cyclotomic character ($\S$\ref{subsec:cyclo_extns_iwasawa_alg});
\item for any prime $\ell$ not dividing $Np$, we have
$$\mathrm{det} \left( 1- \rho_f \left( \mathrm{Fr}^{-1}_\ell \right) \cdot u  : \left( V_f \right)^{I_\ell} \right) = 1 - a_{\ell}(f) u +  \psi  (\ell) \cdot \ell \cdot u^2$$
where $\mathrm{Fr}_\ell$ is the arithmetic Frobenius at $\ell$ and $I_\ell$ is the inertia subgroup of $G_{\mathbb{Q}_\ell}$;
\item for the prime number $p$ lying under $\lambda$,  we have
$$\mathrm{det} \left( 1- \varphi \cdot u  : \mathbf{D}_{\mathrm{cris}}  ( V_f ) \right) = 1 - a_{p}(f) u +  \psi  (p) \cdot p \cdot u^2$$
where $\varphi$ is the Frobenius operator acting on $\mathbf{D}_{\mathrm{cris}}  ( V_f )$, Fontaine's crystalline Dieudonn\'{e} module associated to the restriction of $V_f$ to $G_{\mathbb{Q}_p}$.
\end{enumerate}
For any Galois module $M$ over $\mathbb{Z}_p$, let $M(k) := M \otimes_{\mathbb{Z}_p} \mathbb{Z}_p(k)$ be the $k$-th Tate twist of $M$ for $k \in \mathbb{Z}$.
Let $\Sigma = \Sigma(N)$ be the finite set of places of $\mathbb{Q}$ consisting of $p$, $\infty$, the places dividing $N$.
Let $\mathbb{Q}_\Sigma$ be the maximal extension of $\mathbb{Q}$ unramified outside $\Sigma$.
Then $\rho_f$ factors through $\mathrm{Gal}(\mathbb{Q}_\Sigma/\mathbb{Q})$.

Let $\overline{\rho} : G_\mathbb{Q} \to \mathrm{GL}_2(\mathbb{F}_\lambda)$ be the residual Galois representation of $V_f$.
Due to Condition (Im) in Theorem \ref{thm:main_theorem}, all the content of this article is independent of the choice of a Galois-stable $\mathbb{Z}_{f,\lambda}$-lattice $T_f$ of $V_f$. Let $A_f := V_f/T_f$.

Let $J_1(N)_f = J_1(N)_{f, \mathbb{Q}}$ be the modular abelian variety over $\mathbb{Q}$ attached to $f$ as the quotient of $J_1(N)$ by the ideal $\wp_f$ in the Hecke algebra $\mathbb{T}_{\mathbb{Z}}$ over $\mathbb{Z}$.
Then it is an abelian variety over $\mathbb{Q}$ with endomorphism ring $\mathrm{End} (J_1(N)_f) = \mathbb{T}_{\mathbb{Z}}/\wp_f$.
Note that all Galois conjugates of $f$ define abelian varieties which are isomorphic each other.
Let $\mathfrak{J}_1(N)_f$ be the N\'{e}ron model of $J_1(N)_f$ over $\mathbb{Z}$ and $\widehat{\mathfrak{J}_1(N)}_f$ be the formal group of $\mathfrak{J}_1(N)_f$. This formal group appears in $\S$\ref{subsec:log_formal_groups}.

Let $V_{\lambda}(J_1(N)_f)$ be the Galois representation arising from the $\lambda$-adic Tate module of $J_1(N)_f$.
For a vector space $V$ over a field $F$, let $V^*$ be the $F$-dual of $V$. 
Following \cite{conrad-shimura}, we have
$V_{f} \simeq V_{\lambda}(J_1(N)_{f})^*$ and
$V_f (1) \simeq V_{\lambda}(J_1(N)_{\overline{f}})$.
More precisely, we have $a_\ell (f) = \mathrm{tr}(\rho_f(\mathrm{Fr}^{-1}_\ell)) = \mathrm{tr}(\rho_f(1)(\mathrm{Fr}_\ell))$.
Due to the duality of modular Galois representations \cite[(14.10.1)]{kato-euler-systems}
$$ V_f(1)^*(1) \simeq V_{\overline{f}}(1) ,$$
we also consider the dual representation
 $$\rho_{\overline{f}}(1) : \mathrm{Gal}(\overline{\mathbb{Q}}/\mathbb{Q}) \to \mathrm{Aut}_{\mathbb{Q}_{f,\lambda}}(V_{\overline{f}}(1)) \simeq \mathrm{GL}_2(\mathbb{Q}_{f,\lambda})$$
and denote the corresponding $\mathbb{Z}_{f,\lambda}$-lattice by $T_{\overline{f}}(1)$.

Let $R$ be any $p$-adic ring including $\mathbb{Q}_{f,\lambda}$, $\mathbb{Z}_{f,\lambda}$, and $\mathbb{Z}_{f,\lambda} / \lambda^i$. Then, for any $R$-module $M$, we set $M^* := \mathrm{Hom}_R(M, R)$.
Also, the $R$-torsion part of $M$ is denoted by $M_{\mathrm{tors}}$.

\subsection{Cyclotomic extensions and Iwasawa algebras} \label{subsec:cyclo_extns_iwasawa_alg}
Let $\mathbb{Q}(\mu_{p^\infty})$ be the full cyclotomic extension of $\mathbb{Q}$ with Galois group $G_{\infty} := \mathrm{Gal}( \mathbb{Q}(\mu_{p^\infty})/\mathbb{Q})$, and
let
 \[
\xymatrix{
 \chi_{\mathrm{cyc}} : G_{\infty} \ar[r]^-{\simeq} & \mathbb{Z}^\times_p
}
\]
be the cyclotomic character. For $c \in \mathbb{Z}^\times_p$, let $\sigma_{c} \in G_\infty$ be the unique element such that 
$\chi_{\mathrm{cyc}} (\sigma_{c}) = c$.

Let $\mathbb{Q}_\infty \subset \mathbb{Q}(\mu_{p^\infty})$ be the cyclotomic $\mathbb{Z}_p (\simeq 1+ p\mathbb{Z}_p)$-extension of $\mathbb{Q}$ and $\Gamma_\infty := \mathrm{Gal}( \mathbb{Q}_\infty/\mathbb{Q}) \simeq \mathbb{Z}_p$.
Then we have
$$G_\infty \simeq \Gamma_\infty \times \Delta$$
where $\Delta \simeq (\mathbb{Z}/p\mathbb{Z})^\times \simeq \mathbb{Z}/(p-1)$.
Let $\mathbb{Q}_r \subseteq \mathbb{Q}(\mu_{p^{r+1}})$ be the cyclic extension of $\mathbb{Q}$ in $\mathbb{Q}(\mu_{p^{r+1}})$ of degree $p^r$.
Also, for $a \in (\mathbb{Z}/n\mathbb{Z})^\times$, write $\sigma_{a^{-1}} \in \mathrm{Gal}(\mathbb{Q}(\mu_n)/\mathbb{Q})$ as the image of $a$ under the global Artin map, which behaves like the inverse of the $p$-adic cyclotomic character. Then $\sigma_\ell$ is the arithmetic Frobenius at $\ell$ in $\mathrm{Gal}(\mathbb{Q}(\mu_n)/\mathbb{Q})$ with $\ell \nmid n$.

Let $\widetilde{\Lambda} = \mathbb{Z}_{f,\lambda} \llbracket G_\infty \rrbracket$ be the extended cyclotomic Iwasawa algebra and $\Lambda := \mathbb{Z}_{f,\lambda} \llbracket \mathrm{Gal}( \mathbb{Q}_\infty/\mathbb{Q})\rrbracket$ be the cyclotomic Iwasawa algebra over $\mathbb{Z}_{f,\lambda}$.

For convenience, if $(n,p)=1$, we always choose $p$ as a uniformizer for $\mathbb{Q}(\mu_n)_v \subseteq \mathbb{Q}_p(\mu_n)$ for any $v \vert p$ since $\mathbb{Q}(\mu_n)_v /\mathbb{Q}_p$ is unramified.
\subsection{Remarks on the conditions in Theorem \ref{thm:main_theorem}} \label{subsec:rem}
We briefly review how the conditions in Theorem \ref{thm:main_theorem} are used in this article. 
\begin{rem}[NA]
The non-anomalous assumption $a_p(f) \not\equiv 1 \Mod{\lambda}$ removes the exceptional zero case, which would harm Theorem \ref{thm:specialization_surjective}, so it would also violate Proposition \ref{prop:Lambda-primitivity}.
The assumption $a_p(f) \not\equiv \psi(p) \Mod{\lambda}$ is crucially used in $\S$\ref{subsec:the_proof}.
If $\psi(p) \equiv 1 \Mod{\lambda}$, then two conditions obviously coincide.
Note that
\begin{itemize}
\item $a_p(f) \not\equiv 1 \Mod{\lambda} \Leftrightarrow$ the Euler factor of $L(f,s)$ at $p$ at $s = 0$ is not congruent to 0 mod $\lambda$;
\item $a_p(f) \not\equiv \psi(p) \Mod{\lambda} \Leftrightarrow$ $p \cdot$(the Euler factor of $L(f,s)$ at $p$ at $s = 1$) is not congruent to 0 mod $\lambda$.
\end{itemize}
Note that both conditions $a_p(f) \not\equiv 1 \Mod{\lambda}$ and $a_p(f) \not\equiv \psi(p) \Mod{\lambda}$ are observed in the context of the Bloch--Kato conjecture \cite[(5.15.1)]{bloch-kato}.
\end{rem}
\begin{rem}[Im]
The residual image assumption ensures the integrality of $p$-adic $L$-functions. See \cite[Theorem 12.5.(4) (Page 222) and Theorem 17.4.(3) (Page 273)]{kato-euler-systems} for detail. Following the argument of \cite[$\S$2.5]{skinner-pacific}, the assumption could be slightly relaxed as the irreducibility of $\overline{\rho}$ and Assumption \ref{assu:abstract}.(H2). Also, if the tame conductor of the residual representation has a semi-stable prime, Assumption \ref{assu:abstract}.(H2) is always satisfied.
\end{rem}

\begin{rem}[Tam] \label{rem:tamagawa}
It removes all the Tamagawa defect (Lemma \ref{lem:conditions}.(2)).
If it is violated, then the Tamagawa defect must happen in the context of the ``quantitative level lowering" See \cite[Conjecture 1.4 and $\S$6.6]{pw-mu}, \cite{kim-ota} for detail.
In this situation, the corresponding Euler system cannot produce a primitive Kolyvagin system as in \cite[Proposition 6.2.6]{mazur-rubin-book}, \cite{kazim-tamagawa}.
\end{rem}

\begin{rem}[$N$-imp] \label{rem:N-imp}
It removes the discrepancy coming from the difference of the valuations between $N$-primitive and $N$-imprimitive $L$-values.
It is observed in \cite[Theorem 6.2.4]{mazur-rubin-book} and \cite[Proposition 4.3.(E3)]{kazim-Lambda-adic}.
Note that ``$N$-imprimitive Kato's Euler systems" are used in this setting. One can compare \cite[Theorem 6.6 and Theorem 12.5]{kato-euler-systems} for this issue.
\end{rem}

\section{Iwasawa main conjectures for modular forms} \label{sec:main_conj}
\subsection{Selmer groups} \label{subsec:selmer_groups}
Let $F$ be an algebraic extension of $\mathbb{Q}$. Then the \textbf{Selmer group of $A_f(1)$ over $F$} is defined by
$$\mathrm{Sel}(F, A_f(1)) := \mathrm{ker} \left(  \mathrm{H}^1(F, A_f(1)) \to \prod_v \frac{ \mathrm{H}^1(F_v, A_f(1)) }{ \mathrm{H}^1_f(F_v, A_f(1)) } \right) $$
where $v$ runs over all places of $F$ and  $\mathrm{H}^1_f(F_v, A_f(1))$ is the image of the local Kummer map at $v$ defined by
$$\mathrm{Kum}_v: J_1(N)_{\overline{f}, \lambda}(F_v)  \otimes_{ \mathbb{Z}_{f,\lambda} } \mathbb{Q}_{f,\lambda} / \mathbb{Z}_{f,\lambda} \to \mathrm{H}^1(F_v, A_f(1)) .$$
This classical definition is equivalent to the Bloch--Kato Selmer group (\hspace{1sp}\cite[$\S$14.1]{kato-euler-systems}) using the crystalline period ring \`{a} la Fontaine. See also \cite[Page 70]{greenberg-lnm}. 

\subsubsection{Ordinary forms}
Suppose that $a_p(f)$ is a $\lambda$-adic unit satisfying Condition (NA) in Theorem \ref{thm:main_theorem}. Then our definition of Selmer groups also coincides with that of Greenberg ordinary Selmer groups for Hida deformation \cite[$\S$4.1]{epw}. See \cite[Page 572]{epw}. For the higher weight generalization to apply \cite{epw}, we replace Selmer groups by Greenberg ordinary Selmer groups. Although \cite{epw} includes the exceptional zero case by using Greenberg ordinary Selmer groups, we do not allow the exceptional zero case to examine the $\Lambda$-primitivity of Kato's Kolyvagin system.

\subsubsection{Non-ordinary forms}
Suppose that $a_p(f)=0$ and $\psi = \mathbf{1}$.
Following \cite{kobayashi-thesis}, we define the $\mathbb{Z}_{f, \lambda}$-submodules of $J_1(N)_{\overline{f}, \lambda}(\mathbb{Q}_{n,p})$ by
\begin{align*}
J_1(N)^{+}_{\overline{f}, \lambda}(\mathbb{Q}_{n,p}) & := \lbrace P \in J_1(N)_{\overline{f}, \lambda}(\mathbb{Q}_{n,p}) : \mathrm{Tr}_{n/m+1} (P) \in J_1(N)_{\overline{f}, \lambda}(\mathbb{Q}_{m,p}) \textrm{ for even } m \ (0 \leq m < n)\rbrace \\
J_1(N)^{-}_{\overline{f}, \lambda}(\mathbb{Q}_{n,p}) & := \lbrace P \in J_1(N)_{\overline{f}, \lambda}(\mathbb{Q}_{n,p}) : \mathrm{Tr}_{n/m+1} (P) \in J_1(N)_{\overline{f}, \lambda}(\mathbb{Q}_{m,p}) \textrm{ for odd } m \ (0 \leq m < n)\rbrace
\end{align*}
where $\mathrm{Tr}_{n/m+1} : J_1(N)_{\overline{f}, \lambda}(\mathbb{Q}_{n,p}) \to J_1(N)_{\overline{f}, \lambda}(\mathbb{Q}_{m+1,p})$ is the trace map.
Then the \textbf{$\pm$-Selmer groups of $f$ over $\mathbb{Q}_n$} is defined by
$$\mathrm{Sel}^{\pm}(\mathbb{Q}_n, A_f(1)) 
: = \mathrm{ker} \left( 
\mathrm{Sel}(\mathbb{Q}_n, A_f(1)) \to 
 \dfrac{\mathrm{H}^1(\mathbb{Q}_{n,p}, A_f(1))}{J_1(N)^{\pm}_{\overline{f}, \lambda}(\mathbb{Q}_{n,p}) \otimes \mathbb{Q}_{f,\lambda}/\mathbb{Z}_{f, \lambda}} 
\right) $$
and the \textbf{$\pm$-Selmer groups of $f$ over $\mathbb{Q}_\infty$} by
$$\mathrm{Sel}^{\pm}(\mathbb{Q}_\infty, A_f(1)) : = \varinjlim_n \mathrm{Sel}^{\pm}(\mathbb{Q}_n, A_f(1)) ,$$
respectively.

\subsection{Mazur--Tate elements and $p$-adic $L$-functions} \label{subsec:p-adic-L-functions}
We quickly review the Mazur--Tate elements and $p$-adic $L$-functions of modular forms of weight two.
\subsubsection{Mazur--Tate elements}
Let $\mathbb{Q}(\mu_{n})^+$ be the maximal totally real subfield of $\mathbb{Q}(\mu_{n})$.
We define \textbf{Mazur--Tate element of $f$ at $\mathbb{Q}(\mu_{n})^+$} by
$$\theta^+(\mathbb{Q}(\mu_{n}),f) := \sum_{a \in (\mathbb{Z}/n\mathbb{Z})^{\times}/ \lbrace \pm 1\rbrace }\left[ \frac{a}{n} \right]^+_f \cdot \sigma_{a} \in \mathbb{Z}_{f, \lambda}[\mathrm{Gal}(\mathbb{Q}(\mu_{n})^+/\mathbb{Q})],$$
where 
$$\left[ \frac{a}{n} \right]^{\pm}_f := \frac{ 1 }{2 {\Omega^{\pm}_f} } \cdot  \left( \int^{a/n}_{i\infty} f(z)dz \pm \int^{-a/n}_{i\infty} f(z)dz  \right) \in \mathbb{Z}_{f,\lambda}$$
and
$$\left[ \frac{a}{n} \right]_f  := \left[ \frac{a}{n} \right]^+_f  + \left[ \frac{a}{n} \right]^-_f $$
where $\Omega^{\pm}_f$ is the $(\pm)$-part of an integral canonical period of $f$. See $\S$\ref{subsec:mod_p_multi_one} for the definition of the periods.

\subsubsection{$p$-adic $L$-functions}
Suppose that $a_p(f)$ is a $\lambda$-adic unit satisfying Condition (NA) in Theorem \ref{thm:main_theorem}.
Let $\beta$ be the non-unit root of the Hecke polynomial $X^2 - a_p(f)X - \psi(p)p$ of $f$ at $p$.
Then the $p$-stabilization $f_\alpha$ of $f$ is defined by
$f_\alpha(z) := f(z) - \beta \cdot f(pz) $.
Let
\begin{align*}
\pi^{r}_{r-1} : \mathbb{Z}_{f,\lambda}[\mathrm{Gal}(\mathbb{Q}(\mu_{p^{r}})^+/\mathbb{Q})] & \to \mathbb{Z}_{f,\lambda}[\mathrm{Gal}(\mathbb{Q}(\mu_{p^{r-1}})^+/\mathbb{Q})], \\
\nu^{r}_{r-1} : \mathbb{Z}_{f,\lambda}[\mathrm{Gal}(\mathbb{Q}(\mu_{p^{r-1}})^+/\mathbb{Q})] & \to \mathbb{Z}_{f,\lambda}[\mathrm{Gal}(\mathbb{Q}(\mu_{p^{r}})^+/\mathbb{Q})]
\end{align*}
be the natural projection and the norm map defined by
${\displaystyle \sigma \mapsto \sum_{\pi^{r}_{r-1}:\tau \mapsto \sigma} \tau }$, respectively.
Then we define
$$\vartheta^+(\mathbb{Q}(\mu_{p^r}),f_\alpha) := \frac{1}{\alpha^r} \cdot \left(  \theta^+(\mathbb{Q}(\mu_{p^r}),f)  - \frac{1}{\alpha}  \cdot \nu^{r}_{r-1} \left( \theta^+(\mathbb{Q}(\mu_{p^{r-1}}),f) \right)   \right)$$
and $\vartheta(\mathbb{Q}_{r},f_\alpha)$ to be the natural image of $\vartheta^+(\mathbb{Q}(\mu_{p^{r+1}}),f_\alpha)$ in $\mathbb{Z}_{f,\lambda}[\mathrm{Gal}(\mathbb{Q}_{p^{r}}/\mathbb{Q})]$.
Then the sequence $\left( \vartheta^+(\mathbb{Q}(\mu_{p^r}), f_\alpha) \right)_r$ forms a projective system and the limit defines the $p$-adic $L$-functions of $f$ for $\mathbb{Q}(\mu_{p^{\infty}})^+/\mathbb{Q}$
$$L_p(\mathbb{Q}(\mu_{p^\infty})^+, f_\alpha) := \varprojlim_r \vartheta^+(\mathbb{Q}(\mu_{p^r}), f_\alpha) \in \widetilde{\Lambda}^+ $$
where $\widetilde{\Lambda}^+ := \mathbb{Z}_{f,\lambda} \llbracket\mathrm{Gal}(\mathbb{Q}(\mu_{p^{\infty}})^+/\mathbb{Q})\rrbracket $.

The \textbf{$p$-adic $L$-function of $f$} for the cyclotomic $\mathbb{Z}_p$-extension of $\mathbb{Q}$ is defined by
the image of $L_p(\mathbb{Q}(\mu_{p^\infty})^+, f_\alpha)$ in $\Lambda$ under the natural projection $\widetilde{\Lambda}^+ \to \Lambda$. We denote it by $L_p(\mathbb{Q}_\infty, f_\alpha)$.
\begin{rem}
For the construction of the $p$-adic $L$-functions of modular forms of higher weight, see \cite[$\S$3.2]{epw}. 
Each $p$-adic $L$-function can be understood as an integrally coherent weight specialization of ``two variable" $p$-adic $L$-functions as explained in \cite[$\S$3.3 and $\S$3.4]{epw}.
\end{rem}
\subsubsection{$\pm$-$p$-adic $L$-functions}
Suppose that $a_p(f) = 0$.
Rather than recalling the construction of $\pm$-$p$-adic $L$-functions in \cite{pollack-thesis}, we recall the characterization of $L^{\pm}_p(\mathbb{Q}_\infty,f) \in \Lambda$ by their interpolation property.
Let $\Phi_m$ be the $p^m$-th cyclotomic polynomial and
\[
\xymatrix@R=0em{
{\displaystyle \widetilde{\omega}^{+}_n = \widetilde{\omega}^{+}_n(X)  := \prod_{2 \leq m \leq n, m: \textrm{ even}}\Phi_m(1+X) } , &
{\displaystyle \widetilde{\omega}^{-}_n = \widetilde{\omega}^{-}_n(X)  := \prod_{1 \leq m \leq n, m: \textrm{ odd}}\Phi_m(1+X) } .
}
\]
Then we have the following interpolation property \cite[(3.4)--(3.7), Page 7]{kobayashi-thesis}, \cite[(10), (11), and (12)]{pollack-rubin}:
\begin{align*}
\chi \left( L^+_p(\mathbb{Q}_\infty,f) \right) & = (-1)^{(n+1)/2} \cdot \dfrac{\tau(\chi)}{\chi(\widetilde{\omega}^+_n)} \cdot \dfrac{L(f, \chi^{-1}, 1)}{\Omega^+_f} & \textrm{ if $\chi$ has order $p^n$ with $n$ odd } \\
\chi \left( L^-_p(\mathbb{Q}_\infty,f) \right) & = (-1)^{(n/2) + 1} \cdot \dfrac{\tau(\chi)}{\chi(\widetilde{\omega}^-_n)} \cdot \dfrac{L(f, \chi^{-1}, 1)}{\Omega^+_f} & \textrm{ if $\chi$ has order $p^n>1$ with $n$ even } \\
\mathbf{1} \left( L^+_p(\mathbb{Q}_\infty,f) \right) & = (p-1) \cdot \dfrac{L(f, 1)}{\Omega^+_f} \\
\mathbf{1} \left( L^-_p(\mathbb{Q}_\infty,f) \right) & = 2 \cdot \dfrac{L(f, 1)}{\Omega^+_f}
\end{align*}
where $\chi$ is a character on $\mathrm{Gal}(\mathbb{Q}_{\infty}/\mathbb{Q})$ of $p$-power order, $\mathbf{1}$ is the trivial character, and $\tau(\chi)$ is the Gauss sum of $\chi$.

\subsection{The Iwasawa main conjecture for modular forms \`{a} la Kato}
We recall Kato's reformulation of the Iwasawa main conjecture for $T_{\overline{f}}(1)$ over $\mathbb{Q}_{\infty}$.
See \cite[Chapter I.$\S$3 (especially Conjecture 3.2.2)]{kato-lecture-1} and \cite[Chapter 4 (especially $\S$4.3.4 and $\S$4.4.5.Examples.(ii))]{perrin-riou-book} for the background of this formulation.
Indeed, Kato's original formulation is given for $T_{\overline{f}}$  in \cite{kato-euler-systems} and two formulations are equivalent up to the twist by Teichm{\"{u}}ller character. See \cite[$\S$6.5]{rubin-book}.
We write $\mathrm{H}^i(F/K, M) = \mathrm{H}^i(\mathrm{Gal}(F/K), M)$.
Consider
\begin{align*}
\mathrm{H}^1(\mathbb{Q}_\infty, T_{\overline{f}}(1)) & \simeq \mathrm{H}^1(\mathbb{Q}_\Sigma/\mathbb{Q}_\infty, T_{\overline{f}}(1))  & \textrm{\cite[Lemma 5.3.1.(iii)]{mazur-rubin-book}} \\
& \simeq \varprojlim_n \mathrm{H}^1(\mathbb{Q}_\Sigma/\mathbb{Q}_n, T_{\overline{f}}(1)) & \textrm{\cite[Lemma 5.3.1.(i)]{mazur-rubin-book}}
\end{align*}
 and $\kappa^{\infty}_1 := \varprojlim_n c^+_{\mathbb{Q}_n} \in \mathrm{H}^1(\mathbb{Q}_\infty, T_{\overline{f}}(1))$ be the $\Lambda$-adic Kato's Kolyvagin system at $\mathbb{Q}_{\infty}$ constructed from Kato's Euler system (Theorem \ref{thm:euler_to_Lambda-adic}).
Let $j_n : \mathrm{Spec}(\mathbb{Q}_n) \to \mathrm{Spec}(\mathcal{O}_{\mathbb{Q}_n}[1/p])$ be the natural map and
we define the \textbf{$i$-th Iwasawa cohomology} by
$$\mathbb{H}^i(T_{\overline{f}}(1)) := \varprojlim_{n} \mathrm{H}^i_{\mathrm{\acute{e}t}}( \mathrm{Spec}(\mathcal{O}_{\mathbb{Q}_n}[1/p]), j_{n,*}T_{\overline{f}}(1)) $$
where $\mathrm{H}^i_{\mathrm{\acute{e}t}}( \mathrm{Spec}(\mathcal{O}_{\mathbb{Q}_n}[1/p]),j_{n,*}T_{\overline{f}}(1))$ is the \'{e}tale cohomology group. Then $\mathbb{H}^1(T_{\overline{f}}(1)) \simeq \mathrm{H}^1(\mathbb{Q}_\infty, T_{\overline{f}}(1))$ by \cite[Proposition 7.1.(i)]{kobayashi-thesis}.
\begin{thm}[{\hspace{1sp}\cite[Theorem 12.4.(1) and (3)]{kato-euler-systems}}]
$ $
\begin{enumerate}
\item $\mathbb{H}^2(T_{\overline{f}}(1))$ is a finitely generated torsion module over $\Lambda$.
\item $\mathbb{H}^1(T_{\overline{f}}(1))$ is free of rank one over $\Lambda$.
\end{enumerate}
\end{thm}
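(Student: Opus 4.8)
The statement is Kato's \cite[Theorem 12.4]{kato-euler-systems}, and my plan is to reconstruct his argument, organizing it around a descent computation with one piece of genuinely arithmetic input. Write $T := T_{\overline{f}}(1)$ and let $\gamma$ be a topological generator of $\Gamma_\infty$, so that $\Lambda \cong \mathbb{Z}_{f,\lambda}\llbracket \gamma - 1 \rrbracket$ is a two-dimensional regular local ring. I would begin with the vanishing $\mathbb{H}^0(T) = 0$ and $\mathbb{H}^i(T) = 0$ for $i \geq 3$. The first holds because Condition (Im) forces $A_f(1)^{G_{\mathbb{Q}_\infty}} = 0$: a $G_{\mathbb{Q}_\infty}$-stable line would be stable under all of $G_\mathbb{Q}$, as $\mathbb{Q}_\infty/\mathbb{Q}$ is pro-$p$ abelian while a conjugate of $\mathrm{SL}_2(\mathbb{F}_p)$ has no such quotient, contradicting irreducibility of $\overline{\rho}$. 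The second holds because $p > 2$, so $\mathrm{Gal}(\mathbb{Q}_\Sigma/\mathbb{Q}_n)$ has $p$-cohomological dimension $2$ for every $n$. Finite generation of $\mathbb{H}^1(T)$ and $\mathbb{H}^2(T)$ over $\Lambda$ then follows from Nakayama's lemma together with the finiteness of the groups $\mathrm{H}^i(\mathbb{Q}_\Sigma/\mathbb{Q}_n, T/\lambda)$ (equivalently, from perfectness of the Iwasawa-cohomology complex $R\Gamma_{\mathrm{Iw}}$).

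For the $\Lambda$-ranks, $R\Gamma_{\mathrm{Iw}}$ is perfect and, by the vanishing just noted, concentrated in degrees $1$ and $2$; its Euler characteristic $\mathrm{rank}_\Lambda \mathbb{H}^1(T) - \mathrm{rank}_\Lambda \mathbb{H}^2(T)$ is preserved under the standard base change $R\Gamma_{\mathrm{Iw}} \otimes^{\mathbb{L}}_\Lambda \Lambda/(\gamma - 1) \simeq R\Gamma(\mathbb{Q}, T)$, hence equals $\mathrm{rank}_{\mathbb{Z}_{f,\lambda}} \mathrm{H}^1(\mathbb{Q}, T) - \mathrm{rank}_{\mathbb{Z}_{f,\lambda}} \mathrm{H}^2(\mathbb{Q}, T)$ (using $\mathrm{H}^0(\mathbb{Q}, T) = 0$). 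By Tate's global Euler characteristic formula over $\mathbb{Q}$ this last difference is $d_-$, the $\mathbb{Z}_{f,\lambda}$-rank of the $(-1)$-eigenspace of complex conjugation on $T$. Since $\det\rho_f = \chi_{\mathrm{cyc}}^{-1}\psi^{-1}$ yields $\det\rho_{\overline{f}}(1) = \chi_{\mathrm{cyc}}\psi$, and $\psi(-1) = 1$ in weight two, complex conjugation acts on $T$ with determinant $-1$, hence with eigenvalues $+1$ and $-1$; thus $d_- = 1$ and $\mathrm{rank}_\Lambda \mathbb{H}^1(T) - \mathrm{rank}_\Lambda \mathbb{H}^2(T) = 1$. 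It therefore remains to show that $\mathbb{H}^2(T)$ is $\Lambda$-torsion, which yields both assertion (1) and $\mathrm{rank}_\Lambda \mathbb{H}^1(T) = 1$.

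The torsion-ness of $\mathbb{H}^2(T)$ is the main obstacle, and it is precisely here that Kato's zeta elements are indispensable (this input cannot be bypassed). By Poitou--Tate duality over $\mathbb{Q}_\infty$, $\mathbb{H}^2(T)$ is an extension of purely local terms at the places of $\Sigma$ — each $\Lambda$-torsion, since in the limit local duality leaves only a module finitely generated over $\mathbb{Z}_{f,\lambda}$ rather than over $\Lambda$ — by the Pontryagin dual of (a subgroup of) $\mathrm{Sel}(\mathbb{Q}_\infty, A_f(1))$, the Cartier dual of $T$ being $A_f(1)$. So it suffices that $\mathrm{Sel}(\mathbb{Q}_\infty, A_f(1))$ be $\Lambda$-cotorsion. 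I would deduce this from Kato's Euler system: the localization at $\mathbb{Q}_\infty$ of the zeta element lies in $\mathbb{H}^1(T)$, and by the explicit reciprocity law its image under the dual exponential (Coleman) map equals, up to the canonical period, a $p$-adic $L$-function of $f$ whose values interpolate the $L(f, \chi, 1)$; the latter is non-zero for some finite-order $\chi$ of $p$-power conductor (Rohrlich's non-vanishing theorem). Hence the zeta element is not $\Lambda$-torsion, and the Euler system divisibility of \cite[\S 12--13]{kato-euler-systems} (equivalently the Kolyvagin-system machinery of \cite{mazur-rubin-book}) forces $\mathrm{Sel}(\mathbb{Q}_\infty, A_f(1))$, and thus $\mathbb{H}^2(T)$, to be $\Lambda$-(co)torsion.

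Finally, to upgrade ``$\Lambda$-torsion-free of rank one'' to ``free of rank one'': since $\mathbb{H}^0(T) = 0$, the module $\mathbb{H}^1(T)$ has no nonzero finite $\Lambda$-submodule, so $\gamma - 1$ acts on it as a nonzerodivisor. The descent sequence — obtained from $0 \to T \otimes_{\mathbb{Z}_{f,\lambda}} \Lambda^{\#} \xrightarrow{\gamma - 1} T \otimes_{\mathbb{Z}_{f,\lambda}} \Lambda^{\#} \to T \to 0$ (with $\Lambda^{\#}$ the tautological $\Lambda$-valued character module) and Shapiro's lemma — embeds $\mathbb{H}^1(T)/(\gamma - 1)\mathbb{H}^1(T)$ into $\mathrm{H}^1(\mathbb{Q}, T)$, whose target is $\mathbb{Z}_{f,\lambda}$-free because $\mathrm{H}^0(\mathbb{Q}, A_f(1)) = 0$. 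A finitely generated $\Lambda$-module on which $\gamma - 1$ is a nonzerodivisor and whose quotient by $\gamma - 1$ is $\mathbb{Z}_{f,\lambda}$-free is $\Lambda$-free: lift a basis to a surjection $\Lambda^r \twoheadrightarrow \mathbb{H}^1(T)$ by Nakayama, and observe that its kernel $K$ satisfies $K/(\gamma-1)K = 0$ since $\mathrm{Tor}_1^\Lambda(\mathbb{H}^1(T), \Lambda/(\gamma-1)) = \mathbb{H}^1(T)[\gamma-1] = 0$, whence $K = 0$. Combined with the rank computation, $\mathbb{H}^1(T) \cong \Lambda$. I expect essentially all of the arithmetic content to reside in the single step that $\mathbb{H}^2(T)$ is $\Lambda$-torsion; everything else is formal.
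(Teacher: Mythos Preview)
The paper does not supply a proof of this statement; it is quoted verbatim as \cite[Theorem 12.4.(1) and (3)]{kato-euler-systems} and immediately used. So there is nothing to compare your approach against on the paper's side.

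Your sketch is a faithful reconstruction of Kato's own argument and is correct in outline. A couple of minor tightenings: when you say ``since $\mathbb{H}^0(T)=0$, the module $\mathbb{H}^1(T)$ has no nonzero finite $\Lambda$-submodule,'' what you actually need (and what your earlier paragraph already proves) is the stronger fact $\mathrm{H}^0(\mathbb{Q}_n, T/\lambda^m T)=0$ for all $n,m$, which under Condition (Im) follows because $\overline{\rho}$ is irreducible and $\mathbb{Q}_\infty/\mathbb{Q}$ is pro-$p$; this is what kills the $\lambda$-torsion of $\mathbb{H}^1(T)$ via the long exact sequence for $0\to T\xrightarrow{\lambda^m}T\to T/\lambda^m T\to 0$. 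Also, in the Poitou--Tate step you might make explicit that the local contributions at $v\mid N$ over $\mathbb{Q}_\infty$ are $\Lambda$-torsion because each $\mathrm{H}^0(\mathbb{Q}_{\infty,w}, A_f(1))$ is cofinitely generated over $\mathbb{Z}_{f,\lambda}$ (finitely many primes above $v$ in $\mathbb{Q}_\infty$, each with open decomposition group). Otherwise the argument is Kato's: perfectness and Euler characteristic give the rank difference $1$, Rohrlich plus the explicit reciprocity law and the Euler-system machinery force $\mathbb{H}^2$ to be torsion, and the descent/Nakayama step upgrades rank one to freeness using $\mathrm{H}^0(\mathbb{Q},A_f(1))=0$.
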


Let $\mathbf{z}_{\mathrm{Kato}} \in \mathbb{H}^1(T_{\overline{f}}(1))$ be Kato's $p$-adic zeta element, which is ``$\mathbf{z}^{(p)}_\gamma \otimes (\zeta_{p^n})_n$" in \cite[Theorem 12.5]{kato-euler-systems}.
The main conjecture \`{a} la Kato is as follows.
\begin{conj}[{\hspace{1sp}\cite[Conjecture 12.10]{kato-euler-systems}, \cite[Conjecture 6.1]{kurihara-invent}}] \label{conj:kato-main-conjecture}
$$\mathrm{char}_{\Lambda} \left(  \mathbb{H}^1(T_{\overline{f}}(1)) / \Lambda \mathbf{z}_{\mathrm{Kato}}  \right)
= \mathrm{char}_{\Lambda} \left( \mathbb{H}^2(T_{\overline{f}}(1)) \right) .$$
\end{conj}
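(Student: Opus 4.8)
The statement to establish is that the equality of characteristic ideals in Conjecture~\ref{conj:kato-main-conjecture} holds for $(f,\mathbb{Q}_\infty/\mathbb{Q})$; this is the content of Theorem~\ref{thm:main_theorem}.(2), so we work under the hypotheses (NA), (Im), (Tam) together with the assumption that $\widetilde{\delta}_n \neq 0$ in $\mathbb{F}_\lambda$ for some square-free product of Kolyvagin primes $n$. The plan is to reduce this equality to a single mod-$\lambda$ non-vanishing of a Kolyvagin class and then to verify that non-vanishing by an explicit local computation, following the chain displayed in $\S$\ref{subsec:the_main_idea}.

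First I would pass from Kato's Euler system to the language of Kolyvagin systems. Kato's zeta elements produce a Kolyvagin system $\ks = (\kappa_n)_n$ over $\mathbb{Z}_{f,\lambda}$ and, along the cyclotomic tower, a $\Lambda$-adic Kolyvagin system $\ks^\infty$ whose bottom class is $\kappa^\infty_1 = \varprojlim_n c^+_{\mathbb{Q}_n}$, the $(+)$-part of Kato's zeta element $\mathbf{z}_{\mathrm{Kato}}$. By Mazur--Rubin's $\Lambda$-adic theorem (Theorem~\ref{thm:primitivity_kato_main_conjecture}, i.e. \cite[Theorem 5.3.10.(iii)]{mazur-rubin-book}), $\Lambda$-primitivity of $\ks^\infty$ upgrades the divisibility supplied by Kato's Euler system to the equality asserted in Conjecture~\ref{conj:kato-main-conjecture}; and by B\"{u}y\"{u}kboduk's Proposition~\ref{prop:Lambda-primitivity}, that $\Lambda$-primitivity follows from the mere \emph{primitivity} of $\ks$, i.e. from $\kappa_n \neq 0 \Mod{\lambda}$ for a single $n$. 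It is at this step that (Im) makes the whole construction independent of the chosen Galois-stable lattice, that the exceptional-zero-free part of (NA), $a_p(f) \not\equiv 1 \Mod{\lambda}$, is invoked to secure the specialization-surjectivity (Theorem~\ref{thm:specialization_surjective}) underlying Proposition~\ref{prop:Lambda-primitivity}, and that (Tam) is what keeps Kato's Euler system from being ``defective'' in the sense of \cite[Proposition 6.2.6]{mazur-rubin-book}.

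It then remains to establish the implication $\widetilde{\delta}_n \neq 0 \Rightarrow \kappa_n \neq 0 \Mod{\lambda}$. For this I would: (i) realize $\kappa_n$ explicitly in terms of the $(+)$-part $D_n c^+_{\mathbb{Q}(\mu_n)}$ of the derived Kato Euler system, so that the goal becomes $D_n c^+_{\mathbb{Q}(\mu_n)} \neq 0 \Mod{\lambda}$; (ii) localize at $p$ through $\mathrm{loc}_p$ and then apply the dual exponential map together with Kato's de Rham pairing against the dual basis $\omega^*_{\overline{f}}$ --- each of these maps being $\mathrm{Gal}(\mathbb{Q}(\mu_n)/\mathbb{Q})$-equivariant, so that the Kolyvagin derivative operator $D_n$ commutes past them and lands inside the lattice $\mathscr{L} = \langle \omega^*_{\overline{f}}, \mathrm{exp}^*(\mathrm{H}^1_s(\mathbb{Q}_p(\mu_n), T_{\overline{f}}(1))) \rangle_{\mathrm{dR}}$; (iii) identify $\langle \omega^*_{\overline{f}}, \mathrm{exp}^*(\mathrm{loc}_p c^+_{\mathbb{Q}(\mu_n)}) \rangle_{\mathrm{dR}}$, up to a $p$-adic unit, with the Mazur--Tate element $\theta^+(\mathbb{Q}(\mu_n),f)$ by means of Kato's explicit reciprocity law and the Eichler--Shimura isomorphism; and (iv) run the Williams-type formal Taylor expansion to see that the Kolyvagin derivative $D_n\,\theta^+(\mathbb{Q}(\mu_n),f)$ is congruent mod $\lambda$ to the Kurihara number $\widetilde{\delta}_n$ inside $\mathscr{L}/\lambda\mathscr{L}$. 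Concatenating these steps gives
\[
\widetilde{\delta}_n \neq 0 \;\Leftrightarrow\; D_n\langle \omega^*_{\overline{f}}, \mathrm{exp}^*(\mathrm{loc}_p c^+_{\mathbb{Q}(\mu_n)}) \rangle_{\mathrm{dR}} \notin \lambda\mathscr{L} \;\Rightarrow\; D_n c^+_{\mathbb{Q}(\mu_n)} \neq 0 \;\Leftrightarrow\; \kappa_n \neq 0
\]
(all modulo $\lambda$), which closes the argument.

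The main obstacle --- the technical core of the approach --- is steps (iii)--(iv), and beneath them the \emph{integral} determination of the lattice $\mathscr{L}$: one has to pin down the image of the dual exponential map on the entire local cohomology lattice $\mathrm{H}^1_s(\mathbb{Q}_p(\mu_n), T_{\overline{f}}(1))$, over the (ramified-at-$p$) algebra $\mathbb{Z}_{f,\lambda} \otimes \mathbb{Z}_p[\mu_n]$, and not merely rationally or over $\mathbb{Q}_p$ as in Rubin's earlier unramified computation. I would do this by dualizing: compute the formal logarithm of the local points of the modular abelian variety $J_1(N)_{\overline{f}}$ (via its N\'{e}ron model and its formal group), and then transport the answer through Tate local duality, tracking carefully the normalization coming from the canonical periods $\Omega^\pm_f$. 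A further delicate point is the second half of (NA), $a_p(f) \not\equiv \psi(p) \Mod{\lambda}$: it controls the Euler factor at $p$ --- equivalently, $p$ times the Euler factor of $L(f,s)$ at $s=1$ --- that enters the de Rham computation of step (iii), and it is precisely what prevents that factor from being divisible by $\lambda$, which would otherwise destroy the equivalence there. The remaining $\pm$-bookkeeping, which keeps the $(+)$-parts aligned across the modular symbol, the Mazur--Tate element, and the Euler system, requires care but is routine once the integral image $\mathscr{L}$ is in hand.
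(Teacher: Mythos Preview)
Your proposal is correct and follows the paper's own route exactly: the reduction via Proposition~\ref{prop:Lambda-primitivity} and Theorem~\ref{thm:primitivity_kato_main_conjecture} to $\kappa_n \not\equiv 0 \pmod{\lambda}$, followed by the chain $\mathrm{loc}_p \to \mathrm{exp}^* \to \langle\omega^*_{\overline{f}},-\rangle_{\mathrm{dR}}$ landing in the lattice $\mathscr{L}$ of Proposition~\ref{prop:the_image} and the identification with $\widetilde{\delta}_n$ via Theorem~\ref{thm:computation_KS}, is precisely the argument carried out in \S\ref{subsec:the_proof}. One small slip: the extension $\mathbb{Q}_p(\mu_n)/\mathbb{Q}_p$ is \emph{unramified} (since $n$ is a product of Kolyvagin primes, all prime to $p$), not ramified as you wrote --- and this unramifiedness is exactly what makes the formal-group computation of $\mathscr{L}$ in \S\ref{sec:the_image_of_dual_exp} go through cleanly (cf.\ Remark after Proposition~\ref{prop:the_image}).
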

Note that the $\Lambda$-torsion property of $\mathbb{H}^1(T_{\overline{f}}(1)) / \Lambda \mathbf{z}_{\mathrm{Kato}}$ is  due to \cite[Theorem 12.5.(2)]{kato-euler-systems}.
Indeed, there is an explicit relation between $\kappa^{\infty}_1$ and $\mathbf{z}_{\mathrm{Kato}}$ via \cite[Lemma 13.10 and $\S$13.12]{kato-euler-systems} and we have
$$\Lambda \kappa^{\infty}_1 \subseteq \Lambda  \mathbf{z}_{\mathrm{Kato}}$$ \emph{with finite index} via \cite[Theorem 12.6]{kato-euler-systems}. Thus, we have the following proposition.
\begin{prop} \label{prop:reduction-of-kato-main-conjecture}
$$\mathrm{char}_{\Lambda} \left(  \mathbb{H}^1(T_{\overline{f}}(1)) / \Lambda \mathbf{z}_{\mathrm{Kato}}  \right)
= \mathrm{char}_{\Lambda} \left(  \mathbb{H}^1(T_{\overline{f}}(1)) / \Lambda \kappa^{\infty}_1 \right) .$$
\end{prop}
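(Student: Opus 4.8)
The plan is to read off the equality from the inclusion $\Lambda\kappa^{\infty}_1 \subseteq \Lambda\mathbf{z}_{\mathrm{Kato}}$ of finite index recorded above (via \cite[Lemma 13.10 and $\S$13.12]{kato-euler-systems} together with \cite[Theorem 12.6]{kato-euler-systems}), combined with the multiplicativity of characteristic ideals along short exact sequences of finitely generated torsion $\Lambda$-modules. Recall that $\Lambda = \mathbb{Z}_{f,\lambda}\llbracket\mathrm{Gal}(\mathbb{Q}_\infty/\mathbb{Q})\rrbracket$ is non-canonically isomorphic to $\mathbb{Z}_{f,\lambda}\llbracket T\rrbracket$, hence a two-dimensional regular local domain; so characteristic ideals are defined for finitely generated torsion modules, are multiplicative in exact sequences, and equal the unit ideal on pseudo-null --- in particular on finite --- modules.

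First I would write down the tautological short exact sequence of $\Lambda$-modules attached to $\Lambda\kappa^{\infty}_1 \subseteq \Lambda\mathbf{z}_{\mathrm{Kato}} \subseteq \mathbb{H}^1(T_{\overline{f}}(1))$,
\[
0 \longrightarrow \Lambda\mathbf{z}_{\mathrm{Kato}}/\Lambda\kappa^{\infty}_1 \longrightarrow \mathbb{H}^1(T_{\overline{f}}(1))/\Lambda\kappa^{\infty}_1 \longrightarrow \mathbb{H}^1(T_{\overline{f}}(1))/\Lambda\mathbf{z}_{\mathrm{Kato}} \longrightarrow 0 .
\]
Its leftmost term $\Lambda\mathbf{z}_{\mathrm{Kato}}/\Lambda\kappa^{\infty}_1$ is finite by the finite-index statement, hence pseudo-null over $\Lambda$, so $\mathrm{char}_\Lambda(\Lambda\mathbf{z}_{\mathrm{Kato}}/\Lambda\kappa^{\infty}_1) = \Lambda$. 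Its rightmost term is finitely generated and $\Lambda$-torsion by \cite[Theorem 12.5.(2)]{kato-euler-systems} (the $\Lambda$-torsion property of $\mathbb{H}^1(T_{\overline{f}}(1))/\Lambda\mathbf{z}_{\mathrm{Kato}}$ noted above). As the middle term of the sequence, $\mathbb{H}^1(T_{\overline{f}}(1))/\Lambda\kappa^{\infty}_1$ is then an extension of a finitely generated $\Lambda$-torsion module by a finite module, hence is itself finitely generated and $\Lambda$-torsion, so all three characteristic ideals appearing below are legitimately defined.

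Applying multiplicativity of $\mathrm{char}_\Lambda$ to the displayed sequence yields
\[
\mathrm{char}_\Lambda\left(\mathbb{H}^1(T_{\overline{f}}(1))/\Lambda\kappa^{\infty}_1\right) = \mathrm{char}_\Lambda\left(\Lambda\mathbf{z}_{\mathrm{Kato}}/\Lambda\kappa^{\infty}_1\right)\cdot\mathrm{char}_\Lambda\left(\mathbb{H}^1(T_{\overline{f}}(1))/\Lambda\mathbf{z}_{\mathrm{Kato}}\right) = \mathrm{char}_\Lambda\left(\mathbb{H}^1(T_{\overline{f}}(1))/\Lambda\mathbf{z}_{\mathrm{Kato}}\right),
\]
which is the asserted identity. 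I do not expect a genuine obstacle in this argument: all the arithmetic content sits in the finite-index comparison of the two $\Lambda$-lattices $\Lambda\mathbf{z}_{\mathrm{Kato}}$ and $\Lambda\kappa^{\infty}_1$ inside $\mathbb{H}^1(T_{\overline{f}}(1))$, and once that is granted (by citing Kato's explicit relation between the zeta element and the $\Lambda$-adic Kolyvagin system) the remainder is standard structure theory of finitely generated $\Lambda$-modules. The only step where real work would be required, in a self-contained account, is re-deriving $\Lambda\kappa^{\infty}_1 \subseteq \Lambda\mathbf{z}_{\mathrm{Kato}}$ with finite index from the construction of $\kappa^{\infty}_1$ out of the Euler system of zeta elements, i.e.\ unwinding \cite[$\S$13]{kato-euler-systems}.
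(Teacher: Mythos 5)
Your proof is correct and follows exactly the route the paper intends: the paper records the finite-index inclusion $\Lambda\kappa^{\infty}_1 \subseteq \Lambda\mathbf{z}_{\mathrm{Kato}}$ (via Kato's Lemma 13.10, $\S$13.12, Theorem 12.6) and the $\Lambda$-torsion of $\mathbb{H}^1(T_{\overline{f}}(1))/\Lambda\mathbf{z}_{\mathrm{Kato}}$, then treats the equality of characteristic ideals as immediate. You have simply made explicit the standard bookkeeping (short exact sequence, multiplicativity of $\mathrm{char}_\Lambda$, finite implies pseudo-null) that the paper leaves tacit.
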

\begin{rem} $ $
 The statement of Conjecture \ref{conj:kato-main-conjecture} implicitly assumes the canonical choice of Kato's zeta element since the RHS is independent of the choice of Kato's zeta element. See \cite[Remark 1.11.(2)]{ochiai-families} for detail.
\end{rem}

\subsection{The Iwasawa main conjecture for modular forms \`{a} la Mazur--Greenberg}
Suppose that $a_p(f)$ is a $\lambda$-adic unit satisfying Condition (NA) in Theorem \ref{thm:main_theorem}.
For a $\mathbb{Z}_{f,\lambda}$-module $M$, we define the Pontryagin dual by
$$M^\vee := \mathrm{Hom}_{\mathbb{Z}_{f,\lambda}} (M, \mathbb{Q}_{f,\lambda}/\mathbb{Z}_{f,\lambda}) .$$
\begin{rem}
In \cite[$\S$17.3]{kato-euler-systems}, Kato took $\mathrm{Hom}_{\mathbb{Z}_{f,\lambda}} (M(-1), \mathbb{Q}_{f,\lambda}/\mathbb{Z}_{f,\lambda})$ as the Pontryagin dual since he formulated the conjecture for $T_{\overline{f}}$ not for $T_{\overline{f}}(1)$.
\end{rem}
Due to the work of Kato \cite[Theorem 1.5]{greenberg-lnm}, \cite[Theorem 17.4.(1)]{kato-euler-systems} and Rohrlich \cite{rohrlich-nonvanishing}, the finitely generated $\Lambda$-module $\mathrm{Sel}(\mathbb{Q}_\infty, A_f(1))^\vee$ is $\Lambda$-torsion.

\begin{conj}[The Iwasawa main conjecture for $(A_f(1), \mathbb{Q}_\infty/\mathbb{Q})$; {\cite[Conjecture 2]{greenberg-general-iwasawa}}, {\cite[Conjecture 17.6]{kato-euler-systems}}] \label{conj:iwasawa_main_conjecture}
As ideals of $\mathbb{Z}_{f,\lambda}\llbracket \mathrm{Gal}(\mathbb{Q}_\infty/\mathbb{Q})\rrbracket$, the following equality holds
$$\left( L_p(\mathbb{Q}_\infty, f_\alpha) \right) =  \mathrm{char}_{\Lambda} \left( \mathrm{Sel}(\mathbb{Q}_\infty, A_f(1))^\vee \right) .$$
\end{conj}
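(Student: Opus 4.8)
Under the standing hypotheses of Theorem~\ref{thm:main_theorem} together with the assumption that $a_p(f)$ is a $\lambda$-adic unit (so that $\alpha$ and hence $L_p(\mathbb{Q}_\infty,f_\alpha)$ are defined) and that $\widetilde{\delta}_n\neq 0$ for some square-free product of Kolyvagin primes $n$, the plan is to derive this equality from Kato's reformulation of the main conjecture (Conjecture~\ref{conj:kato-main-conjecture}). Since $a_p(f)$ is a unit satisfying (NA), the comparison of the two formulations recorded in \cite[\S17.13]{kato-euler-systems} (with the Teichm\"{u}ller twist of \cite[\S6.5]{rubin-book}) shows that
$$\left( L_p(\mathbb{Q}_\infty, f_\alpha) \right) = \mathrm{char}_\Lambda\!\left( \mathrm{Sel}(\mathbb{Q}_\infty, A_f(1))^\vee \right)$$
holds if and only if Conjecture~\ref{conj:kato-main-conjecture} does. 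The divisibility $\subseteq$ (Kato's upper bound for the Selmer group) is already a consequence of the Euler system property of $\mathbf{z}_{\mathrm{Kato}}$; the substance is the reverse divisibility, so it suffices to prove the equality $\mathrm{char}_\Lambda(\mathbb{H}^1(T_{\overline{f}}(1))/\Lambda\mathbf{z}_{\mathrm{Kato}}) = \mathrm{char}_\Lambda(\mathbb{H}^2(T_{\overline{f}}(1)))$.

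First I would replace $\mathbf{z}_{\mathrm{Kato}}$ by the $\Lambda$-adic Kolyvagin class $\kappa^\infty_1 = \varprojlim_n c^+_{\mathbb{Q}_n}$: since $\Lambda\kappa^\infty_1 \subseteq \Lambda\mathbf{z}_{\mathrm{Kato}}$ with finite index by \cite[Theorem~12.6]{kato-euler-systems}, the two quotients have the same characteristic ideal (Proposition~\ref{prop:reduction-of-kato-main-conjecture}). The Mazur--Rubin machinery (Theorem~\ref{thm:primitivity_kato_main_conjecture}, i.e.\ \cite[Theorem~5.3.10]{mazur-rubin-book}) then yields the equality once the $\Lambda$-adic Kolyvagin system $\ks^\infty$ containing $\kappa^\infty_1$ is $\Lambda$-primitive; and by B\"{u}y\"{u}kboduk's specialization argument (Proposition~\ref{prop:Lambda-primitivity}) $\Lambda$-primitivity of $\ks^\infty$ follows as soon as the mod-$\lambda$ Kolyvagin system $\ks$ is primitive, i.e.\ as soon as $\kappa_n \not\equiv 0 \pmod{\lambda}$ for a single $n$. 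Here (Im) makes the Kolyvagin-system module free of rank one, so that primitivity is the correct criterion, and the part $a_p(f)\not\equiv 1$ of (NA) excludes the exceptional-zero situation that would break the control theorem underlying Proposition~\ref{prop:Lambda-primitivity}.

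It remains to produce one $n$ with $\kappa_n \not\equiv 0 \pmod\lambda$, and this is where $\widetilde{\delta}_n\neq 0$ is used. I would (i) build the mod-$\lambda$ Kolyvagin system directly from Kato's Euler system and identify $\kappa_n \bmod\lambda$ with the reduction of the $(+)$-part $D_n c^+_{\mathbb{Q}(\mu_n)}$ of the derived Kato Euler system (Proposition~\ref{prop:weak_vs_derived}); (ii) apply $\mathrm{loc}_p$ followed by the integral dual exponential map --- both $\mathrm{Gal}(\mathbb{Q}(\mu_n)/\mathbb{Q})$-equivariant --- so that $D_n\,\mathrm{exp}^*(\mathrm{loc}_p c^+_{\mathbb{Q}(\mu_n)}) \notin \lambda\mathscr{L}$ forces $D_n c^+_{\mathbb{Q}(\mu_n)} \not\equiv 0$; (iii) pair against $\omega^*_{\overline{f}}$ through the de Rham pairing (detected by Eichler--Shimura), landing in the lattice $\mathscr{L} \subseteq \mathbb{Q}_{f,\lambda}\otimes_{\mathbb{Q}_p}\mathbb{Q}_p(\mu_n)$, and use Kato's explicit reciprocity law to identify the outcome with the Kolyvagin derivative of the Mazur--Tate element $\theta^+(\mathbb{Q}(\mu_n),f)$; (iv) invoke the Williams-type formal Taylor expansion (Theorem~\ref{thm:computation_KS}) showing that this derivative is congruent modulo $\lambda$ to the Kurihara number $\widetilde{\delta}_n$. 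Then $\widetilde{\delta}_n \neq 0$ gives $\kappa_n \not\equiv 0 \pmod\lambda$ and the chain closes.

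The hard part is steps (iii)--(iv): computing the \emph{integral} image $\mathscr{L} = \langle \omega^*_{\overline{f}}, \mathrm{exp}^*(\mathrm{H}^1_s(\mathbb{Q}_p(\mu_n), T_{\overline{f}}(1)))\rangle_{\mathrm{dR}}$ exactly (Proposition~\ref{prop:the_image}), which I would do by dualizing to the logarithm of local points on the formal group of the N\'{e}ron model of $J_1(N)_{\overline{f}}$ and running Tate local duality, and then showing that the Kolyvagin derivative of $\theta^+(\mathbb{Q}(\mu_n),f)$ lies in $\mathscr{L}$ precisely when $\widetilde{\delta}_n\ne 0$. It is here that the second half of (NA), $a_p(f)\not\equiv\psi(p)\pmod\lambda$, is indispensable: it controls the Euler factor at $p$ measuring the failure of $\mathrm{exp}^*$ to be an integral isomorphism. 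Condition (Tam) guarantees that Kato's primitive Euler system produces a primitive Kolyvagin system by removing the Tamagawa defects. For a form $f$ failing (Tam) one first replaces it by a congruent newform satisfying (Tam) (Proposition~\ref{prop:existence_congruence}), runs the above for that form, and then propagates the equality by the congruence arguments of \cite{epw} under $\mu=0$; via \cite[\S17.13]{kato-euler-systems} this delivers Conjecture~\ref{conj:iwasawa_main_conjecture} for every member of the Hida family.
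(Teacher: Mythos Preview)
The statement you are addressing is recorded in the paper as a \emph{conjecture}, not as a theorem; the paper gives no unconditional proof of it. What the paper does prove is the conditional result (Theorem~\ref{thm:primitivity_main_conjecture} together with Theorem~\ref{thm:main_theorem}) that the equality holds under Conditions (NA), (Im), (Tam), the ordinariness of $a_p(f)$, and the existence of $n$ with $\widetilde{\delta}_n\neq 0$. You have, correctly, imported exactly these hypotheses at the outset, so your proposal is really a proof of that conditional theorem rather than of the bare conjecture.

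With that understood, your argument is correct and follows the paper's route step for step: the reduction from Mazur--Greenberg to Kato via \cite[\S17.13]{kato-euler-systems}, the replacement of $\mathbf{z}_{\mathrm{Kato}}$ by $\kappa^\infty_1$ (Proposition~\ref{prop:reduction-of-kato-main-conjecture}), the passage primitive $\Rightarrow$ $\Lambda$-primitive $\Rightarrow$ Kato's equality (Proposition~\ref{prop:Lambda-primitivity} and Theorem~\ref{thm:primitivity_kato_main_conjecture}), and the chain $\widetilde{\delta}_n\neq 0 \Rightarrow D_n c^+_{\mathbb{Q}(\mu_n)}\not\equiv 0 \Rightarrow \kappa_n\not\equiv 0$ via Proposition~\ref{prop:the_image}, Theorem~\ref{thm:computation_KS}, and Proposition~\ref{prop:weak_vs_derived}. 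Your identification of where each half of (NA) and where (Tam) enter also matches \S\ref{subsec:the_proof}. The closing remark about propagating along the Hida family via \cite{epw} under $\mu=0$ is the content of the first corollary after Theorem~\ref{thm:main_theorem}, not part of establishing the conjecture for $f$ itself, but it does no harm.
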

Following \cite[$\S$17.13]{kato-euler-systems}, Conjecture \ref{conj:kato-main-conjecture} and Conjecture \ref{conj:iwasawa_main_conjecture} are equivalent.

\subsection{The Iwasawa main conjecture for modular forms \`{a} la Kobayashi}
Suppose that $a_p(f) = 0$ and $\psi = \mathbf{1}$. 
Then the finitely generated $\Lambda$-module $\mathrm{Sel}^{\pm}(\mathbb{Q}_\infty, A_f(1))^\vee$ are $\Lambda$-torsion (\hspace{1sp}\cite[Theorem 7.3.ii)]{kobayashi-thesis}).
\begin{conj}[{\hspace{1sp}\cite[Conjecture in $\S$5]{kobayashi-thesis}}] \label{conj:pm_main_conjecture}
As ideals of $\mathbb{Z}_{f,\lambda}\llbracket \mathrm{Gal}(\mathbb{Q}_\infty/\mathbb{Q})\rrbracket$, the following equalities hold
$$ \left( L^{\mp}_p(\mathbb{Q}_\infty, f_\alpha) \right) = \mathrm{char}_{\Lambda} \left( \mathrm{Sel}^{\pm}(\mathbb{Q}_\infty, A_f(1))^\vee \right) .$$
\end{conj}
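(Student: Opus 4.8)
The plan is to establish Conjecture \ref{conj:pm_main_conjecture} under the hypotheses of Corollary \ref{cor:pm-application} --- that is, with $a_p(f)=0$, $\psi=\mathbf{1}$, the conditions of Theorem \ref{thm:main_theorem} in force, $\mu^{\pm}=0$, and $\widetilde{\delta}_n\neq 0$ for some square-free product of Kolyvagin primes $n$. The strategy is to route the $\pm$-main conjecture through Kato's main conjecture and the theory of $\Lambda$-adic Kolyvagin systems, reduce everything to the mod $\lambda$ non-vanishing of a single Kolyvagin-derivative class, identify that class with the Kurihara number via an explicit dual-exponential computation, and then propagate the resulting equalities to all of $S_2(\overline{\rho})[T_p]$ along congruences.

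First I would reduce to Kato's main conjecture: since $a_p(f)=0$ and $\psi=\mathbf{1}$, the $\pm$-decomposition of the semi-local cohomology at $p$ matches $\mathrm{Sel}^{\pm}(\mathbb{Q}_\infty,A_f(1))^\vee$ and $L^{\mp}_p(\mathbb{Q}_\infty,f_\alpha)$ with the algebraic and analytic sides of Conjecture \ref{conj:kato-main-conjecture}, so by \cite[Theorem 7.4]{kobayashi-thesis} it is enough to prove the latter. By Proposition \ref{prop:reduction-of-kato-main-conjecture} this reduces to $\mathrm{char}_{\Lambda}\big(\mathbb{H}^1(T_{\overline{f}}(1))/\Lambda\kappa^{\infty}_1\big)=\mathrm{char}_{\Lambda}\big(\mathbb{H}^2(T_{\overline{f}}(1))\big)$, which by Theorem \ref{thm:primitivity_kato_main_conjecture} (Mazur--Rubin) holds once the $\Lambda$-adic Kolyvagin system $\ks^\infty$ is $\Lambda$-primitive; Condition (NA), in particular $a_p(f)\not\equiv 1\pmod{\lambda}$ (automatic here from $a_p(f)=0$ and $p>2$), keeps us out of the exceptional-zero situation where this step would fail. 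By Proposition \ref{prop:Lambda-primitivity} (B\"{u}y\"{u}kboduk), $\Lambda$-primitivity of $\ks^\infty$ follows from primitivity of Kato's Kolyvagin system $\ks$, i.e. from $\kappa_n\not\equiv 0\pmod{\lambda}$ for some $n$. These reductions are formal given the cited Kolyvagin-system machinery.

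The heart of the argument is to derive $\kappa_n\not\equiv 0\pmod{\lambda}$ from $\widetilde{\delta}_n\neq 0$, and here I would chain three computations. (i) Determine the lattice $\mathscr{L}=\langle\omega^*_{\overline{f}},\mathrm{exp}^*(\mathrm{H}^1_s(\mathbb{Q}_p(\mu_n),T_{\overline{f}}(1)))\rangle_{\mathrm{dR}}$ explicitly, by computing its dual --- the image of local points on $J_1(N)_{\overline{f}}$ under the logarithm --- and invoking Tate local duality (Proposition \ref{prop:the_image}). (ii) Via the explicit mod $\lambda$ Kolyvagin system built from Kato's Euler system, reduce $\kappa_n\not\equiv 0$ to $D_n c^+_{\mathbb{Q}(\mu_n)}\not\equiv 0\pmod{\lambda}$, where $D_n c^+_{\mathbb{Q}(\mu_n)}$ is the $(+)$-part of the derived Kato Euler system (Proposition \ref{prop:weak_vs_derived}). (iii) Using that $\mathrm{loc}_p$, $\mathrm{exp}^*$, the de Rham pairing and the Kolyvagin derivative $D_n$ are all $\mathrm{Gal}(\mathbb{Q}(\mu_n)/\mathbb{Q})$-equivariant, compute $D_n\langle\omega^*_{\overline{f}},\mathrm{exp}^*(\mathrm{loc}_p c^+_{\mathbb{Q}(\mu_n)})\rangle_{\mathrm{dR}}$ and, by a Williams-type formal Taylor expansion, show it is congruent mod $\lambda$ --- up to a $\lambda$-adic unit coming from the Euler-factor normalization --- to $\widetilde{\delta}_n$, hence lies outside $\lambda\mathscr{L}$ exactly when $\widetilde{\delta}_n\neq 0$ (Theorem \ref{thm:computation_KS}). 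Chaining these, $\widetilde{\delta}_n\neq 0$ forces $D_n\mathrm{loc}_p c^+_{\mathbb{Q}(\mu_n)}\not\equiv 0$, hence $D_n c^+_{\mathbb{Q}(\mu_n)}\not\equiv 0$, hence $\kappa_n\not\equiv 0\pmod{\lambda}$, proving Conjecture \ref{conj:pm_main_conjecture} for the minimal-level form satisfying Condition (Tam). To remove minimality and reach all of $S_2(\overline{\rho})[T_p]$ I would then invoke the comparison of $\pm$-Selmer groups and $\pm$-$p$-adic $L$-functions for congruent supersingular forms of \cite{greenberg-iovita-pollack} together with $\mu^{\pm}=0$, exactly as in the proof of Corollary \ref{cor:pm-application}.

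The main obstacle is step (iii), i.e. Theorem \ref{thm:computation_KS} feeding on Proposition \ref{prop:the_image}: one must make the integral image of the derived Kato Euler system under $\mathrm{exp}^*$ explicit enough to recognize it as the Kurihara number, which demands careful bookkeeping of the choice of uniformizer at the unramified primes of $\mathbb{Q}(\mu_n)$ above $p$ and a delicate matching of a mod $\lambda$ Taylor expansion of the Kolyvagin derivative of the Mazur--Tate element against the purely analytic expansion of Williams. It is precisely at this normalization step that the hypothesis $a_p(f)\not\equiv\psi(p)\pmod{\lambda}$ is used (as flagged in the remark on Condition (NA)), so dropping it would break the identification. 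By contrast, the reductions in the second paragraph are purely formal, and the congruence-propagation step is a black-box application of \cite{greenberg-iovita-pollack}.
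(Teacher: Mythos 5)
The statement is a conjecture, so the paper contains no unconditional proof of it; what you have reconstructed is precisely the paper's proof of Corollary \ref{cor:pm-application}, and your route matches the paper's exactly: the primitivity chain $\widetilde{\delta}_n\neq 0 \Rightarrow \kappa_n\not\equiv 0\ (\mathrm{mod}\ \lambda) \Rightarrow \ks$ primitive $\Rightarrow \ks^\infty$ $\Lambda$-primitive $\Rightarrow$ Kato's main conjecture, via Propositions \ref{prop:weak_vs_derived}, \ref{prop:the_image}, \ref{prop:Lambda-primitivity}, Theorems \ref{thm:computation_KS}, \ref{thm:primitivity_kato_main_conjecture}, and \ref{thm:main_theorem}, followed by Kobayashi's equivalence \cite[Theorem 7.4]{kobayashi-thesis} and propagation to all of $S_2(\overline{\rho})[T_p]$ via \cite{greenberg-iovita-pollack} with $\mu^\pm=0$. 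Your remarks on where Condition (NA) enters (the unit $p-a_p(f)+\psi(p)$ in the proof of Theorem \ref{thm:main_theorem}), on the normalization subtleties in Theorem \ref{thm:computation_KS}, and on (NA) being automatic when $a_p(f)=0$ and $\psi=\mathbf{1}$ all agree with the paper.
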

Following \cite[Theorem 7.4]{kobayashi-thesis}, Conjecture \ref{conj:kato-main-conjecture} and each $\pm$-one of Conjecture \ref{conj:pm_main_conjecture} are equivalent.

\subsection{Remarks on the current status of the Iwasawa main conjecture for modular forms} \label{subsec:current_status_IMC}

As a digression, we quickly review the current status of the Iwasawa main conjecture for modular forms.
\begin{thm}[Skinner--Urban, X.~Wan]
Assume that $f$ is good ordinary at $p$, $\psi = \mathbf{1}$, and
the image of $\overline{\rho}$ contains a conjugate of $\mathrm{SL}_2(\mathbb{F}_p)$ (Condition (Im) in Theorem \ref{thm:main_theorem}).
\begin{itemize}
\item[\cite{skinner-urban}] If there exists a prime $q \Vert N$ such that $\overline{\rho}$ is ramified at $q$, then
then Conjecture \ref{conj:iwasawa_main_conjecture} holds.
\item[\cite{wan_hilbert}] If there exists a real quadratic field $F/\mathbb{Q}$ such that
\begin{itemize}
\item $p$ is unramified in $F$,
\item any prime $q$ dividing $N$ such that $q \equiv -1 \Mod{p}$ is inert in $F/\mathbb{Q}$, and any other prime dividing $N$ splits in $F/\mathbb{Q}$,
\item the canonical period of $f$ over $F$ is the square of its canonical period over $\mathbb{Q}$ up to a $p$-adic unit,
\end{itemize}
then Conjecture \ref{conj:iwasawa_main_conjecture} holds.
\end{itemize}
\end{thm}
In \cite[Theorem 4]{wan_hilbert}, it is required to find a suitable real quadratic field. It does not seem easy to find it (at least algorithmically). See \cite[Remark 5]{wan_hilbert} for this issue.

\begin{thm}[X.~Wan, Sprung, Castella--\c{C}iperiani--Skinner--Sprung]
Suppose that $f$ is non-ordinary at $p$, $\psi = \mathbf{1}$, and
the image of $\overline{\rho}$ contains a conjugate of $\mathrm{SL}_2(\mathbb{F}_p)$ (Condition (Im) in Theorem \ref{thm:main_theorem}). Then Conjecture \ref{conj:kato-main-conjecture} holds if one of the following assumptions hold:
\begin{itemize}
\item[\cite{wan-main-conj-ss-ec}] $a_p(f) = 0$, $\mathbb{Q}_f = \mathbb{Q}$, $N$ is square-free;
\item[\cite{sprung-main-conj-ss}] $\mathbb{Q}_f = \mathbb{Q}$, $N$ is square-free;
\item[\cite{wan-main-conj-nonord}] there exists a prime $q \Vert N$ such that the local automorphic representation at $q$ is the Steinberg representation twisted by the character sending $q$ to $-1$, or $N$ is square-free and there exist two primes $q_1$ and $q_2$ exactly dividing $N$ such that $\overline{\rho}$ is ramified at $q_1$ and $q_2$;
\item[\cite{castella-ciperiani-skinner-sprung}] $N$ is square-free.
\end{itemize}
\end{thm}
Note that \cite{wan-main-conj-ss-ec}, \cite{sprung-main-conj-ss}, \cite{wan-main-conj-nonord}, and \cite{castella-ciperiani-skinner-sprung} are not published yet, and our approach is completely different from theirs.
For the application of the Iwasawa main conjecture to the size of (twisted) Selmer groups, see \cite[Theorem in Introduction]{kato-euler-systems} and  \cite[Theorem 3.35 and 3.36]{skinner-urban}.

\section{A quick review of Kolyvagin systems} \label{sec:kolyvagin}
The goal of this section is to review Kolyvagin systems with a focus on Kolyvagin systems arising from Kato's Euler systems for the dual Galois representation and to explain
\begin{quote}
$\ks$ is primitive $\Rightarrow$ $\ks^{\infty}$ is $\Lambda$-primitive $\Rightarrow$ Kato's main conjecture,
\end{quote}
via Proposition \ref{prop:Lambda-primitivity} and Theorem \ref{thm:primitivity_kato_main_conjecture}. See \cite{mazur-rubin-book} and \cite{kazim-Lambda-adic} for detail.

\subsection{Local preliminaries}
Let $\rho_{\overline{f}}(1)$ be the dual representation defined in $\S$\ref{subsec:modular_galois_repns}.
Then $\rho_{\overline{f}}(1)$ also factors through $\mathrm{Gal}(\mathbb{Q}_\Sigma/\mathbb{Q})$.
For any prime $\ell \not\in \Sigma$, we define $P_\ell(x) \in  \mathbb{Z}_{f, \lambda}[x]$ by
\begin{align*}
P_\ell(x) & := \mathrm{det}(\mathrm{Id} - \rho_{\overline{f}}(1)(\mathrm{Fr}_\ell) x : T_{\overline{f}}(1)) \\
& = 1 - \overline{a_\ell(f)}\ell^{-1}x + \overline{\psi}(\ell)\ell^{-1} x^2 
\end{align*}
where $\mathrm{Fr}_\ell \in \mathrm{Gal}(\mathbb{Q}_{\Sigma}/\mathbb{Q})$ is the arithmetic Frobenius at $\ell$
as in \cite[Example 13.3]{kato-euler-systems}.

\subsection{Selmer structures}
Following \cite[$\S$2.1]{mazur-rubin-book}, we define the Selmer structure $\mathcal{F}$ on $T_{\overline{f}}(1)$ by
$$\mathrm{H}^1_{\mathcal{F}}(F, T_{\overline{f}}(1)) = \mathrm{Sel}(F,  T_{\overline{f}}(1))$$
where $\mathrm{Sel}(F,  T_{\overline{f}}(1))$ is the compact Selmer group (defined in terms of the orthogonal local conditions via the Tate local duality) and $F$ is an algebraic extension of $\mathbb{Q}$ as in \cite[$\S$6.2]{mazur-rubin-book} with help of $\S$\ref{subsec:selmer_groups}.
We also recall the ``canonical" Selmer structure $\mathcal{F}_{\mathrm{can}}$ on $T_{\overline{f}}(1)$ as in \cite[Definition 3.2.1]{mazur-rubin-book}.
The canonical structure $\mathcal{F}_{\mathrm{can}}$ is obtained from $\mathcal{F}$ by relaxing the condition at $p$; in other words,
$$\mathrm{H}^1_{ \mathcal{F}_{\mathrm{can}} }(\mathbb{Q}_\ell, T_{\overline{f}}(1)) = 
\left \lbrace
    \begin{array}{ll}
  \mathrm{H}^1(\mathbb{Q}_p, T_{\overline{f}}(1))  & \textrm{if} \ \ell =p \\ 
\mathrm{H}^1_{\mathcal{F}}(\mathbb{Q}_\ell, T_{\overline{f}}(1)) & \textrm{if} \ \ell \neq p .
    \end{array}
    \right.$$

\begin{defn}[Kolyvagin primes] \label{defn:kolyvagin_primes}
A rational prime $\ell$ is a \textbf{Kolyvagin prime (for $\rho_{\overline{f}}(1)$)} if it satisfies the following properties:
\begin{enumerate}
\item $\rho_{\overline{f}}(1)$ is unramified at $\ell$,
\item $\ell \equiv 1 \Mod{\lambda}$,
\item $\overline{a_\ell(f)} \equiv \ell + 1 \Mod{\lambda}$, and
\item $\overline{\psi}(\ell) \equiv 1 \Mod{\lambda}$.
\end{enumerate}
\end{defn}
From now on, we further assume that $\ell$ is a Kolyvagin prime.
Let $I_\ell \subset \mathbb{Z}_{f,\lambda}$ be the ideal generated by $\ell -1$ and $P_\ell(1)$.
Then  $I_\ell \subseteq \lambda \mathbb{Z}_{f,\lambda}$.
Let $I_n = \sum_{\ell \vert n} I_\ell \subseteq \mathbb{Z}_{f, \lambda}$.
Then the finite-singular map $\phi^{\mathrm{fs}}_\ell$ is defined by the commutative diagram
\[
\xymatrix@R=1.5em{
\mathrm{H}^1_\mathrm{fin} (\mathbb{Q}_\ell, T_{\overline{f}}(1)/I_{n\ell} T_{\overline{f}}(1) ) \ar[r]^-{\phi^{\mathrm{fs}}_\ell} \ar[d]^-{\simeq} & \mathrm{H}^1_\mathrm{sing} (\mathbb{Q}_\ell,  T_{\overline{f}}(1)/I_{n\ell}T_{\overline{f}}(1)) \otimes G_\ell \\
\frac{ T_{\overline{f}}(1)/I_{n\ell} T_{\overline{f}}(1)}{(\mathrm{Fr}_\ell - 1)  T_{\overline{f}}(1)/I_{n\ell} T_{\overline{f}}(1)}  \ar[r]^-{Q(\mathrm{Fr}^{-1}_\ell)} & \left(  T_{\overline{f}}(1)/I_{n\ell} T_{\overline{f}}(1)  \right)^{\mathrm{Fr}_\ell -1} \ar[u]^-{\simeq}
}
\]
where
$\mathrm{H}^1_\mathrm{fin}  = \mathrm{H}^1_\mathrm{ur} $ is the unramified cohomology group,
$\mathrm{H}^1_\mathrm{sing}  = \mathrm{H}^1  / \mathrm{H}^1_\mathrm{fin} $,
 $Q(x) = P_\ell(x)/(x-1)$, and $G_\ell := \mathrm{Gal}(\mathbb{Q}(\mu_\ell)/\mathbb{Q})$.

Let $\mathcal{F}(n)$ be the Selmer structure defined by $\mathcal{F}$ and the transverse local condition at primes dividing $n$ defined in \cite[Example 2.1.8]{mazur-rubin-book}.
We compare different Selmer structures as follows:
\[
\xymatrix@R=1.5em{
\mathrm{H}^1_{\mathcal{F}(n)}(\mathbb{Q},  T_{\overline{f}}(1)/I_{n} T_{\overline{f}}(1) ) \otimes G_n \ar[r]^-{\mathrm{loc}_\ell} & \mathrm{H}^1_{\mathrm{fin}}(\mathbb{Q}_\ell,  T_{\overline{f}}(1)/I_{n\ell} T_{\overline{f}}(1) ) \otimes G_n \ar[d]^-{ \phi^{\mathrm{fs}}_\ell \otimes 1 } \\
\mathrm{H}^1_{\mathcal{F}(n\ell)}(\mathbb{Q},  T_{\overline{f}}(1)/I_{n\ell} T_{\overline{f}}(1) ) \otimes G_{n\ell} \ar[r]^-{\mathrm{loc}_\ell} & \mathrm{H}^1_{\mathrm{sing}}(\mathbb{Q}_\ell,  T_{\overline{f}}(1)/I_{n\ell} T_{\overline{f}}(1) ) \otimes G_{n\ell} .
}
\]
where ${\displaystyle G_{n} := \otimes_{\ell \vert n} G_\ell }$.

\subsection{Selmer triples}
Let $\mathcal{P}$ be the set of Kolyvagin primes for $T_{\overline{f}}(1)$ and $\mathcal{N}$ be the set of square-free product of primes in $\mathcal{P}$.
Then we call $(T_{\overline{f}}(1), \mathcal{F}_{\mathrm{can}}, \mathcal{P})$ a \textbf{Selmer triple} and recall the basic assumptions on the triple as in \cite[$\S$3.5]{mazur-rubin-book} and \cite[$\S$2.2]{kazim-Lambda-adic}.
\begin{assu} \label{assu:abstract} $ $
\begin{itemize}
\item[(H.1)] $T_{\overline{f}}(1) / \lambda T_{\overline{f}}(1)$ is absolutely irreducible.
\item[(H.2)] There is a $\tau \in \mathrm{Gal}( \overline{\mathbb{Q}}/\mathbb{Q})$ such that $\tau = 1$ on $\mu_{p^\infty}$ and the $\mathbb{Z}_{f,\lambda}$-module $T_{\overline{f}}(1) / (\tau - 1)T_{\overline{f}}(1)$ is free of rank one.
\item[(H.3)] $\mathrm{H}^1( \mathbb{Q}(T_{\overline{f}}(1),\mu_{p^\infty})/\mathbb{Q}, T_{\overline{f}}(1) / \lambda T_{\overline{f}}(1) ) = \mathrm{H}^1( \mathbb{Q}(T_{\overline{f}}(1),\mu_{p^\infty})/\mathbb{Q}, A_f(1)[\lambda] ) = 0$. Here $\mathbb{Q}(T_{\overline{f}}(1))$ is the smallest extension of $\mathbb{Q}$ such that the $G_\mathbb{Q}$-action on $T_{\overline{f}}(1)$ factors through $\mathrm{Gal}(\mathbb{Q}(T_{\overline{f}}(1))/\mathbb{Q})$ and $\mathbb{Q}(T_{\overline{f}}(1),\mu_{p^\infty}) = \mathbb{Q}(T_{\overline{f}}(1))(\mu_{p^\infty})$.
\item[(H.4)] Either $\mathrm{Hom}_{\mathbb{F}_\lambda\llbracket G_\mathbb{Q} \rrbracket } \left( T_{\overline{f}}(1)/\lambda T_{\overline{f}}(1), A_f(1)[\lambda] \right)$ or $p > 4$.
\end{itemize}
\end{assu}
\begin{lem}[{\hspace{1sp}\cite[Lemma 6.2.3]{mazur-rubin-book}}]
Condition (Im) in Theorem \ref{thm:main_theorem} implies all the conditions of Assumption \ref{assu:abstract}.
\end{lem}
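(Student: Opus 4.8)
The plan is to deduce each clause of Assumption \ref{assu:abstract} from Condition (Im) by reducing to standard facts about the group $\mathrm{SL}_2(\mathbb{F}_p)$ and its action on the standard module, exactly as in \cite[Lemma 6.2.3]{mazur-rubin-book}. First I would record the key structural fact: since the image of $\overline{\rho}$ contains a conjugate of $\mathrm{SL}_2(\mathbb{F}_p)$, the image of $\overline{\rho}_f$ acting on $T_{\overline{f}}(1)/\lambda T_{\overline{f}}(1)$ (which as an $\mathbb{F}_\lambda$-representation is a twist of $\overline{\rho}$, hence has the same image up to the scalar cyclotomic twist) contains $\mathrm{SL}_2(\mathbb{F}_p)$. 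Because twisting by a character does not change absolute irreducibility and does not change the subgroup generated by the commutator-type elements we will use, we may argue with the standard two-dimensional representation of a group $G$ with $\mathrm{SL}_2(\mathbb{F}_p) \subseteq G \subseteq \mathrm{GL}_2(\mathbb{F}_\lambda)$.

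For (H.1): the standard module is already absolutely irreducible as an $\mathrm{SL}_2(\mathbb{F}_p)$-module (the only invariant lines would be eigenlines of all unipotents, which is impossible), so a fortiori it is absolutely irreducible over $G$. For (H.2): I would exhibit an explicit $\tau$. Since $\mathbb{Q}(\mu_{p^\infty}) \cap \mathbb{Q}(T_{\overline{f}}(1))$ is a number field whose Galois group over $\mathbb{Q}$ is a quotient of the abelianization-compatible part, and $\mathrm{SL}_2(\mathbb{F}_p)$ is (for $p>3$) perfect, the restriction map lets us choose $\tau \in G_\mathbb{Q}$ acting trivially on $\mu_{p^\infty}$ whose image in $\mathrm{Gal}(\mathbb{Q}(T_{\overline{f}}(1))/\mathbb{Q})$ is any prescribed element of $\mathrm{SL}_2(\mathbb{F}_p)$; picking $\tau$ to map to $\begin{pmatrix} 1 & 1 \\ 0 & 1\end{pmatrix}$ (a unipotent $\neq 1$), one checks $(\tau-1)$ has rank one on the standard module, so $T_{\overline{f}}(1)/(\tau-1)T_{\overline{f}}(1)$ is free of rank one over $\mathbb{Z}_{f,\lambda}$ by Nakayama. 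Here a small point to handle is the Tate twist: $\tau$ must act trivially on $\mu_{p^\infty}$, so on $T_{\overline{f}}(1) = T_{\overline{f}} \otimes \mathbb{Z}_p(1)$ this is the same as the action on $T_{\overline{f}}$, and the residual image condition still gives the unipotent.

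For (H.3): the group $\mathrm{Gal}(\mathbb{Q}(T_{\overline{f}}(1),\mu_{p^\infty})/\mathbb{Q})$ surjects onto (a subgroup containing) $\mathrm{SL}_2(\mathbb{F}_p)$, and the cohomology $\mathrm{H}^1$ with coefficients in the standard module over $\mathbb{F}_\lambda$ inflates from $\mathrm{H}^1(\mathrm{SL}_2(\mathbb{F}_p), (\mathbb{F}_\lambda)^2)$-type groups; a classical computation (see \cite[Lemma 6.2.3]{mazur-rubin-book}, going back to work of Serre and to \cite{mazur-rubin-book} Appendix) shows this vanishes for $p>3$ — indeed $\mathrm{H}^1(\mathrm{SL}_2(\mathbb{F}_p),\mathbb{F}_p^2)=0$ in this range. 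The two coefficient modules $T_{\overline{f}}(1)/\lambda T_{\overline{f}}(1)$ and $A_f(1)[\lambda]$ are isomorphic as $\mathbb{F}_\lambda[G_\mathbb{Q}]$-modules (both are the residual representation), so one vanishing statement suffices. Finally (H.4) holds because $p>4$ follows from $p\geq 5$; and even for $p=3$ the $\mathrm{Hom}$-alternative is handled by absolute irreducibility plus Schur (the two residual modules being isomorphic and absolutely irreducible forces the relevant $\mathrm{Hom}$-space to be a line, not zero, so one actually leans on $p>4$ here — which is why Condition (Im) with $\mathrm{SL}_2(\mathbb{F}_p)$ essentially forces $p\geq 5$ in the large-image regime, or one invokes the precise form in \cite[Lemma 6.2.3]{mazur-rubin-book}).

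The main obstacle is (H.3), the vanishing of the Galois cohomology group: one must be careful that the relevant group is really a quotient through which the action factors and that the inflation–restriction sequence kills the contribution of $\mathrm{Gal}(\mathbb{Q}(T_{\overline{f}}(1),\mu_{p^\infty})/\mathbb{Q}(\mu_{p^\infty}))$ versus the $\mu_{p^\infty}$-part, together with invoking the explicit group-cohomology computation for $\mathrm{SL}_2(\mathbb{F}_p)$ acting on its standard module. Since all of this is precisely the content of \cite[Lemma 6.2.3]{mazur-rubin-book} applied to the Selmer triple $(T_{\overline{f}}(1),\mathcal{F}_{\mathrm{can}},\mathcal{P})$ — noting that Mazur–Rubin's hypothesis is stated for representations whose image contains a conjugate of $\mathrm{SL}_2(\mathbb{F}_p)$, which is exactly Condition (Im) — the cleanest proof is simply to verify that $T_{\overline{f}}(1)$ satisfies the hypotheses of that lemma (irreducibility of the residual representation, correct determinant, and the big-image condition) and then cite it. I would therefore write the proof as: observe $T_{\overline{f}}(1)$ has residual representation a twist of $\overline{\rho}$, whose image contains a conjugate of $\mathrm{SL}_2(\mathbb{F}_p)$ by (Im); apply \cite[Lemma 6.2.3]{mazur-rubin-book} to conclude (H.1)–(H.4).
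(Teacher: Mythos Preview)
Your proposal is correct and matches the paper's approach: the paper gives no proof at all beyond the citation in the lemma header, so your final recommendation---verify that $T_{\overline{f}}(1)$ meets the hypotheses and invoke \cite[Lemma 6.2.3]{mazur-rubin-book}---is exactly what the paper does. The sketch you supply for (H.1)--(H.4) is a reasonable unpacking of that reference, though note that the paper allows $p=3$, so your aside that (Im) ``essentially forces $p\geq 5$'' is not quite right; for $p=3$ one must fall back on the $\mathrm{Hom}$-alternative in (H.4) and be a bit more careful with (H.2) and (H.3) since $\mathrm{SL}_2(\mathbb{F}_3)$ is not perfect---but Mazur--Rubin's lemma handles this, which is why a bare citation suffices.
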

\begin{assu} \label{assu:more_conditions} $ $
\begin{itemize}
\item[(H.T)] Tamagawa condition : $\mathrm{H}^0 (I_\ell, V_{\overline{f}}(1)/T_{\overline{f}}(1))$ is divisible for every $\ell \neq p$ where $I_\ell$ is the inertia subgroup at $\ell$.
\item[(H.sEZ)] Strong exceptional zero-like condition : $\mathrm{H}^0(\mathbb{Q}_p, A_f(1)) = 0$.
\item[(H.EZ)] exceptional zero-like condition : $\mathrm{H}^0(\mathbb{Q}_p, A_f(1))$ is finite.
\end{itemize}
\end{assu}

\begin{lem} \label{lem:conditions} $ $
\begin{enumerate}
\item Condition (NA) in Theorem \ref{thm:main_theorem} ($a_p(f) \not\equiv 1 \Mod{\lambda}$) implies Assumption \ref{assu:more_conditions}.(H.sEZ).
\item Condition (Tam) in Theorem \ref{thm:main_theorem} implies Assumption \ref{assu:more_conditions}.(H.T). 
\end{enumerate}
\end{lem}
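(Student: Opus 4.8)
The plan is to establish the two parts independently, each by a local computation --- at $p$ for (1) and at the bad primes $\ell\neq p$ for (2).

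\emph{Part (1).} First I would note that $\mathrm{H}^0(\mathbb{Q}_p, A_f(1))$ is finite: a nonzero $G_{\mathbb{Q}_p}$-invariant would give a trivial crystalline subrepresentation of $V_f(1)|_{G_{\mathbb{Q}_p}}$ (recall $V_f$ has good reduction at $p$), forcing $p$ to be an eigenvalue of the crystalline Frobenius on $\mathbf{D}_{\mathrm{cris}}(V_f)$, i.e. $p\in\{\alpha,\beta\}$, which is impossible because $|\alpha|=|\beta|=\sqrt{p}$ by the Ramanujan bound in weight two. A finite $p$-power torsion $\mathbb{Z}_{f,\lambda}$-module vanishes iff its $\lambda$-torsion does, and $A_f(1)[\lambda]\simeq\overline{\rho}(1):=(T_f/\lambda T_f)(1)$, so it suffices to show that $\mathbf{1}$ is not a subrepresentation of $\overline{\rho}(1)|_{G_{\mathbb{Q}_p}}$. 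If $a_p(f)$ is not a $\lambda$-adic unit then Condition (NA) holds automatically ($a_p(f)\equiv 0\not\equiv 1,\psi(p)\pmod{\lambda}$, the latter since $\psi(p)$ is a unit) and, by the weight-two, $p\nmid N$ case of the theory of Fontaine--Laffaille and Edixhoven, $\overline{\rho}|_{G_{\mathbb{Q}_p}}$ is irreducible (induced from a character of the unramified quadratic extension of $\mathbb{Q}_p$); hence $\overline{\rho}(1)|_{G_{\mathbb{Q}_p}}$ is still irreducible and has no one-dimensional constituent. If $a_p(f)$ is a unit, then $\overline{\rho}(1)|_{G_{\mathbb{Q}_p}}$ is reducible with a unique line on which inertia acts trivially; a direct computation --- tracking Kato's conventions for $\det\rho_f=\chi_{\mathrm{cyc}}^{-1}\psi^{-1}$, the $(1)$-twist, and $\alpha\equiv a_p(f)\pmod{\lambda}$ --- identifies the arithmetic Frobenius eigenvalue on that line and shows that it reduces to $1$ exactly when the $p$-Euler factor of $L(f,s)$ at $s=0$, namely $\det(1-\varphi\mid\mathbf{D}_{\mathrm{cris}}(V_f))=1-a_p(f)+\psi(p)p\equiv 1-a_p(f)\pmod{\lambda}$, vanishes mod $\lambda$ --- that is, exactly when $a_p(f)\equiv 1\pmod{\lambda}$ (cf. the remark after Theorem \ref{thm:main_theorem}). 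Condition (NA) rules this out, so $\mathrm{H}^0(\mathbb{Q}_p, A_f(1))=0$.

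\emph{Part (2).} Write $A_{\overline f}(1):=V_{\overline f}(1)/T_{\overline f}(1)$. For $\ell\nmid N$ inertia acts trivially and $\mathrm{H}^0(I_\ell, A_{\overline f}(1))=A_{\overline f}(1)$ is divisible, so there is nothing to check. For $\ell\mid N$, the snake lemma applied to $0\to T_{\overline f}(1)\to V_{\overline f}(1)\to A_{\overline f}(1)\to 0$, together with $A_{\overline f}(1)[\lambda]^{I_\ell}=(T_{\overline f}(1)/\lambda T_{\overline f}(1))^{I_\ell}$, shows that $\mathrm{H}^0(I_\ell, A_{\overline f}(1))$ is divisible iff its $\mathbb{Z}_{f,\lambda}$-corank equals $\dim_{\mathbb{F}_\lambda}(T_{\overline f}(1)/\lambda T_{\overline f}(1))^{I_\ell}$, i.e. iff the inertia invariants do not grow under reduction mod $\lambda$:
$$\dim_{\mathbb{F}_\lambda}(T_{\overline f}(1)/\lambda T_{\overline f}(1))^{I_\ell}=\dim_{\mathbb{Q}_{f,\lambda}}V_{\overline f}(1)^{I_\ell};$$
equivalently, $\mathrm{H}^1(I_\ell, T_{\overline f}(1))$ is $\mathbb{Z}_{f,\lambda}$-torsion free (cf. \cite[\S6.2]{mazur-rubin-book}, \cite[\S1]{rubin-book}). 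Since $\chi_{\mathrm{cyc}}$ is unramified at $\ell\neq p$, this is a statement about $\rho_f|_{G_{\mathbb{Q}_\ell}}$ and its reduction, and I would go through the possible local types of a weight-two newform at $\ell$ (unramified twist of Steinberg, ramified principal series, supercuspidal). Because the Swan conductor at $\ell\neq p$ is unchanged under reduction mod $\lambda$, the conductor exponents $\mathrm{ord}_\ell(N)$ and $\mathrm{ord}_\ell(N(\overline{\rho}))$ record the codimensions of the respective tame inertia invariants over a common wild part; so the ``otherwise'' clause of Condition (Tam), $\mathrm{ord}_\ell(N)=\mathrm{ord}_\ell(N(\overline{\rho}))$, yields the displayed equality immediately.

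The hard part will be the remaining case $\ell\Vert N(\overline{\rho})$ with $\ell\equiv\pm 1\pmod{p}$: here $\overline{\rho}|_{G_{\mathbb{Q}_\ell}}$ is of Steinberg type, its inertia invariants are one-dimensional, and the conductor can genuinely drop upon reduction unless the lift is controlled. This is exactly where the first clause of Condition (Tam) enters: imposing $\ell^2\Vert N$ pins down $\rho_f|_{G_{\mathbb{Q}_\ell}}$ as the ramified principal series refining $\overline{\rho}|_{G_{\mathbb{Q}_\ell}}$ --- the lift produced by the level-raising in Proposition \ref{prop:existence_congruence} --- for which $V_{\overline f}(1)^{I_\ell}$ is again one-dimensional and agrees with $(T_{\overline f}(1)/\lambda T_{\overline f}(1))^{I_\ell}$. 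Combining the two cases gives divisibility of $\mathrm{H}^0(I_\ell, A_{\overline f}(1))$ for every $\ell\neq p$, which is Assumption \ref{assu:more_conditions}.(H.T). The main obstacle throughout is bookkeeping: in part (1), getting the twist and the normalization of $\psi$ as a Galois character exactly right in the ordinary Frobenius-eigenvalue computation; and in part (2), tracking, type by type and separately for $\ell\equiv 1$ and $\ell\equiv -1\pmod{p}$, when the tame part of a ramified local character survives reduction mod $\lambda$ --- equivalently, when the conductor and the dimension of inertia invariants are preserved.
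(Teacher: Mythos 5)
The paper gives no proof of this lemma (it is labeled ``easily observed''), so I am comparing against what I believe the intended argument is, guided by Remark~\ref{rem:tamagawa}.

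\emph{Part (1).} Your argument is essentially right and matches the standard proof: $\mathrm{H}^0(\mathbb{Q}_p,V_f(1))=0$ by the Ramanujan bound, so $\mathrm{H}^0(\mathbb{Q}_p,A_f(1))$ is finite and vanishes iff $\mathrm{H}^0(\mathbb{Q}_p,A_f(1)[\lambda])=\mathrm{H}^0(\mathbb{Q}_p,\overline{\rho}(1))=0$; in the non-ordinary case $\overline{\rho}|_{G_{\mathbb{Q}_p}}$ is irreducible, and in the ordinary case the Frobenius eigenvalue $\equiv a_p(f)$ on the unramified piece governs the answer. One small imprecision: in the ordinary case the unramified piece of $T_f(1)|_{G_{\mathbb{Q}_p}}$ is a \emph{quotient}, not a ``line'' (i.e.\ a subrepresentation), so there is in general no ``unique line on which inertia acts trivially''. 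The correct (and just as short) phrasing is that $\mathrm{H}^0$ of the ramified sub vanishes since $\overline{\chi}_{\mathrm{cyc}}\neq 1$ for $p>2$, so $\mathrm{H}^0(\overline{\rho}(1))$ injects into $\mathrm{H}^0$ of the unramified quotient, which is killed by $a_p(f)\not\equiv 1$.

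\emph{Part (2).} The treatment of the ``otherwise'' primes is correct: Swan is preserved under reduction since $I_\ell^{\mathrm{wild}}$ is pro-$\ell$ with $\ell\neq p$, so $\mathrm{ord}_\ell(N)=\mathrm{ord}_\ell(N(\overline\rho))$ is equivalent to $\dim V_{\overline f}(1)^{I_\ell}=\dim(T_{\overline f}(1)/\lambda)^{I_\ell}$, which is equivalent to divisibility of $\mathrm{H}^0(I_\ell,A_{\overline f}(1))$. But the final step for the primes covered by the \emph{first} clause of (Tam) is wrong, and this is a real gap. If $\ell\Vert N(\overline\rho)$ (so $\dim\overline{\rho}^{I_\ell}=1$ and $\mathrm{Swan}_\ell(\overline{\rho})=0$) and $\ell^2\Vert N$, then since Swan is preserved we have $\mathrm{Swan}_\ell(V_f)=0$ and the conductor formula forces $\dim V_f^{I_\ell}=2-\mathrm{ord}_\ell(N)=0$. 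Concretely, $\rho_f|_{G_{\mathbb{Q}_\ell}}$ is a tamely ramified principal series $\chi_1\oplus\chi_2$ with \emph{both} $\chi_i$ of conductor $\ell$ (or a depth-zero supercuspidal), so there is no inertia-invariant line; your claim that ``$V_{\overline f}(1)^{I_\ell}$ is again one-dimensional and agrees with $(T_{\overline f}(1)/\lambda T_{\overline f}(1))^{I_\ell}$'' is false. At such a prime the dimensions are $0$ versus $1$, and $\mathrm{H}^0(I_\ell,A_{\overline f}(1))$ is \emph{not} divisible. This is consistent with the paper's own Remark~\ref{rem:tamagawa}, which attributes the removal of the Tamagawa defect to the \emph{second} clause of (Tam) only; the first clause is there for the $N$-imprimitive Euler factors in $\S$\ref{subsec:the_proof}, not for (H.T). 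So either Lemma~\ref{lem:conditions}.(2) should be read as pertaining to the second clause (and (H.T) simply fails at any prime where the first clause is non-vacuous, which must then be handled elsewhere in the Kolyvagin-system machinery), or the case you label ``the hard part'' is genuinely a counterexample to the literal statement. Either way, the assertion you make to close this gap does not hold, and you should not conclude (H.T) at first-clause primes.
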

\begin{proof}
The first statement is obvious.
See \cite[Lemma 4.1.2]{epw} for the second statement.
\end{proof}
\begin{lem} \label{lem:the_set_of_kolyvagin_primes} The set $\mathcal{P}$ satisfies the following properties.
\begin{enumerate}
\item $T_{\overline{f}}(1)/(\mathrm{Fr}_\ell -1)T_{\overline{f}}(1)$ is a cyclic $\mathbb{Z}_{f, \lambda}$-module for every $\ell \in \mathcal{P}$.
\item $\mathrm{Fr}^{p^k}_\ell -1$ is injective on $T_{\overline{f}}(1)$ for every $\ell \in \mathcal{P}$ and every $k \geq 0$.
\end{enumerate}
\end{lem}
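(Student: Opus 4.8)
The plan is to translate both statements into elementary linear algebra over $\mathbb{Z}_{f,\lambda}$ by reading off the action of $\mathrm{Fr}_\ell$ from the local Euler factor $P_\ell(x)$, and, for part (1), reducing modulo $\lambda$. For $\ell\in\mathcal{P}$ write $\alpha_\ell,\beta_\ell$ for the eigenvalues of $\mathrm{Fr}_\ell$ acting (via $\rho_{\overline{f}}(1)$) on the free rank-two $\mathbb{Z}_{f,\lambda}$-module $T_{\overline{f}}(1)$; from $P_\ell(x)=\det(\mathrm{Id}-\mathrm{Fr}_\ell\,x\mid T_{\overline{f}}(1))=1-\overline{a_\ell(f)}\ell^{-1}x+\overline{\psi}(\ell)\ell^{-1}x^{2}$ one has $\alpha_\ell+\beta_\ell=\overline{a_\ell(f)}\ell^{-1}$ and $\alpha_\ell\beta_\ell=\overline{\psi}(\ell)\ell^{-1}$, so the characteristic polynomial of $\mathrm{Fr}_\ell$ is $g_\ell(X):=X^{2}-\overline{a_\ell(f)}\ell^{-1}X+\overline{\psi}(\ell)\ell^{-1}$.

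For part (2): since $T_{\overline{f}}(1)$ is torsion-free, $\mathrm{Fr}_\ell^{p^{k}}-1$ is injective on it as soon as $\det(\mathrm{Fr}_\ell^{p^{k}}-1\mid T_{\overline{f}}(1))=(\alpha_\ell^{p^{k}}-1)(\beta_\ell^{p^{k}}-1)$ is nonzero in $\mathbb{Q}_{f,\lambda}$. This follows from purity: via the isomorphism $V_{\overline{f}}(1)\simeq V_\lambda(J_1(N)_{\overline{f}})$ of $\S$\ref{subsec:modular_galois_repns} and the good reduction of $J_1(N)_{\overline{f}}$ at $\ell$ (as $\ell\nmid N$, $\ell$ being a Kolyvagin prime), the Weil bounds show $\alpha_\ell$ and $\beta_\ell$ are Weil numbers of weight $\pm 1$ — their archimedean absolute values are a fixed nontrivial power of $\ell^{1/2}$ — hence are not roots of unity, so $\alpha_\ell^{p^{k}}\neq1\neq\beta_\ell^{p^{k}}$ for all $k\ge 0$.

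For part (1): reduce $g_\ell$ modulo $\lambda$. By the defining congruences of Definition \ref{defn:kolyvagin_primes}, $\ell\equiv 1$, $\overline{a_\ell(f)}\equiv\ell+1\equiv 2$ and $\overline{\psi}(\ell)\equiv 1$ modulo $\lambda$, so $g_\ell(X)\equiv(X-1)^{2}\pmod{\lambda}$. Hence $\overline{\mathrm{Fr}}_\ell-1$ is a nilpotent endomorphism with square zero of $\overline{T}:=T_{\overline{f}}(1)/\lambda T_{\overline{f}}(1)$, so its $\mathbb{F}_\lambda$-rank is $0$ or $1$. Rank $0$ would mean $\overline{\mathrm{Fr}}_\ell=\mathrm{Id}$, i.e. $\ell$ splits completely in $\mathbb{Q}(T_{\overline{f}}(1))$; this degenerate situation is excluded for $\ell\in\mathcal{P}$ — it is part of the defining conditions on the set of Kolyvagin primes in the Selmer-triple formalism of \cite[$\S$3.1]{mazur-rubin-book}, and is automatic for the primes produced by the Chebotarev argument attached to the element $\tau$ of Assumption \ref{assu:abstract}.(H.2). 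So the rank is exactly $1$, $\overline{T}/(\overline{\mathrm{Fr}}_\ell-1)\overline{T}$ is one-dimensional over $\mathbb{F}_\lambda$, and since it is the mod-$\lambda$ reduction of the finitely generated $\mathbb{Z}_{f,\lambda}$-module $T_{\overline{f}}(1)/(\mathrm{Fr}_\ell-1)T_{\overline{f}}(1)$, Nakayama's lemma shows the latter is cyclic over $\mathbb{Z}_{f,\lambda}$.

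The only genuine obstacle is this last point: the four numerical congruences of Definition \ref{defn:kolyvagin_primes} pin down the characteristic polynomial of $\overline{\mathrm{Fr}}_\ell$ as $(X-1)^{2}$ but do not by themselves forbid $\overline{\mathrm{Fr}}_\ell=\mathrm{Id}$, so the proof of (1) must invoke (or fold into the definition of $\mathcal{P}$) the standard non-split requirement; granting that, both assertions are routine.
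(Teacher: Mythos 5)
Your proof is correct and more self-contained than the paper's, which is a one-line reference to \cite[Lemma 4.1.3]{rubin-book} for the set ``$\mathcal{R}_{\mathbb{Q},p}$'' of \cite[Definition 4.1.1]{rubin-book}, with the help of Assumption \ref{assu:abstract}.(H.2). For part (2) you argue through purity for the Tate module of $J_1(N)_{\overline{f}}$ rather than through Rubin's general Euler-system hypotheses; since $\ell\nmid Np$ the abelian variety has good reduction at $\ell$, and $|\alpha_\ell|=|\beta_\ell|=\ell^{-1/2}$ (from $\alpha_\ell\beta_\ell=\overline{\psi}(\ell)\ell^{-1}$ together with the Ramanujan--Petersson bound), so the eigenvalues are never $p$-power roots of unity and $\det(\mathrm{Fr}_\ell^{p^k}-1)\neq 0$ on the torsion-free $T_{\overline{f}}(1)$. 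For part (1) the Nakayama step from the one-dimensionality of $(T_{\overline{f}}(1)/\lambda T_{\overline{f}}(1))/(\overline{\mathrm{Fr}}_\ell-1)$ is exactly right.

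The gap you flag at the end is genuine and worth naming explicitly. The four congruences of Definition \ref{defn:kolyvagin_primes} only fix the characteristic polynomial of $\overline{\mathrm{Fr}}_\ell$ as $(X-1)^2$; under Condition (Im) both conjugacy classes with this characteristic polynomial --- the identity and a nontrivial unipotent --- are realized by a positive density of primes, and for $\ell$ with $\overline{\mathrm{Fr}}_\ell=\mathrm{Id}$ the module $T_{\overline{f}}(1)/(\mathrm{Fr}_\ell-1)T_{\overline{f}}(1)$ surjects onto the two-dimensional $T_{\overline{f}}(1)/\lambda T_{\overline{f}}(1)$ and so is not cyclic. Rubin's $\mathcal{R}_{\mathbb{Q},p}$ is a Chebotarev set attached to the element $\tau$ of Assumption \ref{assu:abstract}.(H.2) and therefore has the rank-one quotient property built in; Definition \ref{defn:kolyvagin_primes} records only the numerical consequences of membership in such a set and is, strictly, a proper superset. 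The mismatch is harmless downstream because the Kolyvagin-system machinery is actually run over $\mathcal{P}\cap\mathcal{P}_1$, and the set $\mathcal{P}_1$ introduced in the definition of generalized Kolyvagin systems is precisely where the rank-one condition holds; but for the Lemma to hold literally for every $\ell\in\mathcal{P}$ as $\mathcal{P}$ is defined, the non-split requirement must be folded into Definition \ref{defn:kolyvagin_primes}, exactly as you observe.
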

\begin{proof}
See \cite[Lemma 4.1.3]{rubin-book} with ``$\mathcal{R}_{\mathbb{Q}, p}$" in \cite[Definition 4.1.1]{rubin-book} with help of Assumption \ref{assu:abstract}.(H.2). Note that Assumption \ref{assu:abstract}.(H.2) comes from Condition (Im) in Theorem \ref{thm:main_theorem}.
\end{proof}

\subsection{Kolyvagin systems and generalized Kolyvagin systems}
\begin{defn}[Kolyvagin systems; {\cite[Definition 3.1.3]{mazur-rubin-book}}]
A \textbf{Kolyvagin system for the Selmer triple $(T_{\overline{f}}(1), \mathcal{F}_{\mathrm{can}}, \mathcal{P})$} is a collection of cohomology classes $\kappa_n \in \mathrm{H}^1_{\mathcal{F}_{\mathrm{can}}(n)}(\mathbb{Q}, T_{\overline{f}}(1) / I_n T_{\overline{f}}(1) )  \otimes G_{n}$ such that
if $\ell$ is a prime and $n\ell \in \mathcal{N}$, then
$$\left( \kappa_{n\ell} \right)_{\ell,s} = \phi^{\mathrm{fs}}_\ell (\kappa_n)$$
in $\mathrm{H}^1_\mathrm{sing}(\mathbb{Q}_\ell, T_{\overline{f}}(1)/I_{n\ell} T_{\overline{f}}(1)) \otimes G_{n\ell}$.
\end{defn}
Let $\KS(T_{\overline{f}}(1), \mathcal{F}_{\mathrm{can}}, \mathcal{P})$ be the $\mathbb{Z}_{f,\lambda}$-module of Kolyvagin systems and an element of $\KS(T_{\overline{f}}(1), \mathcal{F}_{\mathrm{can}}, \mathcal{P})$ is denoted by $\ks = ( \kappa_n )_n$
where $n$ runs over all square-free products of primes in $\mathcal{P}$, i.e. $n \in \mathcal{N}$.

\begin{defn}[generalized Kolyvagin systems; {\cite[Definition 3.1.6]{mazur-rubin-book}}]
Let $k \in \mathbb{N}$ and $\mathcal{P}_k$ be the set of places $\ell \not\in \Sigma(\mathcal{F}_{\mathrm{can}})$ such that
\begin{itemize}
\item $T_{\overline{f}}(1) / \left( \lambda^k T_{\overline{f}}(1) + \left(\mathrm{Fr}_\ell - 1\right) T_{\overline{f}}(1)\right)$ is free of rank one over $\mathbb{Z}_{f,\lambda} / \lambda^k$, and
\item $I_\ell \subset \lambda^k\mathbb{Z}_{f,\lambda}$.
\end{itemize}
Then we have a decreasing filtration
$$\cdots \subset\mathcal{P}_4 \subset\mathcal{P}_3 \subset\mathcal{P}_2 \subset\mathcal{P}_1 .$$
We define the $\mathbb{Z}_{f,\lambda}$-module of \textbf{generalized Kolyvagin system for $(T_{\overline{f}}(1), \mathcal{F}_{\mathrm{can}}, \mathcal{P})$} by
$$\overline{\KS}(T_{\overline{f}}(1), \mathcal{F}_{\mathrm{can}}, \mathcal{P}) := \varprojlim_k \left( \varinjlim_j \KS(T_{\overline{f}}(1) / \lambda^k T_{\overline{f}}(1) ,\mathcal{F}_{\mathrm{can}}, \mathcal{P} \cap \mathcal{P}_j  ) \right)$$ with respect to the functorial maps given in \cite[Remark 3.1.4]{mazur-rubin-book}.
\end{defn}

Let $\chi(T_{\overline{f}}(1), \mathcal{F}_{\mathrm{can}})$ be the core rank for the pair $(T_{\overline{f}}(1), \mathcal{F}_{\mathrm{can}})$ defined in \cite[Definition 4.1.11]{mazur-rubin-book}.
\begin{prop}[{\hspace{1sp}\cite[Proposition 6.2.2]{mazur-rubin-book}}]  \label{prop:core_rank_one}
$\chi(T_{\overline{f}}(1), \mathcal{F}_{\mathrm{can}}) = 1$.
\end{prop}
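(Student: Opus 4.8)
The plan is to reduce the claim to a local computation by invoking the Mazur--Rubin formula for the core rank. Write $\bar T := T_{\overline f}(1)/\lambda T_{\overline f}(1)$, a two-dimensional $\mathbb{F}_\lambda$-vector space, and let $\bar T^{*} := \mathrm{Hom}_{\mathbb{F}_\lambda}(\bar T,\mathbb{F}_\lambda(1))$ be its Cartier dual. The Mazur--Rubin machinery (\cite[\S 4.1 and \S 5.2]{mazur-rubin-book}) expresses $\chi(T_{\overline f}(1),\mathcal{F}_{\mathrm{can}})$ as
\[
\chi(T_{\overline f}(1),\mathcal{F}_{\mathrm{can}}) = \sum_{v\in\Sigma}\Bigl(\dim_{\mathbb{F}_\lambda}\mathrm{H}^1_{\mathcal{F}_{\mathrm{can}}}(\mathbb{Q}_v,\bar T) - \dim_{\mathbb{F}_\lambda}\mathrm{H}^0(\mathbb{Q}_v,\bar T)\Bigr) - \dim_{\mathbb{F}_\lambda}\mathrm{H}^0(\mathbb{Q},\bar T) - \dim_{\mathbb{F}_\lambda}\mathrm{H}^0(\mathbb{Q},\bar T^{*}),
\]
so it is enough to evaluate each term on the right-hand side.

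First I would dispose of the two global correction terms and the tame places. By Condition (Im) the residual representation $\bar T$ is absolutely irreducible (Assumption \ref{assu:abstract}.(H.1)), and hence so is $\bar T^{*}$; neither carries a nonzero $G_{\mathbb{Q}}$-invariant, so $\mathrm{H}^0(\mathbb{Q},\bar T) = \mathrm{H}^0(\mathbb{Q},\bar T^{*}) = 0$. At each finite prime $\ell\neq p$ in $\Sigma$, the canonical Selmer structure carries the Bloch--Kato finite condition, whose reduction modulo $\lambda$ is the unramified subgroup $\mathrm{H}^1_{\mathrm{ur}}(\mathbb{Q}_\ell,\bar T)$ — here one uses the Tamagawa condition, which follows from Condition (Tam) by Lemma \ref{lem:conditions}.(2) (Assumption \ref{assu:more_conditions}.(H.T)). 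Since $\dim_{\mathbb{F}_\lambda}\mathrm{H}^1_{\mathrm{ur}}(\mathbb{Q}_\ell,\bar T) = \dim_{\mathbb{F}_\lambda}\mathrm{H}^0(\mathbb{Q}_\ell,\bar T)$, these terms contribute $0$. At $v=\infty$, since $p>2$ we have $\mathrm{H}^1(\mathbb{R},\bar T) = 0$, while $\det\overline{\rho}_{\overline f}(1)$ at complex conjugation $c$ equals $\chi_{\mathrm{cyc}}(c)\psi(c) = (-1)\cdot 1 = -1$ (using $\psi(-1) = (-1)^2 = 1$ for a weight two form), so $c$ acts on $\bar T$ with eigenvalues $\{1,-1\}$ and $\dim_{\mathbb{F}_\lambda}\mathrm{H}^0(\mathbb{R},\bar T) = 1$; the archimedean contribution is $-1$.

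The remaining term, at $v=p$, is where Condition (NA) enters, and it is the step I expect to require the most care. Because $\mathcal{F}_{\mathrm{can}}$ relaxes the condition at $p$, this term is $\dim_{\mathbb{F}_\lambda}\mathrm{H}^1(\mathbb{Q}_p,\bar T) - \dim_{\mathbb{F}_\lambda}\mathrm{H}^0(\mathbb{Q}_p,\bar T)$. The local Euler characteristic formula over $\mathbb{Q}_p$ gives $\dim\mathrm{H}^1(\mathbb{Q}_p,\bar T) - \dim\mathrm{H}^0(\mathbb{Q}_p,\bar T) - \dim\mathrm{H}^2(\mathbb{Q}_p,\bar T) = \dim_{\mathbb{F}_\lambda}\bar T = 2$, and local Tate duality identifies $\mathrm{H}^2(\mathbb{Q}_p,\bar T)$ with the $\mathbb{F}_\lambda$-dual of $\mathrm{H}^0(\mathbb{Q}_p,\bar T^{*})$, so the term at $p$ equals $2 + \dim_{\mathbb{F}_\lambda}\mathrm{H}^0(\mathbb{Q}_p,\bar T^{*})$. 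Using the duality $V_f(1)^{*}(1)\simeq V_{\overline f}(1)$ recorded in \S\ref{subsec:modular_galois_repns}, one identifies $\bar T^{*}$ with $A_f(1)[\lambda]$, whence $\mathrm{H}^0(\mathbb{Q}_p,\bar T^{*})\subseteq\mathrm{H}^0(\mathbb{Q}_p,A_f(1))$; and the latter vanishes by Assumption \ref{assu:more_conditions}.(H.sEZ), which holds precisely because $a_p(f)\not\equiv 1\Mod{\lambda}$ (Condition (NA), via Lemma \ref{lem:conditions}.(1)). Hence the contribution of $v=p$ is $2$, and assembling everything yields $\chi(T_{\overline f}(1),\mathcal{F}_{\mathrm{can}}) = 0 + 0 + (-1) + 2 = 1$.

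The hard part is not any single calculation but rather nailing down the exact form of the core rank formula (which global $\mathrm{H}^0$ terms appear, and with which signs) and correctly matching the local term at $p$ with the strong exceptional-zero condition (H.sEZ); once those are in place, the rest is mechanical. This is of course precisely the content of \cite[Proposition 6.2.2]{mazur-rubin-book}, so in practice one simply cites it, but the argument above records exactly why Conditions (Im), (NA), and (Tam) are the relevant inputs.
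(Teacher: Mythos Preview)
The paper gives no proof of its own here: it simply cites \cite[Proposition 6.2.2]{mazur-rubin-book} and adds the remark that, via \cite[Theorem 5.2.15]{mazur-rubin-book}, the core rank equals the $\mathbb{Z}_{f,\lambda}$-rank of the minus part of $T_{\overline f}(1)$ under complex conjugation (which is $1$). Your argument is a correct unpacking of exactly that computation, so the two are in complete agreement.

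One small nitpick: in the Greenberg--Wiles formula you quote, the global correction should read $+\dim_{\mathbb{F}_\lambda}\mathrm{H}^0(\mathbb{Q},\bar T) - \dim_{\mathbb{F}_\lambda}\mathrm{H}^0(\mathbb{Q},\bar T^{*})$, not with both signs negative. This is harmless for the conclusion, since you immediately observe that both terms vanish by absolute irreducibility; but it is worth stating the formula correctly. With that fixed, your local bookkeeping at $\infty$ and at $p$ is exactly the content of the remark following Proposition~\ref{prop:core_rank_one} together with Lemma~\ref{lem:conditions}.(1).
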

In our setting, the core rank $\chi(T_{\overline{f}}(1), \mathcal{F}_{\mathrm{can}})$ is equal to the rank of the minus part of $T_{\overline{f}}(1)$ under the complex conjugation over $\mathbb{Z}_{f,\lambda}$, not over $\mathbb{Z}_p$. See \cite[Theorem 5.2.15]{mazur-rubin-book} and \cite[$\S$6.3]{kato-euler-systems}.

\begin{prop}[{\hspace{1sp}\cite[Proposition 5.2.9]{mazur-rubin-book}}] \label{prop:comparison_koly_and_cpltd_koly}
If the core rank $\chi(T_{\overline{f}}(1), \mathcal{F}_{\mathrm{can}}) = 1$, then the canonical map 
$$\KS(T_{\overline{f}}(1), \mathcal{F}_{\mathrm{can}}, \mathcal{P}) \to \KSbar(T_{\overline{f}}(1), \mathcal{F}_{\mathrm{can}}, \mathcal{P})$$
is an isomorphism.
\end{prop}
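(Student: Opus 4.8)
The plan is to prove the stronger assertion that both sides are free modules of rank one over $R := \mathbb{Z}_{f,\lambda}$ and that the canonical map carries a generator to a generator; since $R$ is a complete discrete valuation ring, an $R$-linear map of free rank-one $R$-modules sending a generator to a generator is automatically an isomorphism, so this settles injectivity and surjectivity simultaneously. The freeness inputs are the structure theorem for Kolyvagin systems of core rank one (\hspace{1sp}\cite[$\S$5.2]{mazur-rubin-book}), applicable here by the standing Assumptions \ref{assu:abstract} and \ref{assu:more_conditions} together with the hypothesis $\chi(T_{\overline{f}}(1),\mathcal{F}_{\mathrm{can}}) = 1$: it produces a distinguished \emph{stub} Kolyvagin system freely generating $\KS(T_{\overline{f}}(1),\mathcal{F}_{\mathrm{can}},\mathcal{P})$ over $R$, and, for every $k$ and every Chebotarev-dense admissible family $\mathcal{P}'$ of Kolyvagin primes, a stub Kolyvagin system freely generating $\KS(T_{\overline{f}}(1)/\lambda^k T_{\overline{f}}(1),\mathcal{F}_{\mathrm{can}},\mathcal{P}')$ over $R/\lambda^k$.

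Next I would unwind the double limit defining $\KSbar(T_{\overline{f}}(1),\mathcal{F}_{\mathrm{can}},\mathcal{P}) = \varprojlim_k\varinjlim_j \KS(T_{\overline{f}}(1)/\lambda^k T_{\overline{f}}(1),\mathcal{F}_{\mathrm{can}},\mathcal{P}\cap\mathcal{P}_j)$. For the inner direct limit the transition maps are the prime-restriction maps (restricting from $\mathcal{P}\cap\mathcal{P}_j$ to $\mathcal{P}\cap\mathcal{P}_{j'}$ with $j'\geq j\geq k$), and the decisive point — this is where $\chi = 1$ is indispensable — is that they are isomorphisms: over a module of finite length in the core-rank-one setting a Kolyvagin system is rigidly reconstructed from its value at $n=1$ by iterated finite-singular propagation, the propagation may be carried out along \emph{any} Chebotarev-dense admissible family of primes, so only finitely many of its members are ever invoked, and the thinned set $\mathcal{P}\cap\mathcal{P}_j$ — still Chebotarev-dense and admissible for $T_{\overline{f}}(1)/\lambda^k T_{\overline{f}}(1)$ by Lemma \ref{lem:the_set_of_kolyvagin_primes}, the Chebotarev density theorem, and the inclusion $\mathcal{P}_j\subseteq\mathcal{P}_k$ for $j\geq k$ — carries no less information than $\mathcal{P}$ itself. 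Hence the inner limit is a free $R/\lambda^k$-module of rank one, generated by the common reduction of the stub generators. The outer inverse limit then has transition maps induced by $T_{\overline{f}}(1)/\lambda^{k+1}T_{\overline{f}}(1)\twoheadrightarrow T_{\overline{f}}(1)/\lambda^k T_{\overline{f}}(1)$; by the functoriality of the stub construction these send stub generators to stub generators, so, identifying each level-$k$ term with $R/\lambda^k$ via its stub generator, the transition maps become the natural surjections and $\KSbar(T_{\overline{f}}(1),\mathcal{F}_{\mathrm{can}},\mathcal{P}) \cong \varprojlim_k R/\lambda^k = R$ is free of rank one.

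Finally, the canonical map sends a Kolyvagin system $\ks = (\kappa_n)_{n\in\mathcal{N}}$ to the compatible family, over $k$, of the reductions of $\ks$ modulo $\lambda^k$ restricted to those $n$ supported on $\mathcal{P}\cap\mathcal{P}_j$, and it is visibly $R$-linear. Applying it to the stub generator of $\KS(T_{\overline{f}}(1),\mathcal{F}_{\mathrm{can}},\mathcal{P})$ and invoking once more the functoriality of the stub construction under reduction and prime-restriction, its image is for each $k$ the stub generator of $\KS(T_{\overline{f}}(1)/\lambda^k T_{\overline{f}}(1),\mathcal{F}_{\mathrm{can}},\mathcal{P}\cap\mathcal{P}_j)$, hence a generator of the inner limit, hence — compatibly in $k$ — a generator of $\KSbar(T_{\overline{f}}(1),\mathcal{F}_{\mathrm{can}},\mathcal{P})$. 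A generator-to-generator $R$-linear map between free rank-one $R$-modules is an isomorphism, which completes the proof. The main obstacle, as signalled, is the stabilisation of the inner direct limit: the assertion that the increasingly thin families $\mathcal{P}\cap\mathcal{P}_j$ still detect \emph{all} Kolyvagin systems. This fails in core rank $\geq 2$ and rests entirely on Proposition \ref{prop:core_rank_one}; the supporting functoriality statements for stub Kolyvagin systems, though routine, are the technical heart of \cite[$\S$5.2]{mazur-rubin-book} that one is really importing.
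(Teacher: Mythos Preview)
The paper does not supply its own proof of this proposition; it is stated with the citation to \cite[Proposition 5.2.9]{mazur-rubin-book} and then immediately used as a black box (``Thus, we do not distinguish $\KS$ and $\KSbar$ for $T_{\overline{f}}(1)$''). So there is nothing in the paper to compare your argument against beyond the bare reference.

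Your outline is essentially the argument one finds in the cited source: in core rank one each $\KS(T_{\overline{f}}(1)/\lambda^k T_{\overline{f}}(1),\mathcal{F}_{\mathrm{can}},\mathcal{P}\cap\mathcal{P}_j)$ is free of rank one over $R/\lambda^k$, the prime-restriction maps in the inner colimit are isomorphisms, the outer inverse limit is therefore free of rank one over $R$, and the canonical map matches generators. One imprecision worth flagging: your claim that ``a Kolyvagin system is rigidly reconstructed from its value at $n=1$ by iterated finite-singular propagation'' overstates things, since $\kappa_1$ may well vanish for every Kolyvagin system (this is exactly the situation of interest when the Selmer rank is positive). The rigidity you need is not determination by $\kappa_1$ but the rank-one freeness of the whole module of Kolyvagin systems, which is a genuinely deeper statement proved via the sheaf of stub Kolyvagin systems on the graph of core vertices. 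You do invoke this correctly elsewhere (``functoriality of the stub construction''), so the slip is cosmetic rather than fatal; just be aware that the sentence as written is not the actual mechanism. You should also check that you are not importing \cite[Theorem 5.2.10]{mazur-rubin-book} in a way that is logically downstream of the very Proposition 5.2.9 you are proving---in Mazur--Rubin the stub-sheaf analysis is developed first and both statements are read off from it, so citing the stub machinery directly (as you mostly do) is the safe route.
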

Thus, we do not distinguish $\KS$ and $\KSbar$ for $T_{\overline{f}}(1)$.

\subsection{From Euler systems to Kolyvagin systems and {$\Lambda$}-adic Kolyvagin systems} \label{subsec:euler_to_kolyvagin}
We recall the Euler system to Kolyvagin system map and the Euler system to $\Lambda$-adic Kolyvagin system map.
Due to Lemma \ref{lem:the_set_of_kolyvagin_primes}, we can omit the conditions on $\mathcal{P}$ in \cite[Theorem 3.2.4 and Theorem 5.3.3]{mazur-rubin-book}.

Let $\ES(T_{\overline{f}}(1), \mathcal{P}, \mathbb{Q}^{\mathrm{ab}})$ be the $\mathbb{Z}_{f,\lambda}\llbracket \mathrm{Gal}(\overline{\mathbb{Q}}/\mathbb{Q}  )\rrbracket$-module of Euler systems as in \cite[$\S$3.2.2]{mazur-rubin-book} where $\mathbb{Q}^{\mathrm{ab}}$ is the maximal abelian extension of $\mathbb{Q}$ in $\overline{\mathbb{Q}}$.
Let $\mathbf{c} = (c_F)_F \in \ES(T_{\overline{f}}(1), \mathcal{P}, \mathbb{Q}^{\mathrm{ab}})$ be Kato's Euler system 
(Definition \ref{defn:kato_euler_systems}) with $c_F \in \mathrm{H}^1 (F, T_{\overline{f}}(1))$ where $F$ runs over finite abelian extensions of $\mathbb{Q}$. Note that we follow the convention of Euler systems in \cite{mazur-rubin-book} not in \cite{rubin-book}.

In other words, the collection of the cohomology classes satisfy the following relation.
For $F'/F$ finite extensions of $\mathbb{Q}$ in $\mathbb{Q}^{\mathrm{ab}}$,
\begin{align*}
\mathrm{Nm}_{F'/F} \left( c_{F'} \right) & = \left( \prod_{\ell \in S' \setminus S} \left( P_\ell \left( \sigma^{-1}_\ell \right) \right) \right) \cdot c_F \\
&= \left( \prod_{\ell \in S' \setminus S} \left( 1 - \overline{a_\ell(f)}\ell^{-1} \sigma^{-1}_\ell + \overline{\psi}(\ell)\ell^{-1} \sigma^{-2}_\ell \right) \right) \cdot c_F
\end{align*}
where $\sigma_\ell \in \mathrm{Gal}(F/\mathbb{Q})$ is the arithmetic Frobenius at $\ell$, and $S$ and $S'$ are the finite sets of finite places which are ramified in $F/\mathbb{Q}$ and $F'/\mathbb{Q}$, respectively. 
Here, we are assuming that $S$ and $S'$ are disjoint from a finite set of finite places including ones dividing $Np$.

\begin{rem} \label{rem:conjugate_euler_systems}
Let $\overline{c_{F}} \in \mathrm{H}^1(F, T_{\overline{f}}(1))$
be the complex conjugation of $c_{F}$.
Then we define
$$c^\pm_{F}  := \frac{1}{2} \cdot \left( c_{F} \pm \overline{c_{F}} \right),$$
respectively.
All the construction and the argument below also work with $c^+_{F}$ and $c^-_{F}$.
\end{rem}

Let $\ks = (\kappa_n)_n \in\KSbar(T_{\overline{f}}(1), \mathcal{F}_{\mathrm{can}}, \mathcal{P})$ be the Kato's Kolyvagin system as the image of $\mathbf{c}^+$ under the map below where $\kappa_n \in \mathrm{H}^1_{ \mathcal{F}(n) } (\mathbb{Q}, T_{\overline{f}}(1))$.
\begin{thm}[{\hspace{1sp}\cite[Theorem 3.2.4]{mazur-rubin-book}}] \label{thm:euler_to_kolyvagin}
There is a canonical Galois equivariant morphism
$$\ES(T_{\overline{f}}(1), \mathcal{P}, \mathbb{Q}^{\mathrm{ab}}) \to \KSbar(T_{\overline{f}}(1), \mathcal{F}_{\mathrm{can}}, \mathcal{P}) $$
such that if $$\mathbf{c}^+ \mapsto \ks$$ then $$\kappa_1 = c^+_{\mathbb{Q}} = c_{\mathbb{Q}}.$$
Furthermore, under Assumption \ref{assu:more_conditions}.(sEZ), the statement holds not only with $\KSbar$ but also with $\KS$.
\end{thm}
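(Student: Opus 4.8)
The plan is to run the Kolyvagin derivative construction of \cite[$\S$3.2]{mazur-rubin-book} uniformly on all of $\ES(T_{\overline{f}}(1), \mathcal{P}, \mathbb{Q}^{\mathrm{ab}})$ and then specialize to $\mathbf{c}^+$. First I would fix, for each $\ell \in \mathcal{P}$ (Definition \ref{defn:kolyvagin_primes}), a generator $\sigma_\ell$ of the cyclic group $G_\ell = \mathrm{Gal}(\mathbb{Q}(\mu_\ell)/\mathbb{Q})$, form the Kolyvagin derivative $D_\ell = \sum_{i=1}^{\ell-2} i\,\sigma_\ell^i \in \mathbb{Z}[G_\ell]$, and set $D_n = \prod_{\ell \mid n} D_\ell$ for $n \in \mathcal{N}$. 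Given an Euler system $(c_F)_F$, applying $D_n$ to the image of $c_{\mathbb{Q}(\mu_n)}$ in $\mathrm{H}^1(\mathbb{Q}(\mu_n), T_{\overline{f}}(1)/I_n T_{\overline{f}}(1))$ produces a candidate derivative class. The telescoping identity $(\sigma_\ell - 1)D_\ell = (\ell-1) - \mathrm{Nm}_{G_\ell}$, combined with the norm relation $\mathrm{Nm}_{\mathbb{Q}(\mu_n)/\mathbb{Q}(\mu_{n/\ell})}(c_{\mathbb{Q}(\mu_n)}) = P_\ell(\sigma_\ell^{-1})\,c_{\mathbb{Q}(\mu_{n/\ell})}$ and the inclusion $P_\ell(1) \in I_\ell$, shows inductively that this class is fixed by $\mathrm{Gal}(\mathbb{Q}(\mu_n)/\mathbb{Q})$ modulo $I_n$. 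One then descends it along the inflation--restriction sequence to a class $\kappa_n \in \mathrm{H}^1(\mathbb{Q}, T_{\overline{f}}(1)/I_n T_{\overline{f}}(1))$, the relevant $\mathrm{H}^0$- and $\mathrm{H}^1$-obstructions vanishing because $T_{\overline{f}}(1)/\lambda T_{\overline{f}}(1)$ is absolutely irreducible (Assumption \ref{assu:abstract}.(H.1), coming from Condition (Im)). For $n = 1$ there is neither a derivative nor a descent, so $\kappa_1 = c_\mathbb{Q}$; and for the $(+)$-part one has $c^+_\mathbb{Q} = c_\mathbb{Q}$ because at level one the projection onto the plus eigenspace of complex conjugation is the identity.

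Next I would verify that $(\kappa_n)_n$ satisfies the axioms of a (generalized) Kolyvagin system. For the local conditions: away from $np$ the class is unramified since $c_{\mathbb{Q}(\mu_n)}$ is; at $p$ nothing is imposed because $\mathcal{F}_{\mathrm{can}}$ relaxes the condition there; and at $\ell \mid n$ one must show $\mathrm{loc}_\ell(\kappa_n)$ lands in the transverse local subspace, which is the delicate point and uses the cyclicity of $T_{\overline{f}}(1)/(\mathrm{Fr}_\ell - 1)T_{\overline{f}}(1)$ together with the injectivity of $\mathrm{Fr}_\ell^{p^k} - 1$ from Lemma \ref{lem:the_set_of_kolyvagin_primes}, and the explicit local structure of $\mathrm{H}^1(\mathbb{Q}_\ell, T_{\overline{f}}(1)/I_n T_{\overline{f}}(1))$ at a Kolyvagin prime. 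The finite--singular compatibility $(\kappa_{n\ell})_{\ell,s} = \phi^{\mathrm{fs}}_\ell(\kappa_n)$ then follows by comparing $D_{n\ell}$ with $D_n$ and feeding the Euler relation at $\ell$ through the explicit description of $\phi^{\mathrm{fs}}_\ell$ recalled above. All the operations involved --- group-ring derivative operators, norm/corestriction, Galois descent, localization --- are functorial in the coefficient module and commute with the ambient $\mathrm{Gal}(\overline{\mathbb{Q}}/\mathbb{Q})$-action on both sides, so the resulting morphism is canonical and Galois-equivariant; in particular it respects the decomposition into $\pm$-eigenspaces for complex conjugation, so $\mathbf{c}^+$ maps to a well-defined Kolyvagin system $\ks$ with $\kappa_1 = c^+_\mathbb{Q} = c_\mathbb{Q}$.

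For the last assertion, the core rank $\chi(T_{\overline{f}}(1), \mathcal{F}_{\mathrm{can}}) = 1$ by Proposition \ref{prop:core_rank_one}, so Proposition \ref{prop:comparison_koly_and_cpltd_koly} already identifies $\KS(T_{\overline{f}}(1), \mathcal{F}_{\mathrm{can}}, \mathcal{P})$ with $\KSbar(T_{\overline{f}}(1), \mathcal{F}_{\mathrm{can}}, \mathcal{P})$; what Assumption \ref{assu:more_conditions}.(H.sEZ), i.e. $\mathrm{H}^0(\mathbb{Q}_p, A_f(1)) = 0$, contributes is that the derivative classes are honestly integral at $p$ --- there is no defect measured by $\mathrm{H}^0(\mathbb{Q}_p, A_f(1))$ --- so the construction lands directly in $\KS$, with each $\kappa_n$ in $\mathrm{H}^1_{\mathcal{F}(n)}(\mathbb{Q}, T_{\overline{f}}(1))$ as stated. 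The main obstacle is the descent-and-localization step of the first two paragraphs: proving that the Kolyvagin derivative of $c^+_{\mathbb{Q}(\mu_n)}$ is Galois-fixed modulo $I_n$ and that its localization at each $\ell \mid n$ lies in the transverse condition is precisely the technical heart of Kolyvagin's method, and it is where Assumptions \ref{assu:abstract} and \ref{assu:more_conditions} are genuinely used; the remaining verifications are essentially bookkeeping.
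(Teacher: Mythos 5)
The paper offers no proof of this theorem: it is cited directly as \cite[Theorem 3.2.4]{mazur-rubin-book}, so there is no paper-internal argument against which to compare your sketch. Your outline is a reasonable high-level account of Mazur--Rubin's Kolyvagin derivative construction (derivative operators $D_n$, Galois descent of $D_n c_{\mathbb{Q}(\mu_n)}$ along inflation--restriction using absolute irreducibility, verification of the transverse local condition at $\ell \mid n$ and of the finite--singular compatibility via the Euler relation, and the triviality $c^+_{\mathbb{Q}} = c_{\mathbb{Q}}$ because complex conjugation acts by an inner automorphism on $\mathrm{H}^1(\mathbb{Q}, -)$).

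However, your explanation of the ``furthermore'' clause carries an internal tension you should resolve. You invoke Proposition \ref{prop:core_rank_one} and Proposition \ref{prop:comparison_koly_and_cpltd_koly} to conclude that $\KS(T_{\overline{f}}(1), \mathcal{F}_{\mathrm{can}}, \mathcal{P}) \to \KSbar(T_{\overline{f}}(1), \mathcal{F}_{\mathrm{can}}, \mathcal{P})$ is already an isomorphism. But if the two modules are canonically isomorphic independent of (H.sEZ), then the hypothesis $\mathrm{H}^0(\mathbb{Q}_p, A_f(1)) = 0$ in the theorem's ``furthermore'' would be vacuous and the authors would not have stated it. The point Mazur--Rubin and the present authors are making is finer: the isomorphism of Proposition \ref{prop:comparison_koly_and_cpltd_koly} is an abstract module isomorphism (an element of $\KSbar$ is a compatible family over shrinking sets of primes and increasing powers of $\lambda$, and core rank one lets one glue a bona fide system from such a family), whereas the ``furthermore'' asserts that the Kolyvagin derivative classes $D_n c^+_{\mathbb{Q}(\mu_n)}$ descend to classes $\kappa_n$ lying honestly in $\mathrm{H}^1_{\mathcal{F}_{\mathrm{can}}(n)}(\mathbb{Q}, T_{\overline{f}}(1)/I_n T_{\overline{f}}(1)) \otimes G_n$ for a \emph{fixed} $\mathcal{P}$ and without any shrinkage, so the Euler-to-Kolyvagin map factors through $\KS$ \emph{before} applying the abstract isomorphism. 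The hypothesis (H.sEZ) is what guarantees this. Your phrase ``honestly integral at $p$ --- there is no defect measured by $\mathrm{H}^0(\mathbb{Q}_p, A_f(1))$'' points in the right direction, but you should not first say that Proposition \ref{prop:comparison_koly_and_cpltd_koly} ``already identifies'' the two modules as if that alone settles the matter; that undercuts the role of the hypothesis you then invoke. Aside from this, your account is sound and follows the cited Mazur--Rubin argument faithfully.
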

Let $\mathcal{F}_\Lambda$ be the Selmer structure for $T_{\overline{f}}(1) \otimes_{\mathbb{Z}_{f,\lambda}} \Lambda$
such that $$\mathrm{H}^1_{\mathcal{F}_\Lambda} (\mathbb{Q}, T_{\overline{f}}(1) \otimes_{\mathbb{Z}_{f,\lambda}} \Lambda) = \mathrm{H}^1 (\mathbb{Q}, T_{\overline{f}}(1) \otimes_{\mathbb{Z}_{f,\lambda}} \Lambda)$$ defined in \cite[Definition 5.3.2]{mazur-rubin-book}.
Let $\KSbar(T_{\overline{f}}(1) \otimes_{\mathbb{Z}_{f,\lambda}} \Lambda, \mathcal{F}_{\Lambda}, \mathcal{P})$ be the $\mathbb{Z}_{f,\lambda}$-module of generalized $\Lambda$-adic Kolyvagin systems and
$\ks^{\infty} = (\kappa^{\infty}_n)_n \in \KSbar(T_{\overline{f}}(1) \otimes_{\mathbb{Z}_{f,\lambda}} \Lambda, \mathcal{F}_{\Lambda}, \mathcal{P})$ be the Kato's $\Lambda$-adic Kolyvagin system for $T_{\overline{f}}(1) \otimes_{\mathbb{Z}_{f,\lambda}} \Lambda$ as the image of the map below where $\kappa^{\infty}_n \in \mathrm{H}^1_{ \mathcal{F}_{\Lambda}(n) } (\mathbb{Q}, T_{\overline{f}}(1) \otimes_{\mathbb{Z}_{f,\lambda}} \Lambda)$. 
\begin{thm}[{\hspace{1sp}\cite[Theorem 5.3.3]{mazur-rubin-book}}] \label{thm:euler_to_Lambda-adic}
There is a canonical homomorphism 
$$\ES(T_{\overline{f}}(1), \mathcal{P}, \mathbb{Q}^{\mathrm{ab}}) \to \KSbar(T_{\overline{f}}(1) \otimes_{\mathbb{Z}_{f,\lambda}} \Lambda, \mathcal{F}_{\Lambda}, \mathcal{P}) $$
such that if $$\mathbf{c}^+ \mapsto \ks^\infty$$
then 
$$\kappa^{\infty}_1 =  \varprojlim_r c^+_{\mathbb{Q}_r} =  \varprojlim_r c_{\mathbb{Q}_r} \in \varprojlim_r \mathrm{H}^1(\mathbb{Q}_r, T_{\overline{f}}(1)) = \mathrm{H}^1(\mathbb{Q}, T_{\overline{f}}(1) \otimes_{\mathbb{Z}_{f,\lambda}} \Lambda ) .$$
\end{thm}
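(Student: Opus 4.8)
The plan is to deduce the statement directly from \cite[Theorem 5.3.3]{mazur-rubin-book} applied to the Selmer triple $(T_{\overline{f}}(1), \mathcal{F}_{\mathrm{can}}, \mathcal{P})$, so that the work splits into checking the input hypotheses and then reading off the value of $\kappa^\infty_1$ from the construction. For the hypotheses: Assumption \ref{assu:abstract}.(H.1)--(H.4) holds by \cite[Lemma 6.2.3]{mazur-rubin-book} thanks to Condition (Im); the conditions imposed on the set of primes in \cite[Theorem 5.3.3]{mazur-rubin-book} are automatic by Lemma \ref{lem:the_set_of_kolyvagin_primes}, as noted at the beginning of $\S$\ref{subsec:euler_to_kolyvagin}; and the core rank hypothesis $\chi(T_{\overline{f}}(1), \mathcal{F}_{\mathrm{can}}) = 1$ is Proposition \ref{prop:core_rank_one}. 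It then remains to feed in Kato's Euler system: $\mathbf{c} \in \ES(T_{\overline{f}}(1), \mathcal{P}, \mathbb{Q}^{\mathrm{ab}})$ by Definition \ref{defn:kato_euler_systems}, and hence its $(+)$-part $\mathbf{c}^+$ lies there as well by Remark \ref{rem:conjugate_euler_systems}. Applying the theorem produces the canonical homomorphism $\ES(T_{\overline{f}}(1), \mathcal{P}, \mathbb{Q}^{\mathrm{ab}}) \to \KSbar(T_{\overline{f}}(1) \otimes_{\mathbb{Z}_{f,\lambda}} \Lambda, \mathcal{F}_\Lambda, \mathcal{P})$ together with the generalized $\Lambda$-adic Kolyvagin system $\ks^\infty = (\kappa^\infty_n)_n$ attached to $\mathbf{c}^+$.

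Next I would identify $\kappa^\infty_1$. By the construction in \cite[$\S$5.3]{mazur-rubin-book}, under the identification $\mathrm{H}^1(\mathbb{Q}, T_{\overline{f}}(1) \otimes_{\mathbb{Z}_{f,\lambda}} \Lambda) \simeq \varprojlim_r \mathrm{H}^1(\mathbb{Q}_r, T_{\overline{f}}(1))$ coming from \cite[Lemma 5.3.1]{mazur-rubin-book} (also recalled before Conjecture \ref{conj:kato-main-conjecture}), the class $\kappa^\infty_1$ corresponds to the norm-coherent system obtained by restricting $\mathbf{c}^+$ to the cyclotomic tower, that is, to $(c^+_{\mathbb{Q}_r})_r$. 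Coherence here uses that $\mathbb{Q}_\infty/\mathbb{Q}$ is ramified only at $p$, which is excluded from $\mathcal{P}$, so the Euler system relation between consecutive layers $\mathbb{Q}_r/\mathbb{Q}_{r-1}$ involves an empty product of Euler factors and reduces to $\mathrm{Nm}_{\mathbb{Q}_r/\mathbb{Q}_{r-1}}(c^+_{\mathbb{Q}_r}) = c^+_{\mathbb{Q}_{r-1}}$. Finally, each $\mathbb{Q}_r$ is totally real — it lies in $\mathbb{Q}(\mu_{p^{r+1}})^+$ because $[\mathbb{Q}_r:\mathbb{Q}] = p^r$ is odd — so a complex conjugation lies in $G_{\mathbb{Q}_r}$ and therefore acts trivially on $\mathrm{H}^1(\mathbb{Q}_r, T_{\overline{f}}(1))$; thus $\overline{c_{\mathbb{Q}_r}} = c_{\mathbb{Q}_r}$, whence $c^+_{\mathbb{Q}_r} = c_{\mathbb{Q}_r}$ and $\kappa^\infty_1 = \varprojlim_r c^+_{\mathbb{Q}_r} = \varprojlim_r c_{\mathbb{Q}_r}$, as claimed.

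Since the real content of the statement is \cite[Theorem 5.3.3]{mazur-rubin-book}, the only genuine obstacle is bookkeeping. First, one must track the normalization of the Euler system to $\Lambda$-adic Kolyvagin system map precisely enough to pin down $\kappa^\infty_1$ itself, rather than merely up to a unit or a twist. Second, one must confirm that passing from $\mathbf{c}$ to $\mathbf{c}^+$ is harmless: this is exactly Remark \ref{rem:conjugate_euler_systems}, and concretely $\overline{\mathbf{c}}$ again satisfies the Euler system norm relations because each field $F/\mathbb{Q}$ occurring is abelian, so conjugation by a complex conjugation acts trivially on the group-ring operator $\prod_\ell P_\ell(\sigma_\ell^{-1})$ appearing in those relations. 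Finally, one should note that the Selmer structure $\mathcal{F}_\Lambda$ in play is the unrestricted one, so $\mathrm{H}^1_{\mathcal{F}_\Lambda}(\mathbb{Q}, T_{\overline{f}}(1) \otimes_{\mathbb{Z}_{f,\lambda}} \Lambda) = \mathrm{H}^1(\mathbb{Q}, T_{\overline{f}}(1) \otimes_{\mathbb{Z}_{f,\lambda}} \Lambda)$ and $\kappa^\infty_1$ is a genuine unrestricted global class; this is built into the definition of $\mathcal{F}_\Lambda$ recalled just before the statement.
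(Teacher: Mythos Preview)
Your proposal is correct and matches the paper's treatment: the paper gives no proof of this theorem at all, simply citing \cite[Theorem 5.3.3]{mazur-rubin-book} and noting (just before Theorem \ref{thm:euler_to_kolyvagin}) that Lemma \ref{lem:the_set_of_kolyvagin_primes} handles the conditions on $\mathcal{P}$. Your write-up is therefore more explicit than the paper, spelling out the hypothesis checks and supplying the justification for $c^+_{\mathbb{Q}_r} = c_{\mathbb{Q}_r}$ (namely that $\mathbb{Q}_r$ is totally real, so complex conjugation lies in $G_{\mathbb{Q}_r}$ and acts by an inner automorphism on $\mathrm{H}^1(\mathbb{Q}_r, T_{\overline{f}}(1))$), which the paper asserts without comment.

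One small remark: the core rank condition you invoke is not actually a hypothesis of \cite[Theorem 5.3.3]{mazur-rubin-book} itself; that theorem produces the map into $\KSbar$ regardless. Core rank one enters elsewhere (e.g.\ Proposition \ref{prop:comparison_koly_and_cpltd_koly} and Theorem \ref{thm:specialization_surjective}), so including it does no harm but is not needed here.
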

The non-triviality of $\ks^\infty$, in fact, $\kappa^{\infty}_1 \neq 0$, is due to the result of Rohrlich on non-vanishing of twisted $L$-values \cite{rohrlich-nonvanishing} and the dual exponential map.
\begin{rem}
By \cite[Remark 3.24]{kazim-Lambda-adic}, two Selmer structures $\mathcal{F}_\Lambda$ and $\mathcal{F}_{\mathrm{can}}$ induce the same module of generalized Kolyvagin systems for $T_{\overline{f}}(1) \otimes_{\mathbb{Z}_{f,\lambda}} \Lambda$.
\end{rem}

\begin{thm}[{\hspace{1sp}\cite[Theorem 3.23]{kazim-Lambda-adic}}] \label{thm:specialization_surjective}
Under Assumption \ref{assu:abstract} and Assumption \ref{assu:more_conditions}, we have the following statements.
\begin{enumerate}
\item The module $\KSbar(T_{\overline{f}}(1) \otimes_{\mathbb{Z}_{f,\lambda}} \Lambda, \mathcal{F}_{\mathrm{can}}, \mathcal{P})$ is free of rank one over $\Lambda$.
\item The specialization map is surjective and forms the following commutative diagram
\[
{ \scriptsize
\xymatrix{
\KSbar(T_{\overline{f}}(1) \otimes_{\mathbb{Z}_{f,\lambda}} \Lambda, \mathcal{F}_{\mathrm{can}}, \mathcal{P}) \ar@{->>}[r] \ar[d]^-{\simeq}&
\KSbar(T_{\overline{f}}(1), \mathcal{F}_{\mathrm{can}}, \mathcal{P}) \ar[d]^-{\simeq}\\
\Lambda \ar@{->>}[r] & \Lambda / (\gamma - 1) \simeq \mathbb{Z}_{f,\lambda}
}
}
\]
\end{enumerate}
\end{thm}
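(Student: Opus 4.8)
The plan is to derive both statements from the structure theory of generalized Kolyvagin systems over complete Noetherian local rings (\cite[$\S\S$4--5]{mazur-rubin-book}), adapted to the $\Lambda$-adic module as in \cite{kazim-Lambda-adic}. The single numerical input that drives the whole machine is already in hand: the core rank of the pair $(T_{\overline{f}}(1),\mathcal{F}_{\mathrm{can}})$ is one (Proposition \ref{prop:core_rank_one}), this being a residual computation, namely the $\mathbb{F}_\lambda$-dimension of the $(-1)$-eigenspace of a complex conjugation on $T_{\overline{f}}(1)/\lambda T_{\overline{f}}(1)$ (\cite[Theorem 5.2.15]{mazur-rubin-book}).

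For part (1), I would first observe that every hypothesis governing Kolyvagin systems is residual in nature, hence is inherited by the $\Lambda$-adic module: since $\Lambda$ is local with residue field $\mathbb{F}_\lambda$ and $T_{\overline{f}}(1)\otimes_{\mathbb{Z}_{f,\lambda}}\Lambda$ is $\Lambda$-free, the residual representation is unchanged, so Assumption \ref{assu:abstract}.(H.1)--(H.4) (equivalently the hypotheses of \cite[$\S$3.5]{mazur-rubin-book}) persist and the core rank stays one. The local conditions of $\mathcal{F}_{\mathrm{can}}$ --- equivalently of $\mathcal{F}_\Lambda$, which by the remark following Theorem \ref{thm:euler_to_Lambda-adic} defines the same module of generalized Kolyvagin systems --- behave well under the projections $\Lambda\to\Lambda/\lambda^k\Lambda$ exactly because of Assumption \ref{assu:more_conditions}.(H.T) at the bad primes and (H.sEZ)/(H.EZ) at $p$. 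One then runs the rank-one structure theorem of \cite[$\S$5.2]{mazur-rubin-book} over the complete Noetherian local rings $\Lambda/\lambda^k\Lambda$, applied to $T_{\overline{f}}(1)\otimes\Lambda/\lambda^k\Lambda$, obtaining modules of generalized Kolyvagin systems free of rank one over $\Lambda/\lambda^k\Lambda$ compatibly in $k$; the inverse limit then gives that $\KSbar(T_{\overline{f}}(1)\otimes_{\mathbb{Z}_{f,\lambda}}\Lambda,\mathcal{F}_{\mathrm{can}},\mathcal{P})$ is free of rank one over $\Lambda$.

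For part (2), the specialization map is base change along $\Lambda\twoheadrightarrow\Lambda/(\gamma-1)\simeq\mathbb{Z}_{f,\lambda}$ together with the identification $T_{\overline{f}}(1)\otimes_{\mathbb{Z}_{f,\lambda}}\Lambda/(\gamma-1)=T_{\overline{f}}(1)$; its target $\KSbar(T_{\overline{f}}(1),\mathcal{F}_{\mathrm{can}},\mathcal{P})$ is free of rank one over the DVR $\mathbb{Z}_{f,\lambda}$ by the same structure theorem and Proposition \ref{prop:comparison_koly_and_cpltd_koly}. Thus the specialization is a $\Lambda$-linear map of free rank-one modules, and by Nakayama its surjectivity reduces to showing that the image of a $\Lambda$-generator is nonzero modulo $\lambda$, i.e. survives to a generator of $\KS(T_{\overline{f}}(1)/\lambda T_{\overline{f}}(1),\mathcal{F}_{\mathrm{can}},\mathcal{P}\cap\mathcal{P}_j)$ over $\mathbb{F}_\lambda$ for $j\gg 0$. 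Equivalently, the composite reduction map $\KSbar(T_{\overline{f}}(1)\otimes\Lambda,\mathcal{F}_{\mathrm{can}},\mathcal{P})\to\KS(T_{\overline{f}}(1)/\lambda,\mathcal{F}_{\mathrm{can}},\mathcal{P}\cap\mathcal{P}_j)$ to the residue field of $\Lambda$ must be surjective with a generator going to a generator, which is once more part of the rank-one structure theorem over $\Lambda$. Identifying the vertical arrows of the diagram with $\Lambda\twoheadrightarrow\Lambda/(\gamma-1)$ is then just the assertion that a $\Lambda$-generator of the $\Lambda$-adic module corresponds under specialization to a $\mathbb{Z}_{f,\lambda}$-generator of $\KSbar(T_{\overline{f}}(1),\mathcal{F}_{\mathrm{can}},\mathcal{P})$.

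The hard part --- and the place where the hypotheses of Theorem \ref{thm:main_theorem} really bite --- is the control input underlying part (2): one must verify that specialization at $(\gamma-1)$ creates no cohomological defect, so that the ``stub'' attached to $T_{\overline{f}}(1)\otimes\Lambda$ reduces exactly onto the stub attached to $T_{\overline{f}}(1)$ and the core rank does not jump under specialization. This is precisely where Assumption \ref{assu:more_conditions}.(H.sEZ), namely $\mathrm{H}^0(\mathbb{Q}_p,A_f(1))=0$ --- guaranteed by Condition (NA) via Lemma \ref{lem:conditions}.(1) --- together with the Tamagawa condition (H.T) are indispensable: in the exceptional zero situation the $p$-local term would acquire extra $\mathbb{F}_\lambda$-rank upon specialization and the surjectivity would fail, exactly as flagged in the Remark on Condition (NA) in $\S$\ref{subsec:rem}. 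Granting this, the rest is a matter of transcribing the inverse-limit formalism of \cite[$\S$5.3]{mazur-rubin-book} and \cite{kazim-Lambda-adic} and checking that each $\Lambda/\lambda^k\Lambda$ lies in the class of rings to which the Artinian-level structure theorem applies.
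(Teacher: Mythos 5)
The paper does not prove this theorem: it is stated as a black box quoted from B\"{u}y\"{u}kboduk's work, and the authors rely on that citation rather than reproducing an argument. So the comparison is really between your reconstruction and the implicitly cited proof. Taking that into account, the outline you give is consonant with the underlying Mazur--Rubin/B\"{u}y\"{u}kboduk framework, and you correctly identify the two crucial inputs: the residual computation that the core rank of $(T_{\overline{f}}(1),\mathcal{F}_{\mathrm{can}})$ is one, and the role of Assumption~\ref{assu:more_conditions}.(H.sEZ) (together with (H.T)) in preventing the core rank from jumping when you specialize at $(\gamma-1)$ --- which is exactly the exceptional-zero obstruction flagged in Remark~(NA) of $\S$\ref{subsec:rem}.

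Two points deserve caution. First, your step ``run the rank-one structure theorem of \cite[$\S$5.2]{mazur-rubin-book} over $\Lambda/\lambda^k\Lambda$'' is not quite the right reduction: $\Lambda/\lambda^k\Lambda \simeq (\mathbb{Z}_{f,\lambda}/\lambda^k)\llbracket\gamma-1\rrbracket$ is a complete Noetherian local ring that is not Artinian, and the Artinian-level structure theorem does not apply directly to it. In \cite{mazur-rubin-book} and \cite{kazim-Lambda-adic}, one instead passes to quotients by powers of the full maximal ideal of $\Lambda$ (so both $\lambda$ and $\gamma-1$ are killed to finite order) and runs the structure theory there before assembling the limits. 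Second, the part (2) argument as written is circular at the decisive step: you reduce surjectivity by Nakayama to ``the image of a $\Lambda$-generator is nonzero mod $\lambda$'' and then claim this is ``once more part of the rank-one structure theorem over $\Lambda$.'' That is exactly the content of the control theorem that needs proving --- Mazur--Rubin themselves give an example where the specialization map fails to be surjective without the extra hypotheses, and B\"{u}y\"{u}kboduk's contribution is precisely to verify that (H.sEZ) and (H.T) eliminate the defect by a careful comparison of stub modules across the specialization. Your last paragraph gestures at this, but the formal-Nakayama paragraph that precedes it assumes the conclusion rather than delivering it. As a high-level sketch of the cited argument your proposal is fine; as a stand-alone proof it still contains the key assertion unproved.
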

To sum up, we have the following commutative diagram
\[
{ \scriptsize
\xymatrix{
& \KSbar(T_{\overline{f}}(1) \otimes_{\mathbb{Z}_{f,\lambda}} \Lambda, \mathcal{F}_{\mathrm{can}}, \mathcal{P})   \ar@{->>}[d] & & \ks^\infty \ar@{|->}[d] \\
\ES(T_{\overline{f}}(1), \mathcal{P}, \mathbb{Q}^{\mathrm{ab}}) \ar[r] \ar[ru] & \KSbar(T_{\overline{f}}(1), \mathcal{F}_{\mathrm{can}}, \mathcal{P}) & \mathbf{c} \ar@{|->}[r] \ar@{|->}[ru] & \ks
}
}
\]

\subsection{Primitivity and {$\Lambda$}-primitivity}
For any subquotient $M$ of $T_{\overline{f}}(1) \otimes_{\mathbb{Z}_{f,\lambda}} \Lambda$,
set $\KS(M) := \KS(M, \mathcal{F}_{\mathrm{can}}, \mathcal{P})$ and $\KSbar(M) := \KSbar(M, \mathcal{F}_{\mathrm{can}}, \mathcal{P})$.
Let $\ks \in \KSbar(T_{\overline{f}}(1))$ be the Kato's Kolyvagin system for $T_{\overline{f}}(1)$.
\begin{defn}[{\hspace{1sp}\cite[Definition 4.5.5]{mazur-rubin-book}}] \label{defn:primitivity}
We call $\ks$ \textbf{primitive} if the image of $\ks$ in $\KSbar(T_{\overline{f}}(1)/\lambda T_{\overline{f}}(1) )$ is non-zero.
\end{defn}
Let $\ks^\infty \in \KSbar(T_{\overline{f}}(1) \otimes_{\mathbb{Z}_{f,\lambda}} \Lambda)$ be the $\Lambda$-adic Kato's Kolyvagin system.
\begin{defn}[{\hspace{1sp}\cite[$\S$3.1]{mazur-rubin-book}}]
The \textbf{blind spot of $\ks^\infty$} is the set of ideals $I \subset \Lambda$ such that the image of $\ks^\infty$ under the natural map
$$\KSbar(T_{\overline{f}}(1) \otimes_{\mathbb{Z}_{f,\lambda}} \Lambda) \to \KSbar(T_{\overline{f}}(1) \otimes_{\mathbb{Z}_{f,\lambda}}  \left(  \Lambda  /I  \right) )$$
is zero.
\end{defn}
\begin{defn}[{\hspace{1sp}\cite[Definition 5.3.9]{mazur-rubin-book}}] \label{defn:Lambda-primitivity}
We call $\ks^\infty$ \textbf{$\Lambda$-primitive} if the blind spot of $\ks^\infty$ contains no height-one primes of $\Lambda$.
\end{defn}
The following proposition is a slight variant of \cite[Proposition 4.1 and Proposition 4.2]{kazim-Lambda-adic}.
\begin{prop}[B\"{u}y\"{u}kboduk] \label{prop:Lambda-primitivity}
Under the assumptions in Theorem \ref{thm:main_theorem}, if $\ks$ is primitive, then $\ks^\infty$ is $\Lambda$-primitive.
\end{prop}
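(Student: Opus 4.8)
The plan is to recast primitivity and $\Lambda$-primitivity as statements about a free rank-one module of (generalized) Kolyvagin systems, and then to transport the information from the single specialization at $\gamma-1$ to \emph{all} height-one primes of $\Lambda$, following \cite[Propositions 4.1 and 4.2]{kazim-Lambda-adic}.

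First I would record the structural input. By Proposition \ref{prop:core_rank_one} the core rank $\chi(T_{\overline{f}}(1),\mathcal{F}_{\mathrm{can}})$ is $1$; in particular (via Proposition \ref{prop:comparison_koly_and_cpltd_koly} and the right-hand column of the diagram in Theorem \ref{thm:specialization_surjective}) $\KSbar(T_{\overline{f}}(1))$ is free of rank one over $\mathbb{Z}_{f,\lambda}$, and by the Mazur--Rubin structure theory for core rank one (\cite[$\S$5.2]{mazur-rubin-book}) $\KSbar(T_{\overline{f}}(1)/\lambda T_{\overline{f}}(1))$ is free of rank one over $\mathbb{F}_\lambda$ with the reduction map $\KSbar(T_{\overline{f}}(1))\to\KSbar(T_{\overline{f}}(1)/\lambda T_{\overline{f}}(1))$ being reduction modulo $\lambda$. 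Hence, by Nakayama, $\ks$ is primitive precisely when $\ks$ generates $\KSbar(T_{\overline{f}}(1))$ as a $\mathbb{Z}_{f,\lambda}$-module.

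Next I would lift this to the Iwasawa-theoretic level. Conditions (NA) and (Tam) supply Assumption \ref{assu:more_conditions} by Lemma \ref{lem:conditions}, and Condition (Im) supplies Assumption \ref{assu:abstract}, so Theorem \ref{thm:specialization_surjective} applies: $\KSbar(T_{\overline{f}}(1)\otimes_{\mathbb{Z}_{f,\lambda}}\Lambda,\mathcal{F}_{\mathrm{can}},\mathcal{P})$ is free of rank one over $\Lambda$, and the specialization at $\gamma-1$ is surjective and, under the vertical isomorphisms of the diagram there, is the quotient map $\Lambda\twoheadrightarrow\Lambda/(\gamma-1)\cong\mathbb{Z}_{f,\lambda}$. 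Fix a $\Lambda$-module generator $\mathfrak{g}$ of $\KSbar(T_{\overline{f}}(1)\otimes_{\mathbb{Z}_{f,\lambda}}\Lambda)$; by surjectivity its specialization generates $\KSbar(T_{\overline{f}}(1))$, while by Theorems \ref{thm:euler_to_kolyvagin} and \ref{thm:euler_to_Lambda-adic} (and the commutative diagram closing $\S$\ref{subsec:euler_to_kolyvagin}) the class $\ks^\infty$ specializes to $\ks$. Writing $\ks^\infty=g\cdot\mathfrak{g}$ with $g\in\Lambda$, the class $\ks$ is $(g\bmod(\gamma-1))$ times a generator of $\KSbar(T_{\overline{f}}(1))$, so $\ks$ generates if and only if $g\bmod(\gamma-1)\in\mathbb{Z}_{f,\lambda}^\times$, if and only if $g\notin(\gamma-1,\lambda)$, if and only if $g\in\Lambda^\times$ --- the last equivalence because $\Lambda$ is local with maximal ideal $(\gamma-1,\lambda)$. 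Combined with the previous paragraph, if $\ks$ is primitive then $\ks^\infty$ generates the free rank-one $\Lambda$-module $\KSbar(T_{\overline{f}}(1)\otimes_{\mathbb{Z}_{f,\lambda}}\Lambda)$.

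It remains to show that a $\Lambda$-module generator is automatically $\Lambda$-primitive, and this is where the real work lies. Let $\mathfrak{P}\subset\Lambda$ be a height-one prime; then $\Lambda/\mathfrak{P}$ is a one-dimensional complete Noetherian local domain with finite residue field, so its normalization $\mathcal{O}_{\mathfrak{P}}$ is a discrete valuation ring, finite over $\Lambda/\mathfrak{P}$. One must verify that the running hypotheses --- core rank one, Assumptions \ref{assu:abstract} and \ref{assu:more_conditions}, most delicately the Tamagawa condition (H.T) --- persist under the base change $\mathbb{Z}_{f,\lambda}\to\Lambda/\mathfrak{P}\to\mathcal{O}_{\mathfrak{P}}$, and that the residual prime $\mathfrak{P}=(\lambda)$ may be handled on the same footing as the height-one primes of residue characteristic zero. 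Granting this, one runs the structure theory over the discrete valuation ring $\mathcal{O}_{\mathfrak{P}}$ exactly as in \cite[Propositions 4.1 and 4.2]{kazim-Lambda-adic}: the composite specialization $\KSbar(T_{\overline{f}}(1)\otimes_{\mathbb{Z}_{f,\lambda}}\Lambda)\to\KSbar(T_{\overline{f}}(1)\otimes_{\mathbb{Z}_{f,\lambda}}\Lambda/\mathfrak{P})\to\KSbar(T_{\overline{f}}(1)\otimes_{\mathbb{Z}_{f,\lambda}}\mathcal{O}_{\mathfrak{P}})$ is surjective onto the free rank-one $\mathcal{O}_{\mathfrak{P}}$-module on the right, hence carries the $\Lambda$-generator $\ks^\infty$ to a generator there, which is nonzero; therefore the image of $\ks^\infty$ in $\KSbar(T_{\overline{f}}(1)\otimes_{\mathbb{Z}_{f,\lambda}}\Lambda/\mathfrak{P})$ is already nonzero. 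As $\mathfrak{P}$ ranges over all height-one primes, the blind spot of $\ks^\infty$ contains none of them, i.e. $\ks^\infty$ is $\Lambda$-primitive. The only points that differ from \cite{kazim-Lambda-adic} are that the coefficient ring here is $\mathbb{Z}_{f,\lambda}$ rather than $\mathbb{Z}_p$ and that one works with the twist $T_{\overline{f}}(1)$, neither of which disturbs the argument.
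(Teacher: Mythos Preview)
Your argument is correct, but the paper's proof is considerably shorter and avoids your third paragraph entirely. The paper never establishes that $\ks^\infty$ is a $\Lambda$-generator, and never passes to the normalization $\mathcal{O}_{\mathfrak{P}}$ of $\Lambda/\mathfrak{P}$. Instead it uses the single observation that any height-one prime $\mathfrak{p}\subset\Lambda$ is contained in the maximal ideal $(\gamma-1,\lambda)$, so there is a functorial factorization
\[
\KSbar\bigl(T_{\overline{f}}(1)\otimes_{\mathbb{Z}_{f,\lambda}}\Lambda\bigr)\longrightarrow
\KSbar\bigl(T_{\overline{f}}(1)\otimes_{\mathbb{Z}_{f,\lambda}}\Lambda/\mathfrak{p}\bigr)\longrightarrow
\KSbar\bigl(T_{\overline{f}}(1)/\lambda T_{\overline{f}}(1)\bigr).
\]
By the commutative diagram closing \S\ref{subsec:euler_to_kolyvagin} the composite sends $\ks^\infty$ to $\ks\bmod\lambda$, which is nonzero by the primitivity hypothesis; hence $\ks^\infty\bmod\mathfrak{p}$ is already nonzero. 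That is the whole proof.

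Your route buys a stronger intermediate statement (that $\ks^\infty$ generates the free rank-one $\Lambda$-module of Kolyvagin systems), at the cost of having to verify that the Mazur--Rubin structure theory and Assumptions \ref{assu:abstract}--\ref{assu:more_conditions} persist after base change to each $\mathcal{O}_{\mathfrak{P}}$. In fact your second paragraph already makes the third unnecessary: once $g\in\Lambda^\times$, the same factorization through the residue field shows nonvanishing mod $\mathfrak{p}$ without invoking any structure theory over $\mathcal{O}_{\mathfrak{P}}$. The paper's approach simply notes that even the generator statement is unneeded, since nonvanishing of the residual image $\overline{\ks^\infty}=\ks\bmod\lambda$ already forces nonvanishing at every intermediate quotient $\Lambda/\mathfrak{p}$.
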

\begin{proof}
Let $\mathfrak{p} \subset \Lambda$ be a height one prime ideal.
Consider the commutative diagram
{ \scriptsize
\[
\xymatrix{
\KSbar(T_{\overline{f}}(1) \otimes_{\mathbb{Z}_{f,\lambda}} \Lambda) \ar[r] \ar[dr] \ar[d] & \KSbar(T_{\overline{f}}(1) \otimes_{\mathbb{Z}_{f,\lambda}} \Lambda/\mathfrak{p} ) \ar[d]  & \ks^\infty \ar@{|->}[r]  \ar@{|->}[dr] \ar@{|->}[d]  & \ks^\infty \Mod{\mathfrak{p}} \ar@{|->}[d]  \\
\KSbar(T_{\overline{f}}(1) ) \ar[r] & \KSbar(T_{\overline{f}}(1) / \lambda T_{\overline{f}}(1) ) & \ks = \ks^{\infty} \Mod{(\gamma -1)} \ar@{|->}[r] & \ks \Mod{\lambda} = \overline{\ks^\infty} .
}
\]
}
Since $\ks$ is primitive, $\ks \Mod{\lambda}$ is non-zero. Thus, the residual image $\overline{\ks^\infty}$ is also non-zero.
Thus, $\ks^\infty \Mod{\mathfrak{p}}$ cannot be zero for any height one prime ideal $\mathfrak{p} \subseteq \Lambda$.
\end{proof}

Considering Proposition \ref{prop:reduction-of-kato-main-conjecture}, we have the following statement.
\begin{thm}[{\hspace{1sp}\cite[Theorem 5.3.10.(iii)]{mazur-rubin-book}}] \label{thm:primitivity_kato_main_conjecture}
If $\ks^\infty$ is $\Lambda$-primitive, then the equality of Kato's main conjecture (Conjecture \ref{conj:kato-main-conjecture}) holds for $(T_{\overline{f}}(1), \mathbb{Q}_\infty/\mathbb{Q})$.
\end{thm}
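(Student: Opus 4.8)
The plan is to recast Kato's main conjecture in purely Selmer-theoretic terms and then to feed it into the core rank one machinery of \cite[Chapter 5]{mazur-rubin-book}.

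First, by Proposition \ref{prop:reduction-of-kato-main-conjecture} it suffices to prove
$$\mathrm{char}_{\Lambda}\left(\mathbb{H}^1(T_{\overline{f}}(1))/\Lambda\kappa^{\infty}_1\right) = \mathrm{char}_{\Lambda}\left(\mathbb{H}^2(T_{\overline{f}}(1))\right).$$
Since $\mathcal{F}_\Lambda$ is the fully relaxed Selmer structure, the identification $\mathrm{H}^1_{\mathcal{F}_\Lambda}(\mathbb{Q}, T_{\overline{f}}(1)\otimes_{\mathbb{Z}_{f,\lambda}}\Lambda) = \mathrm{H}^1(\mathbb{Q}, T_{\overline{f}}(1)\otimes_{\mathbb{Z}_{f,\lambda}}\Lambda) \cong \mathbb{H}^1(T_{\overline{f}}(1))$ together with $\kappa^{\infty}_1 = \varprojlim_r c^+_{\mathbb{Q}_r}$ from Theorem \ref{thm:euler_to_Lambda-adic} presents the left-hand side as $\mathrm{H}^1_{\mathcal{F}_\Lambda}(\mathbb{Q}, T_{\overline{f}}(1)\otimes_{\mathbb{Z}_{f,\lambda}}\Lambda)/\Lambda\kappa^{\infty}_1$. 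For the right-hand side I would use Poitou--Tate global duality over the Iwasawa algebra to identify $\mathbb{H}^2(T_{\overline{f}}(1))$, up to pseudo-isomorphism, with the Pontryagin dual of the strict Selmer group of $A_f(1)$ over $\mathbb{Q}_\infty$, i.e. the one cut out by the strict condition at $p$ and no condition elsewhere, which is the orthogonal complement of the relaxed-at-$p$ structure $\mathcal{F}_{\mathrm{can}}$. The vanishing of the local terms $\mathrm{H}^0(\mathbb{Q}_p, A_f(1))$ and $\mathrm{H}^2(\mathbb{Q}_p, T_{\overline{f}}(1)\otimes_{\mathbb{Z}_{f,\lambda}}\Lambda)$, and of the analogous local terms at the primes dividing $N$ -- which is exactly what Assumption \ref{assu:more_conditions}.(H.sEZ) and (H.T) provide, hence Conditions (NA) and (Tam) via Lemma \ref{lem:conditions} -- ensures this pseudo-isomorphism is in fact an equality of characteristic ideals.

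Having made this translation, I would invoke the core rank one theory. By Proposition \ref{prop:core_rank_one} the core rank of $(T_{\overline{f}}(1), \mathcal{F}_{\mathrm{can}})$ equals one, so by Theorem \ref{thm:specialization_surjective}.(1) the module of $\Lambda$-adic Kolyvagin systems $\KSbar(T_{\overline{f}}(1)\otimes_{\mathbb{Z}_{f,\lambda}}\Lambda, \mathcal{F}_{\mathrm{can}}, \mathcal{P})$ -- which coincides with the one for $\mathcal{F}_\Lambda$ -- is free of rank one over $\Lambda$, and so is $\mathbb{H}^1(T_{\overline{f}}(1))$. The structure theorem of Mazur--Rubin computes the characteristic ideal of the dual of the strict Selmer group from a $\Lambda$-adic Kolyvagin system: for a $\Lambda$-basis Kolyvagin system $\ks^{\infty}_{\mathrm{gen}}$ it yields the equality $\mathrm{char}_\Lambda(\mathbb{H}^2(T_{\overline{f}}(1))) = \mathrm{char}_\Lambda(\mathbb{H}^1(T_{\overline{f}}(1))/\Lambda\kappa^{\infty}_{1,\mathrm{gen}})$, while for an arbitrary Kolyvagin system only one divisibility -- Kato's half, coming from his Euler system -- survives. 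The hypothesis is precisely that Kato's $\ks^{\infty}$ is $\Lambda$-primitive; writing $\ks^{\infty} = g\cdot\ks^{\infty}_{\mathrm{gen}}$ with $g\in\Lambda$, $\Lambda$-primitivity says the image of $\ks^{\infty}$ is nonzero in $\KSbar(T_{\overline{f}}(1)\otimes_{\mathbb{Z}_{f,\lambda}}\Lambda/\mathfrak{p})$ for every height one prime $\mathfrak{p}$, which forces $g\notin\mathfrak{p}$ for all such $\mathfrak{p}$, hence $g\in\Lambda^\times$ because $\Lambda$ is a two-dimensional regular local ring. Then $\kappa^{\infty}_1$ and $\kappa^{\infty}_{1,\mathrm{gen}}$ generate the same $\Lambda$-submodule of $\mathbb{H}^1(T_{\overline{f}}(1))$, so the equality above holds with $\kappa^{\infty}_1$ in place of $\kappa^{\infty}_{1,\mathrm{gen}}$, and by the first paragraph this is Kato's main conjecture for $(T_{\overline{f}}(1), \mathbb{Q}_\infty/\mathbb{Q})$.

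The step I expect to be the main obstacle is the second one: making the identification of Kato's second Iwasawa cohomology $\mathbb{H}^2(T_{\overline{f}}(1))$ with the Mazur--Rubin dual of the strict Selmer group exact on the nose, with no leftover pseudo-null modules, and verifying that the relaxed-at-$p$ ``canonical'' Selmer structure used to build $\ks^{\infty}$ is exactly the one whose dual is computed by Kato's cohomology. This is the precise place where the non-exceptional-zero hypothesis (NA) and the Tamagawa hypothesis (Tam) are indispensable, and it is the bridge between Kato's formulation and the abstract Kolyvagin system formalism. Everything else -- core rank one (Proposition \ref{prop:core_rank_one}), the freeness statement (Theorem \ref{thm:specialization_surjective}), and the reduction of the main conjecture to a statement about $\kappa^{\infty}_1$ (Proposition \ref{prop:reduction-of-kato-main-conjecture}) -- is already in hand, and the rest is a formal consequence of the structure theorem of \cite[Chapter 5]{mazur-rubin-book}.
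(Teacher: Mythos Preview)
Your proposal is correct and follows the same approach as the paper, which does not give its own proof but simply cites \cite[Theorem 5.3.10.(iii)]{mazur-rubin-book} together with Proposition \ref{prop:reduction-of-kato-main-conjecture}. You have in fact unpacked more than the paper does: the Poitou--Tate identification of $\mathbb{H}^2(T_{\overline{f}}(1))$ with the dual of the strict Selmer group, and the argument that $\Lambda$-primitivity forces $g\in\Lambda^\times$ via Krull's Hauptidealsatz, are exactly the content behind the cited Mazur--Rubin theorem that the paper leaves implicit.
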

The following theorem directly follows from Theorem \ref{thm:primitivity_kato_main_conjecture} and \cite[$\S$17.13]{kato-euler-systems}.
\begin{thm}[{\hspace{1sp}\cite[Theorem 6.2.7]{mazur-rubin-book}}] \label{thm:primitivity_main_conjecture}
If $a_p(f)$ is a $\lambda$-adic unit with $a_p(f) \not\equiv 1 \Mod{\lambda}$ and $\ks^\infty$ is $\Lambda$-primitive, then the equality of the Iwasawa main conjecture \`{a} la Mazur--Greenberg (Conjecture \ref{conj:iwasawa_main_conjecture}) holds for $(A_f(1), \mathbb{Q}_\infty/\mathbb{Q})$.
\end{thm}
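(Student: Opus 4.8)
The plan is to derive this from Theorem~\ref{thm:primitivity_kato_main_conjecture} together with Kato's comparison of his main conjecture with the Mazur-Greenberg formulation in the ordinary case. First I would apply Theorem~\ref{thm:primitivity_kato_main_conjecture}: since $\ks^\infty$ is $\Lambda$-primitive, Kato's main conjecture (Conjecture~\ref{conj:kato-main-conjecture}) holds for $(T_{\overline{f}}(1),\mathbb{Q}_\infty/\mathbb{Q})$, that is,
$$\mathrm{char}_{\Lambda}\left(\mathbb{H}^1(T_{\overline{f}}(1))/\Lambda\mathbf{z}_{\mathrm{Kato}}\right)=\mathrm{char}_{\Lambda}\left(\mathbb{H}^2(T_{\overline{f}}(1))\right),$$
and by Proposition~\ref{prop:reduction-of-kato-main-conjecture} I may freely replace $\mathbf{z}_{\mathrm{Kato}}$ by $\kappa^{\infty}_1$ if convenient.

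Next I would translate both sides of this equality into the Mazur-Greenberg language, using that $a_p(f)$ is a $\lambda$-adic unit, so that $f$ is $p$-ordinary and the Greenberg ordinary local condition at $p$ is available. On the algebraic side, Poitou-Tate global duality relates $\mathbb{H}^2(T_{\overline{f}}(1))$ to $\mathrm{Sel}(\mathbb{Q}_\infty,A_f(1))^\vee$ up to a local correction term at $p$ measuring the difference between the relaxed-at-$p$ condition built into $\mathbb{H}^\bullet$ and the ordinary one; on the analytic side, the explicit reciprocity law, i.e.\ the Coleman (dual exponential) map, identifies $\mathbb{H}^1(T_{\overline{f}}(1))/\Lambda\mathbf{z}_{\mathrm{Kato}}$ with a module whose characteristic ideal is $(L_p(\mathbb{Q}_\infty,f_\alpha))$, again up to the \emph{same} local term at $p$. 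These two correction terms cancel, and the Kato equality turns into
$$\left(L_p(\mathbb{Q}_\infty,f_\alpha)\right)=\mathrm{char}_{\Lambda}\left(\mathrm{Sel}(\mathbb{Q}_\infty,A_f(1))^\vee\right),$$
which is exactly Conjecture~\ref{conj:iwasawa_main_conjecture}.

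Finally I would point out where the hypothesis $a_p(f)\not\equiv 1\Mod{\lambda}$ enters: it is the non-anomalous condition ruling out the exceptional zero, so that the Coleman map acquires no extra zero and $\mathrm{H}^0(\mathbb{Q}_p,A_f(1))=0$ (Assumption~\ref{assu:more_conditions}.(H.sEZ), via Lemma~\ref{lem:conditions}.(1)); this is what forces the two local comparison terms at $p$ to be units, so the characteristic ideals match on the nose rather than only up to a bounded defect. I expect the main obstacle to be bookkeeping rather than conceptual: one must keep track of the integral normalizations (Kato's zeta element versus the canonical period defining $L_p(\mathbb{Q}_\infty,f_\alpha)$) and of the Tate twists, since Kato formulates everything for $T_{\overline{f}}$ while we work with $T_{\overline{f}}(1)$, the two being related by a twist by the Teichm\"{u}ller character (cf.\ \cite[\S6.5]{rubin-book}), and then check that no spurious finite-index factor survives. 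All of this is precisely the content of \cite[\S17.13]{kato-euler-systems}, so the theorem follows.
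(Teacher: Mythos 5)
Your proposal is correct and follows exactly the paper's route: apply Theorem~\ref{thm:primitivity_kato_main_conjecture} to deduce Conjecture~\ref{conj:kato-main-conjecture}, then invoke the equivalence with Conjecture~\ref{conj:iwasawa_main_conjecture} from \cite[\S 17.13]{kato-euler-systems}, with the ordinarity and non-anomalous hypotheses ensuring that equivalence applies on the nose. The paper's proof is a one-line citation of precisely these two ingredients; your paragraphs simply unpack what \S 17.13 does internally.
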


\section{The image of the dual exponential map for unramified extensions} \label{sec:the_image_of_dual_exp}
The goal of this section is to explain the following diagram
\[
{ \scriptsize
\xymatrix@R=1.7em{
\mathrm{Hom}_{\mathbb{Q}_{f, \lambda}}(J_1(N)_{\overline{f},\lambda} (\mathbb{Q}_p(\mu_n)) \otimes_{\mathbb{Z}_{f,\lambda}} \mathbb{Q}_{f, \lambda}, \mathbb{Q}_{f, \lambda} ) \ar[d]^-{\simeq}_-{\textrm{Kummer map}} \\
\mathrm{Hom}_{\mathbb{Q}_{f, \lambda}}(\mathrm{H}^1_f( \mathbb{Q}_p(\mu_n), V_f(1)), \mathbb{Q}_{f, \lambda}) \ar[d]^-{\simeq}_-{\textrm{Tate duality}}  \\
\mathrm{H}^1_s( \mathbb{Q}_p(\mu_n), V_{\overline{f}}(1)) \ar[d]_-{ \substack{\mathrm{exp}^{*} \\(\textrm{Theorem \ref{thm:kato_formula}})}}^-{\simeq} & \mathrm{H}^1_s( \mathbb{Q}_p(\mu_n), T_{\overline{f}}(1)) \ar@{_{(}->}[l]_-{\textrm{\cite[$\S$3.5.(3.14)]{rubin-book}}}^-{\textrm{\cite[Lemma 1.2.2.(ii)]{rubin-book}}} \ar[dd]^-{\simeq}_{ \langle \omega^*_{\overline{f}},\mathrm{exp}^{*}(-)\rangle_{\mathrm{dR}} } \\
S(\overline{f}) \otimes_{\mathbb{Q}_f} \mathbb{Q}_{f, \lambda} \otimes_{\mathbb{Q}_p} \mathbb{Q}_p(\mu_n) \ar[d]^-{\simeq}_-{\langle \omega^*_{\overline{f}},-\rangle_{\mathrm{dR}}} \\
 \mathbb{Q}_{f, \lambda} \otimes_{\mathbb{Q}_p} \mathbb{Q}_p(\mu_n) & \mathscr{L} \ar@{_{(}->}[l]
}
}
\]
and determine the image $\mathscr{L}$ of the $\mathbb{Z}_{f, \lambda} \otimes \mathbb{Z}_p[\mu_n]$-lattice $\mathrm{H}^1_s(\mathbb{Q}_p(\mu_n), T_{\overline{f}}(1))$ in $\mathbb{Q}_{f, \lambda} \otimes_{\mathbb{Q}_p} \mathbb{Q}_p(\mu_n)$.
In order to do this, we compute the image of $J_1(N)_{f,\lambda}(\mathbb{Q}_p(\mu_n)) $ under the composition of the de Rham pairing with $\omega_{\overline{f}}$ and the logarithm map and use Kato's explicit formula (Theorem \ref{thm:kato_formula}) via the Tate local duality. It is an explicit description of the integral structure of \cite[Example 3.11]{bloch-kato}.
Since the de Rham pairing and the period integral are closely related via the Eichler--Shimura isomorphism, we also explain this comparison and the integral canonical periods.
This section can be regarded as a generalization of \cite[Proposition 3.5.1]{rubin-book} to modular abelian varieties of $\mathrm{GL}_2$-type and an explicit description of \cite[Lemma 14.18.(ii)]{kato-euler-systems} for $\mathbb{Q}_p(\mu_n)$.
Note that we crucially use the residual irreducibility of $\overline{\rho}$ and the good reduction property of $f$ at $p$ in this section.
\subsection{The local condition at $p$}
The local condition
$\mathrm{H}^1_f(\mathbb{Q}_p(\mu_n), V_f(1))$
at $p$ is defined by the image of the Kummer map
$$\mathrm{H}^1_f(\mathbb{Q}_p(\mu_n), V_f(1)) := \mathrm{Im}\left( J_1(N)_{\overline{f},\lambda}(\mathbb{Q}_p(\mu_n)) \otimes \mathbb{Q}_{p} \hookrightarrow \mathrm{H}^1(\mathbb{Q}_p(\mu_n), V_f(1))  \right) .$$
Then
 $\mathrm{H}^1_f(\mathbb{Q}_p(\mu_n), V_f(1))$
 and
 $\mathrm{H}^1_f(\mathbb{Q}_p(\mu_n), V_{\overline{f}}(1))$
are orthogonal complements with respect to the local Tate pairing.

\subsection{Kato's explicit formula} \label{subsec:kato_explicit_formula}
Let $K$ be a finite extension of $\mathbb{Q}_p$.
For a de Rham representation $V$ of $G_K$, we recall Fontaine's de Rham functor $\mathbf{D}_{\mathrm{dR},K}(V) := \left(V \otimes_{\mathbb{Q}_p} \mathbf{B}_{\mathrm{dR}} \right)^{G_{K}}$ and
$\mathbf{D}^i_{\mathrm{dR},K}(V) := \left(V \otimes_{\mathbb{Q}_p} t^i \mathbf{B}^+_{\mathrm{dR}} \right)^{G_{K}}$
where $\mathbf{B}_{\mathrm{dR}}$ is the de Rham period ring \`{a} la Fontaine, $\mathbf{B}^+_{\mathrm{dR}} \subseteq \mathbf{B}_{\mathrm{dR}}$ is the valuation ring of $\mathbf{B}_{\mathrm{dR}}$, and $t$ is a uniformizer of $\mathbf{B}_{\mathrm{dR}}$.
For finite extensions $K_1/K_2$ of $\mathbb{Q}_p$, we have an isomorphism
$\mathbf{D}_{\mathrm{dR},K_1} (V) \simeq \mathbf{D}_{\mathrm{dR}, K_2}(V) \otimes_{K_2} K_1$
preserving the de Rham filtration. if $K_1/K_2$ is Galois, then it is also $\mathrm{Gal}(K_1/K_2)$-equivariant.
We write $\mathbf{D}_{\mathrm{dR},n}(V) = \mathbf{D}_{\mathrm{dR},\mathbb{Q}_p(\mu_n)}(V)$ and $\mathbf{D}_{\mathrm{dR}}(V) = \mathbf{D}_{\mathrm{dR},\mathbb{Q}_p}(V)$.
Since our representation is crystalline, these $\mathbf{D}_{\mathrm{dR}}$'s admit the natural action of Frobenius $\varphi$. 

Following \cite[Chapter II, Theorem 1.4.1.(4)]{kato-lecture-1} and \cite[(11.3.4) and Theorem 12.5]{kato-euler-systems}, we have the following formula.
\begin{thm}[Kato's explicit formula]  \label{thm:kato_formula}
The Bloch--Kato dual exponential map
$$\mathrm{exp}^{*}: \mathrm{H}^1(\mathbb{Q}_p(\mu_n), V_{\overline{f}}(1)) \to \mathbf{D}_{\mathrm{dR},n}(V_{\overline{f}}(1))$$
coincides with the composition of maps
\[
{\scriptsize
\xymatrix{
\mathrm{H}^1_s(\mathbb{Q}_p(\mu_n), V_{\overline{f}}) \ar[d]^-{\simeq}_{\textrm{local duality}} & \mathbf{D}^0_{\mathrm{dR},n}(V_{\overline{f}}(1))  \ar[r]_-{\simeq}^-{ M \mapsto M(-1) }  &  \mathbf{D}^1_{\mathrm{dR},n}(V_{\overline{f}}) \ar[d]^-{\simeq}_-{\substack{ \textrm{de Rham-\'{e}tale} \\ \textrm{comparison} \\ \textrm{ \cite[(11.3.4)]{kato-euler-systems} }}} \\
 \mathrm{Hom}_{\mathbb{Q}_{f, \lambda}} \left( \mathrm{H}^1_f(\mathbb{Q}_p(\mu_n), V_f(1)), \mathbb{Q}_{f, \lambda} \right) \ar[r]^-{ \mathrm{Hom}_{\mathbb{Q}_{f, \lambda}} \left( \mathrm{exp}, \mathbb{Q}_{f, \lambda} \right)  } & \mathrm{Hom}_{\mathbb{Q}_{f, \lambda}} \left( \mathbf{D}_{\mathrm{dR},n}(V_f(1)) / \mathbf{D}^0_{\mathrm{dR},n}(V_{f}(1)), \mathbb{Q}_{f, \lambda} \right) \ar[u]^-{\simeq}_-{\mathrm{Tr} \circ \langle - , - \rangle_{\mathrm{dR}}(1)} & S(\overline{f}) \otimes_{\mathbb{Q}_f} \mathbb{Q}_{f,\lambda} \otimes_{\mathbb{Q}_p}\mathbb{Q}_p(\mu_n)
}
}
\]
where 
\[
{\scriptsize
\xymatrix{
\mathrm{Tr} \circ \langle - , - \rangle_{\mathrm{dR}} :
\dfrac{ \mathbf{D}_{\mathrm{dR},n}(V_f(1)) }{ \mathbf{D}^0_{\mathrm{dR},n}(V_{f}(1)) }  \times  \mathbf{D}^{0}_{\mathrm{dR},n}(V_{\overline{f}}(1)) \ar[rr]^-{\langle - , - \rangle_{\mathrm{dR}}} & & \mathbf{D}_{\mathrm{dR}, n}(\mathbb{Q}_{f,\lambda}(1)) \simeq \mathbb{Q}_{f,\lambda} \otimes \mathbb{Q}_p(\mu_n) \ar[r]^-{\mathrm{Tr}} & \mathbb{Q}_{f,\lambda}
}
}
\]
is the composition of the trace map and the $\mathbb{Q}_{f,\lambda}$-sesqui-linear de Rham pairing, i.e. the pairing is $\mathbb{Q}_{f,\lambda}$-conjugate-linear for the second term.
\end{thm}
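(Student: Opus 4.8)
The plan is to deduce the statement over $\mathbb{Q}_p(\mu_n)$ from Kato's explicit formula over $\mathbb{Q}_p$ by base change, once all the duality normalizations have been matched up. First I would recall the abstract shape of the dual exponential. For a finite extension $K/\mathbb{Q}_p$ and a de Rham representation $V$ of $G_K$, the Bloch--Kato exponential $\mathrm{exp}_V : \mathbf{D}_{\mathrm{dR},K}(V)/\mathbf{D}^0_{\mathrm{dR},K}(V) \to \mathrm{H}^1_f(K,V)$ and the dual exponential $\mathrm{exp}^*_{V^*(1)} : \mathrm{H}^1(K, V^*(1)) \to \mathbf{D}^0_{\mathrm{dR},K}(V^*(1))$ are adjoint:
$$\mathrm{Tr}_{K/\mathbb{Q}_p}\, \langle x , \mathrm{exp}^*_{V^*(1)}(y) \rangle_{\mathrm{dR}} = \langle \mathrm{exp}_V(x), y \rangle_{\mathrm{Tate}}$$
for $x \in \mathbf{D}_{\mathrm{dR},K}(V)/\mathbf{D}^0_{\mathrm{dR},K}(V)$ and $y \in \mathrm{H}^1(K, V^*(1))$, where $\langle -, - \rangle_{\mathrm{dR}}$ is the de Rham pairing with values in $\mathbf{D}_{\mathrm{dR},K}(\mathbb{Q}_p(1))$ and $\langle -, - \rangle_{\mathrm{Tate}}$ is the local Tate pairing (\hspace{1sp}\cite{bloch-kato}, \cite[Chapter II]{kato-lecture-1}). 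Since $\mathrm{exp}_V$ has image $\mathrm{H}^1_f(K,V)$, this identity exhibits $\mathrm{exp}^*_{V^*(1)}$ as the composite of local Tate duality, precomposition with $\mathrm{exp}_V$, and the perfect de Rham pairing between $\mathbf{D}_{\mathrm{dR},K}(V)/\mathbf{D}^0_{\mathrm{dR},K}(V)$ and $\mathbf{D}^0_{\mathrm{dR},K}(V^*(1))$.

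Next I would specialize to $V = V_f(1)$ and $K = \mathbb{Q}_p(\mu_n)$. By the duality of modular Galois representations \cite[(14.10.1)]{kato-euler-systems} one has $V^*(1) = V_f(1)^*(1) \simeq V_{\overline{f}}(1)$, and by the orthogonality of the local conditions at $p$ recalled above, $\mathrm{H}^1_f(\mathbb{Q}_p(\mu_n), V_f(1))$ and $\mathrm{H}^1_f(\mathbb{Q}_p(\mu_n), V_{\overline{f}}(1))$ are exact annihilators under the Tate pairing, so the composite of the previous paragraph factors through the singular quotient $\mathrm{H}^1_s(\mathbb{Q}_p(\mu_n), V_{\overline{f}}(1))$; this accounts for the left column, the bottom row, and the ``local duality'' arrow of the claimed diagram. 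To identify the de Rham side with $S(\overline{f})$ I would apply the Tate twist $M \mapsto M(-1)$, which gives $\mathbf{D}^0_{\mathrm{dR},n}(V_{\overline{f}}(1)) \simeq \mathbf{D}^1_{\mathrm{dR},n}(V_{\overline{f}})$, followed by Kato's de Rham--\'{e}tale comparison \cite[(11.3.4)]{kato-euler-systems}, which by the very definition of $S(\overline{f})$ identifies $\mathbf{D}^1_{\mathrm{dR},n}(V_{\overline{f}})$ with $S(\overline{f}) \otimes_{\mathbb{Q}_f} \mathbb{Q}_{f,\lambda} \otimes_{\mathbb{Q}_p} \mathbb{Q}_p(\mu_n)$. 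Over $\mathbb{Q}_p$ the resulting composite is precisely the content of \cite[Chapter II, Theorem 1.4.1.(4)]{kato-lecture-1}, equivalently \cite[(11.3.4) and Theorem 12.5]{kato-euler-systems} --- the explicit description of $\mathrm{exp}^*$ through the de Rham realization together with the de Rham--\'{e}tale comparison for modular curves --- and I would cite it rather than reprove it.

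Finally, to descend from $\mathbb{Q}_p$ to $\mathbb{Q}_p(\mu_n)$ I would use that $\mathbb{Q}_p(\mu_n)/\mathbb{Q}_p$ is a product of finite unramified extensions (since $(n,p)=1$), so $\mathbf{D}_{\mathrm{dR},n}(V) \simeq \mathbf{D}_{\mathrm{dR}}(V) \otimes_{\mathbb{Q}_p} \mathbb{Q}_p(\mu_n)$ compatibly with the filtration, the Frobenius, and the $\mathrm{Gal}(\mathbb{Q}_p(\mu_n)/\mathbb{Q}_p)$-action, as recalled in $\S$\ref{subsec:kato_explicit_formula}; each of $\mathrm{exp}$, $\mathrm{exp}^*$, the Tate pairing (with $\mathrm{Tr}_{\mathbb{Q}_p(\mu_n)/\mathbb{Q}_p}$) and the de Rham pairing is compatible with this base change and with corestriction, by the standard functoriality of Bloch--Kato theory under change of the local field. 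The main obstacle is then a bookkeeping one: to see that the diagram commutes on the nose rather than up to an undetermined scalar, one must check that Kato's normalizations in \cite[\S 11 and \S 12]{kato-euler-systems} make the trace maps, the Tate twists by $1$, and the $\mathbb{Q}_{f,\lambda}$-sesqui-linearity of $\langle -, - \rangle_{\mathrm{dR}}$ (conjugate-linear in the $\overline{f}$-variable, forced by the character $\overline{\psi} = \psi^{-1}$ of $\overline{f}$) all line up, and that the trace $\mathrm{Tr}_{\mathbb{Q}_p(\mu_n)/\mathbb{Q}_p}$ of the adjunction becomes $\mathrm{Tr}_{\mathbb{Q}_{f,\lambda} \otimes \mathbb{Q}_p(\mu_n)/\mathbb{Q}_{f,\lambda}}$ in the target $\mathbb{Q}_{f,\lambda}$; tracking this twist through each identification is the only delicate point.
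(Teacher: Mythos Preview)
Your sketch is fine and matches the paper's treatment: the paper does not give an independent proof of this theorem but simply records it as a consequence of \cite[Chapter II, Theorem 1.4.1.(4)]{kato-lecture-1} and \cite[(11.3.4) and Theorem 12.5]{kato-euler-systems}, exactly the references you invoke. Your unpacking of the adjunction between $\mathrm{exp}$ and $\mathrm{exp}^*$, the specialization to $V=V_f(1)$ via $V_f(1)^*(1)\simeq V_{\overline f}(1)$, and the base change along the unramified extension $\mathbb{Q}_p(\mu_n)/\mathbb{Q}_p$ is a faithful expansion of what those citations contain, so there is nothing to correct.
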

An explicit description of the paring $\langle - , - \rangle_{\mathrm{dR}}$ is explored by comparing with the local duality and the period integral in (\ref{eqn:comparison-local-duality-dR-paring-period-integral}). See (\ref{eqn:explicit-formula-pairing}) for an explicit formula for the pairing.

\subsection{Tangent spaces, cotangent spaces, and their integral lattices}
Following \cite[Example 3.11]{bloch-kato} and \cite[$\S$2.2.2]{kurihara-invent}, we recall the notion of the tangent spaces in terms of $\mathbf{D}_{\mathrm{dR}}$.
We define the \textbf{tangent space of $J_1(N)_{f,\lambda}(K)$} by $\mathbb{Q}_{f,\lambda}$-vector space
$$\mathbf{D}_{\mathrm{dR},K}(V_f(1)) / \mathbf{D}^0_{\mathrm{dR},K}(V_{f}(1)).$$
If $K/\mathbb{Q}_p$ is Galois, then it admits the natural action of $\mathrm{Gal}(K/\mathbb{Q}_p)$.
Consider the Lie group exponential map
\[
\xymatrix{
\mathbf{D}_{\mathrm{dR},n}(V_f(1)) / \mathbf{D}^0_{\mathrm{dR},n}(V_{f}(1))  \ar[r]^-{\simeq}  & J_1(N)_{\overline{f},\lambda} ( \mathbb{Q}_p(\mu_n) ) \otimes_{\mathbb{Z}_{f,\lambda}} \mathbb{Q}_{f,\lambda} .
}
\]
Also, with the Kummer map, we have the Bloch--Kato exponential map, which yields the following isomorphism of $\mathbb{Q}_{f,\lambda}[\mathrm{Gal}( \mathbb{Q}_p(\mu_n)/ \mathbb{Q}_p ) ]$-modules
\[
\xymatrix{
\mathrm{exp}: \mathbf{D}_{\mathrm{dR},n}(V_f(1)) / \mathbf{D}^0_{\mathrm{dR},n}(V_{f}(1))  \ar[r]^-{\simeq}  & \mathrm{H}^1_f ( \mathbb{Q}_p(\mu_n),  V_f(1)).
}
\]
\begin{defn}
We define the \textbf{canonical integral lattice} 
$$\mathbf{D}_{\mathrm{dR},n}(T_f(1)) / \mathbf{D}^0_{\mathrm{dR},n}(T_{f}(1)) \subseteq \mathbf{D}_{\mathrm{dR},n}(V_f(1)) / \mathbf{D}^0_{\mathrm{dR},n}(V_{f}(1))$$
by the inverse image of the torsion-free part of $\mathrm{H}^1_f ( \mathbb{Q}_p(\mu_n),  T_f(1))$ under the Bloch--Kato exponential map.
\end{defn}
The canonical integral lattice coincides with the $\mathbb{Z}_{f, \lambda}$-component of the integral tangent space of the $\mathbb{Q}_p(\mu_n)$-points of the N\'{e}ron model of $J_1(N)_{\overline{f}}$ since $T_f(1)$ is naturally isormorphic to the $\lambda$-adic Tate module of $J_1(N)_{\overline{f}, \lambda}$.

By the interpretation of modular forms in terms of $p$-adic Hodge theory as in \cite[(11.3.4)]{kato-euler-systems},
we define the \textbf{cotangent space} and normalize its integral one by
\[
\xymatrix{
\mathbf{D}^0_{\mathrm{dR}}(V_f(1))  \simeq \mathbf{D}^1_{\mathrm{dR}}(V_f)  \simeq S(f) \otimes_{\mathbb{Q}_f} \mathbb{Q}_{f,\lambda}, &
\mathbf{D}^0_{\mathrm{dR}}(T_f(1))  \simeq \mathbf{D}^1_{\mathrm{dR}}(T_f)  \simeq \mathbb{Z}_f \cdot \omega_f \otimes_{\mathbb{Z}_f} \mathbb{Z}_{f,\lambda} .
}
\]
More explicitly, the integral lattice $\mathbf{D}_{\mathrm{dR},\mathbb{Q}_p}(T_f(1)) / \mathbf{D}^0_{\mathrm{dR},\mathbb{Q}_p}(T_{f}(1)) \otimes_{\mathbb{Z}_p} \mathbb{Z}_p[\mu_n]$ is generated by the dual basis $\omega^*_{f} \in \mathbf{D}_{\mathrm{dR},n}(V_f(1)) / \mathbf{D}^0_{\mathrm{dR},n}(V_{f}(1))$ to $\omega_f := f(z)dz$ over $\mathbb{Z}_{f,\lambda} \otimes  \mathbb{Z}_p[\mu_n]$ such that $\langle \omega^*_{f}, \omega_f \rangle_{\mathrm{dR}} = 1$.
We will explain later in $\S$\ref{subsec:eichler-shimura} that the dual basis is explicitly described in terms of integral canonical periods.

\subsection{Mod $p$ multiplicity one and integral canonical periods} \label{subsec:mod_p_multi_one}
We recall the notion of integral canonical periods following \cite[$\S$3]{vatsal-integralperiods-2013}.
The existence of integral canonical periods requires a mod $p$ multiplicity one result established by Mazur, Wiles, and others, under the residual irreducibility assumption.

For a module $M$, let $M^\pm$ be the submodule of $M$ on which the complex conjugation acts by $\pm 1$, respectively,  and $\mathfrak{m}$ be the maximal ideal of $\mathbb{T}$ corresponding to $\overline{\rho}$ as in $\S$\ref{subsec:hecke_algebra}.

By \cite[Theorem 2.1.(i)]{wiles} with Condition (Im) in Theorem \ref{thm:main_theorem} and $(N,p)=1$, the Hecke module $\mathrm{H}_1(X_1(N), \mathbb{Z}_p)^{\pm}_{\mathfrak{m}}$ is free of rank one over $\mathbb{T}_\mathfrak{m}$ and let $\gamma^\pm$ be a generator of $\mathrm{H}_1(X_1(N), \mathbb{Z}_p)^{\pm}_{\mathfrak{m}}$ over $\mathbb{T}_\mathfrak{m}$, respectively.
Multiplying by a unit if necessary, we may assume that $\gamma^\pm \in \mathrm{H}_1(X_1(N), \mathbb{Z})^\pm$ following \cite[$\S$3.1]{vatsal-integralperiods-2013}. 
Then the pairing via the period integral 
\[
\xymatrix@R=0em{
 \mathrm{H}_1(X_1(N), \mathbb{Z})^\pm \times S_2(\Gamma_1(N), \mathbb{C}) \ar[r] & \mathbb{C} \\
( \gamma^{\pm} , f ) \ar@{|->}[r] & \int_{\gamma^{\pm}} \omega_f
}
\]
yields the values
$$\Omega^\pm_f := \int_{\gamma^\pm} \omega_f \in \mathbb{C}^\times$$
and we call them the $(\pm)$-part of the \textbf{integral canonical periods of $f$} if $\wp_f \subseteq \mathfrak{m}$.
The periods $\Omega^\pm_f$ are defined up to multiplication by $\mathbb{Q}^\times_f \cap \mathbb{Z}^\times_{f, \lambda}$, i.e. $\lambda$-adic units.  If we define the periods with $f \in S_2(\Gamma_1(N), \overline{\mathbb{Q}}_p)$, the periods depend on the identification $\iota: \mathbb{C} \simeq \overline{\mathbb{Q}}_p$.
Furthermore, the integral canonical periods vary integrally in Hida families (\hspace{1sp}\cite[$\S$3]{epw}). It is the essence of simultaneous vanishing of $\mu$-invariants in Hida families (\hspace{1sp}\cite[Theorem 1]{epw}).

Let $\gamma^{\pm}_f$ be the generator of the free $\mathbb{Z}_{f,\lambda}$-module $\mathrm{H}_1(X_1(N), \mathbb{Z}_p)^{\pm}_{\mathfrak{m}} \otimes_{\mathbb{T}_{\mathfrak{m}}} \mathbb{Z}_{f,\lambda}$ of rank one, which is induced from the chosen generator $\gamma^\pm$.
Then the period integral naturally induces the pairing between one-dimensional $\mathbb{C}$-vector spaces
\[
\xymatrix@R=0em{
\left( \mathrm{H}_1(X_1(N), \mathbb{Z}_p)^{\pm}_{\mathfrak{m}} \otimes_{\mathbb{T}_{\mathfrak{m}}} \mathbb{Z}_{f,\lambda} \otimes_{\mathbb{Z}_{f,\lambda}, \iota^{-1}} \mathbb{C} \right) \times S(f) \otimes_{\mathbb{Q}_f} \mathbb{C} \ar[r] & \mathbb{C}  .
}
\]
Due to the irreducibility of $\overline{\rho}$, it is easy to see that the values 
$\left[ \dfrac{a}{n} \right]_f$, 
$\left[ \dfrac{a}{n} \right]^+_f$, and
$\left[ \dfrac{a}{n} \right]^-_f$ lie in $\mathbb{Z}_{f, \lambda}$
for any integer $a$ and $n$ with $(n,N) = 1$.

\subsection{The de Rham pairing, the period integral, and Eichler--Shimura} \label{subsec:eichler-shimura}
In order to utilize Kato's explicit formula (Theorem \ref{thm:kato_formula}), we need to work on the de Rham side.
Thus, we need to compare the Betti homology of modular curves (appeared in $\S$\ref{subsec:mod_p_multi_one}) and the dual space to the de Rham cohomology of modular curves (appeared in $\S$\ref{subsec:kato_explicit_formula})  via the Betti--de Rham comparison (Eichler--Shimura isomorphism).

From now on, we only cover the $(+)$-part because we focus on the totally real extension $\mathbb{Q}_{\infty}/\mathbb{Q}$.
We summarize the comparison between the Betti side and the de Rham side in the following diagram.
\begin{equation} \label{eqn:comparison-betti-de-rham}
\begin{split}
{ \scriptsize
\xymatrix@R=1.5em{
\mathrm{H}_1(X_1(N), \mathbb{Z}_p)^+ \ar[d] & \gamma^+ \ar@{|->}[d] \\
\mathrm{H}_1(X_1(N), \mathbb{Z}_p)^+ \otimes_{\mathbb{T}} \mathbb{Q}_{f,\lambda} \ar[d]_-{\otimes_{\mathbb{Q}_{f,\lambda}}\mathbb{C}} & \gamma^+_f  \ar@{|->}[dd] \\
\mathrm{H}_1(X_1(N), \mathbb{Z}_p)^+ \otimes_{\mathbb{T}} \mathbb{Q}_{f,\lambda} \otimes_{\mathbb{Q}_{f,\lambda}} \mathbb{C} \ar[d]^-{\simeq}_-{\textrm{Betti--de Rham comparison (Eichler-Shimura)}} \\
\mathrm{Hom}_{\mathbb{Q}_{f,\lambda}}(\mathrm{H}^0(X_1(N)_{\mathbb{Q}_p}, \Omega^1_{X_1(N)_{\mathbb{Q}_p}/\mathbb{Q}_p})\otimes_{\mathbb{T}} \mathbb{Q}_{f,\lambda} , \mathbb{Q}_{f,\lambda}) \otimes_{\mathbb{Q}_{f,\lambda}} \mathbb{C} & \gamma^+_f = \Omega^+_{f} \cdot \omega^*_{\overline{f}}  \\
\mathrm{Hom}_{\mathbb{Q}_{f,\lambda}}(\mathrm{H}^0(X_1(N)_{\mathbb{Q}_p}, \Omega^1_{X_1(N)_{\mathbb{Q}_p}/\mathbb{Q}_p}) \otimes_{\mathbb{T}} \mathbb{Q}_{f,\lambda} , \mathbb{Q}_{f,\lambda}) \ar[u]^-{\otimes_{\mathbb{Q}_{f,\lambda}}\mathbb{C}}  \\
\mathrm{Hom}_{\mathbb{Q}_{f,\lambda}}(S(\overline{f}) \otimes_{\mathbb{Q}_{f}}\mathbb{Q}_{f,\lambda}, \mathbb{Q}_{f,\lambda}) \ar[u]_-{\simeq} &  \\
\mathrm{Hom}_{\mathbb{Q}_{f,\lambda}}(\mathbf{D}^1_{\mathrm{dR}}(V_{\overline{f}}), \mathbb{Q}_{f,\lambda}) \ar[u]_-{\simeq}^-{\textrm{de Rham--\'{e}tale comparison, \cite[(11.3.4)]{kato-euler-systems}}} & \\
\mathrm{Hom}_{\mathbb{Q}_{f,\lambda}}(\mathbf{D}^0_{\mathrm{dR}}(V_{\overline{f}}(1)), \mathbb{Q}_{f,\lambda}) \ar[u]_-{\simeq}^-{M \mapsto M(-1)} & \omega^*_{\overline{f}} \ar@{|->}[uuuu]   
}
}
\end{split}
\end{equation}
We can also easily obtain the $(-)$-part by looking at the whole first de Rham cohomology. The cup product, the de Rham pairing and the period integral can be also compared as follows.
\begin{equation} \label{eqn:comparison-local-duality-dR-paring-period-integral}
\begin{gathered}
{ \small
\xymatrix@C=0.5em{
\mathrm{H}^1_f(\mathbb{Q}_p, V_f(1)) \ar@/_2pc/[d]_-{\mathrm{log}}^-{\simeq} & \times & \mathrm{H}^1_s(\mathbb{Q}_p, V_{\overline{f}}(1)) \ar[rr]^-{\cup} \ar[d]_-{\mathrm{exp}^{*}}^-{\simeq} & & \mathrm{H}^2 (\mathbb{Q}_p, \mathbb{Q}_{f,\lambda}(1)) \simeq \mathbb{Q}_{f,\lambda}  \ar@{=}[d] 
 \\
\mathbf{D}_{\mathrm{dR}}(V_f(1)) / \mathbf{D}^{0}_{\mathrm{dR}}(V_f(1)) \ar@{<-->}[d]_-{\textrm{Eichler--Shimura}}\ar@/_2pc/[u]^-{\mathrm{exp}}_-{\simeq} & \times & \mathbf{D}^{0}_{\mathrm{dR}}(V_{\overline{f}}(1)) \ar[d]^-{\simeq} \ar[rr]^-{\langle -, - \rangle_{\mathrm{dR}}} & & \mathbf{D}_{\mathrm{dR}}(\mathbb{Q}_{f,\lambda}(1)) \simeq \mathbb{Q}_{f,\lambda}  \ar[d]_-{ \Omega^{\pm}_{f} \times }^-{\simeq}  \\
\mathrm{H}_1(X_1(N), \mathbb{Z}_p)^\pm \otimes_{\mathbb{T}} \mathbb{Q}_{f, \lambda}  & \times & S(\overline{f}) \otimes_{\mathbb{Q}_f} \mathbb{Q}_{f, \lambda}  \ar[rr]^-{\int_{\gamma^{\pm}} \omega} & & \Omega^{
\pm}_{f} \cdot \mathbb{Q}_{f,\lambda}  
}
}
\end{gathered}
\end{equation}
where $\mathrm{exp}$ is the Bloch--Kato exponential map defined in \cite[Chapter II, $\S$1.3.4]{kato-lecture-1}.
Then we can find a $\mathbb{Q}_{f,\lambda}$-basis $\omega^*_{\overline{f}}$ of $\mathbf{D}_{\mathrm{dR}}(V_f(1)) / \mathbf{D}^{0}_{\mathrm{dR}}(V_f(1))$ by equality
$$\langle \omega^*_{\overline{f}} , \omega_{\overline{f}} \rangle_{\mathrm{dR}} = \frac{1}{\Omega^{+}_{f}} \cdot \int_{\gamma^+_f} \omega_{\overline{f}} =1.$$
Note that the period of $f$ not of $\overline{f}$ occurs due to the complex conjugation on the second term
and $\omega^*_{\overline{f}}$ also becomes a $\mathbb{Q}_{f,\lambda} \otimes \mathbb{Q}_p(\mu_n)$-basis of $\mathbf{D}_{\mathrm{dR},n}(V_f(1)) / \mathbf{D}^{0}_{\mathrm{dR},n}(V_f(1))$.
For a more refined description of the de Rham pairing and the integral canonical periods, see \cite[$\S$6]{ochiai-two-variable}.

\subsection{The logarithm map and formal groups} \label{subsec:log_formal_groups}
Let $J_1(N)_{\overline{f},1}(\mathbb{Q}_p(\mu_n))$ be the kernel of the reduction of $J_1(N)_{\overline{f}}(\mathbb{Q}_p(\mu_n))$ modulo $\mathfrak{m}_{\mathbb{Q}_p(\mu_n)}$.
Since $\mathbb{Q}_p(\mu_n)/\mathbb{Q}_p$ is unramified, $\widehat{\mathfrak{J}_1(N)}_{\overline{f}}  (\mathfrak{m}_{\mathbb{Q}_p(\mu_n)})$ has no torsion.
Due to the non-existence of the torsion, we are able to make a precise connection between the logarithm map and the formal logarithm map. Also, in order to single out $f$ among its Galois conjugates, we take the $\mathbb{Z}_{f,\lambda}$-component.  Then we have the the following commutative diagram
\begin{equation} \label{eqn:logarithm-commutative}
\begin{gathered}
\xymatrix@R=1em{
J_1(N)_{\overline{f}, \lambda}  (\mathbb{Q}_p(\mu_n)) \otimes_{\mathbb{Z}_p} \mathbb{Q}_p \ar[r]^-{\mathrm{log}} & \mathbf{D}_{\mathrm{dR},\mathbb{Q}_p}(V_f(1)) / \mathbf{D}^0_{\mathrm{dR},\mathbb{Q}_p}(V_{f}(1)) \otimes_{\mathbb{Q}_p} \mathbb{Q}_p(\mu_n) \\ 
J_1(N)_{\overline{f},1, \lambda}   (\mathbb{Q}_p(\mu_n)) \ar[u] & \mathbf{D}_{\mathrm{dR},\mathbb{Q}_p}(T_f(1)) / \mathbf{D}^0_{\mathrm{dR},\mathbb{Q}_p}(T_{f}(1))  \otimes_{\mathbb{Z}_p} \mathbb{Q}_p(\mu_n) \ar@{=}[u] \\
\widehat{\mathfrak{J}_1(N)}_{\overline{f}, \lambda} (\mathfrak{m}_{\mathbb{Q}_p(\mu_n)}) \ar[u]^-{\simeq} \ar[r]^-{\widehat{\mathrm{log}}}_-{\simeq} & \mathbf{D}_{\mathrm{dR},\mathbb{Q}_p}(T_f(1)) / \mathbf{D}^0_{\mathrm{dR},\mathbb{Q}_p}(T_{f}(1)) \otimes_{\mathbb{Z}_p} \widehat{\mathbb{G}}_a(\mathfrak{m}_{\mathbb{Q}_p(\mu_n)})  \ar@{^{(}->}[u] 
}
\end{gathered}
\end{equation}
where $\mathfrak{m}_{\mathbb{Q}_p(\mu_n)} = p\mathbb{Z}_p[\mu_n]$.
\subsection{Computing the size of the image} \label{subsec:computing_image}
Using the local Tate pairing, we identify the integral structures
\[
\xymatrix{
\mathrm{H}^1_s(\mathbb{Q}_p(\mu_n), V_{\overline{f}}(1)) \ar[r]^-{\simeq} & \mathrm{Hom}_{\mathbb{Q}_{f, \lambda}}(J_1(N)_{\overline{f},\lambda}(\mathbb{Q}_p(\mu_n)) \otimes_{\mathbb{Z}_p} \mathbb{Q}_p, \mathbb{Q}_{f, \lambda}) \\
\mathrm{H}^1_s(\mathbb{Q}_p(\mu_n), T_{\overline{f}}(1)) \ar[r]^-{\simeq} \ar@{^{(}->}[u] & \mathrm{Hom}_{\mathbb{Z}_{f, \lambda}}(J_1(N)_{\overline{f},\lambda}(\mathbb{Q}_p(\mu_n)), \mathbb{Z}_{f, \lambda})  . \ar@{^{(}->}[u]
}
\]
The horizontal map has an explicit formula due to Theorem \ref{thm:kato_formula}. In other words, for $z \in \mathrm{H}^1_s(\mathbb{Q}_p(\mu_n), T_{\overline{f}}(1))$, we assign the map
\begin{equation} \label{eqn:explicit-formula-pairing}
x \mapsto \mathrm{Tr}_{ \mathbb{Q}_{f, \lambda} \otimes_{\mathbb{Q}_p} \mathbb{Q}_p(\mu_n) / \mathbb{Q}_{f, \lambda} \otimes_{\mathbb{Q}_p} \mathbb{Q}_p }\left( \langle \mathrm{log} (x), \mathrm{exp}^{*}(z) \rangle_{\mathrm{dR}}  \right) .
\end{equation}
Therefore, in order to compute the lattice
$$\mathscr{L} := \langle \omega^*_{\overline{f}} , \mathrm{exp}^{*} \left( \mathrm{H}^1_s(\mathbb{Q}_p(\mu_n), T_{\overline{f}}(1)) \right) \rangle_{\mathrm{dR}} ,$$
it suffices to compute the (conjugate) reciprocal lattice
$$\langle  \mathrm{log} \left( J_1(N)_{\overline{f},\lambda}(\mathbb{Q}_p(\mu_n)) \right) , \omega_{\overline{f}} \rangle_{\mathrm{dR}} .$$
Because of (\ref{eqn:logarithm-commutative}) and $\widehat{\mathbb{G}}_a(\mathfrak{m}_{\mathbb{Q}_p(\mu_n)}) = \mathfrak{m}_{\mathbb{Q}_p(\mu_n)} = p\mathbb{Z}_p[\mu_n]$,
the image of the formal group under the formal logarithm map is
$$\left\langle  \widehat{\mathrm{log}} \left( \widehat{\mathfrak{J}_1(N)}_{\overline{f},\lambda}( \mathfrak{m}_{\mathbb{Q}_p(\mu_n)}  ) \right) , \omega_{\overline{f}} \right\rangle_{\mathrm{dR}} = \mathbb{Z}_{f,\lambda} \otimes p\mathbb{Z}_p[\mu_n] \subseteq \mathbb{Q}_{f,\lambda} \otimes \mathbb{Q}_p(\mu_n).$$
Let
$\mathfrak{J}_1(N)_{\overline{f},\lambda}(\mathbb{F}_p(\mu_n))$
be the $\mathbb{Z}_{f,\lambda}$-component of the $\mathrm{Gal}(\overline{\mathbb{F}}_p/\mathbb{F}_p(\mu_n) )$-invariant of the reduction of $\mathfrak{J}_1(N)_{\overline{f}}$ at $p$ and we have an exact sequence
\begin{equation} \label{eqn:short_exact}
\xymatrix{
0 \ar[r] & \widehat{\mathfrak{J}_1(N)}_{\overline{f},\lambda}( \mathfrak{m}_{\mathbb{Q}_p(\mu_n)} )  \ar[r] &
J_1(N)_{\overline{f},\lambda}(\mathbb{Q}_p(\mu_n))  \ar[r] &
\mathfrak{J}_1(N)_{\overline{f},\lambda}(\mathbb{F}_p(\mu_n))  \ar[r] & 0 .
}
\end{equation}
Considering the logarithm maps from the above sequence (\ref{eqn:short_exact}), we have the following diagram:
\[
{ \scriptsize
\xymatrix@R=1.5em{
& 0 \ar[d] \\
& \widehat{\mathfrak{J}_1(N)}_{\overline{f},\lambda}(\mathfrak{m}_{\mathbb{Q}_p(\mu_n)}) \ar[rr]^-{\langle \widehat{\mathrm{log}}(-), \omega_{\overline{f}} \rangle_{\mathrm{dR}}}_-{\simeq} \ar[d] & & \mathbb{Z}_{f,\lambda} \otimes_{\mathbb{Q}_{p}} p\mathbb{Z}_p[\mu_n] \\
 J_1(N)_{\overline{f},\lambda}(\mathbb{Q}_p(\mu_n))_{\mathrm{tors}} \ar[d]_-{\simeq} \ar@{^{(}->}[r]& J_1(N)_{\overline{f},\lambda}(\mathbb{Q}_p(\mu_n)) \ar@{->>}[rr]^-{\langle \mathrm{log}(-), \omega_{\overline{f}} \rangle_{\mathrm{dR}}} \ar[d] & & \mathrm{Im} ( \mathrm{log} ) \subseteq  \mathbb{Q}_{f,\lambda} \otimes_{\mathbb{Q}_{p}} \mathbb{Q}_p(\mu_n) \\
\mathrm{ker} ( \overline{\mathrm{log}} ) \ar@{^{(}->}[r] & \mathfrak{J}_1(N)_{\overline{f},\lambda}(\mathbb{F}_p(\mu_n)) \ar@{->>}[rr]^-{\overline{\langle \mathrm{log}(-), \omega_{\overline{f}} \rangle_{\mathrm{dR}}}} \ar[d] & & \mathrm{Im} ( \mathrm{log} ) / \left( \mathbb{Z}_{f,\lambda} \otimes_{\mathbb{Q}_{p}} p\mathbb{Z}_p[\mu_n]  \right)\\
 & 0  
}
}
\]
By the Eichler--Shimura relation (\hspace{1sp}\cite[Corollary 5.15 and Theorem 5.16]{conrad-shimura}), we have
\begin{align*}
\mathfrak{J}_1(N)_{\overline{f},\lambda}(\mathbb{F}_p(\mu_n)) & = \mathrm{ker} \left( \mathrm{Frob}^{n_p}_p - \mathrm{Id} : \mathfrak{J}_1(N)_{\overline{f},\lambda}(\overline{\mathbb{F}}_p)  \to \mathfrak{J}_1(N)_{\overline{f},\lambda}(\overline{\mathbb{F}}_p)  \right) \\
& = \mathrm{ker} \left( (1-\overline{\alpha_p}^{n_p})(1-\overline{\beta_p}^{n_p})  : \mathfrak{J}_1(N)_{\overline{f},\lambda}(\overline{\mathbb{F}}_p) \to \mathfrak{J}_1(N)_{\overline{f},\lambda}(\overline{\mathbb{F}}_p) \right)  
\end{align*}
where $n_p = [\mathbb{F}_p(\mu_n): \mathbb{F}_p]$.
Thus, $(1-\overline{\alpha_p}^{n_p})(1-\overline{\beta_p}^{n_p})$ exactly annihilates $\mathfrak{J}_1(N)_{\overline{f},\lambda}(\mathbb{F}_p(\mu_n))$ and 
we define  $e_{n}$ by the $\lambda$-valuation of a generator of
$\mathrm{Ann}_{ \mathbb{Z}_{f,\lambda} \otimes \mathbb{Z}_p[\mu_n] } \left(  J_1(N)_{\overline{f},\lambda}(\mathbb{Q}_p(\mu_n))_{\mathrm{tors}} \right) $.
Then we have
\begin{align*}
\langle  \mathrm{log} \left( J_1(N)_{f,\lambda}(\mathbb{Q}_p(\mu_n)) \right) , \omega_{\overline{f}} \rangle_{\mathrm{dR}} & =  \frac{ \lambda^{e_{n}}   }{ (1-\overline{\alpha_p}^{n_p})(1-\overline{\beta_p}^{n_p}) } \mathbb{Z}_{f,\lambda} \otimes_{\mathbb{Q}_{p}} p\mathbb{Z}_p[\mu_n]   \\
& \subseteq \mathbb{Q}_{f,\lambda} \otimes \mathbb{Q}_p(\mu_n) .
\end{align*}
\begin{rem}
Both $(1-\overline{\alpha_p}^{n_p})(1-\overline{\beta_p}^{n_p})$ and $\lambda^{e_{n}}$ are non-zero due to the identity elements of $\mathfrak{J}_1(N)_{\overline{f},\lambda}(\mathbb{F}_p(\mu_n))$ and of $J_1(N)_{\overline{f},\lambda}(\mathbb{Q}_p(\mu_n))_{\mathrm{tors}}$, respectively.
\end{rem}
By the duality via the de Rham pairing, we have the following statement.
\begin{prop} \label{prop:the_image}
$$\mathscr{L} := \langle \omega^*_{\overline{f}} , \mathrm{exp}^{*} \left( \mathrm{H}^1_s(\mathbb{Q}_p(\mu_n), T_{\overline{f}}(1)) \right) \rangle_{\mathrm{dR}}  = \frac{1}{p} \cdot \frac{(1-\alpha^{n_p}_p)(1-\beta^{n_p}_p)}{ \lambda^{e_{n}} } \cdot \mathbb{Z}_{f,\lambda}  \otimes_{\mathbb{Q}_{p}}  \mathbb{Z}_p[\mu_n]$$
and it becomes a $\mathbb{Z}_{f,\lambda} \otimes_{\mathbb{Z}_{p}} \mathbb{Z}_p[\mu_n]$-lattice in $\mathbb{Q}_{f,\lambda} \otimes_{\mathbb{Q}_{p}} \mathbb{Q}_p(\mu_n)$.
\end{prop}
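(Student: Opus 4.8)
The plan is to identify $\mathscr{L}$ with the trace dual of the ``reciprocal lattice''
\[
\mathscr{M}\ :=\ \langle\, \mathrm{log}\bigl(J_1(N)_{\overline{f},\lambda}(\mathbb{Q}_p(\mu_n))\bigr),\ \omega_{\overline{f}}\,\rangle_{\mathrm{dR}}
\]
already determined in $\S$\ref{subsec:computing_image}, and then to evaluate that dual using the unramifiedness of $\mathbb{Q}_p(\mu_n)/\mathbb{Q}_p$. Recall that $\S$\ref{subsec:computing_image} supplies two ingredients. On the cohomological side, Kato's explicit formula (Theorem \ref{thm:kato_formula}) together with the integral local Tate pairing makes the isomorphism
\[
\mathrm{H}^1_s(\mathbb{Q}_p(\mu_n), T_{\overline{f}}(1))\ \simeq\ \mathrm{Hom}_{\mathbb{Z}_{f,\lambda}}\bigl(J_1(N)_{\overline{f},\lambda}(\mathbb{Q}_p(\mu_n)),\, \mathbb{Z}_{f,\lambda}\bigr)
\]
explicit: $z$ is sent to the functional $x \mapsto \mathrm{Tr}\bigl(\langle \mathrm{log}(x), \mathrm{exp}^{*}(z)\rangle_{\mathrm{dR}}\bigr)$, the trace being taken from $\mathbb{Q}_{f,\lambda}\otimes_{\mathbb{Q}_p}\mathbb{Q}_p(\mu_n)$ to $\mathbb{Q}_{f,\lambda}$. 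On the geometric side, feeding the short exact sequence \eqref{eqn:short_exact} into the logarithm and applying the Eichler--Shimura congruence relation (\cite{conrad-shimura}) gives
\[
\mathscr{M}\ =\ \frac{\lambda^{e_{n}}}{(1-\alpha_p^{n_p})(1-\beta_p^{n_p})}\cdot \mathbb{Z}_{f,\lambda}\otimes_{\mathbb{Z}_p} p\mathbb{Z}_p[\mu_n],
\]
with both $(1-\alpha_p^{n_p})(1-\beta_p^{n_p})$ and $\lambda^{e_{n}}$ nonzero, as noted in the remark preceding the statement.

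Next I would carry out the duality step. Since $\langle \omega^*_{\overline{f}}, \omega_{\overline{f}}\rangle_{\mathrm{dR}} = 1$ and $\omega^*_{\overline{f}}$ is a $\mathbb{Q}_{f,\lambda}\otimes_{\mathbb{Q}_p}\mathbb{Q}_p(\mu_n)$-basis of the tangent space $\mathbf{D}_{\mathrm{dR},n}(V_f(1))/\mathbf{D}^0_{\mathrm{dR},n}(V_f(1))$, every point satisfies $\mathrm{log}(x) = \langle \mathrm{log}(x), \omega_{\overline{f}}\rangle_{\mathrm{dR}}\cdot \omega^*_{\overline{f}}$, so by linearity of the de Rham pairing in the first slot
\[
\langle \mathrm{log}(x),\, \mathrm{exp}^{*}(z)\rangle_{\mathrm{dR}}\ =\ \langle \mathrm{log}(x),\, \omega_{\overline{f}}\rangle_{\mathrm{dR}}\cdot \langle \omega^*_{\overline{f}},\, \mathrm{exp}^{*}(z)\rangle_{\mathrm{dR}}.
\]
The pairing is only conjugate-linear in the second slot, but conjugation merely permutes the Galois conjugates of $f$ and so does not affect $\mathbb{Z}_{f,\lambda}$-lattices. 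Writing $\xi_z := \langle \omega^*_{\overline{f}}, \mathrm{exp}^{*}(z)\rangle_{\mathrm{dR}}$, the explicit isomorphism above therefore sends $z$ to $x \mapsto \mathrm{Tr}\bigl(\langle \mathrm{log}(x),\omega_{\overline{f}}\rangle_{\mathrm{dR}}\cdot \xi_z\bigr)$. Because $\langle \mathrm{log}(-),\omega_{\overline{f}}\rangle_{\mathrm{dR}}$ is injective modulo torsion with image $\mathscr{M}$, and because the isomorphism is \emph{onto} $\mathrm{Hom}_{\mathbb{Z}_{f,\lambda}}\bigl(J_1(N)_{\overline{f},\lambda}(\mathbb{Q}_p(\mu_n)), \mathbb{Z}_{f,\lambda}\bigr)$, I conclude that $\mathscr{L} = \{\xi_z\}$ is exactly the trace dual
\[
\mathscr{M}^{\vee}\ :=\ \bigl\{\, \xi \in \mathbb{Q}_{f,\lambda}\otimes_{\mathbb{Q}_p}\mathbb{Q}_p(\mu_n)\ :\ \mathrm{Tr}(\xi\cdot \mathscr{M}) \subseteq \mathbb{Z}_{f,\lambda}\,\bigr\}.
\]

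Finally I would compute $\mathscr{M}^{\vee}$. Since $(n,p) = 1$, every completion $\mathbb{Q}(\mu_n)_v/\mathbb{Q}_p$ is unramified, hence $\mathbb{Z}_p[\mu_n]$ is the maximal order of $\mathbb{Q}_p(\mu_n)$ with trivial different; consequently $\mathbb{Z}_{f,\lambda}\otimes_{\mathbb{Z}_p}\mathbb{Z}_p[\mu_n]$ is self-dual for the ($\mathbb{Q}_{f,\lambda}$-valued) trace pairing. Since $\mathscr{M} = c\cdot\bigl(\mathbb{Z}_{f,\lambda}\otimes_{\mathbb{Z}_p} p\mathbb{Z}_p[\mu_n]\bigr)$ with $c = \lambda^{e_{n}}/\bigl((1-\alpha_p^{n_p})(1-\beta_p^{n_p})\bigr)$, dualizing yields
\[
\mathscr{L}\ =\ \mathscr{M}^{\vee}\ =\ \frac{1}{p}\cdot \frac{(1-\alpha_p^{n_p})(1-\beta_p^{n_p})}{\lambda^{e_{n}}}\cdot \mathbb{Z}_{f,\lambda}\otimes_{\mathbb{Z}_p}\mathbb{Z}_p[\mu_n],
\]
which, being a nonzero fractional multiple of a full lattice, is itself a $\mathbb{Z}_{f,\lambda}\otimes_{\mathbb{Z}_p}\mathbb{Z}_p[\mu_n]$-lattice in $\mathbb{Q}_{f,\lambda}\otimes_{\mathbb{Q}_p}\mathbb{Q}_p(\mu_n)$. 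The hard part will be the integrality/surjectivity input in the middle step: one must know that the map assembled from Kato's explicit formula, the de Rham--\'etale comparison, and the local Tate pairing really is an isomorphism of \emph{integral} modules, i.e. that $\mathscr{L}$ equals the full dual $\mathscr{M}^{\vee}$ rather than merely sitting inside it. This rests on the compatibility of $\mathrm{exp}^{*}$ with the canonical lattices (in the spirit of \cite[Lemma 14.18.(ii)]{kato-euler-systems} and the normalization of $\omega^*_{\overline{f}}$ fixed in $\S$\ref{subsec:eichler-shimura}), on the perfectness of the integral local Tate pairing between $T_f(1)$ and $T_{\overline{f}}(1)$, and on the torsion-freeness of $\widehat{\mathfrak{J}_1(N)}_{\overline{f},\lambda}(\mathfrak{m}_{\mathbb{Q}_p(\mu_n)})$ that allows the formal and Bloch--Kato logarithms to be matched exactly ($\S$\ref{subsec:log_formal_groups}) --- all of which come from the unramifiedness of $\mathbb{Q}_p(\mu_n)/\mathbb{Q}_p$ together with the hypotheses of Theorem \ref{thm:main_theorem}.
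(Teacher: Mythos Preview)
Your proposal is correct and follows exactly the paper's approach: compute the reciprocal lattice $\mathscr{M} = \langle \mathrm{log}(J_1(N)_{\overline{f},\lambda}(\mathbb{Q}_p(\mu_n))), \omega_{\overline{f}}\rangle_{\mathrm{dR}}$ via the formal group and Eichler--Shimura (as in $\S$\ref{subsec:computing_image}), then take its trace dual using the self-duality of $\mathbb{Z}_{f,\lambda}\otimes\mathbb{Z}_p[\mu_n]$ in the unramified setting. Indeed the paper's own argument is the single line ``By the duality via the de Rham pairing,'' and your write-up simply makes explicit the factorization $\langle \mathrm{log}(x), \mathrm{exp}^{*}(z)\rangle_{\mathrm{dR}} = \langle \mathrm{log}(x),\omega_{\overline{f}}\rangle_{\mathrm{dR}}\cdot \langle \omega^*_{\overline{f}},\mathrm{exp}^{*}(z)\rangle_{\mathrm{dR}}$ and the dualization step that the paper leaves to the reader.
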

\begin{rem} \label{rem:size_of_image}
Since $J_1(N)_{\overline{f},\lambda}(\mathbb{Q}_p(\mu_n))_{\mathrm{tors}} = \mathrm{H}^0(\mathbb{Q}_p(\mu_n), A_f(1))$ and
$\mathrm{ker} ( \overline{\mathrm{log}} )$ are isomorphic, the value
$ \dfrac{(1-\alpha^{n_p}_p)(1-\beta^{n_p}_p)}{ \lambda^{e_{n}} }$
is $\lambda$-integral.
\end{rem}
\begin{rem}
In order to cover the full cyclotomic extension $\mathbb{Q}(\mu_{p^\infty})$, not just $\mathbb{Q}_{\infty}$, it seems that one needs to generalize the computation in this section to $\mathbb{Q}_p(\mu_{np})$. However, since $\mathbb{Q}_p(\mu_{np})/\mathbb{Q}_p$ is a ramified extension, the formal group argument (or the Fontaine--Laffaille theory as in \cite[$\S$4]{bloch-kato}) does not seem to work neatly.
\end{rem}
\section{Explicit description of (residual) Kolyvagin systems from Euler systems} \label{sec:explicit_construction}
We explicitly describe the map from Kato's Euler systems to Kato's Kolyvagin systems modulo $\lambda$ as the mod $\lambda$ version of Theorem \ref{thm:euler_to_kolyvagin}. See \cite[Appendix A]{mazur-rubin-book} for detail.

\subsection{Kolyvagin derivatives} \label{subsec:kolyvagin_derivatives}
Let $n$ be a product of Kolyvagin primes.
Let $c^+_{\mathbb{Q}(\mu_n)} \in \mathrm{H}^1(\mathbb{Q}(\mu_n), T_{\overline{f}}(1))$ be the $(+)$-part of Kato's Euler system at $\mathbb{Q}(\mu_n)$ as in Remark \ref{rem:conjugate_euler_systems}.

For each $\ell$, fix a primitive root $\eta_\ell$
and the corresponding generator $\sigma_{\eta_\ell} \in (\mathbb{Z}/\ell\mathbb{Z})^\times$.
Following \cite[Definition 4.4.1]{rubin-book},
we define the Kolyvagin derivative operator at $\ell$ by
$$D_\ell := \sum_{i=0}^{\ell-2} i \sigma^i_{\eta_\ell} ( =  \sum_{i=1}^{\ell-2} i \sigma^i_{\eta_\ell} ).$$
Then it satisfies relation
$(\sigma_{\eta_\ell} - 1)D_\ell =  \ell - 1 - \mathrm{Tr}_\ell$
where $\mathrm{Tr}_\ell := {\displaystyle \sum_{i=1}^{\ell-1} } \sigma^i_{\eta_\ell} ( =  {\displaystyle \sum_{i=0}^{\ell-2} } \sigma^i_{\eta_\ell} )$.
We define the \textbf{Kolyvagin derivative (at $n$)} by
$$D_n := \prod_{\ell \vert n} D_{\ell} .$$
\subsection{Derived Euler systems and Kolyvagin systems} \label{subsec:derived_euler_n_kolyvagin}

We define \textbf{weak Kolyvagin system $w\kappa_n$ modulo $\lambda$} by the following diagram
\[
{ \scriptsize
\xymatrix@C=0.4em@R=1.5em{
\mathrm{H}^1(\mathbb{Q}(\mu_n), T_{\overline{f}}(1)) \ar[d]^-{D_n} & c^+_{\mathbb{Q}(\mu_n)} \ar@{|->}[d] & \textrm{Euler systems}\\
\mathrm{H}^1(\mathbb{Q}(\mu_n), T_{\overline{f}}(1)) \ar[d]^-{\bmod{\lambda}} & D_n c^+_{\mathbb{Q}(\mu_n)} \ar@{|->}[d] & \textrm{derived Euler systems}\\
\left( \mathrm{H}^1(\mathbb{Q}(\mu_n), T_{\overline{f}}(1)) / \lambda \mathrm{H}^1(\mathbb{Q}(\mu_n), T_{\overline{f}}(1)) \right)^{\mathrm{Gal}( \mathbb{Q}(\mu_n)/\mathbb{Q} )} \ar@{_{(}->}[d] & d^+_n \ar@{|->}[dd] & \textrm{\cite[Lemma 4.4.2]{rubin-book}}\\ 
\left( \mathrm{H}^1( \mathbb{Q}(\mu_n),  T_{\overline{f}}(1)/\lambda T_{\overline{f}}(1) ) \right)^{\mathrm{Gal}( \mathbb{Q}(\mu_n)/\mathbb{Q} )} \ar@{-->}[d]^-{\mathrm{res}^{-1}}&  \\
 \mathrm{H}^1(\mathbb{Q},  T_{\overline{f}}(1) / \lambda T_{\overline{f}}(1) ) \ar@{~>}[d]_-{\textrm{Equation (\ref{eqn:kolyvagin_systems}) below}}^-{\textrm{and Proposition \ref{prop:weak_vs_derived}}} & w\kappa_n \Mod{\lambda} \ar@{|~>}[d] & \textrm{weak Kolyvagin systems modulo $\lambda$} \\
 \mathrm{H}^1(\mathbb{Q},  T_{\overline{f}}(1) / \lambda T_{\overline{f}}(1) ) \otimes G_n & \kappa_n \Mod{\lambda} & \textrm{Kolyvagin systems modulo $\lambda$}
}
}
\]
where $\mathrm{res}^{-1}$ is the inverse of the restriction map in the Hochschild--Serre spectral sequence
defined on the image of the Kolyvagin derivative classes. For the well-definedness of $\mathrm{res}^{-1}$, see \cite[$\S$4.4]{rubin-book}.

We recall the explicit formula for the construction of Kolyvagin systems from weak Kolyvagin systems. See \cite[Appendix A]{mazur-rubin-book} for detail.

Let $\ell$ be a Kolyvagin prime. Let $\mathcal{A}_{\ell}$ be the augmentation ideal of group ring $\left( \mathbb{Z}_{f,\lambda} / I_\ell \right) [G_\ell \otimes \left( \mathbb{Z}_{f,\lambda} / I_\ell \right) ]$.
Then there exists a canonical isomorphism of $\mathbb{Z}_{f,\lambda} / I_\ell$-modules defined by
\[
\xymatrix@R=0em{
\rho_\ell: \mathcal{A}_{\ell} / \mathcal{A}^2_{\ell} \ar[r]^-{\simeq} & G_\ell \otimes \left( \mathbb{Z}_{f,\lambda} / I_\ell \right) \\
\sigma - 1 \ar@{|->}[r] & \sigma \otimes 1
}
\]

Let $n \in \mathcal{N}$ and $\mathfrak{S}(n)$ be the set of permutations of the primes dividing $n$. For $\pi \in \mathfrak{S}(n)$, let ${ \displaystyle d_\pi := \prod_{\pi(\ell)=\ell} \ell }$. Then we define \textbf{Kolyvagin system $\kappa_n$}  by
\begin{equation} \label{eqn:kolyvagin_systems}
\kappa_n := \sum_{\pi \in \mathfrak{S}(n)} \left( \mathrm{sign}(\pi) \left( w\kappa_{d_\pi} \right) \otimes \bigotimes_{\ell \vert (n/d_\pi)} \rho_\ell (P_\ell (\mathrm{Fr}^{-1}_{\pi(\ell)} ) ) \right) \in \mathrm{H}^1(\mathbb{Q}, T_{\overline{f}}(1) / I_n T_{\overline{f}}(1) ) \otimes G_n
\end{equation}
following \cite[(33), Page 80]{mazur-rubin-book}, and $\lbrace \kappa_n : n \in \mathcal{N} \rbrace$ satisfies all the axioms of Kolyvagin systems.
From Equation (\ref{eqn:kolyvagin_systems}), the following proposition is straightforward and shows that the indivisibility of derived Euler systems is equivalent to the primitivity of the corresponding Kolyvagin systems.
\begin{prop} \label{prop:weak_vs_derived}
A derived Euler system $D_n c^+_{\mathbb{Q}(\mu_n)}$ at $\mathbb{Q}(\mu_n)$
 is non-zero modulo $\lambda$ if and only if
the corresponding Kolyvagin system $\kappa_{n}$ is non-zero modulo $\lambda$.
\end{prop}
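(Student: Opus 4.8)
The plan is to split the asserted equivalence into the two links
\[
D_n c^+_{\mathbb{Q}(\mu_n)} \neq 0 \Mod{\lambda} \iff w\kappa_n \neq 0 \Mod{\lambda} \iff \kappa_n \neq 0 \Mod{\lambda},
\]
where $d^+_n = D_n c^+_{\mathbb{Q}(\mu_n)} \Mod{\lambda}$ and $w\kappa_n = \mathrm{res}^{-1}(d^+_n)$ are the intermediate classes in the diagram of $\S$\ref{subsec:derived_euler_n_kolyvagin}. The first link will follow from the injectivity of the vertical maps in that diagram, and the second from reducing the explicit formula (\ref{eqn:kolyvagin_systems}) modulo $\lambda$.

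For the first link, the exact sequence $0 \to T_{\overline{f}}(1) \xrightarrow{\lambda} T_{\overline{f}}(1) \to T_{\overline{f}}(1)/\lambda T_{\overline{f}}(1) \to 0$ yields an injection $\mathrm{H}^1(\mathbb{Q}(\mu_n), T_{\overline{f}}(1))/\lambda \hookrightarrow \mathrm{H}^1(\mathbb{Q}(\mu_n), T_{\overline{f}}(1)/\lambda T_{\overline{f}}(1))$, so $D_n c^+_{\mathbb{Q}(\mu_n)}$ is non-zero modulo $\lambda$ if and only if its image $d^+_n$ is non-zero in $\mathrm{H}^1(\mathbb{Q}(\mu_n), T_{\overline{f}}(1)/\lambda T_{\overline{f}}(1))$. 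By \cite[Lemma 4.4.2]{rubin-book} this image lies in the $\mathrm{Gal}(\mathbb{Q}(\mu_n)/\mathbb{Q})$-invariants, and by inflation–restriction together with $(T_{\overline{f}}(1)/\lambda T_{\overline{f}}(1))^{G_{\mathbb{Q}(\mu_n)}} = 0$, which follows from Condition (Im) (cf. Assumption \ref{assu:abstract}), the restriction map $\mathrm{H}^1(\mathbb{Q}, T_{\overline{f}}(1)/\lambda T_{\overline{f}}(1)) \to \mathrm{H}^1(\mathbb{Q}(\mu_n), T_{\overline{f}}(1)/\lambda T_{\overline{f}}(1))^{\mathrm{Gal}}$ is injective; as recorded in \cite[$\S$4.4]{rubin-book}, the derived classes lie in its image, so $\mathrm{res}^{-1}$ is defined and injective on them. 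Hence $d^+_n = 0$ precisely when $w\kappa_n = \mathrm{res}^{-1}(d^+_n) = 0$, which gives the first link.

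For the second link I reduce (\ref{eqn:kolyvagin_systems}) modulo $\lambda$. It suffices to show that every summand with $\pi \neq \mathrm{id}$ dies modulo $\lambda$, so that $\kappa_n \equiv w\kappa_n \otimes \bigotimes_{\ell \mid n} \sigma_{\eta_\ell} \Mod{\lambda}$; since $w \mapsto w \otimes \bigotimes_{\ell \mid n} \sigma_{\eta_\ell}$ is injective, this yields $\kappa_n \neq 0 \Mod{\lambda} \iff w\kappa_n \neq 0 \Mod{\lambda}$. Any $\pi \neq \mathrm{id}$ moves some prime $\ell \mid n$, and the corresponding summand carries the factor $\rho_\ell(P_\ell(\mathrm{Fr}^{-1}_{\pi(\ell)}))$. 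As $P_\ell(1) \in I_\ell$, the element $P_\ell(\mathrm{Fr}^{-1}_{\pi(\ell)})$ lies in the augmentation ideal $\mathcal{A}_\ell$, and a direct computation in $\mathcal{A}_\ell / \mathcal{A}^2_\ell$ gives
\[
\rho_\ell\!\left( P_\ell(\mathrm{Fr}^{-1}_{\pi(\ell)}) \right) = P'_\ell(1) \cdot \left( \mathrm{Fr}^{-1}_{\pi(\ell)} \otimes 1 \right), \qquad P'_\ell(1) = \ell^{-1}\!\left( 2\overline{\psi}(\ell) - \overline{a_\ell(f)} \right).
\]
Because $\ell$ is a Kolyvagin prime we have $\ell \equiv 1$, $\overline{\psi}(\ell) \equiv 1$, and $\overline{a_\ell(f)} \equiv \ell + 1 \equiv 2 \Mod{\lambda}$, whence $P'_\ell(1) \equiv 0 \Mod{\lambda}$; thus this factor is divisible by $\lambda$ and the whole summand vanishes after reduction to $\mathrm{H}^1(\mathbb{Q}, T_{\overline{f}}(1)/\lambda T_{\overline{f}}(1)) \otimes G_n$. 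Combining the two links gives the proposition.

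The routine part is the bookkeeping of the various quotients $T_{\overline{f}}(1)/I_n T_{\overline{f}}(1)$ and of the tensor factors in (\ref{eqn:kolyvagin_systems}), which is carried out in \cite[Appendix A]{mazur-rubin-book}. The one point deserving care is that $\mathrm{res}^{-1}$ is defined and injective only on the submodule spanned by the derived classes and not a priori on all Galois invariants (since $[\mathbb{Q}(\mu_n):\mathbb{Q}]$ is divisible by $p$); this is precisely the content of \cite[$\S$4.4]{rubin-book}, so no new argument is needed and the proof is as straightforward as the remark preceding the statement asserts.
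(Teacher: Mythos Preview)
Your proof is correct and supplies precisely the details the paper omits: the paper merely asserts that the proposition is ``straightforward'' from Equation (\ref{eqn:kolyvagin_systems}), and your argument---computing $\rho_\ell(P_\ell(\mathrm{Fr}^{-1}_{\pi(\ell)})) = P'_\ell(1)\cdot(\mathrm{Fr}^{-1}_{\pi(\ell)}\otimes 1)$ and observing that $P'_\ell(1)\equiv 0\pmod{\lambda}$ for Kolyvagin primes, so that only the $\pi=\mathrm{id}$ term survives---is exactly the intended unpacking of that remark. Your handling of the first link via inflation--restriction and \cite[\S4.4]{rubin-book} is also the standard route and matches the paper's diagram in \S\ref{subsec:derived_euler_n_kolyvagin}.
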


\section{From Kato's Euler systems to modular symbols} \label{sec:zeta_modular}
\subsection{Kato's Euler systems and the interpolation formula}
We first fix the convention of Kato's Euler system.
Let $\delta^\pm_f \in \mathrm{H}^1(X_1(N), \mathbb{Z}) \otimes \mathbb{T}/\wp_f$ be the dual of $\gamma^{\pm}_f$ defined in $\S$\ref{subsec:eichler-shimura}, respectively.
\begin{defn}[Kato's Euler systems] \label{defn:kato_euler_systems}
We define 
\begin{align*}
c_{\mathbb{Q}(\mu_n)} & := b_1 \cdot {}_{c,d}z^{(p)}_n(f,1,1, \alpha_1, \mathrm{prime}(nNp))^- + b_2 \cdot {}_{c,d}z^{(p)}_n(f,1,1, \alpha_2, \mathrm{prime}(nNp))^+ \\
& \in \mathrm{H}^1_{\mathrm{\acute{e}t}}( \mathrm{Spec}(\mathbb{Z}[1/p, \zeta_n]), j_{*}T_{\overline{f}}(1))
\end{align*}
where 
\begin{itemize}
\item $c$ and $d$ are positive integers with $(cd, nNp) = 1$ and $p \nmid (c-1)(d-1)$,
\item $b_1, b_2 \in \mathbb{Q}_{f, \lambda}$ such that 
$\delta^+_f  = b_1 \cdot \delta_1(f, 1, \alpha_1)^+ (\neq 0)$ and $\delta^-_f = b_2 \cdot \delta_1(f, 1, \alpha_2)^- (\neq 0)$, and
\item  ${}_{c,d}z^{(p)}_n(f,1,1, \alpha, \mathrm{prime}(nNp))^{\pm}$ is the element defined in \cite[(8.1.3) and Example 13.3]{kato-euler-systems}.
\end{itemize}
\end{defn}
\begin{rem} $ $
\begin{enumerate}
\item 
The condition $p \nmid (c-1)(d-1)$ is noticed by Rubin in \cite[Corollary 7.2]{rubin-es-mec}.
The cohomology class $c_{\mathbb{Q}(\mu_n)}$ is independent of $\alpha_1$, $\alpha_2$, $b_1$, and $b_2$, but it depends on $c$ and $d$.
\item
Since $\delta^+_f + \delta^-_f \in \mathrm{H}^1(X_1(N), \mathbb{Z}) \otimes \mathbb{T}/\wp_f$, we have the map
\[
\xymatrix@R=0em{
\mathrm{H}^1(X_1(N), \mathbb{Z}) \otimes_{\mathbb{Z}} \mathbb{Z}_p \otimes \mathbb{T}/\wp_f \ar[r]^-{\simeq} &
\mathrm{H}^1_{\mathrm{\acute{e}t}}(X_1(N), \mathbb{Z}_p) \otimes \mathbb{T}/\wp_f = T_{\overline{f}} \ar[r] & \mathrm{H}^1_{\mathrm{\acute{e}t}}( \mathrm{Spec}(\mathbb{Z}[1/p, \zeta_n]), j_{*}T_{\overline{f}}(1)) \\
\delta^+_f + \delta^-_f \ar@{|->}[rr] & & c_{\mathbb{Q}(\mu_n)}
}
\]
where the first isomorphism is the comparison between Betti and \'{e}tale cohomologies as in \cite[$\S$8.3]{kato-euler-systems} and the second map is an analogue of the map $T_{\overline{f}} \to \mathbb{H}^1(T_{\overline{f}})$ defined by $\gamma \mapsto \mathbf{z}^{(p)}_\gamma$ in \cite[Theorem 12.5.(1) and (4)]{kato-euler-systems}. Since we do not invert any element in the group ring for our convention of $c_{\mathbb{Q}(\mu_n)}$, our case is much simpler than Kato's case, which inverts the elements in the completed group ring arising from the choice of $c$ and $d$ and the bad Euler factors. See \cite[$\S$13.9--13.14]{kato-euler-systems} for details and \cite[Appendix A]{delbourgo-book} for the generalization of \cite[$\S$13.9--13.12]{kato-euler-systems} from $\mathbb{Q}$ to $\mathbb{Q}(\mu_n)$.
See also \cite[Appendix A]{kim-nakamura} for a slightly different choice of Kato's Euler systems.
\item
Since $\delta^{\pm}_f \in \mathrm{H}^1(X_1(N), \mathbb{Z}) \otimes \mathbb{T}/\wp_f$
 is dual to $\gamma^{\pm}_f$ and $\gamma^{\pm}_f = \Omega^{\pm}_f \cdot \omega^*_{\overline{f}}$ as in (\ref{eqn:comparison-betti-de-rham}), we have
 the values of the paring
\[
\xymatrix{
 \langle \gamma^{\pm}_f, \delta^{\pm}_f \rangle = 1, & \langle \gamma^{\pm}_f, \omega_{\overline{f}} \rangle = \langle  \Omega^{\pm}_f \cdot \omega^*_{\overline{f}} ,  \omega_{\overline{f}} \rangle = \Omega^{\pm}_f  
} 
\]
where the pairing is induced from the identifications in (\ref{eqn:comparison-local-duality-dR-paring-period-integral}).
Since we have
$$ \delta^{\pm}_f = \dfrac{1}{\Omega^{\pm}_f} \cdot \omega_{\overline{f}} ,$$
the integral canonical periods naturally appear in the interpolation formula for Kato's Euler system
 (Theorem \ref{thm:kato_interpolation}) below.
\end{enumerate}
\end{rem}

\begin{thm}[{\hspace{1sp}\cite[Theorem 6.6 and Theorem 9.7]{kato-euler-systems}}] \label{thm:kato_interpolation}
Let $\chi$ be a Dirichlet character mod $n$.
Then Kato's Euler system $c_{F} \in \mathrm{H}^1(F, T_{\overline{f}}(1))$
satisfies the following interpolation formula
\begin{equation} \label{eqn:kato_interpolation}
\sum_{b \in (\mathbb{Z}/n\mathbb{Z})^\times}  \chi(b) \cdot \left\langle \omega^*_{\overline{f}},  \mathrm{exp}^{*} \left( \mathrm{loc}_p \left( c_{\mathbb{Q}(\mu_n)} \right)^{\sigma_b} \right) \right\rangle_{\mathrm{dR}} = c \cdot d \cdot (c - \chi(c)) \cdot (d - \chi(d)) \cdot \frac{L^{(Np)}(f, \chi, 1)}{(-2 \pi i)\Omega^{\chi(-1)}_{f}}
\end{equation}
where $c$ and $d$ are positive integers with $(cd, nNp) = 1$ and $p \nmid (c-1)(d-1)$ chosen in Definition \ref{defn:kato_euler_systems}
and $L^{(Np)}(f, \chi, 1)$ is the $Np$-imprimitive $L$-value of $f$ at $s = 1$ twisted by $\chi$.
\end{thm}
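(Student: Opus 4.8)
The plan is not to reprove this statement from scratch — it is \cite[Theorem 6.6 and Theorem 9.7]{kato-euler-systems} — but to indicate how it follows from the construction of the zeta elements out of Siegel units together with the explicit reciprocity law. There are three moving parts: (i) realizing $c_{\mathbb{Q}(\mu_n)}$ as the $f$-isotypic piece of the \'etale realization of a Beilinson--Kato element in the motivic cohomology of a modular curve; (ii) computing the Bloch--Kato dual exponential of its localization at $p$ by the explicit reciprocity law, which identifies it with an explicit de Rham (Eisenstein) class; and (iii) converting the pairing of that de Rham class with $\omega^*_{\overline{f}}$ into a period integral of $f$, which by Birch's lemma / the Rankin--Selberg method is the asserted twisted imprimitive $L$-value.

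In more detail I would proceed as follows. \textbf{Step 1.} By Definition \ref{defn:kato_euler_systems}, $c_{\mathbb{Q}(\mu_n)}$ is a $\mathbb{Z}_{f,\lambda}$-linear combination — with $b_1,b_2,\alpha_1,\alpha_2$ chosen only so that the sum represents $\delta^+_f+\delta^-_f$, whence the combination is independent of those choices — of the classes ${}_{c,d}z^{(p)}_n(f,1,1,\alpha_i,\mathrm{prime}(nNp))^{\pm}$, which are the images in $\mathrm{H}^1(\mathbb{Q}(\mu_n),T_{\overline{f}}(1))$ of norm-compatible Beilinson--Kato elements assembled from cup products of Siegel units, with Euler factors at the primes dividing $nNp$ removed. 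The auxiliary integers $c,d$ (and the hypothesis $p\nmid(c-1)(d-1)$, cf.\ \cite[Corollary 7.2]{rubin-es-mec}) enter only to smooth the Siegel units into integral cohomology, producing the factor $c\cdot d\cdot(c-\chi(c))\cdot(d-\chi(d))$ after the twist by $\chi$. \textbf{Step 2.} Apply the local explicit reciprocity law, which is the substance behind Theorem \ref{thm:kato_formula} together with \cite[\S4 and (11.3.4)]{kato-euler-systems}: the composite $\langle\omega^*_{\overline{f}},\mathrm{exp}^{*}(\mathrm{loc}_p(-))\rangle_{\mathrm{dR}}$ applied to each $p$-adic zeta class returns the de Rham realization of the corresponding Beilinson--Kato element, which is an explicit weight-two Eisenstein differential form on $X_1(Nn)$. \textbf{Step 3.} Invoke the Betti--de Rham comparison of \S\ref{subsec:eichler-shimura}: because $\omega^*_{\overline{f}}$ is normalized by $\langle\omega^*_{\overline{f}},\omega_{\overline{f}}\rangle_{\mathrm{dR}}=1$, pairing against it and passing through the Eichler--Shimura isomorphism turns that de Rham value into the period integral of $f$ against the Eisenstein series, normalized by $1/\Omega^{+}_f$; this is also the reason the denominator carries $\Omega^{\chi(-1)}_f$ — the period of $f$ and not of $\overline{f}$ — the complex conjugation being fed through the second slot of the (conjugate-linear) de Rham pairing. \textbf{Step 4.} Evaluate: summing over $b\in(\mathbb{Z}/n\mathbb{Z})^\times$ against $\chi$ collapses the period integral, by Birch's lemma (equivalently, by unfolding the Rankin--Selberg integral of $f$ against the Eisenstein series) together with elementary manipulations of Dirichlet twists, to $L^{(Np)}(f,\chi,1)/((-2\pi i)\Omega^{\chi(-1)}_f)$, the Euler factors at $N$ and $p$ having already been deleted in Step 1; combining with the smoothing factor from Step 1 gives \eqref{eqn:kato_interpolation}.

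The genuine obstacle is \textbf{Step 2}: Kato's computation of the syntomic / dual-exponential regulator of the $p$-adic \'etale realization of the Beilinson--Kato elements and its identification with the de Rham Eisenstein class. Everything else is bookkeeping — the Teichm\"uller twist relating Kato's formulation for $T_{\overline{f}}$ to the one for $T_{\overline{f}}(1)$ (cf.\ \cite[\S6.5]{rubin-book}), the $(\pm)$-decomposition, the normalization of $\omega^*_{\overline{f}}$, and the precise Euler factors removed at primes dividing $Np$. Since Step 2 is exactly the content of \cite[Theorem 6.6 and Theorem 9.7]{kato-euler-systems}, in the paper I would cite these directly, adding only the remark that the normalization of $\omega^*_{\overline{f}}$ fixed in \S\ref{subsec:eichler-shimura} is what forces the denominator to be $(-2\pi i)\,\Omega^{\chi(-1)}_f$ rather than a period of $\overline{f}$.
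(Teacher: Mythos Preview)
Your proposal is appropriate and indeed goes beyond what the paper does: the paper provides no proof of this theorem at all, treating it purely as a citation of \cite[Theorem 6.6 and Theorem 9.7]{kato-euler-systems} and then immediately using it. Your own concluding remark --- that in the paper you would cite Kato directly, noting only that the normalization of $\omega^*_{\overline{f}}$ from \S\ref{subsec:eichler-shimura} accounts for the period $\Omega^{\chi(-1)}_f$ in the denominator --- is exactly the level at which the paper operates.
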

Theorem \ref{thm:kato_interpolation} can be refined via the $\pm$-decomposition of the Euler systems as in Remark \ref{rem:conjugate_euler_systems}:
\begin{align} \label{eqn:algebraic_formula} 
\begin{split} 
& \sum_{b \in (\mathbb{Z}/n\mathbb{Z})^\times}  \chi(b) \cdot \left\langle \omega^*_{\overline{f}},  \mathrm{exp}^{*} \left(  \left( c^{\chi(-1)}_{\mathbb{Q}(\mu_n)} \right)^{\sigma_b} \right) \right\rangle_{\mathrm{dR}} \\
= & c \cdot d \cdot (c - \chi(c)) \cdot (d - \chi(d)) \cdot \left( 1 - \frac{a_p(f) \cdot \chi(p)}{p} + \psi(p)\frac{\chi(p)^2}{p} \right) \\
& \cdot \left( \prod_{q \mid N_{\mathrm{sp}}} ( 1-  q^{-1} \chi(q) )  \right) \cdot \left( \prod_{q \mid N_{\mathrm{ns}}} ( 1 + q^{-1} \chi(q) )  \right) \cdot \frac{L(f, \chi, 1)}{(-2 \pi i)\Omega^{\chi(-1)}_{f}} .
\end{split}
\end{align}
We rewrite the last term in Equation (\ref{eqn:algebraic_formula}) in terms of modular symbols. Expanding the Gauss sum in the interpolation formula of Mazur--Tate elements, we have
\[
 \chi(-1) \cdot \frac{L(f,\chi, 1)}{(-2\pi i) \Omega^{\chi(-1)}_{f}} 
 =  \frac{1}{n}  \cdot   \sum_{b \in (\mathbb{Z}/n\mathbb{Z})^\times} \chi(b) \cdot \sigma_b \cdot  \left( \sum_{a \in (\mathbb{Z}/n\mathbb{Z})^\times} \zeta^{a}_n \cdot  \left[ \frac{-a}{n} \right]^{\chi(-1)}_{f} \right) .
\]
We define the values
$$c^{\mathrm{an}, \pm}_{\mathbb{Q}(\mu_n)} :=  \frac{\pm 1}{n} \cdot  \left( \sum_{a \in (\mathbb{Z}/n\mathbb{Z})^\times} \zeta^{a}_n \cdot \left[ \frac{-a}{n} \right]^{\pm}_{f} \right) \in \mathbb{Z}_{f,\lambda} \otimes \mathbb{Z}_p[\mu_n]$$
in order to have
$$ \sum_{b \in (\mathbb{Z}/n\mathbb{Z})^\times} \left(   \sigma_b \left(  c^{\mathrm{an}, \chi(-1)}_{\mathbb{Q}(\mu_n)}  \right) \right) \cdot \chi(b) = \frac{L(f, \chi, 1)}{(-2 \pi i)\Omega^{\chi(-1)}_{f}} .$$
From now on, we ``extract" the Euler factor at $p$ from $c^{\mathrm{an}, \pm}_{\mathbb{Q}(\mu_n)}$.
Since we have
$$L(f,\chi, 1) = { \displaystyle \sum_{b \in (\mathbb{Z}/n\mathbb{Z})^\times} } \left(  (-2\pi i) \cdot \Omega^{\chi(-1)}_{f} \cdot \sigma_b \left(  c^{\mathrm{an}, \chi(-1)}_{\mathbb{Q}(\mu_n)}  \right) \right) \cdot \chi(b) ,$$
the value
$$L(f,b \Mod{n}, 1) := \frac{1}{2} \left( (-2\pi i) \cdot \Omega^{+}_{f} \cdot \sigma_b \left(  c^{\mathrm{an}, +}_{\mathbb{Q}(\mu_n)}  \right) 
+ (-2\pi i) \cdot \Omega^{-}_{f} \cdot \sigma_b \left(  c^{\mathrm{an}, -}_{\mathbb{Q}(\mu_n)}  \right) \right)$$
is the value of the analytic continuation of a suitable partial $L$-series as follows.
\begin{lem}
The value
$$L^{(p)}(f,b \Mod{n}, 1) := \left( 1 - a_p(f) \cdot \sigma^{-1}_p \cdot p^{-1} + \psi(p) \cdot \sigma^{-2}_p \cdot  p^{-1} \right) \cdot L(f,b \Mod{n}, 1)$$
 is the  value of the analytic continuation of the prime-to-$Np$ partial $L$-series at $s= 1$
$${ \displaystyle \sum_{\substack{m \equiv b \Mod{n} \\ (Np,m) =1 }} \frac{a_m(f)}{m^s} } .$$
\end{lem}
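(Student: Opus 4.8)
The plan is to prove the identity by decomposing both sides into eigencomponents for the action of $(\mathbb{Z}/n\mathbb{Z})^\times$, that is, by pairing with the Dirichlet characters $\chi$ modulo $n$, and matching eigencomponents via (a) orthogonality of characters, (b) multiplicativity of the Hecke eigenvalues $a_m(f)$ together with the explicit local $L$-factors of $f$, and (c) Kato's interpolation formula (Theorem \ref{thm:kato_interpolation}, in the refined form of Equation (\ref{eqn:algebraic_formula})) together with the definitions of $c^{\mathrm{an},\pm}_{\mathbb{Q}(\mu_n)}$ and of $L(f, b \bmod n, 1)$ recalled just above the Lemma.

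First I would settle the analytic preliminaries. By the Hecke (indeed Deligne) bound, $\sum_{m\geq 1} a_m(f)m^{-s}$ converges absolutely for $\mathrm{Re}(s) > 3/2$, so the same holds for $\sum_{m \equiv b\ (n),\ (Np,m)=1} a_m(f)m^{-s}$ there; its holomorphic continuation to $s=1$ (with no pole, since $f$ is cuspidal of weight two) follows from the character-orthogonality identity
\[
\sum_{\substack{m \equiv b\ (n)\\ (Np,m)=1}} \frac{a_m(f)}{m^s} \;=\; \frac{1}{\varphi(n)} \sum_{\chi \bmod n} \overline{\chi}(b)\, L^{\sharp}(f,\chi,s),
\]
where $L^{\sharp}(f,\chi,s) := \sum_{(m,Np)=1} a_m(f)\chi(m)m^{-s}$, because each $L^{\sharp}(f,\chi,s)$ is a finite product of Euler factors times the entire twisted $L$-function of $f$. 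It then suffices to verify the claimed equality after evaluating at $s=1$ and pairing with each $\chi \bmod n$.

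The heart of the argument is the Euler-factor bookkeeping. Since $a_m(f)$ is multiplicative and the local factor of $\sum_m a_m(f)\chi(m)m^{-s}$ at a prime $\ell$ is $\bigl(1 - a_\ell(f)\chi(\ell)\ell^{-s} + \psi(\ell)\chi(\ell)^2\ell^{1-2s}\bigr)^{-1}$ for $\ell \nmid N$ and $\bigl(1 - a_\ell(f)\chi(\ell)\ell^{-s}\bigr)^{-1}$ for $\ell \mid N$ (with $a_\ell(f) = 1$, resp.\ $-1$, resp.\ $0$, according as $\ell \mid N_{\mathrm{sp}}$, $\ell \mid N_{\mathrm{ns}}$, or $\ell^2 \mid N$), imposing the coprimality condition $(m,Np)=1$ multiplies the full twisted series by the reciprocals of the corresponding Euler factors; at $s=1$ these are exactly $1 - \tfrac{a_p(f)\chi(p)}{p} + \psi(p)\tfrac{\chi(p)^2}{p}$ at $p$ and $\prod_{q \mid N_{\mathrm{sp}}}(1 - q^{-1}\chi(q))\prod_{q \mid N_{\mathrm{ns}}}(1 + q^{-1}\chi(q))$ at the bad primes, namely the factors appearing in Equation (\ref{eqn:algebraic_formula}). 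On the other side, by construction — the Gauss-sum expansion of the Mazur--Tate interpolation formula recalled just before the Lemma — the $\chi$-eigencomponent $\sum_{b}L(f,b\bmod n,1)\chi(b)$ is, up to the periods, the value at $s=1$ of the $N$-imprimitive twisted series $\sum_{(m,N)=1}a_m(f)\chi(m)m^{-s}$, and the group-ring element $1 - a_p(f)\sigma_p^{-1}p^{-1} + \psi(p)\sigma_p^{-2}p^{-1}$ acts on that eigencomponent by the scalar $1 - \tfrac{a_p(f)\chi(p)}{p} + \psi(p)\tfrac{\chi(p)^2}{p}$. Comparing, the $\chi$-eigencomponent of $L^{(p)}(f,b\bmod n,1)$ matches that of $\sum_{m\equiv b\ (n),\ (Np,m)=1}a_m(f)m^{-s}$ evaluated at $s=1$ for every $\chi$; since a function on $(\mathbb{Z}/n\mathbb{Z})^\times$ is determined by its Fourier coefficients, the Lemma follows.

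The step I expect to be the main obstacle is precisely the bookkeeping in (c): making rigorous the assertion that $L(f,b\bmod n,1)$ is already the $N$-imprimitive partial value — so that only the $p$-operator remains to be applied to reach the full prime-to-$Np$ series — and keeping track of the normalising constants ($-2\pi i$, the periods $\Omega^\pm_f$, the factor $\tfrac12$ from the $\pm$-decomposition, and the passage between the complex and $p$-adic sides through the fixed isomorphism $\iota$). Here one uses that $c^{\mathrm{an},+}_{\mathbb{Q}(\mu_n)}$ and $c^{\mathrm{an},-}_{\mathbb{Q}(\mu_n)}$ lie respectively in the $+$- and $-$-eigenspaces for complex conjugation on $\mathbb{Z}_{f,\lambda}\otimes\mathbb{Z}_p[\mu_n]$ — because $[\tfrac{a}{n}]^{+}_{f}$ is even and $[\tfrac{a}{n}]^{-}_{f}$ is odd in $a$ — so that the mixed sums $\sum_b \sigma_b(c^{\mathrm{an},+}_{\mathbb{Q}(\mu_n)})\chi(b)$ for $\chi$ odd and $\sum_b \sigma_b(c^{\mathrm{an},-}_{\mathbb{Q}(\mu_n)})\chi(b)$ for $\chi$ even vanish; this is exactly what permits the single quantity $L(f,b\bmod n,1)$ to interpolate the twisted $L$-values of both parities simultaneously.
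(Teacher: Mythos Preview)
Your character-decomposition route is sound in outline but differs from the paper's proof, which is more direct and sidesteps the bookkeeping you flag in (c). The paper never passes to $\chi$-eigencomponents; it works on the partial Dirichlet series themselves via the Hecke recursion $a_{pm}(f)=a_p(f)\,a_m(f)-\psi(p)\,p\,a_{m/p}(f)$. Taking $e\equiv bp^{-1}\pmod n$, the residue class $ep\bmod np$ is exactly $\{m:p\mid m,\ m\equiv b\pmod n\}$, so $L(f,b\bmod n,1)-L(f,ep\bmod np,1)$ is the $p$-deprived partial value; and the recursion gives $L(f,ep\bmod np,1)=a_p(f)\,p^{-1}L(f,bp^{-1}\bmod n,1)-\psi(p)\,p^{-1}L(f,bp^{-2}\bmod n,1)$, which is precisely $\bigl(a_p(f)\sigma_p^{-1}p^{-1}-\psi(p)\sigma_p^{-2}p^{-1}\bigr)\cdot L(f,b\bmod n,1)$. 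This needs no identification of which imprimitive value $L(f,b\bmod n,1)$ is, no Euler factors at $q\mid N$, and no parity cancellation argument. One correction to your step (c): from the definition of $c^{\mathrm{an},\pm}_{\mathbb{Q}(\mu_n)}$ just above the Lemma, $\sum_b\chi(b)\,L(f,b\bmod n,1)$ recovers (up to $\tfrac12$) the \emph{full} value $L(f,\chi,1)$, not the $N$-imprimitive one --- the bad Euler factors at $q\mid N$ enter only on the Kato (cohomological) side in Equation~(\ref{eqn:algebraic_formula}). With that fix your argument yields the partial series with condition $(p,m)=1$ rather than $(Np,m)=1$, which is also what the paper's direct argument actually establishes and what is used downstream in Equation~(\ref{eqn:analytic_formula}).
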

\begin{proof}
One can write 
$$L^{(p)}(f,b \Mod{n}, 1) = L(f,b \Mod{n}, 1) - L(f, ep \Mod{np}, 1)$$
where $e \in \mathbb{Z}/n\mathbb{Z}$ satisfies $ep \equiv b \Mod{n}$.
Since $f$ is a Hecke eigenform (at $p$), the straightforward computation yields the conclusion.
\end{proof}
This lemma shows that
\begin{align} \label{eqn:analytic_formula}
\begin{split}
\sum_{b \in (\mathbb{Z}/n\mathbb{Z})^\times} \left(   \sigma_b \left( \left( 1 - a_p(f) \cdot \sigma^{-1}_p \cdot p^{-1} + \psi(p)p \cdot \sigma^{-2}_p \cdot  p^{-2} \right) \cdot c^{\mathrm{an}, \chi(-1)}_{\mathbb{Q}(\mu_n)}  \right) \right) \cdot \chi(b) \\  
 = \left( 1 - \frac{a_p(f) \cdot \chi(p)}{p} + \psi(p)\frac{\chi(p)^2}{p} \right) \cdot \frac{L(f, \chi, 1)}{(-2 \pi i)\Omega^{\chi(-1)}_{f} }.
\end{split}
\end{align}

\subsection{Lifting to group rings} \label{subsec:lifting_coefficients}
Combining Equation (\ref{eqn:algebraic_formula}) and Equation (\ref{eqn:analytic_formula}), we have
\begin{align} \label{eqn:combined_formula}
\begin{split}
& \sum_{b \in (\mathbb{Z}/n\mathbb{Z})^\times}  \chi(b) \cdot \left\langle \omega^*_{\overline{f}},  \mathrm{exp}^{*} \left(  \left( c^{\chi(-1)}_{\mathbb{Q}(\mu_n)} \right)^{\sigma_b} \right) \right\rangle_{\mathrm{dR}} \\
= & c \cdot d \cdot (c - \chi(c)) \cdot (d - \chi(d)) \cdot \left( \prod_{q \mid N_{\mathrm{sp}}} ( 1-  q^{-1} \chi(q) )  \right) \cdot \left( \prod_{q \mid N_{\mathrm{ns}}} ( 1 + q^{-1} \chi(q) )  \right) \cdot  \\
&  \left( 1 - a_p(f) \cdot \sigma^{-1}_p \cdot p^{-1} + \psi(p) \cdot p^{-1} \cdot \sigma^{-2}_p  \right) \cdot \left( \sum_{b \in (\mathbb{Z}/n\mathbb{Z})^\times} \left(   \sigma_b  \cdot c^{\mathrm{an}, \chi(-1)}_{\mathbb{Q}(\mu_n)}  \right)  \cdot \chi(b) \right)
\end{split}
\end{align}
in $\mathbb{Z}_{f, \lambda}[\chi]$ for all characters $\chi$ on $\mathrm{Gal}(\mathbb{Q}(\mu_n)/\mathbb{Q})$
where $\left( 1 - a_p(f) \cdot \sigma^{-1}_p \cdot p^{-1} + \psi(p) \cdot p^{-1} \cdot \sigma^{-2}_p  \right)$
acts on $c^{\mathrm{an}, \chi(-1)}_{\mathbb{Q}(\mu_n)}$.
In order to lift Equality (\ref{eqn:combined_formula}) to group ring
$\mathbb{Z}_{f, \lambda}[\mathrm{Gal}(\mathbb{Q}(\mu_n)/\mathbb{Q})]$,
it suffices to check that
\begin{align*}
&\sum_{b \in (\mathbb{Z}/n\mathbb{Z})^\times}  \chi(b) \cdot \left\langle \omega^*_{\overline{f}},  \mathrm{exp}^{*} \left(  \left( c^{\chi(-1)}_{\mathbb{Q}(\mu_n)} \right)^{\sigma_b} \right) \right\rangle_{\mathrm{dR}} , 
\sum_{b \in (\mathbb{Z}/n\mathbb{Z})^\times} \left(   \sigma_b  \cdot c^{\mathrm{an}, \chi(-1)}_{\mathbb{Q}(\mu_n)}  \right)  \cdot \chi(b) \\
& (c - \chi(c)) \cdot (d - \chi(d)), \left( \prod_{q \mid N_{\mathrm{sp}}} ( 1-  q^{-1} \chi(q) )  \right) \cdot \left( \prod_{q \mid N_{\mathrm{ns}}} ( 1 + q^{-1} \chi(q) )  \right), \textrm{ and} \\
&  \left( 1 - \frac{a_p(f) \cdot \chi(p)}{p} + \psi(p)\frac{\chi(p)^2}{p} \right)\\
\end{align*}
for all $\chi$ lift to 
\begin{align} \label{eqn:lifted_ones}
\begin{split}
&\sum_{b \in (\mathbb{Z}/n\mathbb{Z})^\times}  \sigma^{-1}_b \cdot \left\langle \omega^*_{\overline{f}},  \mathrm{exp}^{*} \left(  \left( c^{\chi(-1)}_{\mathbb{Q}(\mu_n)} \right)^{\sigma_b} \right) \right\rangle_{\mathrm{dR}} , 
\sum_{b \in (\mathbb{Z}/n\mathbb{Z})^\times} \left(   \sigma_b  \cdot c^{\mathrm{an}, \chi(-1)}_{\mathbb{Q}(\mu_n)}  \right)  \cdot \sigma^{-1}_b \\
& (c - \sigma^{-1}_c) \cdot (d - \sigma^{-1}_d), \left( \prod_{q \mid N_{\mathrm{sp}}} ( 1-  q^{-1} \sigma^{-1}_q )  \right) \cdot \left( \prod_{q \mid N_{\mathrm{ns}}} ( 1 + q^{-1} \sigma^{-1}_q )  \right), \textrm{ and} \\
& \left( 1 - a_p(f) \cdot \sigma^{-1}_p \cdot p^{-1} + \psi(p)p \cdot \sigma^{-2}_p \cdot  p^{-2} \right),
\end{split}
\end{align}
respectively.
We follow the idea of \cite[Corollary 5.13]{ota-thesis}.
Since 
$$\mathbb{Q}_{f,\lambda}\otimes\mathbb{Q}(\mu_n) [\mathrm{Gal}( \mathbb{Q}(\mu_n)/\mathbb{Q} )] \simeq \prod_{\xi}\mathbb{Q}_{f,\lambda}\otimes\mathbb{Q}(\mu_n)[\mathrm{Im}\xi]$$
where $\xi$ runs over all characters on $\mathrm{Gal}( \mathbb{Q}(\mu_n)/\mathbb{Q} )$,
the equalities for all $\xi$ imply the equality in $\mathbb{Q}_{f,\lambda}\otimes\mathbb{Q}(\mu_n) [\mathrm{Gal}( \mathbb{Q}(\mu_n)/\mathbb{Q} )]$. Since all the above elements in (\ref{eqn:lifted_ones}) lie in 
$\mathbb{Z}_{f,\lambda} [\mathrm{Gal}( \mathbb{Q}(\mu_n)/\mathbb{Q} )]$, the lifting to the group ring works well.
To sum up, we have equality
\begin{align} \label{eqn:lifted_combined_formula}
\begin{split}
& \sum_{b \in (\mathbb{Z}/n\mathbb{Z})^\times}  \sigma^{-1}_b \cdot \left\langle \omega^*_{\overline{f}},  \mathrm{exp}^{*} \left(  \left( c^{\chi(-1)}_{\mathbb{Q}(\mu_n)} \right)^{\sigma_b} \right) \right\rangle_{\mathrm{dR}} \\
= & c \cdot d \cdot (c - \sigma^{-1}_c) \cdot (d - \sigma^{-1}_d) \cdot \left( \prod_{q \mid N_{\mathrm{sp}}} ( 1-  q^{-1} \sigma^{-1}_q )  \right) \cdot \left( \prod_{q \mid N_{\mathrm{ns}}} ( 1 + q^{-1} \sigma^{-1}_q )  \right) \cdot  \\
&  \left( 1 - a_p(f) \cdot \sigma^{-1}_p \cdot p^{-1} + \psi(p)p \cdot \sigma^{-2}_p \cdot  p^{-2} \right) \cdot \left( \sum_{b \in (\mathbb{Z}/n\mathbb{Z})^\times} \left(   \sigma_b  \cdot c^{\mathrm{an}, \chi(-1)}_{\mathbb{Q}(\mu_n)}  \right)  \cdot \sigma^{-1}_b \right)
\end{split}
\end{align}
in $\mathbb{Z}_{f,\lambda} [\mathrm{Gal}( \mathbb{Q}(\mu_n)/\mathbb{Q} )]$.

\subsection{Kolyvagin derivatives on modular symbols and Kurihara numbers} \label{subsec:williams}
\begin{thm}[Kurihara, Williams; {\cite[two lines above (21) (page 190)]{kurihara-munster}}] \label{thm:computation_KS}
Let $n = \ell_1 \cdot \cdots \cdot \ell_s$ be a square-free product of Kolyvagin primes.
We have the following equalities in $\mathbb{F}_\lambda$
\begin{align*}
D_n \left( \sum_{a \in (\mathbb{Z}/n\mathbb{Z})^\times } \zeta^{ a'}_n \left[ \frac{a}{n}\right]^{\pm}_f \right) & \equiv
\sum_{a \in (\mathbb{Z}/n\mathbb{Z})^\times}  \left( \prod_{\ell \vert n}  \mathrm{log}_{\mathbb{F}_\ell} ( a ) \right) \cdot \left[\frac{a}{n}  \right]^{\pm}_f  \pmod{\lambda} 
\end{align*}
where $a' = \pm a$.
\end{thm}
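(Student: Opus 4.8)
The plan is to reduce the statement to a purely combinatorial computation with the Kolyvagin derivative operators $D_\ell = \sum_{i=0}^{\ell-2} i\,\sigma_{\eta_\ell}^i$ acting on the group ring $\mathbb{Z}_{f,\lambda}[\mathrm{Gal}(\mathbb{Q}(\mu_n)/\mathbb{Q})]$, together with a careful bookkeeping of the Gauss-sum-type factors $\zeta_n^{a'}$. First I would expand the left-hand side by distributing $D_n = \prod_{\ell\mid n} D_\ell$ over the sum: writing $a$ in terms of its residues $a \bmod \ell$ for each $\ell \mid n$ via the discrete logarithm, $a \equiv \eta_\ell^{\log_{\mathbb{F}_\ell}(a)} \pmod{\ell}$, the Galois action $\sigma_{a^{-1}}$ on $\zeta_n^{a'}$ decomposes as a product over $\ell\mid n$ of the action of powers of $\sigma_{\eta_\ell}$. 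Applying $D_\ell$ to the $\ell$-component then amounts to evaluating $D_\ell$ against a single orbit; the key elementary identity is that, modulo the ideal $I_\ell \subseteq \lambda\mathbb{Z}_{f,\lambda}$ (equivalently, working modulo $\lambda$ after one uses $\ell \equiv 1 \pmod{\lambda}$), the operator $D_\ell$ applied to $\sigma_{\eta_\ell}^{j}$-indexed data picks out the coefficient $j = \log_{\mathbb{F}_\ell}(a)$, up to the harmless discrepancy handled by the relation $(\sigma_{\eta_\ell}-1)D_\ell = \ell-1-\mathrm{Tr}_\ell$ noted in $\S$\ref{subsec:kolyvagin_derivatives}.

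Concretely, the second step is to run an induction on the number $s$ of prime divisors of $n$. For $s=1$, $n = \ell$, one computes $D_\ell\big(\sum_{a}\zeta_\ell^{\pm a}[\tfrac{a}{\ell}]^\pm_f\big)$ directly: reindex the sum by $a = \eta_\ell^j$, so $\zeta_\ell^{\pm a} = \sigma_{\pm a^{-1}}(\zeta_\ell)$, and use that $D_\ell$ shifts the exponent, yielding $\sum_j j\cdot[\tfrac{\eta_\ell^j}{\ell}]^\pm_f \equiv \sum_a \log_{\mathbb{F}_\ell}(a)\cdot[\tfrac{a}{\ell}]^\pm_f \pmod\lambda$. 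The error terms coming from $\mathrm{Tr}_\ell$ and from $\ell-1 \equiv 0$ vanish modulo $\lambda$; here one invokes $\ell \equiv 1 \pmod\lambda$ and $\bar\psi(\ell) \equiv 1 \pmod\lambda$ from Definition \ref{defn:kolyvagin_primes}. For the inductive step, because $D_n = D_{\ell_s}\circ D_{n/\ell_s}$ and the operators for distinct primes act on independent tensor factors of $\mathrm{Gal}(\mathbb{Q}(\mu_n)/\mathbb{Q}) \simeq \prod_{\ell\mid n}G_\ell$, one applies the $s=1$ computation in the $\ell_s$-variable to the already-derived expression $\sum_a \big(\prod_{\ell\mid n/\ell_s}\log_{\mathbb{F}_\ell}(a)\big)[\tfrac{a}{n}]^\pm_f$ and collects the factor $\log_{\mathbb{F}_{\ell_s}}(a)$; the multiplicativity of the discrete logarithm over $(\mathbb{Z}/n\mathbb{Z})^\times \simeq \prod_\ell (\mathbb{Z}/\ell\mathbb{Z})^\times$ makes the product telescope correctly.

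The main obstacle I anticipate is the precise tracking of the Gauss-sum factor $\zeta_n^{a'}$ with $a' = \pm a$ and the resulting sign conventions in the $(\pm)$-parts, together with ensuring that the "extraction" of a single discrete-logarithm coefficient by $D_\ell$ is genuinely an equality modulo $\lambda$ and not merely modulo $I_{n\ell}$ or some larger ideal — one must check that all the correction terms (from the defining relation of $D_\ell$, from cross terms when $D_\ell$ hits the $\ell$-component versus when it acts as a scalar on other components, and from the interpolation/Gauss-sum expansion of $[\tfrac{a}{n}]^\pm_f$) lie in $\lambda\,\mathbb{Z}_{f,\lambda}\otimes\mathbb{Z}_p[\mu_n]$. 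This is essentially the content of Williams' analytic computation referenced as \cite{analytic_kolyvagin} and Kurihara's \cite[two lines above (21)]{kurihara-munster}; I would phrase it as a clean lemma on $D_\ell$ acting on $\mathbb{F}_\ell$-indexed families and then assemble the $n$-variable statement formally. I do not expect the Fourier-coefficient structure of $f$ to enter at all in this step — only the congruences defining Kolyvagin primes and the combinatorics of $D_n$.
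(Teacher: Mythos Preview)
There is a genuine gap in your proposal, and it is precisely the point you dismiss at the end. You write that you ``do not expect the Fourier-coefficient structure of $f$ to enter at all in this step,'' but it does: the paper's proof explicitly uses that ``the lower degree term vanishes using Hecke operators at $\ell_i$,'' and your direct computation cannot avoid this.

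To see why, carry out your own base case $s=1$ carefully. With $\sigma_c(\zeta_\ell)=\zeta_\ell^{c}$ one finds, after reindexing and using $\ell-1\equiv 0\pmod\lambda$,
\[
D_\ell\Bigl(\sum_{a}\zeta_\ell^{a}\,[\tfrac{a}{\ell}]^\pm_f\Bigr)
\;\equiv\;
S\cdot D_\ell(\zeta_\ell)\;+\;T \pmod{\lambda},
\]
where $S=\sum_{a}[\tfrac{a}{\ell}]^\pm_f$ and $T=\sum_{a}\mathrm{log}_{\mathbb{F}_\ell}(a)\,[\tfrac{a}{\ell}]^\pm_f$ is the desired right-hand side. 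The element $D_\ell(\zeta_\ell)=\sum_{i}i\,\zeta_\ell^{\eta_\ell^{i}}$ is \emph{not} zero in $\mathbb{F}_\lambda\otimes\mathbb{F}_p(\mu_\ell)$ in general, so you must show $S\equiv 0\pmod\lambda$. This is exactly the Hecke relation for modular symbols,
\[
\sum_{a\in(\mathbb{Z}/\ell\mathbb{Z})^\times}[\tfrac{a}{\ell}]^\pm_f=(a_\ell(f)-\psi(\ell)-1)\,[0]^\pm_f,
\]
combined with the Kolyvagin congruence $a_\ell(f)\equiv\ell+1$ and $\psi(\ell)\equiv 1\pmod\lambda$; the conditions $\ell\equiv 1$ and $\overline{\psi}(\ell)\equiv 1$ alone, which are all you invoke, do not suffice. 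For general $n$ the same obstruction appears in each variable: the ``constant-in-$\ell_i$'' contribution involves $\sum_{a\equiv b\,(n/\ell_i)}[\tfrac{a}{n}]^\pm_f$, and killing it mod $\lambda$ requires the three-term (Hecke) relation for Mazur--Tate elements at level $n$ versus $n/\ell_i$.

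Your inductive step inherits this problem and is also structurally unclear: the modular symbols $[\tfrac{a}{n}]^\pm_f$ live at level $n$, so ``applying the $s=1$ computation in the $\ell_s$-variable'' to data at level $n$ is not the same as the level-$\ell_s$ base case; passing between levels is again the Hecke relation. The paper's Taylor-expansion approach makes this transparent: expanding the Mazur--Tate element in powers of $(\sigma_{\eta_\ell}-1)$, the terms of degree $\geq 2$ in any variable die via $(\sigma_{\eta_\ell}-1)D_\ell=\ell-1-\mathrm{Tr}_\ell$, while the terms of degree $0$ in some variable die via Hecke, leaving only the multidegree-$(1,\dots,1)$ term, which yields $\widetilde{\delta}_n$ after evaluating at $\zeta_n^{\pm1}$ (using $\mathrm{Tr}_\ell(\zeta_\ell)=-1$). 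Incorporate the Hecke relation explicitly and your argument can be repaired along these lines.
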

\begin{proof}
Here is a sketch of the main idea. First, expand ${ \displaystyle \sum_{a \in (\mathbb{Z}/n\mathbb{Z})^\times }  \left[ \frac{a}{n}\right]^{\pm}_f \sigma_a }$ at ${\displaystyle \prod^{s}_{i=1} \left( \sigma_{\eta_{\ell_i}} - 1\right) }$.
Taking Kolyvagin derivative $D_n$ on the expansion, all but the term $\prod^{s}_{i=1} \left( \sigma_{\eta_{\ell_i}} - 1\right)$ vanish.
The higher degree term (for each $\left( \sigma_{\eta_{\ell_i}} - 1\right)$) vanishes after taking $D_n$ due to the relation $(\sigma_{\eta_{\ell_i}} - 1) D_{\ell_i} = \ell_i - 1 - \mathrm{Tr}_{\ell_i}$. 
Also, the lower degree term vanishes using Hecke operators at $\ell_i$.
In other words, we have
\begin{align*}
D_n \left( \sum_{a \in (\mathbb{Z}/n\mathbb{Z})^\times }  \left[ \frac{a}{n}\right]^{\pm}_f  \sigma_a \right) & \equiv
 \sum_{a \in (\mathbb{Z}/n\mathbb{Z})^\times}   \left[\frac{a}{n}  \right]^{\pm}_f \cdot  \prod_{\ell \vert n} \left( D_{\ell} (\eta_{\ell} - 1)  \right) \Mod{\lambda}  \\
& \equiv \sum_{a \in (\mathbb{Z}/n\mathbb{Z})^\times}   \left[\frac{a}{n}  \right]^{\pm}_f \cdot  \prod_{\ell \vert n} \left( \mathrm{log}_{\mathbb{F}_\ell} ( a ) \cdot (-\mathrm{Tr}_\ell) \right) \Mod{\lambda} .
\end{align*}
Considering the action of both sides on $\zeta^{\pm 1}_n$, we have
$$D_n \left( \sum_{a \in (\mathbb{Z}/n\mathbb{Z})^\times }  \left[ \frac{a}{n}\right]^{\pm}_f  \sigma_a \right) \cdot \zeta^{\pm}_n \equiv
 \sum_{a \in (\mathbb{Z}/n\mathbb{Z})^\times}   \left[\frac{a}{n}  \right]^{\pm 1}_f \cdot  \prod_{\ell \vert n} \left( \mathrm{log}_{\mathbb{F}_\ell} ( a ) \cdot (-\mathrm{Tr}_\ell) \right) \cdot \zeta^{\pm 1}_n \Mod{\lambda} .$$
Thus, the conclusion immediately follows. 
\end{proof}
\begin{rem}
This theorem is also observed in \cite[Theorem 9.5]{analytic_kolyvagin} via a purely analytic computation.
Kurihara found the importance of $\widetilde{\delta}_n$ and derived it from Mazur--Tate elements via the mod $p$ Taylor expansion of $\sigma_a$ at $\prod_{\ell \mid n} \left(\sigma_{\eta_\ell} -1\right)$.
See \cite[(21) (page 190) and (65)]{kurihara-munster} and \cite[(2) and (31), (32) (page 346)]{kurihara-iwasawa-2012} for detail. For the expansion of higher degree terms, see \cite{ota-thesis}.
\end{rem}
\subsection{Proof of Theorem {\ref{thm:main_theorem}}} \label{subsec:the_proof}
We give a proof of Theorem \ref{thm:main_theorem}. Here we only work with $c^+_{\mathbb{Q}(\mu_n)}$ and the result with  $c^+_{\mathbb{Q}(\mu_n)}$ is enough to imply the main conjecture for $(A_f(1), \mathbb{Q}_\infty/\mathbb{Q})$ since $\mathbb{Q}_\infty$ is totally real. The following diagram exactly shows what we compute.
\[
{ \scriptsize
\xymatrix@C=0.4em@R=1.5em{
\mathrm{H}^1(\mathbb{Q}(\mu_n), T_{\overline{f}}(1)) \ar[r]^-{D_n} & \mathrm{H}^1(\mathbb{Q}(\mu_n), T_{\overline{f}}(1)) \ar[d]^-{\bmod{\lambda}} \ar[rrrr]^-{\left\langle \omega^*_{\overline{f}},  \mathrm{exp}^{*} \left( \mathrm{loc}_p - \right) \right\rangle_{\mathrm{dR}}} & & & & \mathscr{L} \ar[d]^-{\bmod{\lambda}} \\
& \left( \mathrm{H}^1(\mathbb{Q}(\mu_n), T_{\overline{f}}(1)) / \lambda \mathrm{H}^1(\mathbb{Q}(\mu_n), T_{\overline{f}}(1)) \right)^{\mathrm{Gal}( \mathbb{Q}(\mu_n)/\mathbb{Q} )} \ar[rrrr]^-{\overline{\left\langle \omega^*_{\overline{f}},  \mathrm{exp}^{*} \left( \mathrm{loc}_p - \right) \right\rangle_{\mathrm{dR}}}} & & & & \mathscr{L}/\lambda\mathscr{L} \simeq \mathbb{F}_\lambda \\
 c^+_{\mathbb{Q}(\mu_n)} \ar@{|->}[r] &
  D_nc^+_{\mathbb{Q}(\mu_n)} \ar@{|->}[d] \ar@{|->}[rrrr] & & & & \langle \omega^*_{\overline{f}},D_n \mathrm{exp}^{*}(\mathrm{loc}_p c^{+}_{\mathbb{Q}(\mu_n)}) \rangle_{\mathrm{dR}} \ar@{|->}[d]\\
&  d^+_n \ar@{|->}[rrrr] & & & & \langle \omega^*_{\overline{f}}, D_n \mathrm{exp}^{*}(\mathrm{loc}_p c^{+}_{\mathbb{Q}(\mu_n)}) \rangle_{\mathrm{dR}} \Mod{\lambda} 
}
}
\]
where $\overline{\left\langle \omega^*_{\overline{f}},  \mathrm{exp}^{*} \left( \mathrm{loc}_p - \right) \right\rangle_{\mathrm{dR}}}$ is the induced reduction of $\left\langle \omega^*_{\overline{f}},  \mathrm{exp}^{*} \left( \mathrm{loc}_p - \right) \right\rangle_{\mathrm{dR}}$ modulo $\lambda$. At the end, we use Theorem \ref{thm:computation_KS} to show that
$$\langle \omega^*_{\overline{f}},D_n \mathrm{exp}^{*}( \mathrm{loc}_p c^{+}_{\mathbb{Q}(\mu_n)}) \rangle_{\mathrm{dR}} \Mod{\lambda} = u \cdot \widetilde{\delta}_n$$
where $u \in \mathbb{F}^\times_\lambda$.

\begin{proof}[Proof of Theorem \ref{thm:main_theorem}]
Due to the reduction of proof in $\S$\ref{subsec:the_main_idea}, it suffices to prove $\kappa_n \Mod{\lambda} \neq 0$.
Suppose that $\kappa_n \Mod{\lambda} = 0$ in $\mathrm{H}^1_{\mathcal{F}(n)}(\mathbb{Q}, T_{\overline{f}}(1)/ \lambda T_{\overline{f}}(1)) \otimes G_n$.
Then $$d^+_n = 0 \in \mathrm{H}^1(\mathbb{Q}(\mu_n), T_{\overline{f}}(1))/\lambda \mathrm{H}^1_{\mathcal{F}(n)}(\mathbb{Q}(\mu_n), T_{\overline{f}}(1)).$$
Thus, $D_n c^+_{\mathbb{Q}(\mu_n)} \in \lambda\mathrm{H}^1(\mathbb{Q}(\mu_n), T_{\overline{f}}(1))$.
Taking the dual exponential map and the pairing with $\omega^{*}_{\overline{f}}$, we have
\begin{align*}
& \left\langle \omega^*_{\overline{f}},  D_n \mathrm{exp}^{*} \left( \mathrm{loc}_p  c^{+}_{\mathbb{Q}(\mu_n)} \right) \right\rangle_{\mathrm{dR}} \in \lambda\mathscr{L} \\
\Rightarrow & C_p  \cdot \left( D_n c^{\mathrm{an}, +}_{\mathbb{Q}(\mu_n)} \right) \in \lambda\mathscr{L} & \textrm{(\ref{eqn:lifted_ones})} \\
\Rightarrow  & p \cdot C_p \cdot \left( D_n c^{\mathrm{an}, +}_{\mathbb{Q}(\mu_n)} \right) \in \lambda \cdot \frac{(1-\alpha^{n_p}_p)(1-\beta^{n_p}_p)}{ \lambda^{e_{n}} } \cdot \mathbb{Z}_{f,\lambda} \otimes  \mathbb{Z}_p[\mu_n] & \textrm{Proposition \ref{prop:the_image}}
\end{align*}
where
\begin{align*}
p \cdot C_p =  & c \cdot d \cdot (c - \sigma^{-1}_c) \cdot (d - \sigma^{-1}_d) \cdot \left( \prod_{q \mid N_{\mathrm{sp}}} ( 1-  q^{-1} \sigma^{-1}_q )  \right) \cdot \left( \prod_{q \mid N_{\mathrm{ns}}} ( 1 + q^{-1} \sigma^{-1}_q )  \right) \\ 
& \cdot \left( p - a_p(f) \cdot \sigma^{-1}_p  + \psi(p) \cdot \sigma^{-2}_p  \right) .
\end{align*}
Since $\frac{(1-\alpha^{n_p}_p)(1-\beta^{n_p}_p)}{ \lambda^{e_{n}} } $ is $\lambda$-integral (Remark \ref{rem:size_of_image}), we have
$$ p \cdot C_p  \cdot \left( D_n c^{\mathrm{an}, +}_{\mathbb{Q}(\mu_n)} \right) \Mod{\lambda} = 0  \in  \mathbb{F}_{\lambda} \otimes \mathbb{F}_p(\mu_n) .$$
Due to Theorem \ref{thm:computation_KS}, it is equivalent to
\begin{equation} \label{eqn:mod_p_delta_eqn}
 p \cdot C_p  \cdot \widetilde{\delta}_n  \Mod{\lambda} = 0  \in  \mathbb{F}_{\lambda} \otimes \mathbb{F}_p(\mu_n) ,
\end{equation}
and indeed, $\widetilde{\delta}_n \in \mathbb{F}_\lambda$; thus, the Galois action on $\widetilde{\delta}_n$ becomes trivial.  (This triviality is the analytic incarnation of \cite[Lemma 4.4.2]{rubin-book}.) 
Thus, (\ref{eqn:mod_p_delta_eqn}) is equivalent to
$$ p \cdot \overline{C}_p \cdot  \widetilde{\delta}_n = 0 \in  \mathbb{F}_{\lambda} $$
where
\begin{align*}
p \cdot \overline{C}_p =  c \cdot d \cdot (c - 1) \cdot (d - 1) \cdot \left( \prod_{q \mid N_{\mathrm{sp}}} ( 1-  q^{-1} )  \right) \cdot \left( \prod_{q \mid N_{\mathrm{ns}}} ( 1 + q^{-1} )  \right) \cdot  \left( p - a_p(f)   + \psi(p) \right) .
\end{align*}
Then we have
\begin{align*}
& c \cdot d \cdot (c - 1) \cdot (d - 1)  \in \mathbb{F}^{\times}_{\lambda} & \textrm{Definition \ref{defn:kato_euler_systems}}\\
& \left( \prod_{q \mid N_{\mathrm{sp}}} ( 1-  q^{-1} )  \right) \cdot \left( \prod_{q \mid N_{\mathrm{ns}}} ( 1 +  q^{-1} ) \right)  \in \mathbb{F}^{\times}_{\lambda} & \textrm{Condition ($N$-imp)}\\
&  \left( p - a_p(f)   + \psi(p) \right)  \in \mathbb{F}^{\times}_{\lambda} . & \textrm{Condition (NA)}
\end{align*}
Thus, it implies
$$ \widetilde{\delta}_n = 0 \in  \mathbb{F}_{\lambda} .$$
\end{proof}

\section{Examples} \label{sec:examples}
In this section, we describe (new) explicit examples of the Iwasawa main conjecture of modular forms over the cyclotomic $\mathbb{Z}_p$-extension.
\subsection{Elliptic curves of conductor $<$ 30,000 with non-trivial Shafarevich--Tate groups} \label{subsec:examples_table}
The following corollary \emph{completes} the validity of the main conjecture for elliptic curves of conductor $< 30,000$ with non-trivial $p$-part of the analytic order of the Shafarevich--Tate groups and $p \geq 5$. Note that all such elliptic curves have rank zero and see \cite[$\S$3.8]{grigorov-thesis} for the table. 
\begin{cor} \label{cor:computation}
Under the assumptions of Theorem \ref{thm:main_theorem}, $$\widetilde{\delta}_n \neq 0$$ for some $n$ for all (optimal) elliptic curves over $\mathbb{Q}$ of conductor $< 30,000$ with $p \geq 5$ such that the $p$-part of the analytic order of the Shafarevich--Tate groups is non-trivial.
\end{cor}
\begin{proof}
It suffices to compute the ``computation failed" cases in Grigorov's table in \cite[$\S$3.8]{grigorov-thesis}. The index of elliptic curves follows that of \cite{lmfdb}, not of Cremona's table.
\begin{center}
\begin{tabular}{ccc}
Elliptic curve & $p$ & Theorem \ref{thm:main_theorem} \\
\cite[\href{http://www.lmfdb.org/EllipticCurve/Q/6432/n/1}{6432.n1}]{lmfdb} & 5 & Condition (Tam) breaks \\
\cite[\href{http://www.lmfdb.org/EllipticCurve/Q/13790/c/1}{13790.c1}]{lmfdb} & 11 & $\widetilde{\delta}_{2663 \cdot 2707} \neq 0$ \\ 
\cite[\href{http://www.lmfdb.org/EllipticCurve/Q/15953/b/2}{15953.b2}]{lmfdb} & 5 & $\widetilde{\delta}_{191 \cdot 1021} \neq 0$  \\
\cite[\href{http://www.lmfdb.org/EllipticCurve/Q/16698/i/1}{16698.i1}]{lmfdb}  & 5 & $\widetilde{\delta}_{31 \cdot 131} \neq 0$  \\
\cite[\href{http://www.lmfdb.org/EllipticCurve/Q/17262/f/4}{17262.f4}]{lmfdb} & 5 &  $\widetilde{\delta}_{71 \cdot 181} \neq 0$ \\
\cite[\href{http://www.lmfdb.org/EllipticCurve/Q/18832/c/1}{18832.c1}]{lmfdb} & 7 &  $\widetilde{\delta}_{113 \cdot 379} \neq 0$ \\
\cite[\href{http://www.lmfdb.org/EllipticCurve/Q/22678/j/2}{22678.j2}]{lmfdb} & 5 & Condition ($N$-imp) breaks  \\
\cite[\href{http://www.lmfdb.org/EllipticCurve/Q/23826/k/1}{23826.k1}]{lmfdb} & 5 &  $\widetilde{\delta}_{181 \cdot 401} \neq 0$ \\
\cite[\href{http://www.lmfdb.org/EllipticCurve/Q/24642/a/1}{24642.a1}]{lmfdb} & 5 &  $\widetilde{\delta}_{31 \cdot 61} \neq 0$ \\
\cite[\href{http://www.lmfdb.org/EllipticCurve/Q/28644/h/2}{28644.h2}]{lmfdb} & 5 &  $\widetilde{\delta}_{131 \cdot 161} \neq 0$ 
\end{tabular}
\end{center}
\end{proof}

\subsection{Elliptic curves of conductor $<$ 1,000 of rank $\geq 1$ with $p=5$} \label{subsec:examples_rank_one}
We also confirm the Iwasawa main conjecture for elliptic curves of conductor $< 1,000$ having no square-free part of rank $\geq 1$ with $p=5$.
Note that the Iwasawa main conjecture for elliptic curves with good ordinary reduction of conductor having square-free part holds due to \cite{skinner-urban}.
\begin{cor} \label{cor:computation-2}
Under the assumptions of Theorem \ref{thm:main_theorem},
$$\widetilde{\delta}_n \neq 0$$
for some $n$ for all (optimal) elliptic curves over $\mathbb{Q}$ of conductor $< 1,000$ having no square-free part with $p = 5$ of rank $\geq 1$. Especially, the Iwasawa main conjecture for elliptic curves with good ordinary reduction at $p=5$ of conductor $< 1,000$ is confirmed.
\end{cor}
\begin{proof}
It suffices to confirm the following examples. Indeed, all the elliptic curves below have rank one.
\begin{center}
\begin{tabular}{ccccc}
Elliptic curve & Theorem \ref{thm:main_theorem} & & Elliptic curve & Theorem \ref{thm:main_theorem} \\
\cite[\href{http://www.lmfdb.org/EllipticCurve/Q/196/a/2}{196.a2}]{lmfdb} &
$\widetilde{\delta}_{11} \neq 0$ 
& & \cite[\href{http://www.lmfdb.org/EllipticCurve/Q/648/c/1}{648.c1}]{lmfdb} &
$\widetilde{\delta}_{41} \neq 0$\\
\cite[\href{http://www.lmfdb.org/EllipticCurve/Q/288/b/3}{288.b3}]{lmfdb} &
$\widetilde{\delta}_{151} \neq 0$
& & \cite[\href{http://www.lmfdb.org/EllipticCurve/Q/784/a/1}{784.a1}]{lmfdb} &
$\widetilde{\delta}_{101} \neq 0$\\
\cite[\href{http://www.lmfdb.org/EllipticCurve/Q/289/a/4}{289.a4}]{lmfdb} &
$\widetilde{\delta}_{181} \neq 0$
& & \cite[\href{http://www.lmfdb.org/EllipticCurve/Q/784/b/5}{784.b5}]{lmfdb} &
$\widetilde{\delta}_{151} \neq 0$\\
\cite[\href{http://www.lmfdb.org/EllipticCurve/Q/324/a/2}{324.a2}]{lmfdb} &
$\widetilde{\delta}_{11} \neq 0$
& & \cite[\href{http://www.lmfdb.org/EllipticCurve/Q/784/g/2}{784.g2}]{lmfdb} &
$\widetilde{\delta}_{691} \neq 0$\\
\cite[\href{http://www.lmfdb.org/EllipticCurve/Q/392/a/1}{392.a1}]{lmfdb} &
$\widetilde{\delta}_{61} \neq 0$
& & \cite[\href{http://www.lmfdb.org/EllipticCurve/Q/784/h/1}{784.h1}]{lmfdb} &
$\widetilde{\delta}_{11} \neq 0$\\
\cite[\href{http://www.lmfdb.org/EllipticCurve/Q/392/c/1}{392.c1}]{lmfdb} &
$\widetilde{\delta}_{401} \neq 0$
& & \cite[\href{http://www.lmfdb.org/EllipticCurve/Q/864/b/1}{864.b1}]{lmfdb} &
$\widetilde{\delta}_{11} \neq 0$\\
\cite[\href{http://www.lmfdb.org/EllipticCurve/Q/392/d/4}{392.d4}]{lmfdb} &
$\widetilde{\delta}_{31} \neq 0$
& & \cite[\href{http://www.lmfdb.org/EllipticCurve/Q/864/c/1}{864.c1}]{lmfdb} &
$\widetilde{\delta}_{41} \neq 0$\\
\cite[\href{http://www.lmfdb.org/EllipticCurve/Q/432/b/3}{432.b3}]{lmfdb} &
$\widetilde{\delta}_{11} \neq 0$
& & \cite[\href{http://www.lmfdb.org/EllipticCurve/Q/864/d/1}{864.d1}]{lmfdb} &
$\widetilde{\delta}_{41} \neq 0$\\
\cite[\href{http://www.lmfdb.org/EllipticCurve/Q/432/c/1}{432.c1}]{lmfdb} &
$\widetilde{\delta}_{31} \neq 0$
& & \cite[\href{http://www.lmfdb.org/EllipticCurve/Q/864/e/1}{864.e1}]{lmfdb} &
$\widetilde{\delta}_{241} \neq 0$\\
\cite[\href{http://www.lmfdb.org/EllipticCurve/Q/441/a/2}{441.a2}]{lmfdb} &
$\widetilde{\delta}_{11} \neq 0$
& & \cite[\href{http://www.lmfdb.org/EllipticCurve/Q/864/g/1}{864.g1}]{lmfdb} &
$\widetilde{\delta}_{11} \neq 0$\\
\cite[\href{http://www.lmfdb.org/EllipticCurve/Q/441/f/6}{441.f6}]{lmfdb} &
$\widetilde{\delta}_{41} \neq 0$
& & \cite[\href{http://www.lmfdb.org/EllipticCurve/Q/864/i/1}{864.i1}]{lmfdb} &
$\widetilde{\delta}_{241} \neq 0$\\
\cite[\href{http://www.lmfdb.org/EllipticCurve/Q/484/a/2}{484.a2}]{lmfdb} &
$\widetilde{\delta}_{101} \neq 0$
& & \cite[\href{http://www.lmfdb.org/EllipticCurve/Q/968/a/1}{968.a1}]{lmfdb} &
$\widetilde{\delta}_{131} \neq 0$\\
\cite[\href{http://www.lmfdb.org/EllipticCurve/Q/576/b/5}{576.b5}]{lmfdb} &
$\widetilde{\delta}_{31} \neq 0$
& & \cite[\href{http://www.lmfdb.org/EllipticCurve/Q/968/b/1}{968.b1}]{lmfdb} &
$\widetilde{\delta}_{61} \neq 0$\\
\cite[\href{http://www.lmfdb.org/EllipticCurve/Q/648/a/1}{648.a1}]{lmfdb} &
$\widetilde{\delta}_{61} \neq 0$
& & \cite[\href{http://www.lmfdb.org/EllipticCurve/Q/968/d/1}{968.d1}]{lmfdb} &
$\widetilde{\delta}_{41} \neq 0$\end{tabular}
\end{center}
\end{proof}
\begin{rem}
The above list contains elliptic curves with supersingular reduction at $p = 5$.
Note that \cite[\href{http://www.lmfdb.org/EllipticCurve/Q/648/b/1}{648.b1}]{lmfdb} does not satisfy the assumptions in Theorem  \ref{thm:main_theorem} since its mod $p$ representation  has exceptional  image $S_4$.
\end{rem}

\subsection{Elliptic curves with good ordinary reduction of square-full conductors} \label{subsec:ordinary_examples}
We consider four elliptic curves over $\mathbb{Q}$ found from \cite{lmfdb} as examples and use \cite{sagemath} for computation. Since all the elliptic curves here have no semistable prime in their conductors, Skinner--Urban's work \cite{skinner-urban} does not apply to these examples. X.~Wan's work \cite{wan_hilbert} could apply only if one can find suitable real quadratic fields. From now on, $\lambda$ means Iwasawa $\lambda$-invariants, not a place dividing $p$.
All four elliptic curves $E_i$ ($i=1, \cdots, 4$) share the following properties:
\begin{itemize}
\item $E_i$ is ordinary and non-anomalous at $p$.
\item $E_i[p]$ is surjective.
\item The product of all the Tamagawa factors are not divisible by $p$.
\item The $\mu$-invariant is zero and the $\lambda$-invariant is 2.
\end{itemize}
Furthermore, by the last condition, their Iwasawa main conjectures do not follow immediately from Kato's Euler system divisibility. 
For the first three elliptic curves, the (analytic) order of the $p$-part of their Shafarevich--Tate groups are non-trivial (with rank zero), i.e. $\lambda = \lambda_{\textrm{\cyr SH}} = 2$.
For the last elliptic curve, the rank of the elliptic curve is two, i.e. $\lambda = \lambda_{\mathrm{MW}} = 2$.

\subsubsection{An elliptic curve of conductor 3364 with $p = 7$}
Let
$$E_1: y^2 = x^3 - 4062871 x - 3152083138$$
be an elliptic curve  of conductor $3364 = 2^2 \cdot 29^2$ as in \cite[\href{http://www.lmfdb.org/EllipticCurve/Q/3364/c/1}{Elliptic Curve 3364.c1}]{lmfdb}.
Then we have $\widetilde{\delta}_{\ell} = 0$ for the first 5 Kolyvagin primes $\ell$ = 1289, 1471, 2549, 2591, and 2689, but
$$\widetilde{\delta}_{1289 \cdot 1471} \neq 0 .$$
Thus, the main conjectures for all members of the Hida family of $E_1[p]$ follow.
\subsubsection{An elliptic curve of conductor 10800 with $p = 7$}
Let 
$$E_2: y^2 = x^3 - 1795500 x - 926032500$$
be an elliptic curve of conductor $10800 = 2^4 \cdot 3^3 \cdot 5^2$ as in \cite[\href{http://www.lmfdb.org/EllipticCurve/Q/10800/dl/1}{Elliptic Curve 10800.dl1}]{lmfdb}.
Then we have $\widetilde{\delta}_{\ell} = 0$ for the first 5 Kolyvagin primes $\ell$ =  71, 113, 491, 967, and 1163, but
$$\widetilde{\delta}_{71 \cdot 113}  \neq 0 .$$
Thus, the main conjectures for all members of the Hida family of $E_2[p]$ follow.

\subsubsection{An elliptic curve of conductor 38088 with $p = 11$}
Let 
$$E_3 : y^2 = x^3 - 937309179 x - 11045170357450 $$
be an elliptic curve of conductor $38088 = 2^3 \cdot 3^2 \cdot 23^2$ as in \cite[\href{http://www.lmfdb.org/EllipticCurve/Q/38088/x/1}{Elliptic Curve 38088.x1}]{lmfdb}.
Then we have $\widetilde{\delta}_{\ell} = 0$ for the first 5 Kolyvagin primes $\ell$ = 463, 727, 881, 2707, and 2927, but
$$\widetilde{\delta}_{463 \cdot 727} \neq  0 .$$
Thus, the main conjectures for all members of the Hida family of $E_3[p]$ follow.

\subsubsection{An elliptic curve of conductor 3456 with $p = 5$}
Let 
$$E_4: y^2 = x^3 - 84 x + 304$$
be an elliptic curve of conductor $3456 = 2^7 \cdot 3^3$ as in \cite[\href{http://www.lmfdb.org/EllipticCurve/Q/3456/a/1}{Elliptic Curve 3456.a1}]{lmfdb}.
Then we have $\widetilde{\delta}_{\ell} = 0$ for all the 5 Kolyvagin primes $\ell$ = 191, 211, 311, 401, and 811, but
$$\widetilde{\delta}_{191 \cdot 211} \neq  0 .$$
Thus, the main conjectures for all members of the Hida family of $E_4[p]$ follow.

\subsection{Non-ordinary modular forms} \label{subsec:nonordinary_examples}
These examples shows how Theorem \ref{thm:main_theorem} applies to the non-ordinary setting without considering any $\pm$- or $\sharp/\flat$-Iwasawa theory.
Since their $L$-values are units, we can easily see $\widetilde{\delta}_1 \neq 0$ for these examples. Indeed, the second example is not genuinely new due to \cite[Proposition 6.2]{kurihara-invent}.

\subsubsection{A non-ordinary modular form with $\mathfrak{p}$ dividing 3}
This example is taken from \cite[5, A 3-adic example, $\S$4.1.2]{fouquet-etnc}.
Let 
$$f = q+\sqrt{6}q^3 +q^5 +2q^7 +3q^9 +(-2+\sqrt{6})q^{11} - q^{13} +\sqrt{6}q^{15} +(2-2\sqrt{6})q^{17} + \cdots$$
in $S_2(\Gamma_0(520))$. Then $f$ is a modular form (of finite slope) which is non-ordinary at $\mathfrak{p} = (3+\sqrt{6})$ above 3 in $\mathbb{Q}(\sqrt{6})$.

Since $a_3(f) \neq 0$ and the Hecke field is not $\mathbb{Q}$, neither \cite{wan-main-conj-ss-ec} nor \cite{sprung-main-conj-ss} applies to this example.
If we can verify certain automorphic assumptions of \cite[Theorem 1.4]{wan-main-conj-nonord}, which should be always true, then \cite{wan-main-conj-nonord} would apply.

Since $520 = 2^3 \cdot 5 \cdot 13$, and $a_5(f) = 1$ and $a_{13}(f) = -1$, we have $3 \nmid (5-1)\cdot (13+1)  = 4 \cdot 14$.
Since $\mathfrak{p} \nmid \frac{L(f, 1)}{\Omega^+_f}$, we have $\widetilde{\delta}_1 \neq 0$.
Thus, Kato's main conjecture (Conjecture \ref{conj:kato-main-conjecture}) for $f$ at $\mathfrak{p} = (3+\sqrt{6})$ holds.

\subsubsection{An elliptic curve with good supersingular reduction with $p=3$}
This example is taken from \cite[6, A 3-adic example, $\S$4.1.2]{fouquet-etnc}.
Let $E$ be the elliptic curve over $\mathbb{Q}$ defined by
$$y^2 = x^3 - 67x + 926$$
with conductor $760 = 2^3 \cdot 5 \cdot 19$ as in \cite[\href{http://www.lmfdb.org/EllipticCurve/Q/760/e/1}{Elliptic Curve 760.e1}]{lmfdb}.
Then we know that the residual representation is surjective, $a_3(E) = 3 (\neq 0)$, $a_5(E) = 1$, $a_{19}(E) = -1$, and $\frac{L(E, 1)}{\Omega^+_E} = 2$.
Since  $3 \nmid (5- 1) \cdot (19+1)=80$ and $\widetilde{\delta}_1 \in \mathbb{F}^\times_3$, Kato's main conjecture for $E$ with $p=3$ holds.

\section*{Acknowledgement}
This project grew out from C.K.'s year-long discussion with Karl Rubin when he was at UC Irvine.
C.K. heartily thanks Liang Xiao and K\^{a}zim B\"{u}y\"{u}kboduk  for guiding him to study Kato's Euler systems and for extremely helpful suggestions and strong encouragement, respectively.
C.K. learned many details of Kato's Euler systems from Kentaro Nakamura and Shanwen Wang during ``an explicit week" at KIAS.
C.K. also greatly appreciates Masato Kurihara's constant encouragement and thanks him for pointing out the relation of this work with \cite{kurihara-iwasawa-2012} and valuable comments.
C.K. thanks Ashay Burungale pointing out the analogy with Heegner points; Keunyoung Jeong for figuring out some computation in $\S$\ref{sec:the_image_of_dual_exp} together;  Kazuto Ota for pointing out the non-ordinary generalization; Olivier Fouquet, Minhyong Kim, Robert Pollack, Tadashi Ochiai, and Haining Wang for the helpful discussion and encouragement.
C.K. appreciates the generous hospitality of Ulsan National Institute of Science and Technology (UNIST), Keio University, and Shanghai Center for Mathematical Sciences during visits.
C.K. was partially supported 
by a KIAS Individual Grant (SP054102) via the Center for Mathematical Challenges at Korea Institute for Advanced Study,
by ``Overseas Research Program for Young Scientists" through Korea Institute for Advanced Study, by ``the 10th MSJ-Seasonal Institute 2017" through Mathematical Society of Japan, and by Basic Science Research Program through the National Research Foundation of Korea (NRF-2018R1C1B6007009).

M.K. thanks to Kentaro Nakamura and Shanwen Wang for giving nice lectures about Euler Systems at KIAS. With their lectures, M.K. got a better picture of the subject.
M.K. appreciates Robert Pollack for the useful discussion and encouragement. M.K. also thanks to Byungheup Jeon, Jungyun Lee, and Peter J. Cho for general support and constant encouragement.

H.S. thanks to Ashay Burungale for helpful conversations and comments about modular symbols. H.S. is supported by Basic Science Research Program through the National Research Foundation of Korea (NRF-2017R1A2B4012408).

All we deeply thank the organizers of Iwasawa 2017 for providing us with the intensive atmosphere, which makes it possible for us to finish the first draft during the conference.

All we deeply thank the referee for his or her extremely careful reading and comments. A number of inaccuracies are corrected and the exposition is improved a lot due to the comments.

\bibliographystyle{amsalpha}
\bibliography{kks}

\providecommand{\bysame}{\leavevmode\hbox to3em{\hrulefill}\thinspace}
\providecommand{\MR}{\relax\ifhmode\unskip\space\fi MR }
% \MRhref is called by the amsart/book/proc definition of \MR.
\providecommand{\MRhref}[2]{%
  \href{http://www.ams.org/mathscinet-getitem?mr=#1}{#2}
}
\providecommand{\href}[2]{#2}
\begin{thebibliography}{EPW06}

\bibitem[BCK]{burungale-castella-kim}
Ashay Burungale, Francesc Castella, and Chan-Ho Kim, \emph{A proof of
  {P}errin-{R}iou's {H}eegner point main conjecture}, submitted,
  \href{https://arxiv.org/abs/1908.09512}{arXiv:1908.09512}.

\bibitem[BD05]{bertolini-darmon-imc-2005}
Massimo Bertolini and Henri Darmon, \emph{Iwasawa's main conjectures for
  elliptic curves over anticyclotomic {$\mathbb{Z}_p$}-extensions}, Ann. of
  Math. (2) \textbf{162} (2005), no.~1, 1--64.

\bibitem[BK90]{bloch-kato}
Spencer Bloch and Kazuya Kato, \emph{{$L$}-functions and {T}amagawa numbers of
  motives}, The {G}rothendieck {F}estschrift {V}olume {I} (Pierre Cartier, Luc
  Illusie, Nicholas~M. Katz, Gerard Laumon, Yuri Manin, and Kenneth~A. Ribet,
  eds.), Progr. Math., vol.~86, Birkh\"{a}user {B}oston, 1990, pp.~333--400.

\bibitem[B{\"{u}}y09]{kazim-tamagawa}
K{\^{a}}zim B{\"{u}}y{\"{u}}kboduk, \emph{Tamagawa defect of {E}uler systems},
  J. Number Theory \textbf{129} (2009), no.~2, 402--417.

\bibitem[B{\"{u}}y11]{kazim-Lambda-adic}
\bysame, \emph{{$\Lambda$}-adic {K}olyvagin systems}, Int. Math. Res. Not. IMRN
  (2011), no.~14, 3141--3206.

\bibitem[C{\c{C}}SS]{castella-ciperiani-skinner-sprung}
Francesc Castella, Mirela {\c{C}}iperiani, Christopher Skinner, and Florian
  Sprung, \emph{On the {I}wasawa main conjectures for modular forms at
  non-ordinary primes}, preprint,
  \href{https://arxiv.org/abs/1804.10993}{arXiv:1804.10993}.

\bibitem[Con01]{conrad-shimura}
Brian Conrad, \emph{Chapter 5. {A}ppendix by {B}rian {C}onrad: {T}he {S}himura
  construction in weight 2}, Arithmetic {A}lgebraic {G}eometry (Brian Conrad
  and Karl Rubin, eds.), IAS/Park City Math. Ser., vol.~9, AMS, 2001, Appendix
  to \cite{ribet-stein-serre}, pp.~205--221.

\bibitem[Del08]{delbourgo-book}
Daniel Delbourgo, \emph{Elliptic curves and big {G}alois representations},
  London Math. Soc. Lecture Note Ser., vol. 356, Cambridge University Press,
  2008.

\bibitem[Dev19]{sagemath}
The~Sage Developers, \emph{{S}age{M}ath, the {S}age {M}athematics {S}oftware
  {S}ystem ({V}ersion 8.7)}, 2019, {\tt http://www.sagemath.org}.

\bibitem[DT94]{diamond-taylor-non-optimal}
Fred Diamond and Richard Taylor, \emph{Non-optimal levels of mod {$\ell$}
  modular representations}, Invent. Math. \textbf{115} (1994), 435--462.

\bibitem[EPW06]{epw}
Matthew Emerton, Robert Pollack, and Tom Weston, \emph{Variation of {I}wasawa
  invariants in {H}ida families}, Invent. Math. \textbf{163} (2006), no.~3,
  523--580.

\bibitem[Fou]{fouquet-etnc}
Olivier Fouquet, \emph{The equivariant {T}amagawa number conjecture for modular
  motives with coefficients in {H}ecke algebras}, preprint,
  \href{https://arxiv.org/abs/1604.06411}{arXiv:1604.06411}.

\bibitem[GIP]{greenberg-iovita-pollack}
Ralph Greenberg, Adrian Iovita, and Robert Pollack, \emph{On the {I}wasawa
  invariants for elliptic curves with supersingular reduction}, in preparation,
  July 2008.

\bibitem[Gre89]{greenberg-general-iwasawa}
Ralph Greenberg, \emph{Iwasawa theory for {$p$}-adic representations},
  Algebraic {N}umber {T}heory -- in honor of {K}enkichi {I}wasawa (John Coates,
  Ralph Greenberg, Barry Mazur, and Ichiro Satake, eds.), Adv. Stud. Pure
  Math., Academic {P}ress, 1989, pp.~97--137.

\bibitem[Gre99]{greenberg-lnm}
\bysame, \emph{Iwasawa theory for elliptic curves}, Arithmetic theory of
  elliptic curves ({C}etraro, 1997) (Berlin) (C.~Viola, ed.), Lecture Notes in
  Math., vol. 1716, Centro Internazionale Matematico Estivo (C.I.M.E.),
  Florence, Springer-Verlag, 1999, Lectures from the 3rd {C.I.M.E.}~{S}ession
  held in {C}etraro, {J}uly 12-–19, 1997, pp.~51–--144.

\bibitem[Gri05]{grigorov-thesis}
Grigor~Tsankov Grigorov, \emph{Kato's {E}uler {S}ystems and the {M}ain
  {C}onjecture}, Ph.D. thesis, Harvard, May 2005, under the supervision of
  Richard Taylor.

\bibitem[GV00]{gv}
Ralph Greenberg and Vinayak Vatsal, \emph{On the {I}wasawa invariants of
  elliptic curves}, Invent. Math. \textbf{142} (2000), no.~1, 17--63.

\bibitem[HL19]{hatley-lei}
Jeffrey Hatley and Antonio Lei, \emph{Arithmetic properties of signed {S}elmer
  groups at non-ordinary primes}, Ann. Inst. Fourier (Grenoble) \textbf{69}
  (2019), no.~3, 1259--1294.

\bibitem[Kat93]{kato-lecture-1}
Kazuya Kato, \emph{Lectures on the approach to {I}wasawa theory for
  {H}asse--{W}eil {$L$}-functions via {$\mathbf{B}_{\mathrm{dR}}$}. {P}art
  {I}}, {A}rithmetic {A}lgebraic {G}eometry (Heidelberg) (Edoardo Ballico,
  ed.), Lecture Notes in Math., vol. 1553, Centro Internazionale Matematico
  Estivo (C.I.M.E.), Florence, Springer-Verlag, 1993, Lectures given at the 2nd
  {S}ession of the {C}entro {I}nternazionale {M}atematico {E}stivo {(C.I.M.E.)}
  held in {T}rento, {I}taly, {J}une 24-–{J}uly 2, 1991, pp.~50–--163.

\bibitem[Kat04]{kato-euler-systems}
\bysame, \emph{{$p$}-adic {H}odge theory and values of zeta functions of
  modular forms}, Ast\'{e}risque \textbf{295} (2004), 117--290.

\bibitem[Kim09]{bdkim-supersingular-invariants}
Byoung~Du Kim, \emph{The {I}wasawa invariants of the plus/minus {S}elmer
  groups}, Asian J. Math. \textbf{13} (2009), no.~2, 181--190.

\bibitem[KLP]{kim-lee-ponsinet}
Chan-Ho Kim, Jaehoon Lee, and Gautier Ponsinet, \emph{On the {I}wasawa
  invariants of {K}ato's zeta elements for modular forms}, submitted,
  \href{https://arxiv.org/abs/1909.01764}{arXiv:1909.01764}.

\bibitem[KN20]{kim-nakamura}
Chan-Ho Kim and Kentaro Nakamura, \emph{Remarks on {K}ato's {E}uler systems for
  elliptic curves with additive reduction}, J. Number Theory \textbf{210}
  (2020), 249--279.

\bibitem[KO]{kim-ota}
Chan-Ho Kim and Kazuto Ota, \emph{On the quantitative variation of congruence
  ideals and integral periods of modular forms}, submitted,
  \href{https://arxiv.org/abs/1905.02926}{arXiv:1905.02926}.

\bibitem[Kob03]{kobayashi-thesis}
Shinichi Kobayashi, \emph{Iwasawa theory for elliptic curves at supersingular
  primes}, Invent. Math. \textbf{152} (2003), no.~1, 1--36.

\bibitem[Kol91]{kolyvagin-selmer}
Victor Kolyvagin, \emph{On the structure of {S}elmer groups}, Math. Ann.
  \textbf{291} (1991), no.~2, 253--259.

\bibitem[Kur02]{kurihara-invent}
Masato Kurihara, \emph{On the {T}ate {S}hafarevich groups over cyclotomic
  fields of an elliptic curve with supersingular reduction {I}}, Invent. Math.
  \textbf{149} (2002), 195--224.

\bibitem[Kur14a]{kurihara-munster}
\bysame, \emph{Refined {I}wasawa theory for {$p$}-adic representations and the
  structure of {S}elmer groups}, M{\"{u}}nster J. of Math. \textbf{7} (2014),
  no.~1, 149--223.

\bibitem[Kur14b]{kurihara-iwasawa-2012}
\bysame, \emph{The structure of {S}elmer groups of elliptic curves and modular
  symbols}, Iwasawa Theory 2012: State of the Art and Recent Advances (Thanasis
  Bouganis and Otmar Venjakob, eds.), Contrib. Math. Comput. Sci., vol.~7,
  Springer, 2014, pp.~317--356.

\bibitem[{LMF}]{lmfdb}
The {LMFDB Collaboration}, \emph{The {$L$}-functions and modular forms
  database}, \url{http://www.lmfdb.org}.

\bibitem[MR04]{mazur-rubin-book}
Barry Mazur and Karl Rubin, \emph{{K}olyvagin {S}ystems}, Mem. Amer. Math.
  Soc., vol. 168, American {M}athematical {S}ociety, March 2004.

\bibitem[Och]{ochiai-families}
Tadashi Ochiai, \emph{Iwasawa main conjecture for {$p$}-adic families of
  elliptic modular cuspforms}, preprint,
  \href{https://arxiv.org/abs/1802.06427}{arXiv:1802.06427}.

\bibitem[Och06]{ochiai-two-variable}
\bysame, \emph{On the two-variable {I}wasawa main conjecture}, Compos. Math.
  \textbf{142} (2006), 1157--1200.

\bibitem[Ota18]{ota-thesis}
Kazuto Ota, \emph{Kato's {E}uler system and the {M}azur--{T}ate refined
  conjecture of {BSD} type}, Amer. J. Math. \textbf{140} (2018), no.~2,
  495--542.

\bibitem[Pol03]{pollack-thesis}
Robert Pollack, \emph{On the {$p$}-adic {$L$}-function of a modular form at a
  supersingular prime}, Duke Math. J. \textbf{118} (2003), no.~3, 523--558.

\bibitem[PR00]{perrin-riou-book}
Bernadette Perrin-Riou, \emph{{$p$}-adic {$L$}-{F}unctions and {$p$}-adic
  {R}epresentations}, SMF/AMS Texts and Monographs, vol.~3, American
  Mathematical Society, 2000, Translated by Leila Schneps.

\bibitem[PR04]{pollack-rubin}
Robert Pollack and Karl Rubin, \emph{The main conjecture for {CM} elliptic
  curves at supersingular primes}, Ann. of Math. (2) \textbf{159} (2004),
  no.~1, 447--464.

\bibitem[PW11]{pw-mu}
Robert Pollack and Tom Weston, \emph{On anticyclotomic {$\mu$}-invariants of
  modular forms}, Compos. Math. \textbf{147} (2011), 1353--1381.

\bibitem[Roh84]{rohrlich-nonvanishing}
David Rohrlich, \emph{On {$L$}-functions of elliptic curves and cyclotomic
  towers}, Invent. Math. \textbf{75} (1984), 409--423.

\bibitem[RS01]{ribet-stein-serre}
Kenneth Ribet and William Stein, \emph{Lectures on {S}erre's conjectures},
  Arithmetic {A}lgebraic {G}eometry (Brian Conrad and Karl Rubin, eds.),
  IAS/Park City Math. Ser., vol.~9, AMS, 2001, pp.~143--232.

\bibitem[Rub98]{rubin-es-mec}
Karl Rubin, \emph{Euler systems and modular elliptic curves}, Galois
  representations in arithmetic algebraic geometry ({D}urham, 1996) (Anthony
  Scholl and Richard Taylor, eds.), London Math. Soc. Lecture Note Ser., vol.
  254, Cambridge University Press, 1998, pp.~351--367.

\bibitem[Rub00]{rubin-book}
\bysame, \emph{Euler {S}ystems}, Ann. of Math. Stud., vol. 147, Princeton
  {U}niversity {P}ress, 2000.

\bibitem[Ski16]{skinner-pacific}
Christopher Skinner, \emph{Multiplicative reduction and the cyclotomic main
  conjecture for {$\mathrm{GL}_2$}}, Pacific J. Math. \textbf{283} (2016),
  no.~1, 171--200.

\bibitem[Spr]{sprung-main-conj-ss}
Florian Sprung, \emph{The {I}wasawa main conjecture for elliptic curves at odd
  supersingular primes}, preprint,
  \href{https://arxiv.org/abs/1610.10017}{arXiv:1610.10017}.

\bibitem[SU14]{skinner-urban}
Christopher Skinner and Eric Urban, \emph{The {I}wasawa main conjectures for
  {$\mathrm{GL}_2$}}, Invent. Math. \textbf{195} (2014), no.~1, 1--277.

\bibitem[Vat13]{vatsal-integralperiods-2013}
Vinayak Vatsal, \emph{Integral periods for modular forms}, Ann. Math.
  Qu\'{e}bec \textbf{37} (2013), 109--128.

\bibitem[Wana]{wan-main-conj-nonord}
Xin Wan, \emph{Iwasawa main conjecture for non-ordinary modular forms},
  preprint, February 2020,
  \href{https://arxiv.org/abs/1607.07729}{arXiv:1607.07729}.

\bibitem[Wanb]{wan-main-conj-ss-ec}
\bysame, \emph{Iwasawa main conjecture for supersingular elliptic curves and
  {BSD} conjecture}, preprint, October 2019,
  \href{https://arxiv.org/abs/1411.6352}{arXiv:1411.6352}.

\bibitem[Wan15]{wan_hilbert}
\bysame, \emph{The {I}wasawa main conjecture for {H}ilbert modular forms},
  Forum Math. Sigma \textbf{3} (2015), e18 (95 pages).

\bibitem[Wil95]{wiles}
Andrew Wiles, \emph{Modular elliptic curves and {F}ermat's last theorem}, Ann.
  of Math. (2) \textbf{141} (1995), 443--551.

\bibitem[Wil01]{analytic_kolyvagin}
Samuel~Rufus Williams, \emph{Mod {$p$} {$L$}-functions and analytic {K}olyvagin
  systems}, Ph.D. thesis, Harvard, May 2001, under the supervision of Richard
  Taylor.

\bibitem[Zha14]{wei-zhang-mazur-tate}
Wei Zhang, \emph{Selmer groups and the indivisibility of {H}eegner points},
  Camb. J. Math. \textbf{2} (2014), no.~2, 191--253.

\end{thebibliography}

\end{document}